\newtheorem{thm}{Theorem}[section]
\newtheorem*{thm*}{Theorem}
\newtheorem{lem}[thm]{Lemma}
\newtheorem{prop}[thm]{Proposition}
\newtheorem{cor}[thm]{Corollary}
\newtheorem{dfn}[thm]{Definition}
\newtheorem{ques}[thm]{Question}
\theoremstyle{remark}
\newtheorem{ex}[thm]{Example}
\newtheorem*{rmk}{Remark}
\newtheorem*{rmks}{Remarks}
\renewcommand{\bf}[1]{\boldsymbol{#1}}
\renewcommand{\rm}[1]{\mathrm{#1}}
\newcommand{\bbE}{\mathbb{E}}
\newcommand{\bbN}{\mathbb{N}}
\newcommand{\bbR}{\mathbb{R}}
\newcommand{\bbZ}{\mathbb{Z}}
\newcommand{\sfE}{\mathbb{E}}
\newcommand{\sfP}{\mathbb{P}}
\newcommand{\rmD}{\mathrm{D}}
\newcommand{\rmH}{\mathrm{H}}
\newcommand{\rmI}{\mathrm{I}}
\renewcommand{\d}{\mathrm{d}}
\newcommand{\rme}{\mathrm{e}}
\newcommand{\rmh}{\mathrm{h}}
\newcommand{\B}{\mathscr{B}}
\newcommand{\F}{\mathscr{F}}
\newcommand{\G}{\mathscr{G}}
\newcommand{\scrH}{\mathscr{H}}
\renewcommand{\O}{\Omega}
\renewcommand{\a}{\alpha}
\renewcommand{\b}{\beta}
\newcommand{\eps}{\varepsilon}
\newcommand{\g}{\gamma}
\renewcommand{\k}{\kappa}
\renewcommand{\l}{\lambda}
\renewcommand{\o}{\omega}
\newcommand{\s}{\sigma}
\renewcommand{\Pr}{\mathrm{Prob}}
\newcommand{\Lip}{\mathrm{Lip}}
\renewcommand{\t}[1]{\widetilde{#1}}
\newcommand{\ol}[1]{\overline{#1}}
\renewcommand{\to}{\longrightarrow}
\renewcommand{\phi}{\varphi}
\newcommand{\bl}{\mathrm{block}}
\newcommand{\TC}{\mathrm{TC}}
\newcommand{\DTC}{\mathrm{DTC}}
\begin{document}

\title{\Large{MEASURE CONCENTRATION AND THE WEAK PINSKER PROPERTY}}

\author{Tim Austin}

\date{}

\maketitle

\begin{abstract}
Let $(X,\mu)$ be a standard probability space. An automorphism $T$ of $(X,\mu)$ has the weak Pinsker property if for every $\eps > 0$ it has a splitting into a direct product of a Bernoulli shift and an automorphism of entropy less than $\eps$.  This property was introduced by Thouvenot, who asked whether it holds for all ergodic automorphisms.

This paper proves that it does.  The proof actually gives a more general result.  Firstly, it gives a relative version: any factor map from one ergodic automorphism to another can be enlarged by arbitrarily little entropy to become relatively Bernoulli.  Secondly, using some facts about relative orbit equivalence, the analogous result holds for all free and ergodic measure-preserving actions of a countable amenable group.

The key to this work is a new result about measure concentration.  Suppose now that $\mu$ is a probability measure on a finite product space $A^n$, and endow this space with its Hamming metric. We prove that $\mu$ may be represented as a mixture of other measures in which (i) most of the weight in the mixture is on measures that exhibit a strong kind of concentration, and (ii) the number of summands is bounded in terms of the difference between the Shannon entropy of $\mu$ and the combined Shannon entropies of its marginals.
	\end{abstract}

\renewcommand{\cfttoctitlefont}{\bfseries\normalsize}
\renewcommand{\cftpartfont}{\bfseries\small}
\renewcommand{\cftpartpagefont}{\bfseries\small}
\renewcommand{\cftsecfont}{\bfseries\small}
\renewcommand{\cftsecpagefont}{\bfseries\small}
\renewcommand{\cftsubsecfont}{\small}
\renewcommand{\cftsubsecpagefont}{\small}

\setcounter{tocdepth}{1}

\tableofcontents

\section{Introduction}

\subsection*{The weak Pinsker property}

In this paper, a \textbf{measure-preserving system} or \textbf{automorphism} consists of (i) a standard probability space $(X,\mu)$ and (ii) a measurable and $\mu$-preserving transformation $T:X\to X$ with a measurable inverse.  We write $\B_X$ for the $\s$-algebra of $X$ when it is needed.

One of the most fundamental invariants of an automorphism is its entropy.  Kolmogorov and Sinai first brought the notion of entropy to bear on questions of ergodic theory in~\cite{Kol58,Kol59,Sin59}.  They showed that Shannon's entropy rate for stationary, finite-state stochastic processes is monotone under factor maps of processes, and deduced several new non-isomorphism results, most famously among Bernoulli shifts.

In~\cite{Sin64}, Sinai proved a result about entropy and ergodic theory of a different nature.  He showed that any ergodic automorphism $(X,\mu,T)$ of positive entropy $h$ admits factor maps onto any Bernoulli shift of entropy at most $h$.  This marked a turn in entropy theory towards synthetic results, which deduce the existence of an isomorphism or factor map from some modest assumption about invariants, such as Sinai's inequality between entropies.  These synthetic results are generally much more delicate.  They produce maps between systems using complicated combinatorics and analysis, and the maps that result are rarely canonical.

The most famous result of this kind is Ornstein's theorem~\cite{Orn70,Orn70c}, which shows that two Bernoulli shifts of equal (finite or infinite) entropy are isomorphic.  Together with Kolmogorov and Sinai's original entropy calculations, this completes the classification of Bernoulli shifts up to isomorphism. 

The real heart of Ornstein's work goes far beyond Bernoulli shifts: it provides necessary and sufficient conditions for an arbitrary stationary process to be isomorphic to a Bernoulli shift.  A careful introduction to this sophisticated theory can be found in the contemporary monographs~\cite{Shi73} and~\cite{Orn74}.  An intuitive discussion without many proofs can be found in~\cite{Orn13}, and the historical account in~\cite{Kat07} puts Ornstein's work in a broader context.  Finally, the survey~\cite{Tho02} collects statements and references for much of the more recent work in this area.

Since Ornstein's original work, several other necessary and sufficient conditions for Bernoullicity have also been explored, with far-reaching consequences for the classification of automorphisms. On the one hand, many well-known systems turn out to be isomorphic to Bernoulli shifts: see, for instance,~\cite[Section 6]{Tho02} and the references given there.  On the other hand, many examples can be constructed that are distinct from Bernoulli shifts, in spite of having related properties such as being K-automorphisms~\cite{Orn73,OrnShi73,Kal82}.

In particular, the subsequent non-isomorphism results included counterexamples~\cite{Orn73b,Orn73c} to an early, ambitious conjecture of Pinsker~\cite{Pin60}.  Let us say that a system $(X,\mu,T)$ \textbf{splits} into two of its factors $(Y,\nu,S)$ and $(Y',\nu',S')$ if there is an isomorphism
\[(X,\mu,T) \to (Y\times Y',\nu\times \nu',S\times S').\]
Such an isomorphism is called a \textbf{splitting}.  Pinsker's conjecture asserted that every ergodic automorphism splits into a factor of entropy zero and a K-automorphism. Even after Ornstein found his counterexamples, the class of systems with this structure attracted considerable interest.  Of particular importance is the subclass of systems that split into a zero-entropy factor and a Bernoulli factor.

In general, if
\[\pi:(X,\mu,T) \to (Y,\nu,S)\]
is a factor map of automorphisms, then $(X,\mu,T)$ is \textbf{relatively Bernoulli over $\pi$} if there are a Bernoulli system $\bf{B}$ and a factor map $\phi:(X,\mu,T)\to \bf{B}$ such that the combined map $x\mapsto (\pi(x),\phi(x))$ is a splitting. Since we restrict attention to standard probability spaces, it is equivalent to require that (i) $\pi$ and $\phi$ are independent maps on the probability space $(X,\mu)$ and (ii) $\pi$ and $\phi$ together generate $\B_X$ modulo the $\mu$-negligible sets.

The study of systems that split into zero-entropy and Bernoulli factors led Thouvenot to develop a `relative' version of Ornstein's conditions for Bernoullicity~\cite{Tho75,Tho75b}. Then, in~\cite{Tho77}, he proposed a replacement for the structure in Pinsker's conjecture.  According to Thouvenot, an automorphism has the \textbf{weak Pinsker property} if for every $\eps > 0$ it has a splitting into a Bernoulli shift and a system of entropy less than $\eps$.  In the Introduction to~\cite{Tho77}, Thouvenot wrote,
\begin{quotation}
`The meaning of this definition is that the ``structure'' of these systems lies in factors of arbitrarily small entropy and that their randomness is essentially driven by a Bernoulli process.'
\end{quotation}

Although Pinsker's conjecture and the weak Pinsker property are very close, it is worth being aware of the following important difference.  If an ergodic automorphism has a splitting as in Pinsker's conjecture, then the zero-entropy factor is essentially unique: it must generate the Pinsker $\s$-algebra of the original automorphism.  However, there is nothing canonical about the splittings promised by the weak Pinsker property.  This is illustrated very clearly by a result of Kalikow~\cite{Kal12}.  For a well-known example of a non-Bernoulli K-automorphism, he exhibits two splittings, each into a Bernoulli and a non-Bernoulli factor, such that the $\s$-algebras generated by the two non-Bernoulli factors have trivial intersection.

In this paper we prove that all ergodic automorphisms have the weak Pinsker property.  Our proof actually gives a generalization of this fact, formulated relative to another fixed factor of the automorphism.

\vspace{7pt}

\noindent\textbf{Theorem A.}\quad \emph{Let $\pi:(X,\mu,T) \to (Y,\nu,S)$ be a factor map of ergodic automorphisms. For every $\eps > 0$, this map admits a factorization
\begin{center}
$\phantom{i}$\xymatrix{ (X,\mu,T) \ar^\pi[rr]\ar_{\pi_1}[dr] & & (Y,\nu,S) \\
& (\t{Y},\t{\nu},\t{S}) \ar_{\pi_2}[ur]}
\end{center}
in which $(X,\mu,T)$ is relatively Bernoulli over $\pi_1$ and $(\t{Y},\t{\nu},\t{S})$ has relative entropy less than $\eps$ over $\pi_2$.}

\vspace{7pt}

In fact, our method naturally yields an even more general result: the extension of Theorem A to free and ergodic measure-preserving actions of arbitrary countable amenable groups.  This is Theorem~\ref{thm:A+} below.  In our approach this requires essentially no more work than Theorem A, because even for Theorem A we appeal to some relative orbit equivalence theory which applies equally easily to any free amenable group action.

By a result of Fieldsteel~\cite{Fie84}, Theorem A also implies the weak Pinsker property for flows; we discuss this further in Subsection~\ref{subs:other-groups}.

\subsection*{Measure concentration}

Within ergodic theory, the proof of Theorem A requires a careful use of Thouvenot's relative Ornstein theory.  Some of the elements we need have apparently not been published before, but they are quite natural extensions of well-known results.  We state and prove these results carefully where we use them below, but they should not be thought of as really new.

The main innovations of this paper are results in discrete probability, and do not mention ergodic theory at all.  They concern the important phenomenon of measure concentration.  This phenomenon is already implicitly used in Ornstein's original work. Its role in ergodic theory is made clearer by results of Marton and Shields~\cite{MarShi94}, who call it the `blowing up property'. (Also, see Section~\ref{sec:rel-Orn} for some older references in Ornstein theory about the related notion of `extremality'.) Measure concentration has other important applications across probability theory~\cite{Tal94,Tal96,Ver--HDPbook}, geometry~\cite[Chapter 3$\frac{1}{2}$]{Gro99}, and Banach space theory~\cite{MilmanSchech86}.

Let $A$ be a finite alphabet. We study the Cartesian product spaces $A^n$ for large $n$, endowed with the normalized Hamming metrics $d_n$ (see Subsection~\ref{subs:Ham-met}).  On the set of all probability measures $\Pr(A^n)$, let $\ol{d_n}$ be the transportation metric arising from $d_n$ (see Subsection~\ref{subs:meas-trans}).  We focus on a specific form of measure concentration expressed by a transportation inequality.  Given $\mu \in \Pr(A^n)$ and also parameters $\k > 0$ and $r > 0$, we say $\mu$ satisfies the inequality T$(\k,r)$ if the following holds for any other $\nu \in \Pr(A^n)$:
\[\ol{d_n}(\nu,\mu) \leq \frac{1}{\k}\rmD(\nu\,\|\,\mu) + r,\]
where $\rmD$ denotes Kullback--Leibler divergence.  Such inequalities are introduced more carefully and generally in Section~\ref{sec:meas-conc}.

Inequalities of roughly this kind go back to works of Marton about product measures, starting with~\cite{Mar86}. See also~\cite{Tal96b} for related results in Euclidean spaces. To see how T$(\k,r)$ implies concentration in the older sense of L\'evy, consider a measurable subset $U \subseteq A^n$ with $\mu(U) \geq 1/2$, and apply T$(\k,r)$ to the measure $\nu := \mu(\,\cdot\,|\,U)$.  The result is the bound
\[\ol{d_n}(\mu(\,\cdot\,|\,U),\,\mu) \leq \frac{1}{\k}\rmD(\mu(\,\cdot\,|\,U)\,\|\,\mu) + r \leq \frac{\log 2}{\k} + r.\]
If $\k$ is large and $r$ is small, then it follows that $\ol{d_n}(\mu(\,\cdot\,|\,U),\,\mu)$ is small.  This means that there is a coupling $\l$ of $\mu(\,\cdot\,|\,U)$ and $\mu$ for which the integral
\[\int d_n(\bf{a},\bf{a}')\,\l(\d\bf{a},\d\bf{a}')\]
is small.  By Markov's inequality, this $\l$ must mostly be supported on pairs of strings $(\bf{a},\bf{a}')$ that are close in $d_n$. Since the first marginal of $\l$ is supported on $U$, and the second is the whole of $\mu$, this is possible only if some small Hamming-neighbourhood of $U$ has almost full measure according to $\mu$.

Marton's original paper contains a more complete discussion of the relation between transportation inequalities and more traditional notions of measure concentration on metric spaces.  Their basic connection to Ornstein theory is discussed in~\cite{MarShi94}.

Our first new result about concentration asserts that an arbitrary probability measure $\mu$ on $A^n$ can be represented as a mixture of other measures, most of which do satisfy a good transportation inequality, and with the number of summands controlled in terms of a quantity that indicates how much $\mu$ differs from being a product measure.

\vspace{7pt}

\noindent\textbf{Theorem B.}\quad \emph{For any $\eps,r > 0$ there exists $c > 0$ such that the following holds.  Let $\mu \in \Pr(A^n)$, let $\mu_{\{i\}} \in \Pr(A)$ be its marginals for $i=1,2,\dots,n$, and set
	\begin{equation}\label{eq:Bdef}
E := \sum_{i=1}^n\rmH(\mu_{\{i\}}) - \rmH(\mu).
\end{equation}
Then $\mu$ may be written as a mixture
\begin{equation}\label{eq:conv-comb}
\mu = p_1\mu_1 + \dots + p_m\mu_m
\end{equation}
	so that
	\begin{enumerate}
		\item[(a)] $m \leq c\rme^{cE}$,
		\item[(b)] $p_1 < \eps$, and
		\item[(c)] the measure $\mu_j$ satisfies T$(r n/1200,r)$ for every $j = 2,3,\dots,m$.
	\end{enumerate} }

\vspace{7pt}

The constant 1200 appearing here is presumably far from optimal, but I have not tried to improve it.

Beware that the inequality T$(\k,r)$ gets stronger as $\k$ \emph{increases} or $r$ \emph{decreases}, and so knowing T$(rn/1200,r)$ for one particular value of $r > 0$ does not imply it for any other value of $r$, larger or smaller.

The constant $c$ provided by Theorem B depends on $\eps$ and $r$, but not on the alphabet $A$.  In fact, if we allow $c$ to depend on $A$, then Theorem B is easily subsumed by simpler results.  To see this we must consider two cases.  On the one hand, Marton's concentration result for product measures (see Proposition~\ref{prop:Marton} below) gives a small constant $\a$ depending on $r$ such that, if $E < \a n$, then $\mu$ itself is close in $\ol{d_n}$ to the product of its marginals. Using this, one can find a subset $U\subseteq A^n$ with $\mu(U)$ close to $1$ and such that $\mu(\,\cdot\,|\,U)$ is highly concentrated: see Proposition~\ref{prop:T-under-dbar} below for a precise statement.  So in case $E < \a n$, we can obtain Theorem B with the small error term plus only one highly concentrated term.  On the other hand, if $E \geq \a n$, then the partition of $A^n$ into \emph{singletons} has only
\[|A|^n = \rme^{\log|A| n} \leq \rme^{\frac{\log|A|}{\a}E}\]
parts.  This bound has the form of (a) above if we allow $c$ to be $\log|A|/\a$, and point masses are certainly highly concentrated.  In view of these two arguments, Theorem B is useful only when $|A|$ is large in terms of $\eps$, $r$ and $E$.  That turns out to be the crucial case for our application to ergodic theory.

Once Theorem B is proved, we also prove the following variant.

\vspace{7pt}

\noindent\textbf{Theorem C.}\quad \emph{For any $\eps,r > 0$ there exist $c > 0$ and $\k >0$ such that, for any alphabet $A$, the following holds for all sufficiently large $n$.  Let $\mu \in \Pr(A^n)$, and let $E$ be as in~\eqref{eq:Bdef}.  Then there is a partition
	\[A^n = U_1 \cup \dots \cup U_m\]
such that
	\begin{enumerate}
		\item[(a)] $m \leq c\rme^{cE}$,
		\item[(b)] $\mu(U_1) < \eps$, and
		\item[(c)] $\mu(U_j) > 0$ and the conditioned measure $\mu(\,\cdot\,|U_j)$ satisfies T$(\k n,r)$ for every $j= 2,3,\dots,m$.
\end{enumerate} }

\vspace{7pt}

Qualitatively, this is stronger than Theorem B: one recovers the qualitative result of Theorem B by letting $p_j = \mu(U_j)$ and $\mu_j = \mu(\,\cdot\,|U_j)$ for each $j$.  As in Theorem B, the proof gives a value for $\k$ which is a small constant multiple of $r$, but this time we have not recorded the precise constant in the statement.  Also, the statement of Theorem C requires that $n$ be sufficiently large in terms of $\eps$, $r$ and $A$, but I do not know whether the dependence on $A$ is really needed.

Theorem C is proved using Theorem B in Section~\ref{sec:C}.  The key idea is a random partitioning procedure which can be used to turn the mixture from Theorem B into a suitable partition.

Theorem C is the key to our proof of Theorem A.  In that proof, Theorem C is applied repeatedly for smaller and smaller values of $r$ (and with different choices of $A$ and $n$ each time), and the resulting partitions are then combined into a construction of the new factor $\pi_1$ of Theorem A.  In fact, for this application it suffices to know a special case of Theorem C which has a shorter proof, because in the ergodic theoretic setting some relevant extra hypotheses can be obtained from the Shannon--McMillan theorem.  However, we choose to prove and then cite the full strength of Theorem C.  We do so both for the sake of greater generality and because it reduces the complexity of the proof of Theorem A.

In Theorems B and C, the starting hypothesis is a bound on the difference of entropies in~\eqref{eq:Bdef}.  The use of this particular quantity underlies the application to proving Theorem A, because we are able to control this quantity in the ergodic theoretic setting.  This control is a simple consequence of the fact that, in a stationary stochastic process, the normalized Shannon entropies of long finite strings converge to the limiting entropy rate of the process.  The resulting link from Theorem C to Theorem A is explained in Step 2 of the construction in Subsection~\ref{subs:preA1}, and specifically by Lemma~\ref{lem:good-set}.  That specific explanation can largely be appreciated before reading the rest of the paper, although the rest of Subsection~\ref{subs:preA1} requires more preparation.

The proof of Theorem B is not constructive.  It gives no reason to expect that the summands in the resulting mixture are unique, even approximately.  The same applies to the partition produced by Theorem C.  This issue may reflect the fact that the splittings promised by the weak Pinsker property are not at all canonical.  However, I do not know of concrete examples of measures $\mu$ on $A^n$ for which one can prove that several decompositions as in Theorem B are possible, none of them in any way `better' than the others.  It would be worthwhile to understand this issue more completely.

For our application of these decomposition results to ergodic theory it suffices to let $\eps = r$.  However, the statements and proofs of these results are clearer if the roles of these two tolerances are kept separate.

\subsection*{Structure and randomness}

Theorems B and C bear a resemblance to some key structural results in extremal and additive combinatorics.  They may be regarded as `rough classifications' of measures on $A^n$, in the sense proposed by Gowers in~\cite{Gow00}.  Although Gowers' principal examples come from those branches of combinatorics, he suggests that rough classifications should be useful across large parts of mathematics: see his Subsection 3.8.  Theorems B and C are examples in a new setting.

Let us discuss this further for Theorem B. Theorem B expresses an arbitrary measure on $A^n$ as a mixture of other measures, not too many in number, and most of them highly concentrated.  We may fit this description into the following terms:
\begin{itemize}
	\item A kind of `structure' which has controlled `complexity'. A suitable indication of this `complexity' is the number of summands in the mixture~\eqref{eq:conv-comb}.
	\item A kind of `randomness' relative to that structure.  In this case measure concentration plays the part of randomness: most of the weight in~\eqref{eq:conv-comb} is on measures $\mu_i$ that exhibit strong concentration.  It is natural to regard concentration as a manifestation of `randomness' because product measures --- the `most random' of all --- are very highly concentrated.
\end{itemize}

To a first approximation, we can identify these notions of `structure' and `randomness' with the terms that Thouvenot used to describe the meaning of the weak Pinsker property in the sentence from~\cite{Tho77} cited above.  Put very roughly, the small number of summands in~\eqref{eq:conv-comb} turns into the smallness of the relative entropy $\rmh(\t{\nu},\t{S}\,|\,\pi_2)$ in Theorem A.  (This description ignores several extra constructions and arguments that are needed before the proof of Theorem A is complete, but they are not of the same order of magnitude as Theorem B itself.)

A few years after Gowers' paper, Tao's ICM address~\cite{Tao07--ICM} emphasized that a dichotomy between `structure' and `randomness' can be found in all the different proofs of Szemer\'edi's theorem (one of the highlights of additive combinatorics).  Tao describes how this dichotomy reappears in related work in ergodic theory and harmonic analysis. The flavour of ergodic theory in Tao's paper is different from ours: he refers to the theory that originates in Furstenberg's multiple recurrence theorem~\cite{Fur77}, which makes no mention of Kolmogorov--Sinai entropy.  However, the carrier of ergodic theoretic `structure' is the same in both settings: a special factor (distal in Furstenberg's work, low-entropy in ours) relative to which the original automorphism exhibits some kind of `randomness' (relative weak mixing, respectively relative Bernoullicity).  Thus, independently and after an interval of thirty years, Tao's identification of this useful dichotomy aligns with Thouvenot's description of the weak Pinsker property cited above.

The parallel with parts of extremal and additive combinatorics resurfaces inside the proof of Theorem B.  In the most substantial step of that proof, the representation of $\mu$ as a mixture is obtained by a `decrement' argument in a certain quantity that describes how far $\mu$ is from being a product measure.  (The `decrement' argument does not quite produce the mixture in~\eqref{eq:conv-comb} --- some secondary processing is still required.) This proof is inspired by the `energy increment' proof Szemer\'edi's regularity lemma and various similar proofs in additive combinatorics.  Indeed, one of the main innovations of the present paper is finding the right quantity in which to seek a suitable decrement.  It turns out to be the `dual total correlation', a classical measure of multi-variate mutual information (see Section~\ref{sec:DTC}).  We revisit the comparison with Szemer\'edi's regularity lemma at the end of Subsection~\ref{subs:DTC-dec-intro}.

\subsection*{Outline of the rest of the paper}

Part I begins with background on Shannon entropy and related quantities, metric probability spaces, and measure concentration.  Then the bulk of this part is given to the proofs of Theorems B and C.  The final section in this part, Section~\ref{sec:ext}, introduces `extremality', a different notion of concentration which is more convenient for applications to ergodic theory.  However, none of Part I itself involves any ergodic theory, and this part may be relevant to other applications in metric geometry or probability.  Section~\ref{sec:ext} is logically independent from the proofs of Theorems B and C. 

Part II gives a partial account of Thouvenot's relative Ornstein theory, tailored to the specific needs of the proof of Theorem A.  This part makes no mention of Theorems B or C.  Its only reliance on Part I is for some of the general machinery in Section~\ref{sec:ext}.

Finally, Part III completes the proof of Theorem A and its generalization to countable amenable groups, and collects some consequences and remaining open questions.  In this part we draw together the threads from Parts I and II.  In particular, the relevance of Theorem C to proving Theorem A is explained in Section~\ref{sec:preA}.

\subsection*{Acknowledgements}

My sincere thanks go to Jean-Paul Thouvenot for sharing his enthusiasm for the weak Pinsker property with me, and for patient and thoughtful answers to many questions over the last few years.  I have also been guided throughout this project by some very wise advice from Dan Rudolph, whom I had the great fortune to meet before the end of his life.  More recently, I had helpful conversations with Brandon Seward, Mike Saks, Ofer Zeitouni and Lewis Bowen.

Lewis Bowen also read an early version of this paper carefully and suggested several important changes and corrections.  Other valuable suggestions came from Yonatan Gutman, Jean-Paul Thouvenot and an anonymous referee.

Much of the work on this project was done during a graduate research fellowship from the Clay Mathematics Institute.

\part{MEASURES ON FINITE CARTESIAN PRODUCTS}

\section{Background on kernels, entropy, and mutual information}

We write $\Pr(X)$ for the set of all probability measures on a standard measurable space $X$.  We mostly use this notation when $X$ is a finite set.  In that case, given $\mu \in \Pr(X)$ and $x \in X$, we often write $\mu(x)$ instead of $\mu(\{x\})$.

\subsection{Kernels, joint distributions, and fuzzy partitions}\label{subs:kers}

Let $\O$ and $X$ be two measurable spaces with $X$ standard.  A \textbf{kernel} from $\O$ to $X$ is a measurable function $\o\mapsto \mu_\o$ from $\O$ to the set $\Pr(X)$, where the latter is given its usual measurable structure (generated, for instance, by the vague topology resulting from any choice of compact metric on $X$ which generates $\B_X$).  We often denote such a kernel by $\mu_\bullet$. Kernels arise as regular conditional distributions in probability theory; we meet many such cases below.

Suppose in addition that $P$ is a probability measure on $\O$. Then $P$ and $\mu_\bullet$ together define a probability measure on $\O\times X$ with its product $\s$-algebra, given formally by the integral
\[\int_\O \delta_\o\times \mu_\o\ P(\d\o).\]
See, for instance,~\cite[Theorem 10.2.1]{Dud--book} for the rigorous construction.  We denote this new measure by $P\ltimes \mu_\bullet$.  Following some information theorists, we refer to it as the \textbf{hookup} of $P$ and $\mu_\bullet$.  It is a coupling of $P$ with the averaged measure
\begin{equation}\label{eq:bary}
	\mu = \int_\O \mu_\omega\,P(\d \omega).
\end{equation}
Such an average of the measures in a kernel is called a \textbf{mixture}. Often we start with the measure $\mu$ and then consider ways to express it as a mixture.  If $\O$ and $\mu$ are non-trivial then there are many ways to do this.  Once an expression as in~\eqref{eq:bary} has been chosen, we may also consider a pair of random variables $(\zeta,\xi)$ whose joint distribution is $P\ltimes \mu_\bullet$.  Then $\xi$ by itself has distribution $\mu$. Loosely motivated by terminology in~\cite[Section II.5]{FellerVolII}, we call such a pair of random variables a \textbf{randomization} of the mixture.

Now consider a standard probability space $(X,\mu)$ and a measurable function $\rho:X\to [0,\infty)$ satisfying
\[0 < \int \rho\,\d\mu < \infty.\]
Then we define
\[\mu_{|\rho} := \frac{\rho}{\int\rho\,\d\mu}\cdot \mu.\]
Similarly, if $U \subseteq X$ has positive measure, then we often write
\[\mu_{|U} := \mu(\,\cdot\,|\,U).\]

A \textbf{fuzzy partition} on $X$ is a finite tuple $(\rho_1,\dots,\rho_k)$ of measurable functions from $X$ to $[0,1]$ satisfying
\begin{equation}\label{eq:fuzzy}
\rho_1 + \rho_2 + \cdots + \rho_k = 1.
\end{equation}
This is more conventionally called a `partition of unity', but in the present setting I feel `fuzzy partition' is less easily confused with partitions that consist of sets.  If $P_1$, \dots, $P_k$ are a partition of $X$ into measurable subsets, then they give rise to the fuzzy partition $(1_{P_1},\dots,1_{P_k})$, which has the special property of consisting of indicator functions.

A fuzzy partition $(\rho_1,\dots,\rho_k)$ gives rise to a decomposition of $\mu$ into other measures:
\[\mu = \rho_1\cdot \mu + \cdots + \rho_k\cdot \mu.\]
We may also write this as a mixture
\begin{equation}\label{eq:fuzzy-mix}
p_1\cdot \mu_1 + \cdots + p_k\cdot \mu_k
\end{equation}
by letting $p_i := \int \rho_i\,\d\mu$ and $\mu_j:= \mu_{|\rho_j}$, where terms for which $p_i = 0$ are interpreted as zero.

The stochastic vector $p := (p_1,p_2,\dots,p_k)$ may be regarded as a probability distribution on $\{1,2,\dots,k\}$, and the family $(\mu_j)_{j=1}^k$ may be regarded as a kernel from $\{1,2,\dots,k\}$ to $X$. We may therefore construct the hookup $p\ltimes \mu_\bullet$.  This hookup has a simple formula in terms of the fuzzy partition:
\[(p\ltimes \mu_\bullet)(\{j\}\times A) = \int_A \rho_j\,\d\mu \quad \hbox{for}\ 1 \leq j \leq k\ \hbox{and measurable}\ A\subseteq X.\]
One can also reverse the roles of $X$ and $\{1,2,\dots,k\}$ here, and regard $p\ltimes \mu_\bullet$ as the hookup of $\mu$ to the kernel
\[x\mapsto (\rho_1(x),\dots,\rho_k(x))\]
from $X$ to $\Pr(\{1,2,\dots,k\})$: the fact that this \emph{is} a kernel is precisely~\eqref{eq:fuzzy}.

\subsection{Entropy and mutual information}\label{subs:basic}

If $\xi$ and $\zeta$ are finite-valued random variables on a probability space $(\O,\F,P)$, then $\rmH(\xi)$, $\rmH(\xi\,|\,\zeta)$ denote Shannon entropy and conditional Shannon entropy, respectively.  We also use the notation $\rmH(\mu)$ for the Shannon entropy of a probability distribution $\mu$ on a finite set. We assume familiarity with the basic properties of entropy, particularly the chain rule.  A standard exposition is~\cite[Chapter 2]{CovTho06}.

More generally, one can define the conditional entropy $\rmH(\xi\,|\,\G)$ whenever $\xi$ is a finite-valued random variable and $\G$ is a $\s$-subalgebra of $\F$.  To write it in terms of unconditional entropy, let $\o \mapsto \mu_\o$ be a conditional distribution for $\xi$ given $\G$. Then we have
\begin{equation}\label{eq:cond-ent-formula}
\rmH(\xi\,|\,\G) = \int \rmH(\mu_\o)\,P(\d \o).
\end{equation}
See, for instance,~\cite[Section 12]{Bil65}.

If $F \in \F$ has $P(F) > 0$, then we sometimes use the notation
\begin{equation}\label{eq:ent-on-event}
\rmH(\xi\,|\,F) := \rmH_{P(\,\cdot\,|\,F)}(\xi) \quad \hbox{and} \quad \rmH(\xi\,|\,\zeta;F):= \rmH_{P(\,\cdot\,|\,F)}(\xi\,|\,\zeta).
\end{equation}

For a fixed alphabet $A$, Shannon's entropy function $\rmH$ is concave on $\Pr(A)$.  The next inequality gives a useful way to reverse this concavity, up to an additional error term.

\begin{lem}\label{lem:near-convex}
	Consider a finite mixture
	\[\mu = p_1\mu_1 + \cdots + p_k\mu_k\]
	of probability measures on a finite set $A$.  Then
	\[\rmH(\mu) \leq \sum_{j=1}^k p_j\rmH(\mu_j) + \rmH(p_1,\dots,p_k).\]
\end{lem}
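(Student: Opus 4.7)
The plan is to realize the mixture as the marginal of a joint distribution on $\{1,2,\dots,k\}\times A$ and then invoke the chain rule for Shannon entropy. Concretely, I would let $\xi$ be a random variable with distribution $(p_1,\dots,p_k)$ and, conditional on $\xi = j$, draw $\eta \in A$ from $\mu_j$. The marginal distribution of $\eta$ is then exactly the mixture $\mu$, so $\rmH(\eta) = \rmH(\mu)$; by the formula~\eqref{eq:cond-ent-formula} for conditional entropy we have $\rmH(\eta\,|\,\xi) = \sum_{j=1}^k p_j \rmH(\mu_j)$; and of course $\rmH(\xi) = \rmH(p_1,\dots,p_k)$.

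With these three identifications in hand, the chain rule gives
\[\rmH(\xi,\eta) = \rmH(\xi) + \rmH(\eta\,|\,\xi) = \rmH(\eta) + \rmH(\xi\,|\,\eta),\]
and discarding the non-negative term $\rmH(\xi\,|\,\eta)$ yields
\[\rmH(\eta) \leq \rmH(\xi) + \rmH(\eta\,|\,\xi),\]
which is the claimed inequality after substituting for each of the three entropies.

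There is no real obstacle here: the gap between the two sides is exactly $\rmH(\xi\,|\,\eta)$, the residual information that the sample $\eta$ carries about which summand $\mu_j$ produced it. In particular, equality holds iff the supports of the $\mu_j$ with $p_j > 0$ are pairwise disjoint, which is the usual sharpness condition. The only small bookkeeping point is to interpret any term in $\sum_j p_j \rmH(\mu_j)$ with $p_j = 0$ as zero, consistent with the convention already in force around~\eqref{eq:fuzzy-mix}.
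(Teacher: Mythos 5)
Your proof is correct and is essentially the paper's own argument: both construct a randomization $(\xi,\eta)$ of the mixture and apply the chain rule, the only cosmetic difference being that you discard the term $\rmH(\xi\,|\,\eta)$ explicitly where the paper invokes the monotonicity $\rmH(\eta)\leq\rmH(\xi,\eta)$ (which is the same fact). The remarks on the equality case and the $p_j=0$ convention are fine but not needed.
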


\begin{proof}
	Let $(\zeta,\xi)$ be a randomization of the mixture, so this random pair takes values in $\{1,2,\dots,k\}\times A$.  Then $\zeta$ and $\xi$ have marginal distributions $(p_1,\dots,p_k)$ and $\mu$, respectively.  Now monotonicity of $\rmH$ and the chain rule give
	\[\rmH(\mu) = \rmH(\xi) \leq \rmH(\zeta,\xi) = \rmH(\xi\,|\,\zeta) + \rmH(\zeta)
	= \sum_{j=1}^k p_j\rmH(\mu_j) + \rmH(p_1,\dots,p_k).\]
\end{proof}

In addition to entropy and conditional entropy, we make essential use of two other related quantities: mutual information, and Kullback--Leibler (`KL') divergence.  These are also standard in information theory, but appear less often in ergodic theory.  For finite-valued random variables they are introduced alongside entropy in~\cite[Chapter 2]{CovTho06}.

If $\xi$ and $\zeta$ are finite-valued, then their mutual information is
\[\rmI(\xi\,;\,\zeta) := \rmH(\xi) - \rmH(\xi\,|\,\zeta).\]
An exercise in the chain rule gives the alternative formula
\[\rmI(\xi\,;\,\zeta) = \rmH(\xi) + \rmH(\zeta) - \rmH(\xi,\zeta).\]
From this it follows that $\rmI(\xi\,;\,\zeta)$ is symmetric in $\xi$ and $\zeta$.  We define a conditional mutual information such as $\rmI(\xi\,;\,\zeta\,|\,F)$ or $\rmI(\xi\,;\,\zeta\,|\,\a)$ by conditioning all the entropies in the formulae above on the set $F$ or random variable $\a$.  Mutual information and its conditional version satisfy their own chain rule, similar to that for entropy: see, for instance,~\cite[Theorem 2.5.2]{CovTho06}.

KL divergence is a way of comparing two probability measures, say $\mu$ and $\nu$, on the same measurable space $X$. The KL divergence of $\nu$ with respect to $\mu$ is $+\infty$ unless $\nu$ is absolutely continuous with respect to $\mu$. In that case it is given by
\begin{equation}\label{eq:dfn-D}
\rmD(\nu\,\|\,\mu) := \int \frac{\d\nu}{\d\mu}\log \frac{\d\nu}{\d\mu}\,\d\mu = \int \log \frac{\d\nu}{\d\mu}\,\d\nu.
\end{equation}
This may still be $+\infty$, since $\frac{\d\nu}{\d\mu}\log \frac{\d\nu}{\d\mu}$ need not lie in $L^1(\mu)$.  Since the function $t\mapsto t\log t$ is strictly convex, Jensen's inequality gives that $\rmD(\nu\,\|\,\mu) \geq 0$, with equality if and only if $\d\nu/\d\mu = 1$, hence if and only if $\nu = \mu$.

KL divergence appears in various elementary but valuable formulae for conditional entropy and mutual information. Let the random variables $\zeta$ and $\xi$ take values in the finite sets $A$ and $B$. Let $\mu \in \Pr(A)$, $\nu \in \Pr(B)$, and $\l \in \Pr(A\times B)$ be the distribution of $\zeta$, of $\xi$, and of the pair $(\zeta,\xi)$ respectively.  Then a standard calculation gives
\begin{equation}\label{eq:I}
	\rmI(\xi\,;\,\zeta) = \rmD(\l\,\|\,\mu\times \nu).
\end{equation}

The next calculation is also routine, but we include a proof for completeness.

\begin{lem}\label{lem:I-and-KL}
Let $(\O,\F,P)$ be a probability space and let $X$ be a finite set. Let $\xi:\O \to X$ be measurable and let $\mu$ be its distribution.  Let $\G \subseteq \scrH$ be $\s$-subalgebras of $\F$, and let $\mu_\bullet$ and $\nu_\bullet$ be conditional distributions of $\xi$ given $\G$ and $\scrH$, respectively. Then
	\[\rmH(\xi\,|\,\G) - \rmH(\xi\,|\,\scrH) = \int \rmD(\nu_\o\,\|\,\mu_\o)\ P(\d \o).\]
In particular, applying this with $\G$ trivial, we have
	\[\rmH(\xi) - \rmH(\xi\,|\,\scrH) = \int \rmD(\nu_\o\,\|\,\mu)\ P(\d \o).\]
\end{lem}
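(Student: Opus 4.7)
The plan is to split the integrand $\rmD(\nu_\o\,\|\,\mu_\o)$ into an entropy piece and a cross-entropy piece, and then use the tower property of conditional expectation to turn the cross-entropy piece into a conditional entropy with respect to $\G$.

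First I would apply the definition~\eqref{eq:dfn-D} pointwise in $\o$: for $P$-a.e.\ $\o$,
\[\rmD(\nu_\o\,\|\,\mu_\o) = \sum_{x\in X} \nu_\o(x)\log\nu_\o(x) - \sum_{x\in X} \nu_\o(x)\log\mu_\o(x) = -\rmH(\nu_\o) - \sum_{x\in X}\nu_\o(x)\log\mu_\o(x),\]
with the standard convention $0\log 0 = 0$. Integrating in $\o$, the first summand gives $-\rmH(\xi\,|\,\scrH)$ by the formula~\eqref{eq:cond-ent-formula}. It remains only to show that the integral of the second summand equals $-\rmH(\xi\,|\,\G)$.

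For this, the key observation is that for each fixed $x\in X$ the function $\o\mapsto \log\mu_\o(x)$ is $\G$-measurable, while both $\mu_\o(x)$ and $\nu_\o(x)$ are versions of the conditional probabilities $P(\xi = x\,|\,\G)(\o)$ and $P(\xi = x\,|\,\scrH)(\o)$. Since $\G\subseteq \scrH$, the tower property gives $\sfE[\nu_\bullet(x)\,|\,\G] = \mu_\bullet(x)$ a.s. Therefore, pulling $\log\mu_\o(x)$ out of the inner conditional expectation,
\[\int \nu_\o(x)\log\mu_\o(x)\,P(\d\o) = \int \mu_\o(x)\log\mu_\o(x)\,P(\d\o)\]
for each $x$. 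Summing over $x\in X$ and again invoking~\eqref{eq:cond-ent-formula} yields $\int\sum_x \nu_\o(x)\log\mu_\o(x)\,P(\d\o) = -\rmH(\xi\,|\,\G)$, which combined with the previous paragraph gives the identity. The second claim follows by taking $\G$ to be the trivial $\s$-algebra, in which case $\mu_\o = \mu$ $P$-a.s.

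The only subtlety to watch for is the handling of null sets where $\mu_\o(x) = 0$: on such $\o$ one must have $\nu_\o(x) = 0$ $P$-a.s.\ (otherwise both $\rmD(\nu_\o\,\|\,\mu_\o)$ and the right-hand side can be $+\infty$ in a compatible way), which is exactly what the tower property $\sfE[\nu_\bullet(x)\,|\,\G] = \mu_\bullet(x)$ guarantees, since a nonnegative random variable with zero conditional expectation on a set is a.s.\ zero there. With $X$ finite the manipulations are all finite sums, so no further integrability argument is required; this is really the whole content of the lemma, and there is no serious obstacle beyond recognising the tower-property step.
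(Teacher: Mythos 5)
Your proof is correct and follows essentially the same route as the paper's: split $\rmD(\nu_\o\,\|\,\mu_\o)$ into the entropy term and the cross term, then use the fact that $\mu_\bullet(x)$ is the conditional expectation of $\nu_\bullet(x)$ onto $\G$ (with $\log\mu_\o(x)$ being $\G$-measurable) to convert the cross term into $\int\sum_x\mu_\o(x)\log\mu_\o(x)\,P(\d\o)$, and finish with~\eqref{eq:cond-ent-formula}. Your extra remark on the null set $\{\mu_\o(x)=0\}$ is a small but welcome addition that the paper leaves implicit.
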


\begin{proof}
Since $X$ is finite, the definition of KL divergence gives
	\begin{multline*}
		\int \rmD(\nu_\o\,\|\,\mu_\o)\ P(\d \o) = \int \sum_{x \in X}\nu_\o(x)\log \frac{\nu_\o(x)}{\mu_\o(x)}\ P(\d \o)\\
 = \int \sum_{x\in X}\nu_\o(x)\log \nu_\o(x)\ P(\d \o) - \int \sum_{x\in X}\nu_\o(x)\log \mu_\o(x)\ P(\d \o).
	\end{multline*}
For each $x \in X$ we have
\[\nu_\o(x) = P(\{\xi = x\}\,|\,\scrH)(\o) \quad \hbox{and} \quad \mu_\o(x) = P(\{\xi = x\}\,|\,\G)(\o) \quad P\hbox{-a.s.}\]
Therefore, by the tower property of conditional expectation, for each fixed $x\in X$ the function $\mu_\o(x)$ is a conditional expectation of $\nu_\o(x)$ onto $\G$.  Using this in the second integral above, that formula becomes
\[\int \sum_{x\in X}\nu_\o(x)\log \nu_\o(x)\ P(\d \o) - \int \sum_{x\in X}\mu_\o(x)\log \mu_\o(x)\ P(\d \o).\]
This equals $[-\rmH(\xi\,|\,\scrH) + \rmH(\xi\,|\,\G)]$, by~\eqref{eq:cond-ent-formula}.
\end{proof}

Now consider a fuzzy partition $(\rho_1,\dots,\rho_k)$ on a finite set $X$ and the associated mixture~\eqref{eq:fuzzy-mix}. Let $(\zeta,\xi)$ be a randomization of this mixture. We define
\begin{equation}\label{eq:Imu}
\rmI_\mu(\rho_1,\dots,\rho_k) := \rmI(\xi\,;\,\zeta).
\end{equation}
This notion of mutual information for a measure and a fuzzy partition plays a central role later in Part I.  Using Lemma~\ref{lem:I-and-KL} it may be evaluated as follows. 

\begin{cor}\label{cor:I-and-KL}
With $\mu$ and $(\rho_1,\dots,\rho_k)$ as above, we have
\[\rmI_\mu(\rho_1,\dots,\rho_k) = \sum_{j=1}^k p_j\cdot \rmD(\mu_{|\rho_j}\,\|\,\mu), \quad \hbox{where}\ p_j := \int \rho_j\,\d\mu.\]
\end{cor}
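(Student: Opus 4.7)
The plan is to derive this as a direct specialization of the second (unconditional) case of Lemma~\ref{lem:I-and-KL}. I would first unpack the randomization $(\zeta,\xi)$ explicitly: by construction $\zeta$ takes values in $\{1,\dots,k\}$ with distribution $p = (p_1,\dots,p_k)$, the marginal of $\xi$ is $\mu$, and the conditional distribution of $\xi$ given $\zeta = j$ is exactly $\mu_{|\rho_j}$ (for each $j$ with $p_j > 0$). In particular, a regular conditional distribution $\nu_\bullet$ for $\xi$ given the $\sigma$-algebra $\scrH := \sigma(\zeta)$ is given by $\nu_\omega = \mu_{|\rho_{\zeta(\omega)}}$.

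Next I would apply the second formula in Lemma~\ref{lem:I-and-KL} with this $\scrH$:
\[
\rmH(\xi) - \rmH(\xi\,|\,\scrH) \;=\; \int \rmD(\nu_\omega\,\|\,\mu)\ P(\d\omega).
\]
Since $\nu_\omega$ depends only on $\zeta(\omega)$, the right-hand integral collapses to $\sum_{j=1}^k p_j\,\rmD(\mu_{|\rho_j}\,\|\,\mu)$ by the push-forward of $P$ under $\zeta$ (omitting $j$'s with $p_j=0$, as usual). On the left-hand side, $\rmH(\xi\,|\,\scrH) = \rmH(\xi\,|\,\zeta)$ since $\scrH$ is generated by $\zeta$, so the difference equals $\rmI(\xi\,;\,\zeta)$ by definition of mutual information. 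By~\eqref{eq:Imu} this is $\rmI_\mu(\rho_1,\dots,\rho_k)$, which yields the claimed identity.

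There is no real obstacle here — the only points of care are the trivial ones: handling indices with $p_j = 0$ (these contribute $0$ on both sides under the usual conventions), and confirming that $\nu_\omega = \mu_{|\rho_{\zeta(\omega)}}$ really is a version of the conditional distribution of $\xi$ given $\sigma(\zeta)$, which follows immediately from the explicit formula $(p\ltimes \mu_\bullet)(\{j\}\times A) = \int_A \rho_j\,\d\mu$ recorded in Subsection~\ref{subs:kers}.
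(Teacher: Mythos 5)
Your proposal is correct and follows the paper's own proof exactly: apply the second formula of Lemma~\ref{lem:I-and-KL} with $\scrH = \sigma(\zeta)$, identify $\nu_\omega$ with $\mu_{|\rho_{\zeta(\omega)}}$, and recognize the left-hand side as $\rmI(\xi\,;\,\zeta) = \rmI_\mu(\rho_1,\dots,\rho_k)$. The extra care you take over the identification of the conditional distribution and the $p_j = 0$ indices is sound but not a departure from the paper's argument.
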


\begin{proof}
	Let $(\zeta,\xi)$ be the randomization above, and apply the second formula of Lemma~\ref{lem:I-and-KL} with $\scrH$ the $\s$-algebra generated by $\zeta$.  The result follows because $\mu_{|\rho_j}$ is the conditional distribution of $\xi$ given the event $\{\zeta = j\}$, and because
	\[\rmI_\mu(\rho_1,\dots,\rho_k) = \rmI(\xi\,;\,\zeta) = \rmH(\xi) - \rmH(\xi\,|\,\zeta).\]
	\end{proof}

It is also useful to have a version of the chain rule for mutual information in terms of fuzzy partitions.  To explain this, let us consider two fuzzy partitions $(\rho_i)_{i=1}^k$ and $(\rho'_j)_{j=1}^\ell$ on a finite set $X$, and assume there is a partition
\[\{1,2,\dots,\ell\} = J_1 \cup J_2 \cup\cdots \cup J_k\]
into nonempty subsets such that
\begin{equation}\label{eq:refine}
\rho_i = \sum_{j \in J_i}\rho_j' \quad \hbox{for}\ i=1,2,\dots,k.
\end{equation}
This is a natural generalization of the notion of refinement for partitions consisting of sets.  However, beware that for a general pair of fuzzy partitions there may be several ways to partition $\{1,2,\dots,\ell\}$ into $k$ subsets so that~\eqref{eq:refine} holds.

From~\eqref{eq:refine}, it follows that the tuple of ratios
\begin{equation}\label{eq:newfuzzy}
\Big(\frac{\rho'_j}{\rho_i}\Big)_{j \in J_i}
\end{equation}
define a new fuzzy partition for each $1 \leq i \leq k$.  To be precise, these ratios are defined only on the set $\{\rho_i > 0\}$.  Outside that set, each $\rho_j'$ for $j \in J_i$ is also zero, by~\eqref{eq:refine}.  We extend the new fuzzy partition~\eqref{eq:newfuzzy} arbitrarily to the set $\{\rho_i = 0\}$.

Let $(\zeta,\xi)$ be a randomization of the mixture obtained from $\mu$ and $(\rho'_j)_{j=1}^\ell$.  This means that $(\zeta,\xi)$ are a random pair in $\{1,2,\dots,\ell\}\times X$, and their joint distribution is given by a hookup of the kind constructed at the end of the previous subsection.  Define a third random variable $\ol{\zeta}$ taking values in $\{1,2,\dots,k\}$ by setting
\[\ol{\zeta} = i \quad \hbox{when} \quad \zeta \in J_i.\]
The pair $(\ol{\zeta},\xi)$ may be regarded as a `coarsening' of $(\zeta,\xi)$ in the first entry.  From the relation~\eqref{eq:refine} it follows that the pair $(\ol{\zeta},\xi)$ are a randomization of the mixture obtained from $\mu$ and $(\rho_i)_{i=1}^k$. Similarly, if we condition on the event $\{\ol{\zeta} = i\}$, then $(\zeta,\xi)$ become a randomization of the mixture obtained from $\mu_{|\rho_i}$ and the new fuzzy partition~\eqref{eq:newfuzzy}.  Since the set $\{\rho_i = 0\}$ is negligible according to $\mu_{|\rho_i}$, our choice of extension of~\eqref{eq:newfuzzy} to this set is unimportant.

Now we can simply write out the chain rule
\[\rmI(\xi\,;\,\zeta) = \rmI(\xi\,;\,\ol{\zeta}) + \rmI(\xi\,;\,\zeta\,|\,\ol{\zeta})\]
(see~\cite[Theorem 2.5.2]{CovTho06}) in terms of all these fuzzy partitions.  The result is
\begin{equation}\label{eq:fuzzy-chain}
\rmI_\mu(\rho_1',\dots,\rho_\ell') = \rmI_\mu(\rho_1,\dots,\rho_k) + \sum_{i=1}^k\Big(\int \rho_i\,\d\mu\Big)\cdot \rmI_{\mu_{|\rho_i}}\Big(\Big(\frac{\rho'_j}{\rho_i}\Big)_{j \in J_i}\Big).
\end{equation}
An immediate corollary is worth noting in itself: whenever there is a partition $J_1$, \dots, $J_k$ of $\{1,2,\dots,\ell\}$ for which~\eqref{eq:refine} holds, we must have
\begin{equation}\label{eq:refine-more-inf}
	\rmI_\mu(\rho_1',\dots,\rho_\ell') \geq \rmI_\mu(\rho_1,\dots,\rho_k).
	\end{equation}

\begin{rmk}
	KL divergence is defined for pairs of measures on general measurable spaces.  As a result, we may use~\eqref{eq:I} to extend the definition of mutual information to general pairs of random variables, not necessarily discrete.  The basic properties of mutual information still hold, with proofs that require only small adjustments. Mutual information is already defined and studied this way in the original reference~\cite{KullLei51}.  See~\cite[Section 2.1]{Pin64} for a comparison with other definitions at this level of generality.
	
	The main theorems of the present paper do not need this extra generality, but KL divergence is central to the background ideas in Section~\ref{sec:meas-conc} below, where the more general setting is appropriate.
	\end{rmk}

\subsection{A classical variational principle}

Let $(X,\mu)$ be a probability space and let $f \in L^\infty(\mu)$.  Let $\langle \cdot\rangle$ denote integration with respect to $\mu$, and let
\[M_\mu(f) := \langle \rme^f\rangle \quad \hbox{and} \quad C_\mu(f) := \log M_\mu(f).\]
Regarding $f$ as a random variable, the functions of $t\in \bbR$ given by $M_\mu(tf)$ and $C_\mu(tf)$ are its moment generating function and cumulant generating function, respectively.

\begin{dfn}
For any $\mu$ and $f$ as above, the associated \textbf{Gibbs measure} is the probability measure
\[\mu_{|\rme^f} = \frac{\rme^f}{\langle \rme^f\rangle}\cdot \mu.\]
\end{dfn}

Note that the normalizing constant in this definition is equal to $M_\mu(f)$.

The importance of Gibbs measures derives from the following classical variational principle.  It is essentially Gibbs' own principle from his work on the foundations of statistical physics, phrased in modern, abstract terms.  Rigorous mathematical treatments go back at least to Kullback's work~\cite{Kull54} in information theory and Sanov's~\cite{San61} on large deviations.  We include the short proof for completeness, since we need the result at one point in Subsection~\ref{subs:BobGot} below.  Closely related results appear in~\cite[Theorem 12.1.1 and Problem 12.2]{CovTho06}.

\begin{lem}\label{lem:maxent}
For $\mu$ and $f$ as above, the function of $\nu \in \Pr(X)$ given by the expression
\[\rmD(\nu\,\|\,\mu) - \int f\,\d\nu\]
achieves its unique minimum at $\nu = \mu_{|\rme^f}$, where it is equal to $-C_\mu(f)$.
\end{lem}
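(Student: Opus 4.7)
The plan is to reduce the inequality to a single application of the nonnegativity of KL divergence, by rewriting the functional $\rmD(\nu\,\|\,\mu) - \int f\,\d\nu$ in terms of the KL divergence of $\nu$ against the candidate minimizer $\mu_{|\rme^f}$.

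First I would dispense with the case that $\nu$ is not absolutely continuous with respect to $\mu$: here $\rmD(\nu\,\|\,\mu) = +\infty$ and $\int f\,\d\nu$ is finite since $f \in L^\infty(\mu)$ is not an issue because $\nu \ll \mu_{|\rme^f}$ iff $\nu \ll \mu$ (as $\rme^f$ is bounded above and below by positive constants, so $\mu$ and $\mu_{|\rme^f}$ have the same null sets). So in this case the functional is $+\infty > -C_\mu(f)$, and $\nu \neq \mu_{|\rme^f}$, so strict inequality holds.

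Now suppose $\nu \ll \mu$, with density $g := \d\nu/\d\mu$. By construction $\d\mu_{|\rme^f}/\d\mu = \rme^f/M_\mu(f)$, and hence $\nu \ll \mu_{|\rme^f}$ with density
\[\frac{\d\nu}{\d\mu_{|\rme^f}} = g\cdot M_\mu(f)\cdot \rme^{-f}.\]
Taking logarithms and integrating against $\nu$ gives, by the definition~\eqref{eq:dfn-D} of KL divergence,
\[\rmD(\nu\,\|\,\mu_{|\rme^f}) = \int \log g\,\d\nu - \int f\,\d\nu + C_\mu(f) = \rmD(\nu\,\|\,\mu) - \int f\,\d\nu + C_\mu(f).\]
Rearranging,
\[\rmD(\nu\,\|\,\mu) - \int f\,\d\nu = \rmD(\nu\,\|\,\mu_{|\rme^f}) - C_\mu(f).\]

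Finally, recall the standard fact (a consequence of Jensen's inequality applied to $t\mapsto t\log t$, already noted just after~\eqref{eq:dfn-D}) that $\rmD(\nu\,\|\,\mu_{|\rme^f}) \geq 0$, with equality if and only if $\nu = \mu_{|\rme^f}$. Substituting this into the identity above yields the desired lower bound $-C_\mu(f)$ with the claimed unique minimizer. There is no real obstacle here; the only point that requires a little care is to check that $\int f\,\d\nu$ is genuinely finite whenever $\rmD(\nu\,\|\,\mu) < \infty$, so that the rearrangement above is valid — but this is immediate from $f \in L^\infty(\mu)$ and $\nu \ll \mu$.
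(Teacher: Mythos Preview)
Your proof is correct and follows essentially the same approach as the paper: both arguments establish the identity $\rmD(\nu\,\|\,\mu) - \int f\,\d\nu = \rmD(\nu\,\|\,\mu_{|\rme^f}) - C_\mu(f)$ by a change of density and then invoke the nonnegativity and equality case of KL divergence. The only cosmetic difference is that the paper starts from $\rho = \d\nu/\d\mu_{|\rme^f}$ and expands $\rmD(\nu\,\|\,\mu)$, whereas you start from $g = \d\nu/\d\mu$ and expand $\rmD(\nu\,\|\,\mu_{|\rme^f})$; these are the same computation read in opposite directions.
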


\begin{proof}
If $\nu$ is not absolutely continuous with respect to $\mu$, then $\rmD(\nu\,\|\,\mu) = +\infty$, and therefore $\nu$ is not a candidate for minimizing the expression in question.  So suppose that $\nu$ is absolutely continuous with respect to $\mu$.

Since $\mu$ and $\mu_{|\rme^f}$ are mutually absolutely continuous, $\nu$ is also absolutely continuous with respect to $\mu_{|\rme^f}$.  Let $\rho$ be the Radon--Nikodym derivative $\d\nu/\d\mu_{|\rme^f}$.  Then
\[\frac{\d\nu}{\d\mu}= \rho\cdot \frac{\rme^f}{\langle \rme^f\rangle},\]
where $\langle \cdot\rangle$ denotes integration with respect to $\mu$, as before. Substituting this into the definition of $\rmD(\nu\,\|\,\mu)$, we obtain
\begin{align*}
\rmD(\nu\,\|\,\mu) &= \int \log \Big(\rho \frac{\rme^f}{\langle \rme^f\rangle}\Big)\ \d\nu\\
&= \int \log \rho\ \d\nu + \int f\ \d\nu - \log \langle \rme^f\rangle\\
&= \rmD(\nu\,\|\,\mu_{|\rme^f}) + \int f\ \d\nu - C_\mu(f).
\end{align*}
Re-arranging, this becomes
\[\rmD(\nu\,\|\,\mu) - \int f\,\d\nu = \rmD(\nu\,\|\,\mu_{|\rme^f}) - C_\mu(f).\]
The term $\rmD(\nu\,\|\,\mu_{|\rme^f})$ is non-negative, and zero if and only if $\nu = \mu_{|\rme^f}$.
\end{proof}

\section{Total correlation and dual total correlation}\label{sec:DTC}

Let $\xi = (\xi_1,\dots,\xi_n)$ be a tuple of finite-valued random variables on a probability space $(\O,\F,P)$.  If $n=2$ then the mutual information $\rmI(\xi_1\,;\,\xi_2)$ provides a canonical way to quantify the dependence between them.  But this has no single generalization to the cases $n > 2$. Rather, a range of options are available, suitable for different purposes.  Early studies of these options, and the many relations between them, include~\cite{McG54},~\cite{Wat60} and~\cite{Han75}.  An idea of the breadth of these options can be obtained from~\cite{Crooks--mutinfnote} and the references given there.

Two different quantities of this kind play crucial roles in this paper.

\subsection{Total correlation}

A simple and natural choice for `multi-variate mutual information' is the difference
\[\sum_{i=1}^n \rmH(\xi_i) - \rmH(\xi).\]
This is always non-negative by the subadditivity of Shannon entropy.  It goes back at least to Watanabe's paper~\cite{Wat60}, so following him we call it the \textbf{total correlation}.  It agrees with mutual information if $n=2$. It depends only on the joint distribution $\mu$ of the tuple $\xi$, so we may denote it by either $\TC(\xi)$ or $\TC(\mu)$.  This leaves the role of the $n$ separate coordinates to the reader's understanding.

If $A$ is a finite set and $\mu$ is a probability measure on $A^n$, then $\TC(\mu)$ is precisely the quantity $E$ that appears in Theorems B and C.  It is the key feature of such a finite-dimensional distribution that we are able to control when we apply Theorem C during the proof of Theorem A.  This control is exerted in Subsection~\ref{subs:preA1}.

The total correlation of $\mu$ appears in various classical estimates of probability theory and statistics.  For instance, it is central to Csisz\'ar's approach to conditional limit theorems for product measures~\cite{Csi84}, where it is the quantity that must be bounded to prove convergence in information.  In these applications it is often written in the alternative form
\begin{equation}\label{eq:TC2}
\TC(\mu) = \rmD(\mu\,\|\,\mu_{\{1\}}\times \cdots \times \mu_{\{n\}}),
\end{equation}
where $\mu_{\{i\}}$ is the $i^{\rm{th}}$ marginal of $\mu$ on $A$.  This equation for $\TC(\mu)$ generalizes~\eqref{eq:I}.  The generalization is easily proved by induction on $n$, applying~\eqref{eq:I} at each step.

Since total correlation is given by a difference of entropy values, it is neither convex nor concave as a function of $\mu$.  However, we do have the following useful approximate concavity.

\begin{lem}\label{lem:TC-approx-concave}
	Consider a finite mixture
	\[\mu = p_1\mu_1 + p_2\mu_2 + \dots + p_k\mu_k\]
	of probability measures on $A^n$.  Then
	\begin{equation}\label{eq:near-convex}
		\sum_{j=1}^k p_j\rm{TC}(\mu_j) \leq \TC(\mu) + \rmH(p_1,\dots,p_k).
	\end{equation}
\end{lem}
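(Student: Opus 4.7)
The plan is to expand both sides of the claimed inequality using the definition
\[\TC(\nu) = \sum_{i=1}^n \rmH(\nu_{\{i\}}) - \rmH(\nu)\]
and then reduce the inequality to two familiar building blocks: the concavity of Shannon entropy (applied separately to each marginal) and Lemma~\ref{lem:near-convex} (applied to the full mixture on $A^n$).

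More precisely, I would first observe that if $\mu = \sum_j p_j \mu_j$, then for each fixed coordinate $i$ the $i$th marginal satisfies $\mu_{\{i\}} = \sum_j p_j (\mu_j)_{\{i\}}$. Concavity of $\rmH$ on $\Pr(A)$ therefore gives
\[\sum_{j=1}^k p_j \rmH\bigl((\mu_j)_{\{i\}}\bigr) \leq \rmH(\mu_{\{i\}}),\]
and summing over $i = 1,\dots,n$ yields
\[\sum_{j=1}^k p_j \sum_{i=1}^n \rmH\bigl((\mu_j)_{\{i\}}\bigr) \leq \sum_{i=1}^n \rmH(\mu_{\{i\}}).\]
Next, Lemma~\ref{lem:near-convex} applied to the mixture $\mu = \sum_j p_j \mu_j$ on the finite set $A^n$ gives
\[\rmH(\mu) \leq \sum_{j=1}^k p_j \rmH(\mu_j) + \rmH(p_1,\dots,p_k),\]
or equivalently
\[-\sum_{j=1}^k p_j \rmH(\mu_j) \leq -\rmH(\mu) + \rmH(p_1,\dots,p_k).\]

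Adding these two inequalities and regrouping the terms produces
\[\sum_{j=1}^k p_j \left(\sum_{i=1}^n \rmH\bigl((\mu_j)_{\{i\}}\bigr) - \rmH(\mu_j)\right) \leq \sum_{i=1}^n \rmH(\mu_{\{i\}}) - \rmH(\mu) + \rmH(p_1,\dots,p_k),\]
which is exactly $\sum_j p_j \TC(\mu_j) \leq \TC(\mu) + \rmH(p_1,\dots,p_k)$.

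There is no real obstacle here: the whole point is that total correlation is a difference of entropies, and for each piece of that difference we already know the correct one-sided inequality (genuine concavity for the marginal entropies, near-convexity with an $\rmH(p_1,\dots,p_k)$ slack for the joint entropy). The only thing to be careful about is sign bookkeeping when combining the two estimates, so that the slack $\rmH(p_1,\dots,p_k)$ lands on the correct side.
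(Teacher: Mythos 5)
Your argument is correct and is essentially identical to the paper's own proof: concavity of $\rmH$ applied to each marginal, Lemma~\ref{lem:near-convex} applied to the joint entropy, and a linear combination of the two. No further comment is needed.
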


\begin{proof}
	Let $\xi_i:A^n \to A$ be the $i^{\rm{th}}$ coordinate projection for $1 \leq i \leq n$.  On the one hand, the concavity of $\rmH$ gives
	\[\sum_{j=1}^k p_j\rmH_{\mu_j}(\xi_i) \leq \rmH_\mu(\xi_i) \quad \hbox{for each}\ i=1,2,\dots,n.\]
	On the other hand, Lemma~\ref{lem:near-convex} gives
	\[\sum_{j=1}^k p_j\rmH(\mu_j) \geq \rmH(\mu) - \rmH(p_1,\dots,p_k).\]
	The desired result is a linear combination of these inequalities.
\end{proof}

\begin{cor}\label{cor:cond-and-TC}
	If $\mu$ is a probability measure on $A^n$, $U\subseteq A^n$, and $\mu(U) > 0$, then
	\[\TC(\mu_{|U}) \leq \frac{1}{\mu(U)}(\TC(\mu) + \log 2).\]
\end{cor}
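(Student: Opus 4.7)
The plan is to apply Lemma~\ref{lem:TC-approx-concave} to the two-term mixture that decomposes $\mu$ according to whether a sample lies in $U$ or not. Assuming without loss of generality that $\mu(U) < 1$ (the case $\mu(U) = 1$ is trivial, since then $\mu_{|U} = \mu$ and the asserted inequality reduces to $\TC(\mu) \leq \TC(\mu) + \log 2$), we may write
\[\mu = \mu(U)\cdot \mu_{|U} + \mu(U^c)\cdot \mu_{|U^c}.\]

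First I would invoke Lemma~\ref{lem:TC-approx-concave} with $k = 2$, weights $p_1 = \mu(U)$ and $p_2 = \mu(U^c)$, and components $\mu_1 = \mu_{|U}$ and $\mu_2 = \mu_{|U^c}$, which yields
\[\mu(U)\,\TC(\mu_{|U}) + \mu(U^c)\,\TC(\mu_{|U^c}) \leq \TC(\mu) + \rmH(\mu(U),\mu(U^c)).\]
Next I would drop the non-negative term $\mu(U^c)\,\TC(\mu_{|U^c})$ from the left-hand side (non-negativity of total correlation follows from subadditivity of Shannon entropy), and bound the binary entropy $\rmH(\mu(U),\mu(U^c)) \leq \log 2$ on the right, since Shannon entropy on a two-point set is maximized at the uniform distribution. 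Dividing both sides by $\mu(U) > 0$ then delivers the claimed bound
\[\TC(\mu_{|U}) \leq \frac{1}{\mu(U)}\bigl(\TC(\mu) + \log 2\bigr).\]

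There is no real obstacle here: the whole argument is a one-line application of the previous lemma followed by two trivial estimates. The only subtlety worth flagging is that the approximate-concavity slack $\rmH(p_1,\dots,p_k)$ from Lemma~\ref{lem:TC-approx-concave} is exactly what forces the additive $\log 2$ correction, and that one must divide by $\mu(U)$, which accounts for the factor $1/\mu(U)$ in the conclusion — so the corollary is genuinely useful only when $\mu(U)$ is not too small.
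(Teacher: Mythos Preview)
Your proof is correct and follows exactly the same route as the paper: apply Lemma~\ref{lem:TC-approx-concave} to the two-term decomposition $\mu = \mu(U)\cdot \mu_{|U} + \mu(U^c)\cdot \mu_{|U^c}$, drop the non-negative term $\mu(U^c)\,\TC(\mu_{|U^c})$, bound the binary entropy by $\log 2$, and divide through by $\mu(U)$. The only addition is your explicit treatment of the edge case $\mu(U)=1$, which the paper leaves implicit.
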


\begin{proof}
	Simply apply Lemma~\ref{lem:TC-approx-concave} to the decomposition
	\[\mu = \mu(U)\cdot \mu_{|U} + \mu(A^n\setminus U)\cdot \mu_{|A^n\setminus U},\]
	then drop the term involving $\mu_{|A^n\setminus U}$ and recall that any two-set partition has entropy at most $\log 2$.
\end{proof}

A simple example shows that one cannot hope for any useful inequality that reverses~\eqref{eq:near-convex}.

\begin{ex}\label{ex:DTC-far-convex}
	For $p \in [0,1]$, let $\nu_p$ be the $p$-biased distribution on $\{0,1\}$.  Now let
	\[\mu := \frac{1}{2}(\nu_p^{\times n} + \nu_q^{\times n})\]
	for some distinct $p$ and $q$.  Then $\mu_{\{i\}} = \nu_{(p+q)/2}$ for each $1 \leq i \leq n$, and so
	\[\TC(\mu) = n\rmH\Big(\frac{p+q}{2},1 - \frac{p+q}{2}\Big) - \rmH\Big(\frac{\nu_p^{\times n} + \nu_q^{\times n}}{2}\Big).\]
	By Lemma~\ref{lem:near-convex}, we have
	\begin{equation}\label{eq:H-mix-small}
\rmH\Big(\frac{\nu_p^{\times n} + \nu_q^{\times n}}{2}\Big) \leq \log 2 + \frac{\rmH(p,1-p)n + \rmH(q,1-q)n}{2}.
\end{equation}
	Therefore
	\[\TC(\mu) \geq \Big[\rmH\Big(\frac{p+q}{2},1 - \frac{p+q}{2}\Big) - \frac{\rmH(p,1-p) + \rmH(q,1-q)}{2}\Big]n - \log 2.\]
	This grows linearly with $n$, since $p$ and $q$ are distinct and $\rmH$ is strictly concave.  But $\mu$ is an average of just two product measures, each of which has total correlation zero. \qed
\end{ex}

\subsection{Dual total correlation}

In this subsection we start to use the notation
\[[n] := \{1,2,\dots,n\}.\]
It reappears often later in the paper.

Our second multi-variate generalization of mutual information is the quantity
\begin{equation}\label{eq:Shearer}
\rmH(\xi) - \sum_{i=1}^n \rmH(\xi_i\,|\,\xi_{[n]\setminus i}),
\end{equation}
where
\[\xi_{[n]\setminus i} := (\xi_1,\dots,\xi_{i-1},\xi_{i+1},\dots,\xi_n).\]
When $n=2$, this again agrees with the mutual information $\rmI(\xi_1\,;\,\xi_2)$.  This generalization seems to originate in Han's paper~\cite{Han75}, where it is called the \textbf{dual total correlation}.  Like total correlation, it depends only on the joint distribution $\mu$ of $\xi$. We denote it by either $\DTC(\xi)$ or $\DTC(\mu)$.

By applying the chain rule to the first term in~\eqref{eq:Shearer} we obtain
\begin{align*}
\DTC(\xi) &= \sum_{i=1}^n\rmH(\xi_i\,|\,\xi_1,\dots,\xi_{i-1}) - \sum_{i=1}^n \rmH(\xi_i\,|\,\xi_{[n]\setminus i})\\
&=\sum_{i=1}^n\big[\rmH(\xi_i\,|\,\xi_1,\dots,\xi_{i-1}) - \rmH(\xi_i\,|\,\xi_{[n]\setminus i})\big].
\end{align*}
All these terms are non-negative, since Shannon entropy cannot increase under extra conditioning, so $\DTC \geq 0$.

The ergodic theoretic setting produces measures with control on their $\TC$.  The relevance of $\DTC$ to our paper is less obvious.  However, we find below that $\DTC(\mu)$ is more easily related than $\TC(\mu)$ to concentration properties of $\mu$.  As a result, we first prove an analog of Theorem B with $\DTC(\mu)$ in place of $\TC(\mu)$: Theorem~\ref{thm:bigdecomp2} below.  We then convert this into Theorem B itself by showing that a bound on $\TC(\mu)$ implies a related bound on $\DTC(\mu')$, where $\mu'$ is a projection of $\mu$ onto a slightly lower-dimensional product of copies of $A$: see Lemma~\ref{lem:trimming} below.  The first of these two stages is the more subtle.  It is based on a `decrement' argument for dual total correlation, in which we show that if $\mu$ fails to be concentrated then it may be written as a mixture of two other measures whose dual total correlations are strictly smaller on average.  This special property of $\DTC$ is the key to the whole proof, and it seems to have no analog for $\TC$.  This is why we need the two separate stages described above.

Now suppose that $\zeta$ is another finite-valued random variable on the same probability space as the tuple $\xi$. Then the \textbf{conditional dual total correlation} of the tuple $\xi$ given $\zeta$ is obtained by conditioning all the Shannon entropies that appear in~\eqref{eq:Shearer}:
\[\DTC(\xi\,|\,\zeta) := \rmH(\xi\,|\,\zeta) - \sum_{i=1}^n \rmH(\xi_i\,|\,\xi_{[n]\setminus i},\zeta).\]

\begin{lem}\label{lem:Shearer-change}
	In the setting above, we have
	\begin{equation}\label{eq:new-I-top}
	\DTC(\xi) - \DTC(\xi\,|\,\zeta) = \rmI(\xi\,;\,\zeta) - \sum_{i=1}^n\rmI(\xi_i\,;\,\zeta\,|\,\xi_{[n]\setminus i}).
	\end{equation}
\end{lem}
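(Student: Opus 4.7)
The plan is to prove the identity by direct expansion of the definitions, noting that both sides are constructed from the same entropy quantities arranged differently. This is essentially a bookkeeping calculation, with no real obstacle beyond matching terms carefully.

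First, I would write out the definition of $\DTC(\xi)$ and $\DTC(\xi\,|\,\zeta)$ from~\eqref{eq:Shearer} and its conditional analogue, and subtract them term-by-term. This yields
\[\DTC(\xi) - \DTC(\xi\,|\,\zeta) = \big[\rmH(\xi) - \rmH(\xi\,|\,\zeta)\big] - \sum_{i=1}^n \big[\rmH(\xi_i\,|\,\xi_{[n]\setminus i}) - \rmH(\xi_i\,|\,\xi_{[n]\setminus i},\zeta)\big].\]

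Next, I would recognize the first bracketed difference as $\rmI(\xi\,;\,\zeta)$ directly from the definition of mutual information given in Subsection~\ref{subs:basic}. For the $i$-th term under the sum, the same definition applied after conditioning on $\xi_{[n]\setminus i}$ identifies the bracket as $\rmI(\xi_i\,;\,\zeta\,|\,\xi_{[n]\setminus i})$, where we use the convention introduced earlier that conditional mutual information is obtained by conditioning all the entropies in the formula on the appropriate random variable.

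Substituting these identifications gives exactly~\eqref{eq:new-I-top}, completing the proof. The only delicate point is consistency of notation: one must verify that the conditional mutual information $\rmI(\xi_i\,;\,\zeta\,|\,\xi_{[n]\setminus i})$ as defined in Subsection~\ref{subs:basic} agrees with $\rmH(\xi_i\,|\,\xi_{[n]\setminus i}) - \rmH(\xi_i\,|\,\xi_{[n]\setminus i},\zeta)$, which follows immediately from applying $\rmI(\a;\b) = \rmH(\a) - \rmH(\a\,|\,\b)$ within the conditional probability measure obtained by conditioning on $\xi_{[n]\setminus i}$, and then averaging.
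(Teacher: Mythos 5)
Your proof is correct and follows exactly the paper's own argument: expand the two definitions of $\DTC$, subtract, and identify the two bracketed differences as $\rmI(\xi\,;\,\zeta)$ and $\rmI(\xi_i\,;\,\zeta\,|\,\xi_{[n]\setminus i})$ respectively. The paper's proof cites precisely these two standard identities, so there is nothing to add.
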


\begin{proof}
	This follows from these standard identities:
	\[\rmH(\xi) - \rmH(\xi\,|\,\zeta) = \rmI(\xi\,;\,\zeta)\]
	and
	\[\rmH(\xi_i\,|\,\xi_{[n]\setminus i}) - \rmH(\xi_i\,|\,\xi_{[n]\setminus i},\zeta) = \rmI(\xi_i\,;\,\zeta\,|\,\xi_{[n]\setminus i}).\]
\end{proof}

We sometimes refer to the right-hand side of~\eqref{eq:new-I-top} as the \textbf{DTC decrement} associated to $\xi$ and $\zeta$.  This quantity can actually take either sign, but it is positive in the cases that we need later.

\begin{rmk}
	For each $i$, another appeal to the chain rule gives
	\[\rmH(\xi) = \rmH(\xi_{[n]\setminus i}) + \rmH(\xi_i\,|\,\xi_{[n]\setminus i}).\]
	Using this to substitute for $\rmH(\xi_i\,|\,\xi_{[n]\setminus i})$ in $\DTC(\xi)$ and simplifying, we obtain
	\[\DTC(\xi) = \sum_{i=1}^n \rmH(\xi_{[n]\setminus i}) - (n-1)\rmH(\xi).\]
	In this form, the non-negativity of $\DTC(\xi)$ is a special case of some classic inequalities proved by Han in~\cite{Han78} (see also~\cite[Section 17.6]{CovTho06}).
	
	Han's inequalities were part of an abstract study of non-negativity among various multivariate generalizations of mutual information.  Shearer proved an even more general inequality at about the same time with a view towards combinatorial applications, which then appeared in~\cite{ChungGraFraShe86}.  For any subset $S = \{i_1,\dots,i_\ell\} \subseteq [n]$, let us write $\xi_S = (\xi_{i_1},\dots,\xi_{i_\ell})$.  If $\mathscr{S}$ is a family of subsets of $[n]$ with the property that every element of $[n]$ lies in at least $k$ members of $\mathscr{S}$, then Shearer's inequality asserts that
	\begin{equation}\label{eq:gen-Shearer}
	\sum_{S \in \mathscr{S}}\
	\rmH(\xi_S) \geq k\rmH(\xi).
	\end{equation}
	A simple proof is given in the survey~\cite{Radha01}, together with several applications to combinatorics.
	
	When $\mathscr{S}$ consists of all subsets of $[n]$ of size $n-1$, the inequality~\eqref{eq:gen-Shearer} is just the non-negativity of $\DTC$.  However, in principle one could use any family $\mathscr{S}$ and integer $k$ as above to define a kind of mutual information among the random variables $\xi_1$, \dots, $\xi_n$: simply use the gap between the two sides in~\eqref{eq:gen-Shearer}.
	
	I do not see any advantage to these other quantities over $\DTC$ for the purposes of this paper.  But in settings where the joint distribution of $(\xi_1,\dots,\xi_n)$ has some known special structure, it could be worth exploring how that structure is detected by the right choice of such a generalized mutual information, and what consequences it has for other questions about those random variables.
\end{rmk}

\begin{rmk}
The sum of conditional entropies that we subtract in~\eqref{eq:Shearer} has its own place in the information theory literature.  Verd\'u and Weissman~\cite{VerWei08} call it the `erasure entropy' of $(\xi_1,\dots,\xi_n)$, and use it to define the limiting erasure entropy rate of a stationary ergodic source.  They derive operational interpretations in connection with (i) Gibbs sampling from a Markov random field and (ii) recovery of a signal passed through a binary erasure channel in the limit of small erasure probability. Their paper also makes use of the sum of conditional mutual informations that appears in the second right-hand term in~\eqref{eq:new-I-top}, which they call `erasure mutual information'.  For this quantity they give an operational interpretation in terms of the decrease in channel capacity due to sporadic erasures of the channel outputs.

Concerning erasure entropy, see also Proposition~\ref{prop:erasure-rate} below.
	\end{rmk}

\subsection{The relationship between total correlation and dual total correlation}

Although $\TC(\mu)$ and $\DTC(\mu)$ both quantify some kind of correlation among the coordinates under $\mu$, they can behave quite differently.

\begin{ex}\label{ex:prod-mix}
	Let $\nu_p$ be the $p$-biased distribution on $\{0,1\}$, let $p \neq q$, and let $\mu = (\nu_p^{\times n} + \nu_q^{\times n})/2$, as in Example~\ref{ex:DTC-far-convex}.  In that example $\TC(\mu)$ is at least ${c(p,q)n - \log 2}$, where
	\[c(p,q) = \rmH\Big(\frac{p+q}{2},1 - \frac{p+q}{2}\Big) - \frac{\rmH(p,1-p) + \rmH(q,1-q)}{2} > 0.\]
	
On the other hand, once $n$ is large, the two measures $\nu_p^{\times (n-1)}$ and $\nu_q^{\times (n-1)}$ are very nearly disjoint.  More precisely, there is a partition $(U,U^{\rm{c}})$ of $\{0,1\}^{n-1}$ such that $\nu_q^{\times (n-1)}(U)$ and $\nu_p^{\times (n-1)}(U^{\rm{c}})$ are both exponentially small in $n$.  (Finding the best such partition is an application of the classical Neyman--Pearson lemma in hypothesis testing~\cite[Theorem 11.7.1]{CovTho06}.) It follows that $\mu\{\xi_{[n-1]}\in U\}$ and $\mu\{\xi_{[n-1]} \in U^{\rm{c}}\}$ are both exponentially close to $1/2$. Using Bayes' theorem and this partition, we obtain that the conditional distribution of $\xi_i$ given the event ${\{\xi_{[n]\setminus i} = \bf{z}\}}$ is very close to $\nu_p$ if $\bf{z} \in U$, and very close to $\nu_q$ if $\bf{z} \in U^{\rm{c}}$. In both of these approximations the total variation of the error is exponentially small in $n$.  Therefore, re-using the estimate~\eqref{eq:H-mix-small},
\begin{align*}
\DTC(\mu) &= \rmH_\mu(\xi) - \sum_{i=1}^n\rmH_\mu(\xi_i\,|\,\xi_{[n]\setminus i})\\
&\leq \log 2 + \frac{\rmH(p,1-p)n + \rmH(q,1-q)n}{2} \\
&\quad - n\cdot \mu\{\xi_{[n-1]} \in U\}\cdot \rmH(p,1-p) - n\cdot \mu\{\xi_{[n-1]} \in U^{\rm{c}}\}\cdot \rmH(q,1-q)\\
&\quad  + O(n\rme^{- c'n})\\
&\leq \log 2 + O(n\rme^{- c'n})
\end{align*}
for some $c' > 0$.

Thus, in this example, $\TC(\mu)$ is larger than $\DTC(\mu)$ by roughly a multiplicative factor of $n$. \qed
\end{ex}

\begin{ex}\label{ex:TC-DTC-different}
	Let $A := \bbZ/q\bbZ$ for some $q$, and let $\mu$ be the uniform distribution on the subgroup
	\[U:= \{(a_1,\dots,a_n) \in A^n:\ a_1 +\cdots + a_n = 0\}.\]
	Let $\xi = (\xi_1,\dots,\xi_n)$ be the $n$-tuple of $A$-valued coordinate projections. Then
	\[\rmH(\mu) = \rmH_\mu(\xi) = (n-1)\log q\]
	and
	\[\rmH_\mu(\xi_i) = \log q \quad \hbox{for each}\ i,\]
	so
	\[\TC(\mu) = \log q.\]
	On the other hand, given an element of $U$, any $n-1$ of its coordinates determine the last coordinate, and so
	\[\rmH(\xi_i\,|\,\xi_{[n]\setminus i}) = 0 \quad \hbox{for each}\ i.\]
	Therefore
	\[\DTC(\mu) = (n-1)\log q.\]
	
	In this example, $\DTC(\mu)$ is larger than $\TC(\mu)$ almost by a factor of $n$.  In fact, this example also achieves the largest possible $\DTC$ with an alphabet of size $q$, since one always has
	\begin{align*}
	\DTC(\mu) &= \rmH_\mu(\xi) - \sum_{i=1}^n\rmH_\mu(\xi_i\,|\,\xi_{[n]\setminus i})\\
	&= \rmH_\mu(\xi_{[n-1]}) + \rmH_\mu(\xi_n\,|\,\xi_{[n-1]}) - \sum_{i=1}^n\rmH_\mu(\xi_i\,|\,\xi_{[n]\setminus i})\\
	&= \rmH_\mu(\xi_{[n-1]}) - \sum_{i=1}^{n-1}\rmH_\mu(\xi_i\,|\,\xi_{[n]\setminus i})\\
	&\leq \rmH_\mu(\xi_{[n-1]})\\
	&\leq (n-1)\log q.
	\end{align*}
\qed
\end{ex}

As described in the previous subsection, our route to Theorem B goes through the analogous theorem with $\DTC$ in place of $\TC$: Theorem~\ref{thm:bigdecomp2} below.  After proving Theorem~\ref{thm:bigdecomp2}, we need some estimate relating $\DTC$ and $\TC$ in order to deduce Theorem B.  Example~\ref{ex:TC-DTC-different} shows that this estimate cannot simply bound $\DTC$ by a multiple of $\TC$, since the factor of $n$ which appears in that example is too large to lead to the correct dependence in Theorem B, part (a).

The key here is that a much better bound on $\DTC$ by $\TC$ is available if we allow ourselves to discard a few coordinates in $A^n$.

\begin{lem}[Discarding coordinates to control $\DTC$ using $\TC$]\label{lem:trimming}
	For any $\mu \in \Pr(A^n)$ and $r > 0$ there exists $S \subseteq [n]$ with $|S| \geq (1-r)n$ and such that \[\DTC(\mu_S) \leq r^{-1}\TC(\mu).\]
\end{lem}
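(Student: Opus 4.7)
My plan is to show that a uniformly random subset of the right size realizes the desired inequality in expectation. Set $m := n - \lfloor rn\rfloor$, so that $m \geq (1-r)n$, and let $S$ be a uniformly random $m$-element subset of $[n]$. Introduce the averaged entropies
\[U_k := \binom{n}{k}^{-1}\sum_{|T|=k}\rmH(\xi_T) \qquad (U_0 := 0)\]
and their discrete derivatives $s_k := U_k - U_{k-1}$. By the chain rule, $\TC(\mu) = nU_1 - U_n = \sum_{k=1}^n(s_1 - s_k)$.

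Starting from the definition $\DTC(\mu_S) = \rmH(\xi_S) - \sum_{j\in S}\rmH(\xi_j\,|\,\xi_{S\setminus j})$, the symmetry of the uniform random $m$-subset immediately yields $\mathbb{E}[\rmH(\xi_S)] = U_m$; a routine double count over pairs $(j, S\setminus j)$ establishes $\mathbb{E}\bigl[\sum_{j\in S}\rmH(\xi_j\,|\,\xi_{S\setminus j})\bigr] = m\,s_m$. These combine into the clean identity
\[\mathbb{E}[\DTC(\mu_S)] = U_m - m\,s_m = \sum_{k=1}^m(s_k - s_m).\]

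The heart of the argument --- and the main potential obstacle --- is the claim that $(s_k)$ is non-increasing, equivalently that $k\mapsto U_k$ is concave. This is an averaged form of Han's inequality, and I would derive it from the submodularity of the entropy set-function $A\mapsto\rmH(\xi_A)$ by a coupling: draw a uniformly random $(k+1)$-subset $A$ together with an ordered pair $(x,y)$ of distinct random elements of $A$. Then $A\setminus x$ and $A\setminus y$ are each uniform on $k$-subsets, $(A\setminus x)\cup(A\setminus y) = A$ is uniform on $(k+1)$-subsets, and $(A\setminus x)\cap(A\setminus y) = A\setminus\{x,y\}$ is uniform on $(k-1)$-subsets. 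Averaging the pointwise submodularity $\rmH(\xi_{A\setminus x}) + \rmH(\xi_{A\setminus y}) \geq \rmH(\xi_A) + \rmH(\xi_{A\setminus\{x,y\}})$ over this coupling gives $2U_k \geq U_{k+1} + U_{k-1}$.

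With monotonicity in hand, set $t_k := s_1 - s_k \geq 0$, which is non-decreasing in $k$. Then
\[\mathbb{E}[\DTC(\mu_S)] = \sum_{k=1}^m(t_m - t_k) \leq m\,t_m \quad\text{and}\quad \TC(\mu) = \sum_{k=1}^n t_k \geq (n-m+1)\,t_m,\]
using $t_k \geq t_m$ for $k \geq m$ in the second bound. Dividing, $\mathbb{E}[\DTC(\mu_S)] \leq \frac{m}{n-m+1}\TC(\mu)$. One checks that $m/(n-m+1) \leq 1/r$ in both regimes: if $rn \geq 1$ then $n-m+1 \geq rn$ and $m \leq n$; while if $rn < 1$ then $m = n$, $n-m+1 = 1$, and $m = n < 1/r$. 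Hence $\mathbb{E}[\DTC(\mu_S)] \leq r^{-1}\TC(\mu)$, and some realization of $S$ with $|S| = m \geq (1-r)n$ meets the required bound.
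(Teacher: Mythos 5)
Your proof is correct, but it takes a genuinely different route from the paper's. The paper argues greedily: setting $\a := r^{-1}\TC(\mu)/n$, it either finds that every remaining coordinate $i$ satisfies $\rmH(\xi_i\,|\,\xi_{S\setminus i}) \geq \rmH(\xi_i) - \a$, in which case the crude bound $\DTC(\mu_S) \leq \sum_{i\in S}[\rmH(\xi_i) - \rmH(\xi_i\,|\,\xi_{S\setminus i})] \leq \a n$ finishes, or else it removes a violating coordinate and checks via the chain rule that $\TC$ drops by more than $\a$; non-negativity of $\TC$ caps the number of removals at $rn$. You instead take $S$ uniformly at random of size $m = n - \lfloor rn\rfloor$ and compute $\mathbb{E}[\DTC(\mu_S)] = U_m - m s_m$ exactly, then control it using the concavity of $k\mapsto U_k$ (an averaged Han inequality, which your submodularity coupling establishes correctly) together with the identity $\TC(\mu) = \sum_k (s_1 - s_k)$; all the intermediate computations check out, including the double count giving $\mathbb{E}[\sum_{j\in S}\rmH(\xi_j\,|\,\xi_{S\setminus j})] = m s_m$ and the case analysis showing $m/(n-m+1)\leq 1/r$. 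The paper's argument is adaptive and needs nothing beyond subadditivity and the chain rule, and it tells you a specific structural reason why the surviving coordinates are good (each is nearly independent of the rest); yours is non-adaptive and slightly more machinery-heavy, but it buys the stronger information that a \emph{typical} $(1-r)n$-subset already works in expectation, and the clean closed form $\mathbb{E}[\DTC(\mu_S)] = \sum_{k\leq m}(s_k - s_m)$ makes the relationship between $\DTC$, $\TC$, and the entropy profile $(U_k)$ transparent. Either proof is acceptable here.
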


Let $\xi = (\xi_1,\dots,\xi_n)$ have joint distribution $\mu$. Beware that, on the left-hand side of the inequality above, $\DTC(\mu_S)$ refers to the dual total correlation among the smaller family of random variabes $(\xi_i)_{i\in S}$.  In the proof below, we use $\DTC$ and $\TC$ for measures on product spaces of several different dimensions.

\begin{proof}
	Let us write $\a := r^{-1}\TC(\mu)/n$ for brevity.
	
	By the definition of $\DTC$ and the subadditivity of $\rmH$, we have
	\begin{equation}\label{eq:DTC-easy-bd}
\DTC(\mu) \leq \sum_{i=1}^n\big[\rmH(\xi_i) - \rmH(\xi_i\,|\,\xi_{[n]\setminus i})\big].
\end{equation}
	If the measure $\mu$ happens to satisfy $\rmH(\xi_i\,|\,\xi_{[n]\setminus i}) \geq \rmH(\xi_i) - \a$ for every $i$, then~\eqref{eq:DTC-easy-bd} immediately gives $\DTC(\mu) \leq \a n$.
	
	So suppose there is an $i_1 \in [n]$ for which $\rmH(\xi_{i_1}\,|\,\xi_{[n]\setminus i_1}) < \rmH(\xi_{i_1}) - \a$.  Then that failure and the chain rule give
	\begin{align*}
		\TC(\mu_{[n]\setminus i_1}) &= \Big(\sum_{i\in [n]\setminus i_1}\rmH(\xi_i)\Big) - \rmH(\xi_{[n]\setminus i_1})\\
		&= \Big(\sum_{i\in [n]\setminus i_1}\rmH(\xi_i)\Big) - \big[\rmH(\xi) - \rmH(\xi_{i_1}\,|\,\xi_{[n]\setminus i_1})\big]\\
		&= \Big(\sum_{i=1}^n\rmH(\xi_i)\Big) - \rmH(\xi) - \big[\rmH(\xi_{i_1}) - \rmH(\xi_{i_1}\,|\,\xi_{[n]\setminus i_1})\big]\\
		&< \TC(\mu) - \a
	\end{align*}
	for the projection $\mu_{[n]\setminus i_1}$ of $\mu$ to $A^{[n]\setminus i_1}$.
	
	We can now repeat the argument above. If every $i \in [n]\setminus i_1$ satisfies
	\[\rmH(\xi_i\,|\,\xi_{[n]\setminus \{i_1,i\}}) \geq \rmH(\xi_i) - \a,\]
	then the $(n-1)$-dimensional analog of~\eqref{eq:DTC-easy-bd} gives $\DTC(\mu_{[n]\setminus i_1}) \leq \a(n-1) \leq \a n$, and we select $S := [n]\setminus i_1$.  Otherwise, there is an $i_2 \in [n]\setminus i_1$ such that
	\[\rmH(\xi_{i_2}) - \rmH(\xi_{i_2}\,|\,\xi_{[n]\setminus \{i_1,i_2\}}) > \a,\]
	and then removing $i_2$ leads to
	\[\TC(\mu_{[n]\setminus \{i_1,i_2\}}) < \TC(\mu_{[n]\setminus i_1}) - \a < \TC(\mu) - 2\a.\]
	
	Since total correlations are non-negative, we cannot repeat this argument more than $k$ times, where $k$ is the largest integer for which
	\[\TC(\mu) - k\a > 0.\]
	From the definition of $\a$, this amounts to $k < r n$.  Thus, after removing at most this many elements from $[n]$, we are left with a subset $S$ of $[n]$ having both the required properties.
\end{proof}

Lemma~\ref{lem:trimming} is nicely illustrated by Example~\ref{ex:TC-DTC-different}. In that example, the projection of $\mu$ to any $n-1$ coordinates is a product measure, so the $\DTC$ of any such projection collapses to zero.

In Subsection~\ref{subs:B} we use Lemma~\ref{lem:trimming} (together with Lemma~\ref{lem:stab-lift} below) to finish the deduction of Theorem B from Theorem~\ref{thm:bigdecomp2}.

\section{Background on metrics and measure transportation}\label{sec:meas-trans}

\subsection{Normalized Hamming metrics}\label{subs:Ham-met}

For any finite set $A$ and $n \in \bbN$, the \textbf{normalized Hamming metric} on $A^n$ is defined by
\begin{equation}\label{eq:norm-Ham}
d_n(\bf{a},\bf{a}') := \frac{|\{i \in [n]:\ a_i\neq a_i'\}|}{n},
\end{equation}
where $\bf{a} = (a_1,\dots,a_n)$, $\bf{a}' = (a_1',\dots,a_n') \in A^n$.  All the metric spaces that appear in Parts II and III have the form $(A^n,d_n)$ or some slight modification of it.  We never use Hamming metrics that are not normalized.

More generally, if $(K,d)$ is any metric space, then $K^n$ may be endowed with the \textbf{normalized Hamming average} of copies of $d$, defined by
\begin{equation}\label{eq:norm-Ham2}
d_n(\bf{x},\bf{x}') := \frac{1}{n}\sum_{i=1}^n d(x_i,x'_i) \quad \forall \bf{x},\bf{x}' \in K^n.
\end{equation}
We refer to this generalization occasionally in the sequel, but our main results concern the special case~\eqref{eq:norm-Ham}.

\begin{rmk}
In the alternative formula~\eqref{eq:TC2} for total correlation, the right-hand side makes sense for a measure $\mu$ on $K^n$ for any measurable space $K$.  This suggests an extension of Theorem B to measures $\mu$ on $(K^n,d_n)$, where $(K,d)$ is a compact metric space and $d_n$ is as in~\eqref{eq:norm-Ham2}. In fact, this generalization is easily obtained from Theorem B itself by partitioning $K$ into finitely many subsets of diameter at most $\delta$, using this partition to quantize the measure $\mu$, and then letting $\delta \to 0$.

On the other hand, various steps in the proof of Theorem B are simpler in the discrete case, particularly once we switch to arguing about $\DTC$ instead of $\TC$.  We therefore leave this generalization of Theorem B aside.
\end{rmk}

\subsection{Transportation metrics}\label{subs:meas-trans}

Let $(K,d)$ be a compact metric space. Let $\Pr(K)$ be the space of Borel probability measures on $K$, and endow it with the transportation metric
\[\ol{d}(\mu,\nu) := \inf\Big\{\int d(x,y)\,\l(\d x,\d y):\ \l\ \hbox{a coupling of}\ \mu\ \hbox{and}\ \nu\Big\}.\]
Let $\Lip_1(K)$ be the space of all $1$-Lipschitz functions $K \to\bbR$.  The transportation metric is indeed a metric on $\Pr(K)$, and it generates the vague topology.  See, for instance,~\cite[Section 11.8]{Dud--book}.

The transportation metric has a long history, and is variously associated with the names Monge, Kantorovich, Rubinstein, Wasserstein and others.  Within ergodic theory, the special case when $(K,d)$ is a Hamming metric space $(A^n,d_n)$ is the finitary version of Ornstein's `d-bar' metric, which Ornstein introduced independently in the course of his study of isomorphisms between Bernoulli shifts.  Here I have adopted a notation based on that connection to ergodic theory.  For a Hamming metric space $(A^n,d_n)$, the associated transportation metric on $\Pr(A^n)$ is $\ol{d_n}$.

The transportation metric is closely related to many branches of geometry and to the field of optimal transportation: see, for instance,~\cite[Section 3$\frac{1}{2}$.10]{Gro99},~\cite{GanMcC96}, and~\cite{Levin90} for more on these connections.  Within ergodic theory, it has also been explored beyond Ornstein's original uses for it, particularly by Vershik and his co-workers~\cite{Ver00,Ver04,Ver10}.

The heart of the theory is the following dual characterization of this metric.

\begin{thm}[Monge--Kantorovich--Rubinstein duality]\label{thm:MKR}
	Any $\mu,\nu \in \Pr(K)$ satisfy
	\[\ol{d}(\mu,\nu) = \sup_{f \in \Lip_1(K)}\Big[\int f\,\d\nu - \int f\,\d\mu\Big].\]
	\qed
\end{thm}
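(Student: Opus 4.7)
The strategy splits into two inequalities; the easy direction uses only the Lipschitz property, while the reverse inequality requires a Kantorovich-style duality argument that I expect to be the main obstacle.

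For the direction $\ol{d}(\mu,\nu) \ge \sup_{f\in\Lip_1(K)}[\int f\,\d\nu - \int f\,\d\mu]$: given any coupling $\l$ of $\mu$ and $\nu$ and any $f \in \Lip_1(K)$, I would compute
\[\int f\,\d\nu - \int f\,\d\mu = \int [f(y) - f(x)]\,\l(\d x,\d y) \le \int d(x,y)\,\l(\d x,\d y),\]
using $|f(y)-f(x)| \le d(x,y)$. Taking the infimum over $\l$ and the supremum over $f$ yields the inequality.

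For the reverse direction, my plan is first to establish the general Kantorovich dual
\[\ol{d}(\mu,\nu) = \sup\Bigl\{\int g\,\d\mu + \int f\,\d\nu : g,f\in C(K),\ g(x)+f(y) \le d(x,y)\Bigr\},\]
and then to reduce it to the Lipschitz form by a \emph{$c$-transform}. Given any admissible pair $(g,f)$, set $\t{f}(y) := \inf_{x\in K}[d(x,y) - g(x)]$. Then $\t{f}$ is 1-Lipschitz in $y$, as the infimum of the 1-Lipschitz family $y\mapsto d(x,y)-g(x)$; the bound $f(y) \le d(x,y) - g(x)$ for every $x$ yields $\t{f} \ge f$ pointwise; and evaluating the infimum at $x=y$ gives $\t{f}(x) \le -g(x)$. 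Hence $\int g\,\d\mu + \int f\,\d\nu \le \int(-\t{f})\,\d\mu + \int \t{f}\,\d\nu$, so the supremum in the Kantorovich dual may be restricted to pairs of the form $(-h,h)$ with $h \in \Lip_1(K)$, which is exactly the right-hand side of the theorem.

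The Kantorovich identity itself is the main obstacle. I would establish it via weak-$*$ compactness together with Hahn--Banach separation. Since $K$ is compact, the set of couplings of $\mu$ and $\nu$ is a weak-$*$ compact convex subset of $\Pr(K\times K)$ and $\l \mapsto \int d\,\d\l$ is weak-$*$ continuous, so the primal infimum $\ol{d}(\mu,\nu)$ is attained. I then view the problem as a linear program: minimize $\int d\,\d\l$ over non-negative Radon measures $\l$ on $K\times K$ with the marginal constraints encoded as affine identities paired against test functions in $C(K)$. For any $c < \ol{d}(\mu,\nu)$, a Hahn--Banach separation of the point $(\mu,\nu,c)$ from the weak-$*$ closed convex cone of achievable triples $(\mathrm{marg}_1\l,\,\mathrm{marg}_2\l,\,\int d\,\d\l + t)$ for $\l \ge 0$ and $t \ge 0$ produces the dual certificate: continuous $g,f$ with $g(x)+f(y) \le d(x,y)$ and $\int g\,\d\mu + \int f\,\d\nu \ge c$. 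The delicate ingredient is the weak-$*$ closedness of that cone, which follows from Banach--Alaoglu applied to Radon measures of bounded total variation on the compact space $K\times K$.
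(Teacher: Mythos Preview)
The paper does not prove this theorem; it is stated with a \qed and attributed to the classical literature, with a reference to \cite[Theorem 11.8.2]{Dud--book} for a modern treatment. Your proposal is a correct sketch of the standard proof found in such references: the easy inequality via the Lipschitz bound, the $c$-transform reduction from general admissible pairs $(g,f)$ to pairs $(-h,h)$ with $h\in\Lip_1(K)$, and the Kantorovich duality itself via linear-programming/Hahn--Banach methods on the compact space $K\times K$. The one place where your outline is thin is the separation step --- the precise formulation of the cone and the verification of its closedness require some care (and there are cleaner routes, e.g.\ via Fenchel--Rockafellar duality or a minimax theorem) --- but these are routine in the textbook accounts the paper cites, and nothing in your plan is wrong.
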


The precise statement of this result seems to originate in the papers~\cite{Kantor42,Kantor48,KantorRubin57}; see~\cite[Theorem 11.8.2]{Dud--book} for a standard modern treatment.

One immediate consequence of Theorem~\ref{thm:MKR} is that $\ol{d}(\nu,\mu)$ is always bounded by $\frac{1}{2}\|\nu-\mu\|$ times the diameter of $(K,d)$.  The following corollary improves this slightly, giving a useful estimate for certain purposes later.

\begin{cor}\label{cor:dbar-and-TV}
Let $\mu,\nu \in \Pr(K)$, and let $B_1$, \dots, $B_m$ be a partition of $K$ into Borel subsets.  Then
\[\ol{d}(\nu,\mu) \leq \sum_{i=1}^m \mu (B_i) \cdot \rm{diam}(B_i) + \frac{1}{2}\Big(\sum_{i=1}^m |\nu (B_i) - \mu (B_i)|\Big)\cdot \rm{diam}(K).\]
\end{cor}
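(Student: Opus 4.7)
The plan is to construct an explicit coupling $\l$ of $\mu$ and $\nu$ that first matches as much mass as possible within each block $B_i$, and then handles the unavoidable residual by an arbitrary coupling on $K\times K$. Granted such a $\l$, the result will follow by splitting $\int d\,\d\l$ into its ``diagonal'' contribution (bounded using $d(x,y)\leq \rm{diam}(B_i)$ on $B_i\times B_i$) and its ``residual'' contribution (bounded using $d(x,y)\leq \rm{diam}(K)$ everywhere), and then invoking the primal side of Theorem~\ref{thm:MKR}.

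To construct $\l$, I would set $\a_i := \min(\mu(B_i),\nu(B_i))$, and let $\mu_{|B_i}$ and $\nu_{|B_i}$ denote the normalized conditional measures (with the convention that summands below associated to empty blocks are zero). Put
\[\l_{\rm{diag}} := \sum_{i=1}^m \a_i\cdot (\mu_{|B_i}\times \nu_{|B_i}),\]
a positive measure supported on $\bigcup_i B_i\times B_i$.  Its first marginal is $\sum_i \a_i \mu_{|B_i}$, which is pointwise dominated by $\mu$ because on each $B_i$ one has $\a_i \mu_{|B_i} \leq \mu(B_i)\mu_{|B_i} = \mu|_{B_i}$.  Define the nonnegative defect measures $\mu^- := \mu - \sum_i \a_i\mu_{|B_i}$ and $\nu^- := \nu - \sum_i \a_i\nu_{|B_i}$.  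Both have the same total mass
\[M := \sum_{i=1}^m (\mu(B_i) - \a_i) = \sum_{i=1}^m (\nu(B_i) - \a_i) = \tfrac{1}{2}\sum_{i=1}^m|\mu(B_i) - \nu(B_i)|,\]
because $\sum_i \mu(B_i) = \sum_i \nu(B_i) = 1$.  When $M > 0$ the rescaled product $M^{-1}(\mu^-\times \nu^-)$ is a coupling of $\mu^-$ and $\nu^-$ on $K\times K$, and setting
\[\l := \l_{\rm{diag}} + M^{-1}(\mu^-\times \nu^-)\]
(with the second term omitted when $M=0$) gives a coupling of $\mu$ and $\nu$, as one checks directly by summing the marginal computations above.

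It then remains to bound $\int d\,\d\l$. The diagonal piece contributes at most $\sum_i \a_i\cdot\rm{diam}(B_i)$, and since $\a_i\leq \mu(B_i)$ this is at most $\sum_i \mu(B_i)\cdot\rm{diam}(B_i)$, matching the first term of the claimed bound. The residual piece contributes at most $M\cdot\rm{diam}(K)$, which matches the second term.  The only step requiring any care is the verification that $\l$ has the correct marginals, which is a direct calculation from the definition of the defect measures; I do not expect any serious obstacle.
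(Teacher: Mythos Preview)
Your argument is correct, but it takes a genuinely different route from the paper.  The paper works on the dual side of Monge--Kantorovich--Rubinstein: for an arbitrary $1$-Lipschitz $f$ it bounds $\int_{B_i}f\,\d\nu - \int_{B_i}f\,\d\mu$ by $\mu(B_i)\cdot\rm{diam}(B_i) + |\nu(B_i)-\mu(B_i)|\cdot\|f\|$, sums over $i$, and then observes that one may shift $f$ by a constant so that $\|f\|\leq \tfrac{1}{2}\rm{diam}(K)$, which is where the factor $\tfrac{1}{2}$ comes from.  You instead work on the primal side by building an explicit coupling; your factor $\tfrac{1}{2}$ emerges from the elementary identity $\sum_i(\mu(B_i)-\a_i)=\sum_i(\mu(B_i)-\nu(B_i))^+=\tfrac{1}{2}\sum_i|\mu(B_i)-\nu(B_i)|$.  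Your approach is slightly more self-contained, since it uses only the definition of $\ol{d}$ as an infimum over couplings and never actually invokes the duality theorem (so your reference to ``the primal side of Theorem~\ref{thm:MKR}'' is a mild overstatement --- you need only the definition).  The paper's version is a bit shorter and perhaps makes the appearance of $\rm{diam}(K)$ in the second term more transparent.
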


\begin{proof}
Let $f\in \rm{Lip}_1(K)$. For each $i = 1,2,\dots,m$ we have
\begin{align*}
\int_{B_i}f\,\d\nu - \int_{B_i}f\,\d\mu &\leq \nu (B_i)\cdot \sup (f|B_i) - \mu (B_i)\cdot \inf (f|B_i) \\
&= \mu(B_i)\cdot (\sup (f|B_i) - \inf (f|B_i))\\
& \qquad \qquad \qquad \qquad + (\nu(B_i) - \mu(B_i))\cdot \sup (f|B_i)\\
&\leq \mu(B_i)\cdot \rm{diam}(B_i) + |\nu (B_i) - \mu (B_i)|\cdot \|f\|,
\end{align*}
where $\|\cdot\|$ is the supremum norm. Summing over $i$, this gives
\[\int f\,\d\nu - \int f\,\d\mu \leq \sum_{i=1}^m\mu(B_i)\cdot \rm{diam}(B_i) + \sum_{i=1}^m|\nu (B_i) - \mu (B_i)|\cdot \|f\|.\]
Since $\nu(K) = \mu(K) = 1$, the left-hand side here is unchanged if we add any constant value to $f$.  We may therefore shift $f$ by a constant so that $\|f\| \leq \frac{1}{2}\rm{diam}(K)$.  We complete the proof by substituting this bound on $\|f\|$ above and then taking the supremum over all $f$.
\end{proof}

To recover the bound by $\frac{1}{2}\|\nu-\mu\|\rm{diam}(K)$ from this corollary, observe that if we take $B_1$, $B_2$, \dots, $B_m$ to be a partition into sets of very small diameter, then we can make the first right-hand term in Corollary~\ref{cor:dbar-and-TV} as small as we please, and the second term is still always bounded by $\frac{1}{2}\|\nu-\mu\|\rm{diam}(K)$.

Here is another important estimate for our later purposes.  It gives an upper bound on the transportation distance from a measure on a product space to a product of two other measures.  It is well known, but we include a proof for commpleteness.

\begin{lem}\label{lem:dist-from-prod-meas}
Let $(K,d_K)$ and $(L,d_L)$ be compact metric spaces, let $0 < \a < 1$, and let $d$ be the following metric on $K\times L$:
\begin{equation}\label{eq:prod-met}
d((x,y),(x',y')) := \a d_K(x,x') + (1-\a) d_L(y,y').
\end{equation}
Let $\mu \in \Pr(K)$, $\nu \in \Pr(L)$ and $\l \in \Pr(K \times L)$.  Finally, let $\l_K$ be the marginal of $\l$ on $K$, and let $x\mapsto \l_{L,x}$ be a conditional distribution for the $L$-coordinate given the $K$-coordinate under $\l$.

Then
\[\ol{d}(\l,\mu\times \nu) \leq \a \ol{d_K}(\l_K,\mu) + (1-\a)\int_K \ol{d_L}(\l_{L,x},\nu)\,\l_K(\d x).\]
\end{lem}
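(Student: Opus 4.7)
My plan is to exhibit a concrete coupling $\Pi$ of $\l$ with $\mu\times \nu$ on $(K\times L)^2$ whose expected cost under $d$ equals the right-hand side of the claimed inequality; the bound then follows directly from the definition of $\ol{d}$.

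The construction has two ingredients. First, select a coupling $\pi_K \in \Pr(K\times K)$ of $\l_K$ and $\mu$ that attains $\int d_K\,\d\pi_K = \ol{d_K}(\l_K,\mu)$; this exists in the compact setting. Second, for each $x\in K$, select an optimal coupling $\pi_{L,x}\in \Pr(L\times L)$ of $\l_{L,x}$ and $\nu$. Then define
\[\Pi(\d x,\d y,\d x',\d y') := \pi_K(\d x,\d x')\,\pi_{L,x}(\d y,\d y').\]
Checking that $\Pi$ is a coupling of $\l$ with $\mu\times \nu$ is routine: the $(x,y)$-marginal reassembles $\l_K\ltimes \l_{L,\bullet} = \l$ from the first marginal of $\pi_K$ and the first marginal of $\pi_{L,x}$, while for $(x',y')$ the second marginal of $\pi_K$ is $\mu$ and the second marginal of $\pi_{L,x}$ is $\nu$ for every $x$, so $y'$ is independent of $x'$ under $\Pi$ and has distribution $\nu$.

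Now the cost splits along~\eqref{eq:prod-met}. The $d_K(x,x')$-term depends only on the $K\times K$-marginal of $\Pi$, which is $\pi_K$, and contributes exactly $\a\,\ol{d_K}(\l_K,\mu)$. The $d_L(y,y')$-term, after using that the $x$-marginal of $\pi_K$ is $\l_K$ and that $\pi_{L,x}$ is $(\l_K$-a.s.)\ optimal between $\l_{L,x}$ and $\nu$, contributes $(1-\a)\int_K \ol{d_L}(\l_{L,x},\nu)\,\l_K(\d x)$. Summing the two gives the stated upper bound for $\int d\,\d\Pi$, and hence for $\ol{d}(\l,\mu\times \nu)$.

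The only nontrivial point is ensuring that $x\mapsto \pi_{L,x}$ can be chosen to be a measurable kernel, which is what I expect to be the main (minor) obstacle. In the compact metric setting this is standard: the set-valued map sending $x$ to the collection of optimal couplings of $\l_{L,x}$ and $\nu$ has nonempty compact values and measurable graph (since the cost functional is continuous in the weak topology and $x\mapsto \l_{L,x}$ is measurable), so a measurable selection exists by Kuratowski--Ryll-Nardzewski. If one prefers to avoid selection theorems, one can fix $\eps>0$, choose $\pi_{L,x}$ from a countable dense family so that it is $\eps$-optimal in a measurable way, run the argument, and let $\eps\to 0$.
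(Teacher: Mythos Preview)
Your argument is correct. You build an explicit coupling of $\l$ with $\mu\times\nu$ by first coupling the $K$-coordinates optimally and then, conditionally on the first $K$-coordinate $x$, coupling the $L$-coordinates optimally between $\l_{L,x}$ and $\nu$; the cost computation is exactly as you say, and the measurable-selection issue is genuine but routine in the compact setting.

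The paper takes the dual route instead: it invokes Monge--Kantorovich--Rubinstein duality (Theorem~\ref{thm:MKR}), fixes a $1$-Lipschitz $f$ on $K\times L$, and sets $\ol{f}(x):=\int_L f(x,y)\,\nu(\d y)$. From~\eqref{eq:prod-met} one reads off that $f(x,\cdot)$ is $(1-\a)$-Lipschitz on $L$ and that $\ol{f}$ is $\a$-Lipschitz on $K$; splitting $\int f\,\d\l - \int f\,\d(\mu\times\nu)$ through $\ol{f}$ then gives the bound, and one takes the supremum over $f$. The paper in fact remarks that both the primal (your) and dual (its) proofs work, and opts for duality only because it is slightly shorter. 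Your approach has the advantage of avoiding any appeal to duality and making the near-optimal coupling explicit; the paper's approach avoids the measurable-selection technicality entirely.
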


\begin{proof}
	This can be proved directly from the definition or using Theorem~\ref{thm:MKR}.  The latter approach is slightly shorter.
	
	Let $f:K\times L\to \bbR$ be $1$-Lipschitz for the metric $d$.  Define the function $\ol{f}:K\to \bbR$ by
	\[\ol{f}(x) := \int_L f(x,y)\,\nu(\d y).\]
	
	From~\eqref{eq:prod-met} it follows that the function $f(x,\cdot)$ is $(1-\a)$-Lipschitz on the metric space $(L,d_L)$ for each fixed $x\in K$.  Therefore
	\begin{align*}
		\int_L [f(x,y) - \ol{f}(x)]\,\l_{L,x}(\d y) &= \int_L f(x,y)\,\l_{L,x}(\d y) - \int_L f(x,y)\,\nu(\d y)\\ &\leq (1-\a)\ol{d_L}(\l_{L,x},\nu).
		\end{align*}
	
	Also, the function $\ol{f}$ satisfies
	\[|\ol{f}(x) - \ol{f}(x')| \leq \int_L |f(x,y) - f(x',y)|\,\nu(\d y) \leq \a d_K(x,x') \quad \forall x,x' \in K,\]
	so $\ol{f}$ is $\a$-Lipschitz on $(K,d_K)$. Using the estimates above and Fubini's theorem, we now obtain
	\begin{align*}
\int_{K\times L} f\,\d\l - \int_{K\times L} f\,\d(\mu\times \nu) & = \int_{K\times L} f\,\d\l - \int_K \ol{f}\,\d\l_K + \int_K \ol{f}\,\d\l_K - \int_K \ol{f}\,\d\mu\\
&= \int_K\Big[\int_L [f(x,y) - \ol{f}(x)]\,\l_{L,x}(\d y)\Big]\,\l_K(\d x) \\
&\quad + \int_K \ol{f}\,\d\l_K - \int_K \ol{f}\,\d\mu\\
&\leq (1-\a)\int_K \ol{d_L}(\l_{L,x},\nu)\,\l_K(\d x) + \a\ol{d_K}(\l_K,\mu).
\end{align*}
Taking the supremum over $f$, Theorem~\ref{thm:MKR} completes the proof.
\end{proof}

\subsection{A first connection to total correlation}

The simple lemma below is needed only for one example later in the paper, but it also begins to connect this section with the previous ones.

\begin{lem}\label{lem:near-prod-TC-small}
Let $\nu \in \Pr(A)$, let $\mu\in \Pr(A^n)$, and let $\delta:= \ol{d_n}(\mu,\nu^{\times n})$.  Then
\[\TC(\mu) \leq 2\big(\rmH(\delta,1-\delta) + \delta \log |A|\big)n.\]
\end{lem}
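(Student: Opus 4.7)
The plan is to realize $\delta = \ol{d_n}(\mu,\nu^{\times n})$ concretely through an optimal coupling $\l$ of $\mu$ and $\nu^{\times n}$, and then sandwich $\TC(\mu) = \sum_{i=1}^n \rmH(\mu_{\{i\}}) - \rmH(\mu)$ by comparing to the product measure using standard entropy inequalities. Let $(\xi,\eta)$ have joint law $\l$, so that $\xi\sim\mu$, $\eta\sim\nu^{\times n}$, and $\bbE[d_n(\xi,\eta)] = \delta$. Introducing the indicator $Z_i := 1_{\{\xi_i \neq \eta_i\}}$ and writing $\delta_i := \sfP(\xi_i \neq \eta_i)$, the definition of the normalized Hamming metric gives $\sum_{i=1}^n \delta_i = \delta n$.

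For the marginal entropies of $\mu$, I would use $\rmH(\xi_i) \leq \rmH(\xi_i,\eta_i) = \rmH(\eta_i) + \rmH(\xi_i\,|\,\eta_i) = \rmH(\nu) + \rmH(\xi_i\,|\,\eta_i)$ and then apply a Fano-type estimate to the conditional term:
\[
\rmH(\xi_i\,|\,\eta_i) \leq \rmH(Z_i,\xi_i\,|\,\eta_i) = \rmH(Z_i\,|\,\eta_i) + \rmH(\xi_i\,|\,\eta_i,Z_i) \leq \rmH(\delta_i,1-\delta_i) + \delta_i\log|A|,
\]
because on $\{Z_i=0\}$ we have $\xi_i = \eta_i$ (zero conditional entropy) while on $\{Z_i=1\}$ the random variable $\xi_i$ takes at most $|A|$ values. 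For the joint entropy, nonnegativity of $\rmI(\xi;\eta)$ gives $\rmH(\xi) \geq \rmH(\eta) - \rmH(\eta\,|\,\xi) = n\rmH(\nu) - \rmH(\eta\,|\,\xi)$, and then subadditivity together with the symmetric Fano bound yield
\[
\rmH(\eta\,|\,\xi) \leq \sum_{i=1}^n \rmH(\eta_i\,|\,\xi_i) \leq \sum_{i=1}^n \bigl[\rmH(\delta_i,1-\delta_i) + \delta_i\log|A|\bigr].
\]

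Combining these two estimates, the $n\rmH(\nu)$ contributions cancel, leaving
\[
\TC(\mu) \leq 2\sum_{i=1}^n \bigl[\rmH(\delta_i,1-\delta_i) + \delta_i\log|A|\bigr].
\]
Concavity of the binary entropy $t\mapsto \rmH(t,1-t)$ combined with Jensen's inequality, applied to the average $\tfrac{1}{n}\sum_i \delta_i = \delta$, gives $\sum_i \rmH(\delta_i,1-\delta_i) \leq n\rmH(\delta,1-\delta)$, while the linear term contributes exactly $\delta n\log|A|$. This produces the stated bound $\TC(\mu) \leq 2\bigl(\rmH(\delta,1-\delta) + \delta\log|A|\bigr)n$.

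I do not anticipate a substantial obstacle here; the only slightly delicate point is noticing that one needs the two-sided comparison — an upper bound on each $\rmH(\xi_i)$ and a lower bound on $\rmH(\xi)$ — both of which reduce to Fano's inequality applied to the same coupling $\l$, so the error terms coincide and the factor $2$ appears naturally.
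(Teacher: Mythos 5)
Your proof is correct and follows essentially the same route as the paper's: an optimal coupling, Fano's inequality applied coordinatewise in both directions (upper-bounding each $\rmH(\xi_i)$ via $\rmH(\eta_i)$ and lower-bounding $\rmH(\xi)$ via $\rmH(\eta) - \rmH(\eta\,|\,\xi)$ with subadditivity), followed by concavity of binary entropy to average the $\delta_i$. No gaps.
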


\begin{proof}
	Regard the coordinates in $A^n\times A^n$ as a pair of copies of each of the coordinates in $A^n$. For each $i=1,2,\dots,n$, let $\xi_i:A^n\times A^n\to A$ (respectively, $\zeta_i$) be the projection to the first (respectively, second) copy of the $i^{\rm{th}}$ coordinate in $A^n$. Let $\l \in \Pr(A^n\times A^n)$ be a coupling of $\mu$ and $\nu^{\times n}$ such that
	\[\int d_n\,\d\l = \frac{1}{n}\sum_{i=1}^n\l\{\xi_i\neq \zeta_i\} = \delta.\]
	Let $\delta_i := \l\{\xi_i\neq \zeta_i\}$ for each $i$. The tuple $\xi$ has distribution $\mu$ under $\l$, and the tuple $\zeta$ has distribution $\nu^{\times n}$.
	
	Now the chain rule, monotonicity under conditioning, and several applications of Fano's inequality~\cite[Section 2.10]{CovTho06} give
	\[\rmH_\l(\xi_i) \leq \rmH_\l(\zeta_i) + \rmH(\delta_i,1 - \delta_i) + \delta_i\log|A| \]
	for each $i$, and also
\[\rmH_\l(\zeta) \leq \rmH_\l(\xi) + \sum_{i=1}^n\rmH_\l(\zeta_i\,|\,\xi_i) \leq \rmH_\l(\xi) + \sum_{i=1}^n\big(\rmH(\delta_i,1-\delta_i) + \delta_i\log |A|\big).\]

Therefore
\[\TC(\mu) = \TC(\xi) = \sum_{i=1}^n\rmH_\l(\xi_i) - \rmH_\l(\xi) \leq \TC(\zeta) + 2\sum_{i=1}^n\big(\rmH(\delta_i,1-\delta_i) + \delta_i\log |A|\big). \]
Since $\TC(\zeta) =0$ and $\rmH$ is concave, this is at most
\[2\big(\rmH(\delta,1-\delta) + \delta \log |A|\big)n.\]
\end{proof}

\section{Preliminaries on measure concentration}\label{sec:meas-conc}

Many of the first important applications of measure concentration were to the local theory of Banach spaces.  Classic accounts with an emphasis on these applications can be found in~\cite{MilmanSchech86,Milman88}: in particular,~\cite[Chapter 6]{MilmanSchech86} is an early introduction to the general metric-space framework and gives several examples of it.  Independently, a concentration inequality due to Margulis found an early application to information theory in~\cite{AhlGacKor76}. More recently, Gromov has described a very broad class of concentration phenomena for general metric spaces in~\cite[Chapter 3$\frac{1}{2}$]{Gro01}.  Within probability theory, measure concentration is now a large subject in its own right, with many aspects and many applications.  Ledoux' monograph~\cite{Led--book} is dedicated to these. Some of the most central results for product measures can be found in Talagrand's classic work~\cite{Tal95}, which contributed some very refined estimates in that setting. Concentration now appears as a branch of probability theory in a few advanced textbooks: Vershynin's~\cite{Ver--HDPbook} is a recent example.

Although we need only a few of the important ideas here, they must be altered slightly from their usual presentation, in ways that are explained more carefully below.  I therefore give complete proofs for all but the most classical results in this section.

In this paper, our principal notion of measure concentration takes the form of certain transportation inequalities that we call `T-inequalities': see Definition~\ref{dfn:T} below.  These tie together the metric and measure theoretic ideas of the preceding sections.  Subsections~\ref{subs:trans-ineq} and~\ref{subs:trans-ineq-egs} discuss some history and basic examples.  Then Subsection~\ref{subs:BobGot} introduces an equivalent formulation of T-inequalities in terms of bounds on the exponential moments of $1$-Lipschitz functions.  This equivalent formulation gives an easy way to establish some basic stability properties of T-inequalities.

Although Theorems B and C are focused on T-inequalities, at a key point in the proof of Theorem B we must switch to concentration inequalities of a different kind.  We call them L-inequalities because of their relation to logarithmic Sobolev inequalities.  They are formulated in Definition~\ref{dfn:L}, and the rest of Subsection~\ref{subs:logSob} explains how they are related to T-inequalities.

The rest of this section concerns triples $(K,d,\mu)$ consisting of a compact metric space $(K,d)$ and a measure $\mu \in \Pr(K)$.  We refer to such a triple as a \textbf{metric probability space}. We shall not consider `metric measure spaces' that are not compact or have mass other than $1$; indeed, the main theorems later in the paper all concern metrics on finite sets.

\subsection{Transportation inequalities}\label{subs:trans-ineq}

Here is the definition from the Introduction, placed in a more general setting:

\begin{dfn}\label{dfn:T}
Let $\k,r > 0$. The metric probability space $(K,d,\mu)$ satisfies the \textbf{T-inequality with parameters $\k$ and $r$}, or \textbf{T}$(\k,r)$, if any other $\nu \in \Pr(K)$ satisfies
\[\ol{d}(\nu,\mu) \leq \frac{1}{\k}\rmD(\nu\,\|\,\mu) + r.\]
\end{dfn}

In this definition, `T' stands for `transportation', but beware that other authors use `T' for a variety of different inequalities involving transportation metrics. Indeed, Definition~\ref{dfn:T} may be seen as a linearization of a more standard family of transportation inequalities: those of the form
\begin{equation}\label{eq:SQT}
\ol{d}(\nu,\mu) \leq \sqrt{\frac{1}{\k}\rmD(\nu\,\|\,\mu)}.
\end{equation}
Here we refer to~\eqref{eq:SQT} as a \textbf{square-root transportation inequality}.  The square root is a natural and important feature in many settings where such inequalities have been proved, such as product measures and Hamming metrics (discussed further below).

The papers~\cite{Mar96} and~\cite{BobGot99} include good accounts of how square-root transportation inequalities relate to other notions of concentration.  Those accounts are easily adapted to the T-inequalities of Definition~\ref{dfn:T}.  A survey of recent developments on a wide variety of transportation inequalities can be found in~\cite{GozLeo10}.

If $(K,d,\mu)$ satisfies~\eqref{eq:SQT}, then it also satisfies a whole family of T-inequalities as in Definition~\ref{dfn:T}.  This is because the inequality of arithmetic and geometric means gives
\begin{equation}\label{eq:AMGM}
\sqrt{\frac{1}{\k}\rmD(\nu\,\|\,\mu)} \leq \frac{1}{4\k r}\rmD(\nu\,\|\,\mu) + r 
\end{equation}
for any $r > 0$.  However, in our Theorems B and C it is important that we fix $r > 0$ in advance and then seek an inequality of the form T$(\k n,r)$ for some $\k$.  This fixed value of $r$ may be viewed as a `distance cutoff': the T-inequality we obtain is informative only about distances greater than or equal to $r$.  Example~\ref{ex:need-r} in the next subsection shows that this limitation is necessary.

If $(K,d,\mu)$ has diameter at most $1$ then it clearly satisfies T$(\k,1)$ for every $\k > 0$. Such a space also satisfies T-inequalities with arbitrarily small values of $r$.  To see this, first observe from the diameter bound that
\[\ol{d}(\nu,\mu) \leq \frac{1}{2}\|\nu - \mu\| \quad \hbox{for any}\ \nu,\mu \in \Pr(K).\]
Next, Pinsker's classical inequality~\cite[p15]{Pin64} (with the optimal constant established later in~\cite{Csi67},~\cite{Kull67} and~\cite[Theorem 6.11]{Kem69}) provides
\begin{equation}\label{eq:Pinsker}
\|\mu - \nu\| \leq \sqrt{2\rmD(\nu\,\|\,\mu)}.
\end{equation}
Combining these inequalities with~\eqref{eq:AMGM}, we obtain that $(K,d,\mu)$ satisfies T$(8r,r)$ for every $r > 0$.

If an inequality T$(\k,r)$ improves on this trivial estimate, then its strength lies in the trade-off between $r$ and $\k$.  Product spaces with Hamming metrics and product measures provide classical examples of stronger T-inequalities.  Let $(K,d)$ be a metric space, and suppose for simplicity that its diameter is at most $1$.  Let $d_n$ be the metric on $K^n$ given by~\eqref{eq:norm-Ham2}. Starting with the simple estimate T$(8r,r)$ for any probability measure on $(K,d)$, one shows by induction on $n$ that any product measure on $(K^n,d_n)$ satisfies a T-inequality with constants that improve as $n$ increases.  The inductive argument amounts to combining Lemma~\ref{lem:dist-from-prod-meas} with the chain rule for KL divergence.  This elegant approach to measure concentration for product spaces is the main contribution of Marton~\cite{Mar86} (with improvements and generalizations in~\cite{Mar96}).  The result is as follows.

\begin{prop}[See~{\cite[Lemma 1]{Mar86}} and~{\cite[eqn (1.4)]{Mar96}}]\label{prop:Marton}
	If $(K,d)$ has diameter at most $1$, if $d_n$ is given by~\eqref{eq:norm-Ham2}, and if $\mu$ is a product measure on $K^n$, then
	\begin{equation}\label{eq:Marton}
	\ol{d_n}(\nu,\mu) \leq \sqrt{\frac{1}{2n}\rmD(\nu\,\|\,\mu)} \quad \forall \nu \in \Pr(K^n).
	\end{equation}
	In particular, combining with~\eqref{eq:AMGM}, the space $(K^n,d_n,\mu)$ satisfies T$(8 r n,r)$ for all $r > 0$. \qed
	\end{prop}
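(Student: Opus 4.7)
The plan is to prove the square-root transportation inequality~\eqref{eq:Marton} by induction on $n$, peeling off one coordinate at each step. The two ingredients already available in the excerpt do almost all the work: Lemma~\ref{lem:dist-from-prod-meas} handles the geometric step of bounding $\ol{d_n}$ of a conditional decomposition, while the chain rule for KL divergence handles the information-theoretic side. The only real work is to check that the combination of these tools, passed through Cauchy--Schwarz, preserves the constant $2n$ in the bound exactly.

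For the base case $n=1$, note that $d$ has diameter at most $1$, so for any $\mu,\nu\in\Pr(K)$ the trivial transport bound gives $\ol{d}(\nu,\mu)\leq\tfrac{1}{2}\|\nu-\mu\|$, and Pinsker's inequality~\eqref{eq:Pinsker} yields $\|\nu-\mu\|\leq\sqrt{2\rmD(\nu\,\|\,\mu)}$; together these give $\ol{d}(\nu,\mu)\leq\sqrt{\rmD(\nu\,\|\,\mu)/2}$, which is the $n=1$ instance of~\eqref{eq:Marton}.

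For the inductive step, suppose the inequality holds on $(K^{n-1},d_{n-1})$ with parameter $2(n-1)$. Write $\mu=\mu_1\times\mu'$ with $\mu_1\in\Pr(K)$ and $\mu'$ a product measure on $K^{n-1}$, and note that the normalized Hamming metric splits as
\[
d_n\bigl((x,\bf{y}),(x',\bf{y}')\bigr)=\tfrac{1}{n}d(x,x')+\tfrac{n-1}{n}d_{n-1}(\bf{y},\bf{y}'),
\]
which is exactly the product-of-metrics form~\eqref{eq:prod-met} with $\a=1/n$. Given $\nu\in\Pr(K^n)$, let $\nu_K$ be its marginal on the first coordinate and $x\mapsto\nu_x$ the conditional distribution on $K^{n-1}$. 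Lemma~\ref{lem:dist-from-prod-meas} then gives
\[
\ol{d_n}(\nu,\mu)\leq\tfrac{1}{n}\ol{d}(\nu_K,\mu_1)+\tfrac{n-1}{n}\int_K\ol{d_{n-1}}(\nu_x,\mu')\,\nu_K(\d x),
\]
and applying the base case to the first summand and the inductive hypothesis inside the integral bounds this by
\[
\tfrac{1}{n}\sqrt{a/2}+\tfrac{n-1}{n}\int_K\sqrt{b(x)/\bigl(2(n-1)\bigr)}\,\nu_K(\d x),
\]
where $a:=\rmD(\nu_K\,\|\,\mu_1)$ and $b(x):=\rmD(\nu_x\,\|\,\mu')$. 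The chain rule for KL divergence identifies $\rmD(\nu\,\|\,\mu)=a+b$ with $b:=\int b(x)\,\nu_K(\d x)$.

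The final step, which is where the sharp constant is locked in, is to verify
\[
\tfrac{1}{n}\sqrt{a}+\tfrac{\sqrt{n-1}}{n}\sqrt{b}\leq\sqrt{(a+b)/n}.
\]
This is just Cauchy--Schwarz applied to the vectors $(1,\sqrt{n-1})$ and $(\sqrt{a},\sqrt{b})$, using that concavity of the square root (Jensen) first replaces $\int\sqrt{b(x)}\,\nu_K(\d x)$ by $\sqrt{b}$. The main subtlety of the plan is merely to organize the step so that Jensen and Cauchy--Schwarz are applied in the right order with the right weights; once the splitting $\a=1/n$ is chosen, the algebra falls into place and yields~\eqref{eq:Marton}, whence the T-inequality T$(8rn,r)$ follows from~\eqref{eq:AMGM}.
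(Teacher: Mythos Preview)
Your proof is correct and follows precisely the approach the paper itself sketches just before stating the proposition: induction on $n$, combining Lemma~\ref{lem:dist-from-prod-meas} with the chain rule for KL divergence, with Pinsker's inequality supplying the base case. The paper does not spell out the details (the proposition is cited from Marton's work and closed with a \qed), but your write-up fills them in exactly as intended, including the Jensen-then-Cauchy--Schwarz step that preserves the sharp constant $2n$.
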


Marton's paper~\cite{Mar96} also describes how these inequalities may be seen as sharper and cleaner versions of estimates that Ornstein already uses in his work on isomorphisms between Bernoulli shifts.

In view of the equality~\eqref{eq:TC2}, Proposition~\ref{prop:Marton} implies a kind of reversal of Lemma~\ref{lem:near-prod-TC-small}.

We use Proposition~\ref{prop:Marton} directly at one point in the proof of Theorem C below.  It is also a strong motivation for the specific form of the T-inequalities obtained in Theorems B and C.

Square-root transportation inequalities have been proved for various other probability measures on metric spaces, such as Gaussian measures in Euclidean spaces~\cite{Tal96b}. Marton and others have also extended her work to various classes of weakly dependent stochastic processes: see, for instance,~\cite{Mar96b,Mar98,Dem97,Sam00}.

\subsection{Two families of examples}\label{subs:trans-ineq-egs}

The next examples show that in Theorem B we must fix the distance cutoff $r$ in advance.

\begin{ex}\label{ex:need-r}
Fix a sequence of values $1 > \delta_1 > \delta_2 > \dots$ such that
\begin{equation}\label{eq:deltas}
	\delta_n\to 0 \quad \hbox{but} \quad \sqrt{n}\delta_n \to \infty.
	\end{equation}
For each $n$, let $C_n \subseteq \{0,1\}^n$ be a code of maximal cardinality such that its minimum distance is at least $\delta_n$ in the normalized Hamming metric.  See, for instance,~\cite[Chapter 4]{Welsh--book} for these notions from coding theory.  By the Gilbert--Varshamov bound~\cite[Section 4.2]{Welsh--book} and simple estimates on Hamming-ball volumes, we have
\begin{equation}\label{eq:Cn-fast}
|C_n| \geq 2^{(1 - o(1))n}.
\end{equation}
Letting $\mu_n$ be the uniform distribution on $C_n$, it follows that
\begin{equation}\label{eq:TC-slow}
\TC(\mu_n) \leq n\log 2 - \log |C_n|  = o(1).
\end{equation}

Now let $\nu$ be any measure which is absolutely continuous with respect to $\mu_n$.  Then $\nu$ is supported on $C_n$.  Fix $\k > 0$.  Assume that $\nu$ satisfies the square-root transportation inequality with constant $\k n$: that is, the analog of~\eqref{eq:SQT} holds with $\nu$ as the reference measure instead of $\mu$ and with $\k n$ in place of $\k$.  If $n$ is sufficiently large, then we can deduce from this that $\nu$ must put most of its mass on a single point.  To do this, consider any subset $U \subseteq C_n$ such that $\nu(U) \leq 1/2$, and set $a:= \nu(U)$.  Any coupling of $\nu_{|U}$ and $\nu$ must transport mass at least $a$ from $U$ to $U^{\rm{c}}$.  Since any two elements of $C_n$ are at least distance $\delta_n$ apart, this implies
\begin{equation}\label{eq:dn-delta}
\ol{d_n}(\nu,\nu_{|U}) \geq a\delta_n.
\end{equation}
On the other hand, since $\nu(U) \geq a$, a simple calculation gives
\[\rmD(\nu_{|U}\,\|\,\nu) = \log \frac{1}{\nu(U)} \leq \log \frac{1}{a}.\]
Therefore our assumed square-root transportation inequality gives
\[\ol{d_n}(\nu,\nu_{|U}) \leq \sqrt{\frac{\log(1/a)}{\k}}\cdot\frac{1}{\sqrt{n}}.\]
By the second part of~\eqref{eq:deltas}, this is compatible with~\eqref{eq:dn-delta} only if $a$ is $o(1)$ as $n\to\infty$. It follows that no choice of partition $(U,U^\rm{c})$ can separate $\nu$ into two substantial pieces once $n$ is large, and hence that $\nu$ must be mostly supported on a single point.

Now suppose we have a mixture which represents $\mu_n$ and with most of the weight on terms that satisfy a good square-root transportation inequality. Then the argument above shows that those terms must be close to delta masses.  This is possible only if the number of terms is roughly $|C_n|$.  By~\eqref{eq:Cn-fast} and~\eqref{eq:TC-slow}, this number is much larger than $\exp(O(\TC(\mu_n)))$.

This example shows that one cannot hope to prove Theorem B with a square-root transportation inequality in place of T$(\k n,r)$.  Simply by choosing $\delta_n$ to decay sufficiently slowly, many similar possibilities can also be ruled out: for instance, replacing the square root with an even smaller power cannot repair the problem.

Of course, this example does not contradict Theorem B itself. For any \emph{fixed} $r > 0$, the lower bound in~\eqref{eq:dn-delta} is less than $r$ once $n$ is large enough, and for those $n$ the terms in the mixture provided by Theorem B need not consist of delta masses.  Indeed, I suspect that the classical `random' construction of good codes $C_n$ (see again~\cite[Chapter 4]{Welsh--book}) yields measures $\mu_n$ that already satisfy some inequality of the form T$(\k n,r)$, provided $n$ is large enough in terms of $r$.  It could be interesting to try to prove this.
 \qed
\end{ex}

In view of Proposition~\ref{prop:Marton}, it is worth emphasizing that many measures other than product measures also exhibit concentration.  The next examples are very simple, but already hint at the diversity of such measures.

\begin{ex}\label{ex:prod-img-meas}
	Let $n = 2\ell$ be even, let $A$ be a finite set, and let $B:= A\times A$.  Then we have an obvious bijection
\begin{multline*}
F:B^\ell\to A^n:\big((a_{1,1},a_{1,2}),(a_{2,1},a_{2,2}),\dots,(a_{\ell,1},a_{\ell,2})\big) \\ \mapsto (a_{1,1},a_{1,2},a_{2,1},a_{2,2},\dots,a_{\ell,1},a_{\ell,2}).
\end{multline*}
	Let $\nu \in \Pr(B)$ and let $\mu:= F_\ast\nu^{\times \ell}$.
	
 If $d_\ell$ and $d_n$ denote the normalized Hamming metrics on $B^\ell$ and $A^n$ respectively, then $F$ is $1$-Lipschitz from $d_\ell$ to $d_n$.  This is because each coordinate in $B^\ell$ gives two output coordinates under $F$, but the normalization constant in $d_n$ is twice that in $d_\ell$.  It follows that $\mu$ inherits any T-inequality that is satisfied by $\nu^{\times \ell}$, for instance by using Proposition~\ref{prop:BobGot} below. Since Proposition~\ref{prop:Marton} gives that $\nu^{\times \ell}$ satisfies T$(8r\ell,r)$, which we may write as T$(4rn,r)$, the same is true of $\mu$.
	
However, if $\nu$ is not a product measure on $A\times A$, then $\mu$ is not truly a product measure on the product space $A^n$.  In fact, $\mu$ can be far away from any true product measure on $A^n$. In simple cases we can deduce this from Lemma~\ref{lem:near-prod-TC-small}. For instance, suppose that $\nu$ is the uniform distribution on the diagonal $\{(a,a):\ a\in A\}$.  Then
\[\rmH(\mu) = \rmH(\nu^{\times \ell}) = \ell\log |A| = \frac{n}{2}\log |A|.\]
But on each coordinate in $A^n$, $\mu$ projects to the uniform distribution.  Therefore
	\[\sum_{i=1}^n\rmH(\mu_{\{i\}}) = n\log|A|.\]
	Putting these calculations together, we obtain
	\[\TC(\mu) = n\log|A|/2.\]
	In view of Lemma~\ref{lem:near-prod-TC-small}, this implies a uniform lower bound on $\ol{d_n}(\mu,\mu_0^{\times n})$ for any $\mu_0 \in \Pr(A)$.
	
	The last example also satisfies $\rmH_\mu(\xi_i\,|\,\xi_{[n]\setminus i}) = 0$ for every $i$, since each coordinate in $A\times A$ determines the other under the diagonal distribution $\nu$.  Therefore we also have
	\[\DTC(\mu) = \rmH(\mu) - 0 = n\log|A|/2.\]
	
	Other choices of $\nu$ lead to even more striking properties.  Let us sketch some of these without giving complete details.  Assume $|A|$ is very large, and let $C \subseteq A\times A$ have size $|A|^2/2$ and be $\eps$-uniform for some very small $\eps$, in the sense of quasirandomness for bipartite graphs (see, for instance,~\cite[Section IV.5]{Bol98}).  For any fixed $\eps  > 0$, if $|A|$ is large enough, then a random choice of $C$ is $\eps$-uniform with high probability. Let $\nu$ be the uniform distribution on $C$, and let $\mu := F_\ast\nu^{\times \ell}$.
	
	An easy calculation gives that $\TC(\mu)$ is at most $(n/2)\log 2$.  In fact this is roughly an equality, up to a multiple of $n$ which is small depending on the uniformity of $C$, and $\DTC(\mu)$ is also roughly $(n/2)\log 2$ up to a similar error.
	
	We need the following consequence of uniformity: if $M$ is any large constant, and if $C$ is sufficiently uniform in terms of $M$, then there is a small constant $c > 0$ depending only on $M$ such that
	\begin{equation}\label{eq:quasirand}
(\mu_1\times \mu_2)((A\times A)\setminus C)\geq c
\end{equation}
	whenever $\rmH(\mu_1)$ and $\rmH(\mu_2)$ are both at least $\log |A| - M$. To see this, let $\g$ be the uniform distribution on $A$, and recall that with this choice we have $\rmD(\mu_i\,\|\,\g) = \log|A| - \rmH(\mu_i)$.  Therefore the assumed lower bound on $\rmH(\mu_i)$ implies the upper bound
	\[\rmD(\mu_i\,\|\,\g) = \sum_{a \in A}\g(a)\frac{\mu_i(a)}{\g(a)}\log\frac{\mu_i(a)}{\g(a)} \leq M \quad \hbox{for}\ i=1,2.\]
	Using the strict convexity of the function $t\mapsto t\log t$, one obtains from this bound a positive constant $c_1 \geq 1$, depending only on $M$, such that the set
	\[D_i := \{a:\ \g(a)/c_1 \leq \mu_i(a) \leq c_1\g(a)\}\]
	satisfies $\mu_i(D_i) \geq 1/c_1$ for each $i=1,2$.  In view of the definition of $D_i$, this set must also satisfy
	\begin{equation}\label{eq:Di-big}
	\g(D_i) = \frac{|D_i|}{|A|}\geq \frac{1}{c_1}\mu_i(D_i) \geq \frac{1}{c^2_1}.
	\end{equation}
	Because of these sets, we have
	\begin{equation}\label{eq:quasirand2}
(\mu_1\times \mu_2)((A\times A)\setminus C) \geq (\mu_1\times \mu_2)((D_1\times D_2)\setminus C) \geq \frac{|(D_1\times D_2)\setminus C|}{c_1^2|A|^2}.
\end{equation}
	If $C$ is sufficiently uniform in terms of $c_1$, then it and its complement must both intersect the product set $D_1\times D_2$ in roughly half its elements.  The uniformity can be applied here because~\eqref{eq:Di-big} includes a fixed lower bound on the ratios $|D_i|/|A|$. It then follows that the fraction at the end of~\eqref{eq:quasirand2} is at least
	\[\frac{|D_1\times D_2|}{4c_1^2|A|^2} \geq \frac{1}{4c_1^4},\]
	by another appeal to~\eqref{eq:Di-big}.  This proves~\eqref{eq:quasirand} with $c := 1/4c_1^4$.
	
	Having arranged~\eqref{eq:quasirand}, we now consider the function on $A^n$ defined by
	\[f(\bf{a}):= \frac{1}{n}\big|\big\{i \in \{1,2,\dots,\ell\}:\ (a_{2i-1},a_{2i}) \not\in C\big\}\big|.\]
It is $1$-Lipschitz for the metric $d_n$.  Using~\eqref{eq:quasirand}, Markov's inequality, and some simple double counting, one finds another small but absolute constant $c'$ such that the following holds: for any true product measure $\mu_1\times \cdots \times \mu_n$ on $A^n$, if
\[\rmH(\mu_1\times \cdots \times \mu_n) = \rmH(\mu_1) + \cdots + \rmH(\mu_n) \geq n\log|A| - nM/100,\]
then also
\[\int f\,\d(\mu_1\times \cdots \times \mu_n) \geq c'.\]
Therefore, by Theorem~\ref{thm:MKR} applied with the $1$-Lipschitz function $f$, we have
\begin{equation}\label{eq:prod-far}
\ol{d_n}(\mu_1\times \cdots\times \mu_n,\mu') \geq c'
\end{equation}
for any measure $\mu'$ which is supported on $F(C^\ell) = \{f=0\}$.

On the other hand, $\mu := F_\ast\nu^{\times \ell}$ is supported on $F(C^\ell)$.  If we write it as a mixture of any other measures, they must all be supported on $F(C^\ell)$ too.  So now suppose we have such a mixture
\[\mu = p_1\mu_1 + \cdots + p_m\mu_m\]
with $m = \exp(O(\TC(\mu))) = \exp(O(n))$.  Since
\[\rmH(\mu) = \ell\log|C| = n\log|A| - (n/2)\log 2,\]
and since any measure on $A^n$ has entropy at most $n\log|A|$, Lemma~\ref{lem:near-convex} implies that most of the mass in the mixture is on terms $\mu_j$ for which $\rmH(\mu_j)> n\log|A| - O(n)$, where `$O(n)$' is independent of $|A|$.  In particular, if we previously chose $M$ large enough, then this lower bound on entropy is greater than $n\log|A| -nM/100$. Combining this with~\eqref{eq:prod-far}, we deduce that most of the mass in this mixture is on measures $\mu_j$ that are at least distance $c'$ from any product measure.

This gives an example $\mu$ that is highly concentrated, but cannot be written as a mixture of measures that are close in $\ol{d_n}$ to product measures and where the number of terms is $\exp(O(\TC(\mu))$.  This shows that the `good' summands in Theorem B cannot be limited to measures that are close in $\ol{d_n}$ to products --- other examples of highly concentrated measures are sometimes necessary. \qed
\end{ex}

The ideas in Example~\ref{ex:prod-img-meas} can be generalized in many ways.  For instance, many other subsets and measures can be constructed by imposing more complicated pairwise constraints on the coordinates in $A^n$, or by imposing constraints on triples or even larger sets of coordinates.  I expect that many such constructions exhibit measure concentration, although certainly many others do not.

However, all the cases in which I know how to prove measure concentration share a certain structure: like the measures $F_\ast\nu^{\times \ell}$ in Example~\ref{ex:prod-img-meas}, they are $O(1)$-Lipschitz images of product measures on other product spaces.  In that example, we use the map $F$ to `hide' the product structure of $\nu^{\times \ell}$, with the effect that $F_\ast\nu^{\times \ell}$ is far away from any product measure on $A^n$. But one might feel that this still has the essence of a product measure, and it suggests the following general question.

\begin{ques}\label{ques:near-prod-img}
	Is it true that for any $r,\k,\eps > 0$ there exists an $L > 0$ for which the following holds for all $n \in \bbN$?
	\begin{quote}
If $(A^n,d_n,\mu)$ satisfies T$(\k n,r)$, then there exist an integer $\ell \in [n/L,Ln]$, a finite set $B$, a distribution $\nu$ on $B$, and an $L$-Lipschitz map $F:B^\ell\to A^n$ such that
\[\ol{d_n}(\mu,F_\ast\nu^{\times \ell}) < \eps.\]
\end{quote}
	\end{ques}

The methods of the present paper do not seem to shed any light on this question.  An answer in either direction would be very interesting.  If the answer is Yes, then we have found a rather strong characterization of concentrated  measures on $(A^n,d_n)$.  If it is No, then finding a counterexample seems to require finding a new property of product measures which (i) survives under pushforward by Lipschitz maps but (ii) is not implied by concentration alone.

If Question~\ref{ques:near-prod-img} has a positive answer, then it could be regarded as an analog of a result about stationary processes: the equivalence between measure concentration of the finite-dimensional distributions and being a factor of a Bernoulli shift (see~\cite[Subsection IV.3.c]{Shi96}).  However, Question~\ref{ques:near-prod-img} assumes much less structure than that result.  This, in turn, could have consequences for the ergodic theory of non-amenable acting groups, for which factors of Bernoulli systems are far from being understood.

\subsection{Exponential moment bounds and some consequences}\label{subs:BobGot}

Using Monge--Kantorovich--Rubinstein duality, a T-inequality is equivalent to an exponential moment bound for $1$-Lipschitz functions.  This fact is essentially due to Bobkov and G\"{o}tze~\cite{BobGot99}, although they work with square-root transportation inequalities.  The statement and proof in our linearized setting are easily obtained by modifying their argument.

\begin{prop}[Bobkov--G\"{o}tze equivalence]\label{prop:BobGot}
For a metric probability space $(K,d,\mu)$, the following are equivalent:
\begin{itemize}
	\item[(a)] The space $(K,d,\mu)$ satisfies T$(\k,r)$;
	\item[(b)] Any $1$-Lipschitz function $f$ satisfies
	\[M_\mu(\k f) \leq \rme^{\k\langle f\rangle + \k r}.\]
\end{itemize}
\end{prop}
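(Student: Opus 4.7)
The proof should be a Legendre-type duality argument that combines two tools introduced earlier in the excerpt: the Monge--Kantorovich--Rubinstein duality (Theorem~\ref{thm:MKR}), which expresses $\ol{d}(\nu,\mu)$ as a supremum over $\mathrm{Lip}_1(K)$, and the classical Gibbs variational principle (Lemma~\ref{lem:maxent}), which for any bounded measurable $g$ yields the inequality $\int g\,\d\nu \leq \rmD(\nu\,\|\,\mu) + C_\mu(g)$, with equality exactly at the Gibbs measure $\nu = \mu_{|\rme^g}$. The plan is to use the variational principle to convert between exponential-moment statements about $f$ and transportation statements about $\nu$, with the $1$-Lipschitz condition on $f$ linked to $\ol{d}$ via Theorem~\ref{thm:MKR}.

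For the implication (b)$\Rightarrow$(a), I would fix an arbitrary $\nu$ and an arbitrary $1$-Lipschitz $f$ and apply the inequality form of Lemma~\ref{lem:maxent} with the test function $g = \k f$, obtaining
\[\k \int f\,\d\nu \leq \rmD(\nu\,\|\,\mu) + \log M_\mu(\k f).\]
Plugging in (b) gives $\int f\,\d\nu - \int f\,\d\mu \leq \k^{-1}\rmD(\nu\,\|\,\mu) + r$, and then Theorem~\ref{thm:MKR} (take the supremum over $f \in \Lip_1(K)$) yields T$(\k,r)$.

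For the converse (a)$\Rightarrow$(b), fix a $1$-Lipschitz $f$ and choose the Gibbs measure $\nu := \mu_{|\rme^{\k f}}$ as the test measure in T$(\k,r)$. The key point is that for this particular $\nu$, Lemma~\ref{lem:maxent} gives an \emph{identity} rather than an inequality: $\rmD(\nu\,\|\,\mu) = \k \int f\,\d\nu - \log M_\mu(\k f)$. Combined with $\int f\,\d\nu - \int f\,\d\mu \leq \ol{d}(\nu,\mu)$ (from Theorem~\ref{thm:MKR} applied to $f$) and the hypothesis $\ol{d}(\nu,\mu) \leq \k^{-1}\rmD(\nu\,\|\,\mu) + r$, the $\int f\,\d\nu$ terms cancel and one is left with $\k^{-1}\log M_\mu(\k f) \leq \langle f\rangle + r$, which is (b).

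I do not anticipate a genuine obstacle: both halves are short once one recognizes that Lemma~\ref{lem:maxent} is the Legendre transform relating $\rmD(\,\cdot\,\|\,\mu)$ and $C_\mu$, so that $\k^{-1}\rmD$ and $C_\mu(\k\,\cdot\,)/\k$ are convex conjugates and the pairing with $\int f\,(\d\nu-\d\mu)$ is exactly the coupling that $\ol{d}$ dualizes against $\Lip_1$. The only point requiring small care is the application of Lemma~\ref{lem:maxent} in the first direction: that lemma is stated for $f \in L^\infty(\mu)$, but compactness of $(K,d)$ together with $1$-Lipschitzness of $f$ ensures boundedness, so no extension is needed.
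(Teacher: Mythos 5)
Your proposal is correct and is essentially the paper's own argument: the paper phrases it as a single chain of equivalences (MKR duality to rewrite T$(\k,r)$ as a family of inequalities over all $\nu$ and $f$, then Lemma~\ref{lem:maxent} to identify the infimum over $\nu$ as $-C_\mu(\k f)$), while you unpack the same two ingredients into the two separate implications. No gap.
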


\begin{proof}
	Theorem~\ref{thm:MKR} implies that condition (a) is equivalent to
\[\k\int f\,\d\nu - \k\int f\,\d\mu \leq \rmD(\nu\,\|\,\mu) + \k r\]
for all $\nu \in \Pr(K)$ and $1$-Lipschitz $f$.  Re-arranging, this is equivalent to
\begin{equation}\label{eq:equiv-ineq}
- \k\int f\,\d\mu \leq \k r + \Big(\rmD(\nu\,\|\,\mu) - \k\int f\,\d\nu\Big)
\end{equation}
for all such $\nu$ and $f$.

For a given $f$, Lemma~\ref{lem:maxent} asserts that the difference
\[\rmD(\nu\,\|\,\mu) - \k\int f\,\d\nu\]
is minimized by the Gibbs measure $\nu = \mu_{|\rme^{\k f}}$, and for this choice of $\nu$ that difference is equal to $- C_\mu(\k f)$. Therefore~\eqref{eq:equiv-ineq} is equivalent to the inequality
\[C_\mu(\k f) \leq  \k \int f\,\d\mu + \k r.\]
Exponentiating, we arrive at condition (b).
\end{proof}

Bobkov and G\"{o}tze's original result~\cite[Theorem 1.3]{BobGot99} asserts that the square-root transportation inequality~\eqref{eq:SQT} is equivalent to the inequality
\[M_\mu(tf) \leq \rme^{t\langle f\rangle + t^2/4\k} \]
for all $1$-Lipschitz $f:K\to \bbR$ and all $t \in \bbR$.  This inequality is also easily linearized to arrive at part (b) of Proposition~\ref{prop:BobGot}.

A first application of Bobkov and G\"{o}tze's equivalence is a second proof of Proposition~\ref{prop:Marton}.  Let $(K^n,d_n,\mu)$ be as in that proposition. By the equivalence, it suffices to prove that any $1$-Lipschitz function $f:K^n\to\bbR$ and $t\in \bbR$ satisfy
\begin{equation}\label{eq:BobGot-moment}
\int \rme^{tf}\,\d\mu \leq \rme^{t\langle f\rangle + t^2/8n}.
\end{equation}
Because $\mu$ is a product measure, this can be proved by the method of bounded martingale differences, using the filtration $(\F_i)_{i=1}^n$ of $K^n$ where $\F_i$ consists of the Borel sets that depend on only the first $i$ coordinates.  One can also phrase this proof as an induction on $n$, using a conditional version of~\eqref{eq:BobGot-moment} with $n=1$ at each step.  The result when $n=1$ is a classical lemma of Hoeffding~\cite{Hoe63}, and its extension to the setting of $1$-Lipschitz functions on product spaces is due to McDiarmid: see~\cite[Lemma 5.8]{McD89} and the arguments that follow it.  By a special case of the equivalence in Proposition~\ref{prop:BobGot}, that lemma of Hoeffding is equivalent to Pinsker's inequality~\eqref{eq:Pinsker} with the optimal constant.

This use of the inequality~\eqref{eq:BobGot-moment} fits into a more general formalism of bounding the `Laplace functional' of a metric probability space: see~\cite[Section 1.6]{Led--book}.

The rest of this subsection gives three stability results for T-inequalities under different kinds of perturbation to a measure.  These lemmas can all be proved directly from Definition~\ref{dfn:T}, but the proofs become simpler and quicker using condition (b) from Proposition~\ref{prop:BobGot}.

\begin{lem}\label{lem:RN-flat}
	Suppose that $(K,d,\mu)$ satisfies T$(\k,r)$, and let $\nu \in \Pr(K)$ be absolutely continuous with respect to $\mu$ and satisfy
	\[\frac{\d\nu}{\d\mu} \leq M \quad \mu\hbox{-a.s.}\]
	for some $M \in [1,\infty)$.  Then $(K,d,\nu)$ satisfies
	\[\rm{T}\Big(\k,2\frac{\log M}{\k} + 2r \Big).\]
	\end{lem}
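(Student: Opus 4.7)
The plan is to use the Bobkov--G\"otze equivalence from Proposition~\ref{prop:BobGot}: knowing T$(\k,r)$ for $\mu$ is the same as an exponential moment bound for $1$-Lipschitz functions under $\mu$, and we want to deduce the analogous bound under $\nu$. Concretely, it suffices to show that for every $1$-Lipschitz $f:K\to\bbR$,
\[
M_\nu(\k f) \;\leq\; \rme^{\k\langle f\rangle_\nu + 2\log M + 2\k r}.
\]

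The first step is straightforward: since $\d\nu/\d\mu \leq M$ and $\rme^{\k f}$ is non-negative,
\[
M_\nu(\k f) \;=\; \int \rme^{\k f}\,\d\nu \;\leq\; M\int \rme^{\k f}\,\d\mu \;\leq\; M\,\rme^{\k\langle f\rangle_\mu + \k r},
\]
where the second inequality is condition (b) of Proposition~\ref{prop:BobGot} applied to $\mu$. So the whole game reduces to replacing $\langle f\rangle_\mu$ by $\langle f\rangle_\nu$ at the cost of an additive error of $r + (\log M)/\k$.

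For that comparison I would invoke Monge--Kantorovich--Rubinstein duality (Theorem~\ref{thm:MKR}): since $f$ is $1$-Lipschitz,
\[
\langle f\rangle_\mu - \langle f\rangle_\nu \;\leq\; \ol{d}(\nu,\mu).
\]
Now apply the hypothesis T$(\k,r)$ for $\mu$ to the measure $\nu$ itself:
\[
\ol{d}(\nu,\mu) \;\leq\; \frac{1}{\k}\rmD(\nu\,\|\,\mu) + r.
\]
Finally, the bound $\d\nu/\d\mu \leq M$ gives $\log(\d\nu/\d\mu) \leq \log M$ pointwise, so from the formula~\eqref{eq:dfn-D} we read off $\rmD(\nu\,\|\,\mu) \leq \log M$. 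Combining the last three displays,
\[
\langle f\rangle_\mu \;\leq\; \langle f\rangle_\nu + \tfrac{\log M}{\k} + r,
\]
and substituting into our earlier moment bound yields
\[
M_\nu(\k f) \;\leq\; M\cdot \rme^{\k\langle f\rangle_\nu + \log M + \k r + \k r} \;=\; \rme^{\k\langle f\rangle_\nu + 2\log M + 2\k r},
\]
which is exactly condition (b) of Proposition~\ref{prop:BobGot} for $\nu$ with parameters $\k$ and $2(\log M)/\k + 2r$.

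There is no real obstacle here; the only decision is to route through the exponential moment reformulation rather than trying to bound $\ol{d}(\nu',\nu)$ directly for a general competitor $\nu'$, which would force one to manipulate $\rmD(\nu'\,\|\,\nu) - \rmD(\nu'\,\|\,\mu)$ by hand. Going through Bobkov--G\"otze isolates the single inequality $\rmD(\nu\,\|\,\mu) \leq \log M$, which is where the constant $M$ enters, and contributes $(\log M)/\k$ once to each of the two places where $\mu$ is replaced by $\nu$ (the ratio $\d\nu/\d\mu$ and the shift in barycentre), explaining the factor of $2$ in the statement.
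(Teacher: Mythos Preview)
Your proof is correct and follows essentially the same route as the paper: both verify condition (b) of Proposition~\ref{prop:BobGot} by first bounding $M_\nu(\k f)\le M\cdot M_\mu(\k f)$, then shifting from $\langle f\rangle_\mu$ to $\langle f\rangle_\nu$ via $\ol{d}(\nu,\mu)\le \tfrac{1}{\k}\rmD(\nu\,\|\,\mu)+r$ together with $\rmD(\nu\,\|\,\mu)\le\log M$.
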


\begin{proof}
We verify the relevant version of condition (b) from Proposition~\ref{prop:BobGot}.  Let $\langle\cdot\rangle_\mu$ and $\langle\cdot\rangle_\nu$ denote integration with respect to $\mu$ and $\nu$, respectively.  Let $f \in \rm{Lip}_1(K)$. Then
\[\int \rme^{\k f}\,\d\nu = \int \rme^{\k f}\frac{\d\nu}{\d\mu}\,\d\mu \leq M\int \rme^{\k f}\,\d\mu = \rme^{\k(\log M/\k)}\int \rme^{\k f}\,\d\mu.\]
By assumption, this is at most $\exp(\k\langle f\rangle_\mu + \k r + \k(\log M/\k))$. On the other hand, we have
\[\rmD(\nu\,\|\,\mu) = \int \log \frac{\d\nu}{\d\mu}\,\d\nu \leq \log M,\]
and so the original definition of T$(\k,r)$ gives
\[\langle f\rangle_\mu \leq \langle f\rangle_\nu + \ol{d}(\nu,\mu) \leq \langle f\rangle_\nu + \frac{1}{\k}\log M + r.\]
Combining these inequalities completes the proof.
\end{proof}

\begin{lem}\label{lem:infty-transport}
Suppose that $(K,d,\mu)$ satisfies T$(\k,r)$, let $\nu \in \Pr(K)$, and assume there exists a coupling $\l$ of $\mu$ and $\nu$ which is supported on the set
\[\{(x,y) \in K\times K:\ d(x,y) \leq \delta\}.\]
Then $\nu$ satisfies T$(\k,r+2\delta)$.
\end{lem}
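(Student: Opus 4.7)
The plan is to verify this via the Bobkov--G\"otze equivalence (Proposition~\ref{prop:BobGot}), rather than working directly from the definition of T$(\k,r)$. So the goal becomes showing that every $1$-Lipschitz $f:K\to\bbR$ satisfies
\[M_\nu(\k f) \leq \exp\bigl(\k\langle f\rangle_\nu + \k(r+2\delta)\bigr),\]
where $\langle\,\cdot\,\rangle_\nu$ denotes integration against $\nu$.

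First I would exploit the pointwise bound coming from the support hypothesis on $\l$: whenever $d(x,y)\leq\delta$ and $f$ is $1$-Lipschitz, one has $f(y)\leq f(x)+\delta$, and hence $\rme^{\k f(y)}\leq \rme^{\k\delta}\rme^{\k f(x)}$. Integrating against $\l$ and using that its marginals are $\mu$ and $\nu$ yields
\[M_\nu(\k f)=\int \rme^{\k f(y)}\,\l(\d x,\d y)\leq \rme^{\k\delta}\int \rme^{\k f(x)}\,\l(\d x,\d y)=\rme^{\k\delta}M_\mu(\k f).\]
Since $\mu$ satisfies T$(\k,r)$, Proposition~\ref{prop:BobGot}(b) gives $M_\mu(\k f)\leq \exp(\k\langle f\rangle_\mu+\k r)$, so
\[M_\nu(\k f)\leq \exp\bigl(\k\langle f\rangle_\mu+\k r+\k\delta\bigr).\]

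The remaining step is to replace $\langle f\rangle_\mu$ by $\langle f\rangle_\nu$ at the cost of an extra $\delta$. The same coupling and the Lipschitz property give
\[\langle f\rangle_\mu-\langle f\rangle_\nu=\int \bigl(f(x)-f(y)\bigr)\,\l(\d x,\d y)\leq \int d(x,y)\,\l(\d x,\d y)\leq \delta,\]
so $\langle f\rangle_\mu\leq \langle f\rangle_\nu+\delta$. Substituting, $M_\nu(\k f)\leq \exp(\k\langle f\rangle_\nu+\k(r+2\delta))$, and Proposition~\ref{prop:BobGot} in the other direction converts this back into the T-inequality T$(\k,r+2\delta)$ for $\nu$.

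There is no serious obstacle here; the only delicate point is keeping track of the two $\delta$ contributions, one coming from comparing $M_\nu(\k f)$ to $M_\mu(\k f)$ through the support of $\l$, and one coming from comparing the $\nu$- and $\mu$-means of $f$. Each costs $\delta$, and together they account for the $+2\delta$ in the conclusion.
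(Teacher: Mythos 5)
Your proof is correct and follows essentially the same route as the paper: both verify condition (b) of Proposition~\ref{prop:BobGot}, bound $M_\nu(\k f)$ by $\rme^{\k\delta}M_\mu(\k f)$ using the coupling and the Lipschitz property, and then trade $\langle f\rangle_\mu$ for $\langle f\rangle_\nu+\delta$ by the same coupling argument. The accounting of the two $\delta$ contributions matches the paper exactly.
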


\begin{proof}
We verify the relevant version of condition (b) from Proposition~\ref{prop:BobGot}. Let $\langle\cdot\rangle_\mu$ and $\langle\cdot\rangle_\nu$ denote integration with respect to $\mu$ and $\nu$, respectively.  Then
\[\int \rme^{\k f}\,\d\nu = \int_{K\times K}\rme^{\k f(y)}\,\l(\d x,\d y).\]
Since $d(x,y) \leq \delta$ for $\l$-almost every $(x,y)$, and $f$ is $1$-Lipschitz, the last integral is bounded above by
\[\int_{K\times K}\rme^{\k f(x) + \k\delta}\,\l(\d x,\d y) = \rme^{\k\delta}\int \rme^{\k f}\,\d\mu.\]
By assumption, this is at most $\exp(\k\langle f\rangle_\mu + \k r + \k\delta)$. Finally, a repeat of the same reasoning about $\l$ and $f$ gives $\langle f\rangle_\mu \leq \langle f\rangle_\nu + \delta$.  Combining these inequalities completes the proof.
\end{proof}

\begin{prop}\label{prop:T-under-dbar}
Suppose that $(K,d,\mu)$ satisfies T$(\k,r)$, let $\nu \in \Pr(K)$, and assume that $\ol{d}(\nu,\mu) \leq \delta^2$ for some $\delta \in [0,1/8)$.  Then there exists a Borel set $U \subseteq K$ satisfying $\nu(U) \geq 1-4\delta$ and such that $\nu_{|U}$ satisfies
\[\rm{T}\Big(\k,\frac{8\delta + 2\log 4}{\k} + 4r + 4\delta\Big).\]
\end{prop}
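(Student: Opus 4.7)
My plan is to extract $U$ from an optimal transport plan between $\mu$ and $\nu$, and then to chain Lemmas~\ref{lem:RN-flat} and~\ref{lem:infty-transport} twice in order to push T$(\k,r)$ from $\mu$ to $\nu_{|U}$.

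First, I would fix an optimal coupling $\l$ of $\mu$ and $\nu$, so $\int d(x,y)\,\l(\d x,\d y) \leq \delta^2$, and let $E := \{(x,y):\ d(x,y) \leq \delta\}$. Markov's inequality gives $\l(E) \geq 1-\delta$, and the conditional probability $\l_E := \l(\,\cdot\,|\,E)$ is a coupling of its marginals $\mu_E$ and $\nu_E$ supported on pairs at distance at most $\delta$. The relations $\l(E)\mu_E \leq \mu$ and $\l(E)\nu_E \leq \nu$ give
\[
\frac{\d\mu_E}{\d\mu} \leq \frac{1}{1-\delta}, \qquad \frac{\d\nu_E}{\d\nu} \leq \frac{1}{1-\delta}.
\]

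Next, I would apply Lemma~\ref{lem:RN-flat} with $M = 1/(1-\delta)$ to transfer T$(\k,r)$ from $\mu$ to $\mu_E$, and then apply Lemma~\ref{lem:infty-transport} with the coupling $\l_E$ to transfer the resulting inequality to $\nu_E$. Using $-\log(1-\delta) \leq 2\delta$ (valid for $\delta \leq 1/2$), this shows that $\nu_E$ satisfies T$(\k,\ 4\delta/\k + 2r + 2\delta)$.

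Now I would identify $U$ from the density $f := \d\nu_E/\d\nu$. The decomposition $\nu = \l(E)\nu_E + \l(E^{\rm{c}})\nu_{E^{\rm{c}}}$ shows $\|\nu - \nu_E\| \leq 2\delta$; since $\int (1-f)\,\d\nu = 0$, this forces $\int (1-f)^+\,\d\nu \leq \delta$. Setting $U := \{f \geq 1/2\}$, the bound $\delta \geq \int_{U^{\rm{c}}}(1-f)\,\d\nu \geq \tfrac{1}{2}\nu(U^{\rm{c}})$ yields $\nu(U) \geq 1 - 2\delta \geq 1-4\delta$. On $U$ one has
\[
\frac{\d\nu_{|U}}{\d\nu_E} = \frac{1}{f\,\nu(U)} \leq \frac{2}{1-2\delta} \leq 4,
\]
where the last inequality uses $\delta < 1/8$. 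A second application of Lemma~\ref{lem:RN-flat}, now with $M = 4$, then upgrades the T-inequality for $\nu_E$ to one for $\nu_{|U}$ with parameter at most
\[
\frac{2\log 4}{\k} + 2\Big(\frac{4\delta}{\k} + 2r + 2\delta\Big) = \frac{8\delta + 2\log 4}{\k} + 4r + 4\delta,
\]
exactly as required.

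The main point, rather than any substantive obstacle, is the observation that the inequality $\|\nu - \nu_E\| \leq 2\delta$ automatically forces the density $f$ to be close to $1$ on a set of $\nu$-measure at least $1-2\delta$; truncating $f$ from below at $1/2$ then simultaneously preserves enough of the $\nu$-mass and bounds the Radon--Nikodym derivative needed for the second invocation of Lemma~\ref{lem:RN-flat}. The cutoff value $1/2$ together with the hypothesis $\delta < 1/8$ are calibrated to produce $M = 4$ in that final step, matching the stated constant $2\log 4$.
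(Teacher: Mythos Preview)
Your proof is correct and follows essentially the same route as the paper's: conditioning the optimal coupling on $\{d\le\delta\}$, applying Lemma~\ref{lem:RN-flat} then Lemma~\ref{lem:infty-transport}, defining $U$ via a threshold on the density $f=\d\nu_E/\d\nu$, and finishing with a second use of Lemma~\ref{lem:RN-flat}. The only cosmetic difference is your choice of threshold $\{f\ge 1/2\}$ (yielding $\nu(U)\ge 1-2\delta$) versus the paper's $\{f\ge 1/2+2\delta\}$ (yielding $\nu(U)\ge 1-4\delta$); both lead to the same final constants.
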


\begin{proof}
Let $\l$ be a coupling of $\mu$ and $\nu$ such that $\int d\,\d\l \le \delta^2$, and let
\[W := \{(x,y):\ d(x,y)\leq \delta\}.\]
Then Markov's inequality gives $\l(W) \geq 1-\delta$. Let $\l_1 := \l_{|W}$, and let $\mu_1$ and $\nu_1$ be the first and second marginals of $\l_1$, respectively.

For these new measures, we have
\[\frac{\d\l_1}{\d\l} = \frac{1}{\l(W)}\cdot 1_W \leq (1-\delta)^{-1} \leq 1 + 2\delta,\]
and hence also $\d\mu_1/\d\mu \leq 1+2\delta$. Therefore, by Lemma~\ref{lem:RN-flat}, the measure $\mu_1$ still satisfies T$(\k,r_1)$, where
\[r_1 := 2\frac{\log (1+2\delta)}{\k} + 2r \leq 4\delta/\k + 2r.\]

Then, since $\l_1$ is supported on $W$, Lemma~\ref{lem:infty-transport} promises that $\nu_1$ still satisfies T$(\k,r_2)$, where $r_2 = r_1 + 2\delta$.

Let $\rho := \d\nu_1/\d\nu$, and let $f:= 1 + 2\delta - \rho$. Since $\rho \leq 1 + 2\delta$, $f$ is non-negative.  On the other hand,
\[\int f\,\d\nu = 1 + 2\delta - 1 = 2\delta.\]
Therefore another appeal to Markov's inequality gives that the set
\[U := \{\rho \geq 1/2+2\delta\} = \{f \leq 1/2\}\]
satisfies $\nu(U) \geq 1 - 4\delta$.

Finally, observe that
\[\nu_{|U} = \frac{1}{\nu(U)}\cdot 1_U\cdot \nu \leq \frac{1}{1 - 4\delta}\cdot (2\rho)\cdot\nu = \frac{2}{1 - 4\delta}\cdot \nu_1 \leq 4\nu_1.\]
By a second appeal to Lemma~\ref{lem:RN-flat}, this implies that $\nu_{|U}$ satisfies T$(\k,r_3)$, where
\[r_3 = 2r_2 + 2\frac{\log 4}{\k} = 2(r_1+2\delta) + 2\frac{\log 4}{\k} \leq \frac{8\delta + 2\log 4}{\k} + 4r + 4\delta.\]
\end{proof}

\begin{rmks}$\phantom{i}$
\begin{enumerate}
\item Different estimates can be obtained by choosing different thresholds for $\rho$ when defining the set $U$ in the proof above.  The choice we have made here is simple, but not canonical.  This version suits our later purposes quite well, because in those cases $\k$ is very large while $r$ is only fairly small, so the expression with $\k$ in the denominator is also very small.

\item The qualitative conclusion of Proposition~\ref{prop:T-under-dbar} cannot be substantially improved: in particular, we cannot conclude that $\nu$ itself satisfies T$(\k,r)$ for some controlled value of $r$.  To see this, suppose that $K$ has diameter $1$, and that $\nu$ equals $(1-\delta^2)\mu + \delta^2 \mu'$, where $\mu$ satisfies T$(\k,\delta)$ for some huge $\k$, but $\mu'$ is another measure whose support is at distance $1$ from the support of $\mu$.  Then $\ol{d}(\nu,\mu) = \delta^2$, but that small multiple of $\mu'$ which appears in $\nu$ is hard to transport to the rest of $\nu$.  This prevents $\nu$ from satisfying T$(\k',r')$ with $r' \ll 1$ and $\k' \gg 1/\delta^2$, no matter how large $\k$ is.
\end{enumerate}
\end{rmks}

We finish this subsection with another stability property.  This one applies to measures on Hamming spaces.  It shows that T-inequalities survive when we lift those measures to spaces of slightly higher dimension.

\begin{lem}[Stability under lifting]\label{lem:stab-lift}
Let $\mu$ be a probability measure on a Hamming metric space $(A^n,d_n)$.  Let $S$ be a nonempty subset of $[n]$, let $\mu_S$ be the projection of $\mu$ to $A^S$, and define $a \in [0,1)$ by $|S| = (1-a)n$.  If $\mu_S$ satisfies T$(\k,r)$, then $\mu$ itself satisfies
\[\rm{T}\Big(\frac{\k}{1-a},(1-a)r + a\Big).\]
\end{lem}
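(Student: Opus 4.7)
The plan is to exploit the fact that the normalized Hamming metric $d_n$ on $A^n$ decomposes as a weighted average of the normalized Hamming metrics on $A^S$ and $A^{[n]\setminus S}$. Specifically, writing $T := [n]\setminus S$ and letting $d_S, d_T$ denote the normalized Hamming metrics on $A^S$ and $A^T$, a direct count of disagreements gives
\[
d_n(\bf{a},\bf{a}') \;=\; (1-a)\,d_S(\bf{a}_S,\bf{a}'_S) \;+\; a\,d_T(\bf{a}_T,\bf{a}'_T).
\]
This reduces bounding $\ol{d_n}$ to bounding the two factors separately.

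Given any $\nu \in \Pr(A^n)$, I would build a coupling of $\nu$ and $\mu$ as follows. Let $\nu_S, \mu_S \in \Pr(A^S)$ be the $S$-marginals, and let $\nu_{T\mid x}$ and $\mu_{T\mid x'}$ be regular conditional distributions of the $T$-coordinates given the $S$-coordinates under $\nu$ and $\mu$ respectively. Choose any coupling $\l_S$ of $\nu_S$ and $\mu_S$ on $A^S \times A^S$, and form the coupling
\[
\l \;:=\; \int_{A^S\times A^S} \big(\delta_{(x,x')} \otimes (\nu_{T\mid x}\times \mu_{T\mid x'})\big)\,\l_S(\d x,\d x')
\]
on $A^n\times A^n$, which has marginals $\nu$ and $\mu$. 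Integrating the decomposition of $d_n$ against $\l$, and using that $d_T \leq 1$ pointwise, gives
\[
\ol{d_n}(\nu,\mu) \;\leq\; (1-a)\int d_S\,\d\l_S \;+\; a.
\]
Optimizing over $\l_S$ yields $\ol{d_n}(\nu,\mu) \leq (1-a)\ol{d_S}(\nu_S,\mu_S) + a$.

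Now I invoke the assumption T$(\k,r)$ for $\mu_S$, which gives
\[
\ol{d_S}(\nu_S,\mu_S) \;\leq\; \tfrac{1}{\k}\rmD(\nu_S\,\|\,\mu_S) + r,
\]
and the data-processing (monotonicity) property of KL divergence under the coordinate projection $A^n \to A^S$, which gives $\rmD(\nu_S\,\|\,\mu_S) \leq \rmD(\nu\,\|\,\mu)$. Combining,
\[
\ol{d_n}(\nu,\mu) \;\leq\; \tfrac{1-a}{\k}\rmD(\nu\,\|\,\mu) + (1-a)r + a,
\]
which is exactly T$\big(\k/(1-a),\,(1-a)r+a\big)$ since $(1-a)/\k = 1/(\k/(1-a))$.

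None of the steps looks like a real obstacle: the metric decomposition is an elementary counting identity, the coupling construction is a standard conditional-disintegration argument, and monotonicity of $\rmD$ under projection is a classical instance of the data-processing inequality (one can recover it in one line from Lemma~\ref{lem:I-and-KL} applied to the tower of $\s$-algebras generated by the $S$-coordinates and by all coordinates). The only thing to be careful about is the bookkeeping of the weights $(1-a)$ and $a$ so that the claimed constants come out correctly.
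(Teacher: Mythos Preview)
Your proof is correct and follows essentially the same route as the paper: decompose $d_n$ as a weighted sum of $d_S$ and $d_{[n]\setminus S}$, lift an optimal coupling of the $S$-marginals to a coupling of $\nu$ and $\mu$, bound the $[n]\setminus S$-contribution by $a$, and invoke T$(\k,r)$ for $\mu_S$ together with monotonicity of KL under projection. The only cosmetic difference is that the paper simply says ``let $\l$ be any lift of $\l_S$'' while you spell out one explicit such lift via conditional distributions.
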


\begin{proof}
Let $d_S$ be the normalized Hamming metric on $A^S$. Suppose that $\nu \in \Pr(A^n)$ has $\rmD(\nu\,\|\,\mu) < \infty$, and let $\nu_S$ be the projection of $\nu$ to $A^S$.  Then $\rmD(\nu_S\,\|\,\mu_S) \leq \rmD(\nu\,\|\,\mu)$, because $\d\nu_S/\d\mu_S$ is the conditional $\mu$-expectation of $\d\nu/\d\mu$ onto the $\s$-algebra generated by the coordinates in $S$, and so we may apply the conditional Jensen inequality in the definition~\eqref{eq:dfn-D}.  Therefore our assumption on $\mu_S$ gives
\[\ol{d_S}(\nu_S,\mu_S) \leq \frac{1}{\k}\rmD(\nu\,\|\,\mu) + r.\]
Let $\l_S$ be a coupling of $\nu_S$ and $\mu_S$ satisfying $\int d_S\,\d\l_S = \ol{d_S}(\nu_S,\mu_S)$, and let $\l$ be any lift of $\l_S$ to a coupling of $\nu$ and $\mu$.  Then
\begin{align*}
\int d_n\ \d\l &= \frac{|S|}{n}\int d_S\ \d\l_S + \frac{n-|S|}{n}\int d_{[n]\setminus S}(\bf{x}_{[n]\setminus S},\bf{y}_{[n]\setminus S})\ \l(\d\bf{x},\d\bf{y}) \\ &\leq (1-a)\Big(\frac{1}{\k}\rmD(\nu_S\,\|\,\mu_S) + r\Big) + a\\ &\leq (1-a)\Big(\frac{1}{\k}\rmD(\nu\,\|\,\mu) + r\Big) + a.
\end{align*}
\end{proof}

\subsection{A related functional inequality}\label{subs:logSob}

\begin{dfn}\label{dfn:L}
Fix $(K,d)$ as before, and let $\a > 0$ and $0 \leq \k_0 \leq \k \leq \infty$.  Then $\mu \in \Pr(K)$ satisfies the \textbf{L-inequality with parameters $\k_0$, $\k$ and $\a$}, or \textbf{L$([\k_0,\k],\a)$}, if
\begin{equation}\label{eq:LSL}
\rmD(\mu_{|\rme^{tf}}\,\|\,\mu) \leq \a t^2
\end{equation}
for any $1$-Lipschitz function $f$ on $K$ and any real value $t \in [\k_0,\k]$.
\end{dfn}

This definition is closely related to another important branch of measure concentration theory: logarithmic Sobolev inequalities.  Put roughly, given a metric probability space $(K,d,\mu)$ and a suitable class of functions on $K$, a logarithmic Sobolev inequality asserts that
\begin{equation}\label{eq:log-Sob}
\rmD(\mu_{|\rme^f}\,\|\,\mu) \leq \a \int |\nabla f|^2\,\d\mu_{|\rme^f}
\end{equation}
for all $f$ in that class, where $|\nabla f|(x)$ is some notion of the `local gradient' of $f$ at the point $x \in K$.  If $(K,d)$ is a Riemannian manifold, then $|\nabla f|$ is often the norm of the true gradient function $\nabla f$ on $K$.  For various other metric spaces $(K,d)$, including many discrete spaces, alternative notions are available.

Logarithmic Sobolev inequalities form a large area of study in functional analysis in their own right.  A good introduction to these inequalities and their relationship to measure concentration can be found at the beginning of~\cite{BobGot99}, which also gives many further references.  Several more recent developments are covered in~\cite[Section 8]{GozLeo10}.

Product spaces exhibit certain logarithmic Sobolev inequalities that improve with dimension, and these offer a third alternative route to Proposition~\ref{prop:Marton}: see~\cite[Section 4]{Led9597}.  This approach, more than Marton's original one, played an important role in the discovery of the proof of Theorem B. This is discussed further in Subsection~\ref{subs:whyDTC}.

In general, if $f$ is Lipschitz, then any sensible choice for $|\nabla f|$ should be bounded by the Lipschitz constant of $f$.  Thus, although we do not introduce any quantity that plays the role of $|\nabla f|$ in this paper, we can regard~\eqref{eq:LSL} as a stunted logarithmic Sobolev inequality. This is the reason for choosing the letter L.

L-inequalities imply T-inequalities. This link is one of the key tools in our proof of Theorem B below.  To explain it, we begin with the following elementary estimate, which is also needed again later by itself.

\begin{lem}\label{lem:D-est}
Let $(X,\mu)$ be a probability space, let $a < b$ be real numbers, and let $f:X\to\bbR$ be a measurable function satisfying $a \leq f \leq b$ almost surely. Then
\[\rmD(\mu_{|\rme^f}\,\|\,\mu) \leq (b-a)^2.\]
\end{lem}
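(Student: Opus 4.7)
The plan is to identify the target quantity as a second-order remainder in the cumulant generating function of $f$ under $\mu$, and then bound that remainder using a crude variance estimate. Concretely, the Gibbs measure $\mu_{|\rme^f}$ is what appears in Lemma~\ref{lem:maxent}, so applying that lemma with $\nu = \mu_{|\rme^f}$ gives the identity
\[\rmD(\mu_{|\rme^f}\,\|\,\mu) \;=\; \int f\,\d\mu_{|\rme^f} - C_\mu(f),\]
where $C_\mu(f) = \log\langle \rme^f\rangle$. I would then introduce $\phi:[0,1]\to\bbR$ by $\phi(t) := C_\mu(tf) = \log\langle\rme^{tf}\rangle$. Differentiation under the integral (justified because $f$ is bounded) gives $\phi(0) = 0$, $\phi'(t) = \langle f\rangle_{\mu_{|\rme^{tf}}}$, and $\phi''(t) = \var_{\mu_{|\rme^{tf}}}(f)$, so the right-hand side of the displayed identity is exactly $\phi'(1) - \phi(1)$.

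Next I would integrate by parts, or equivalently write down Taylor's theorem with integral remainder, to express this difference as
\[\phi'(1) - \phi(1) \;=\; \phi'(1) - \phi(0) - \int_0^1 \phi'(s)\,\d s \;=\; \int_0^1 s\,\phi''(s)\,\d s.\]
Since the random variable $f$ lies in $[a,b]$ under every probability measure in sight, the elementary variance bound $\var(X) \leq \bbE[(X-(a+b)/2)^2] \leq (b-a)^2/4$ gives $\phi''(s) \leq (b-a)^2/4$ for all $s \in [0,1]$. Integrating yields
\[\rmD(\mu_{|\rme^f}\,\|\,\mu) \;\leq\; \frac{(b-a)^2}{4}\int_0^1 s\,\d s \;=\; \frac{(b-a)^2}{8},\]
which is comfortably below the claimed bound $(b-a)^2$.

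I do not anticipate a substantial obstacle: the only subtlety is justifying differentiation under the integral sign to get the formulas for $\phi'$ and $\phi''$, but this is routine since $|f| \leq \max(|a|,|b|)$ makes all the relevant integrands bounded uniformly in $t \in [0,1]$. The fact that the argument actually yields the sharper constant $1/8$ (the Hoeffding-type bound) is worth recording, although the weaker constant $1$ in the statement is all that is needed later.
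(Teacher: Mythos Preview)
Your proof is correct and takes a genuinely different route from the paper's. The paper argues by a direct elementary manipulation: it shifts so that $a=0$, then uses $\rme^f - 1 \leq f\rme^f$ to bound $\int f\rme^f\,\d\mu$ by $b^2\langle\rme^f\rangle + \langle f\rangle\langle\rme^f\rangle$, and finishes with Jensen's inequality $\log\langle\rme^f\rangle \geq \langle f\rangle$. This yields the stated constant $(b-a)^2$ (and, as the paper notes, the pointwise strengthening $\rmD(\mu_{|\rme^f}\,\|\,\mu) \leq \int (f-\min f)^2\,\d\mu_{|\rme^f}$). Your argument instead runs the Herbst machinery one level down: you identify the divergence as $\phi'(1)-\phi(1)$ for the cumulant generating function $\phi(t)=C_\mu(tf)$, integrate by parts to expose $\int_0^1 s\,\phi''(s)\,\d s$, and then invoke Popoviciu's variance bound. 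This buys you the sharper Hoeffding constant $(b-a)^2/8$, and it dovetails nicely with the proof of Proposition~\ref{prop:LSL-T}, where a closely related function $t\mapsto \tfrac{1}{t}C_\mu(tf)$ is differentiated; indeed your computation makes the appeal to Lemma~\ref{lem:D-est} inside that proposition essentially self-contained. The paper's approach, by contrast, avoids calculus entirely and keeps the lemma independent of the later Herbst argument.
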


\begin{proof}
Both sides of the desired inequality are unchanged if we add a constant to $f$, so we may assume that $a = 0$.  Let $\langle \cdot \rangle$ denote integration with respect to $\mu$.  Since we are now assuming that $f \geq 0$ almost surely, we must have $\langle \rme^f\rangle \geq 1$, and therefore
\begin{align*}
\int f\rme^f\,\d\mu &= \int f(\rme^f - \langle \rme^f\rangle)\,\d\mu + \langle f\rangle\langle \rme^f\rangle\\
&\leq \int f(\rme^f - 1)\,\d\mu + \langle f\rangle\langle \rme^f\rangle.
\end{align*}
Since $\rme^f - 1 \leq f\rme^f$ (for instance, by the convexity of $\exp$), the above implies that
\[\int f\rme^f\,\d\mu \leq \int f^2\rme^f\,\d\mu + \langle f\rangle \langle \rme^f\rangle \leq b^2\langle \rme^f\rangle + \langle f\rangle \langle \rme^f\rangle.\]
Dividing by $\langle \rme^f\rangle$ and re-arranging, this inequality completes the proof.
\end{proof}

Lemma~\ref{lem:D-est} and its proof are well known, but I do not know their origins, and have included them for completeness.  The proof actually gives the stronger inequality
\[\rmD(\mu_{|\rme^f}\,\|\,\mu) \leq \int(f - \min f)^2\,\d\mu_{|\rme^f}.\]
This version may be regarded as a logarithmic Sobolev inequality of the kind in~\eqref{eq:log-Sob}.  For the present paper we need only the simpler statement in Lemma~\ref{lem:D-est}.

Now we show how to turn an L-inequality into a T-inequality.  Intuitively, we find that an L-inequality is a `differential version' of a T-inequality.  This is made precise by the following proof, which follows the classic Herbst argument from the study of logarithmic Sobolev inequalities, with some slight adjustments to the present setting. See~\cite{AidMasShi94,Led95} for early expositions of Herbst's original (unpublished) argument.

\begin{prop}\label{prop:LSL-T}
If $(K,d)$ has diameter at most $1$ and $0 < r < \k < \infty$, then L$([r,\k],r/\k)$ implies $T(\k,2r)$.
\end{prop}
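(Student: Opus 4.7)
The proof I would write follows the classical Herbst argument, adapted to handle the fact that the L-inequality is only assumed on the interval $[r,\k]$ rather than all of $(0,\k]$. By the Bobkov--G\"{o}tze equivalence (Proposition~\ref{prop:BobGot}), it suffices to show that for every $1$-Lipschitz $f:K\to\bbR$,
\[
C_\mu(\k f) \leq \k\langle f\rangle + 2\k r,
\]
where $\langle\cdot\rangle$ denotes integration with respect to $\mu$ and $C_\mu(g)=\log M_\mu(g)$.

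Fix such an $f$ and define $H(t):=C_\mu(tf)$ for $t\ge 0$, with $H(0)=0$ and $H'(0)=\langle f\rangle$. The key identity, obtained by a direct computation from the definitions, is
\[
tH'(t) - H(t) \;=\; \rmD\bigl(\mu_{|\rme^{tf}}\,\bigl\|\,\mu\bigr) \qquad (t>0),
\]
which is equivalent to the statement that $G(t):=H(t)/t$ satisfies
\[
G'(t) \;=\; \frac{\rmD(\mu_{|\rme^{tf}}\,\|\,\mu)}{t^2}, \qquad G(0^+)=\langle f\rangle.
\]
The plan is to bound $G'(t)$ separately on the two intervals $(0,r]$ and $[r,\k]$, and then to integrate.

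On $[r,\k]$, the hypothesis L$([r,\k],r/\k)$ applied to the $1$-Lipschitz function $f$ at scale $t$ gives $\rmD(\mu_{|\rme^{tf}}\,\|\,\mu) \leq (r/\k)t^2$, hence $G'(t)\leq r/\k$. On $(0,r]$, the fact that $(K,d)$ has diameter at most $1$ implies that $tf$ has oscillation at most $t$, so Lemma~\ref{lem:D-est} gives $\rmD(\mu_{|\rme^{tf}}\,\|\,\mu)\leq t^2$, hence $G'(t)\leq 1$. Integrating,
\[
G(\k) - \langle f\rangle \;=\; \int_0^r G'(t)\,\d t + \int_r^{\k} G'(t)\,\d t \;\leq\; r\cdot 1 + (\k-r)\cdot\frac{r}{\k} \;\leq\; 2r.
\]
Multiplying through by $\k$ yields $H(\k)\leq \k\langle f\rangle + 2\k r$, which is condition (b) of Proposition~\ref{prop:BobGot} with parameter pair $(\k,2r)$, completing the proof.

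The only nonroutine step is recognizing how to handle the small-$t$ regime $(0,r]$ on which the assumed L-inequality provides no information: the diameter bound on $K$ together with Lemma~\ref{lem:D-est} is exactly strong enough to supply the substitute estimate $G'(t)\leq 1$, and the contribution $r\cdot 1$ this makes to the integral is precisely what produces the factor $2r$ (rather than $r$) in the resulting T-inequality.
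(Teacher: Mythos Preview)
Your proof is correct and essentially identical to the paper's: both run the Herbst argument on $\phi(t)=C_\mu(tf)/t$ (your $G$), use Lemma~\ref{lem:D-est} together with the diameter bound to control $\phi'$ on $(0,r]$ and the L-inequality on $[r,\k]$, and then integrate and invoke Proposition~\ref{prop:BobGot}. The only cosmetic difference is that the paper cites Lemma~\ref{lem:maxent} for the identity $\phi'(t)=t^{-2}\rmD(\mu_{|\rme^{tf}}\,\|\,\mu)$, whereas you obtain it by direct differentiation.
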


\begin{proof}
Let $f$ be $1$-Lipschitz, and define $\phi:[0,\k]\to \bbR$ by
\[\phi(t) := \left\{\begin{array}{ll} \langle f\rangle &\quad \hbox{if}\ t=0\\ \frac{1}{t}C_\mu(tf) & \quad \hbox{if}\ 0 < t \leq \k.\end{array}\right.\]
A simple argument shows that $\phi$ is continuous at $0$, and a little calculus gives
\[\phi'(t) = \frac{1}{t^2}\Big(t\int f\,\d\mu_{|\rme^{tf}} - C_\mu(tf)\Big) = \frac{1}{t^2}\rmD(\mu_{|\rme^{tf}}\,\|\,\mu) \quad \hbox{for}\ t > 0,\]
where the second equality comes from Lemma~\ref{lem:maxent}. Since $(K,d)$ has diameter at most $1$ and $f$ is $1$-Lipschitz, Lemma~\ref{lem:D-est} gives $\phi'(t) \leq 1$ for any $t$.  On the other hand, L$([r,\k],r/\k)$ gives $\phi'(t) \leq r/\k$ whenever $r \leq t \leq \k$.  Combining these bounds, we obtain
\begin{align*}
	C_\mu(\k f) = \k\phi(\k) &= \k\phi(0) + \k \int_0^\k\phi'(t)\,\d t\\
	&= \k\phi(0) + \k \int_0^r\phi'(t)\,\d t + \k \int_r^\k\phi'(t)\,\d t\\
	&\leq \k\phi(0) + 2\k r = \k\langle f\rangle + 2\k r.
\end{align*}
This completes the proof via part (b) of Proposition~\ref{prop:BobGot}.
\end{proof}

\subsection{The role of the inequalities in the rest of this paper}

T-inequalities already appear in the formulation of Theorem B.  However, the proof of that theorem involves L-inequalities as well.  Put very roughly, in part of the proof of that theorem, we assume that a given measure $\mu$ does not already satisfy T$(r n/1200,r)$, deduce from Proposition~\ref{prop:LSL-T} that a related L-inequality also fails, and then use the function $f$ and parameter $t$ from \emph{that} failure to write $\mu$ as a mixture of two `better' measures.  This part of the proof of Theorem B is the subject of the next section.  I do not know an approach that works directly with the violation of the T-inequality, and avoids the L-inequality.

\section{A relative of Theorem B}\label{sec:aux-decomp}

The proof of Theorem B follows two other decomposition results, which come progressively closer to the conclusion we really want.  The biggest part of the proof goes towards the first of those auxiliary decompositions.

In the rest of this section, we always use $\langle\cdot\rangle$ to denote expectation with respect to a measure $\mu$ on $A^n$.  Here is the first auxiliary decomposition:

\begin{thm}\label{thm:bigdecomp1}
Let $\eps,r > 0$.  For any $\mu \in \Pr(A^n)$ there exists a fuzzy partition $(\rho_1,\dots,\rho_k)$ such that
\begin{enumerate}
	\item[(a)] $\rmI_\mu(\rho_1,\dots,\rho_k) \leq 2\cdot \DTC(\mu)$ (recall~\eqref{eq:Imu} for the notation on the left here),
	\item[(b)] $\langle \rho_1\rangle < \eps$, and
	\item[(c)] $\langle \rho_j\rangle > 0$ and the measure $\mu_{|\rho_j}$ satisfies T$(r n/200,r)$ for every $j=2,3,\dots,k$.
	\end{enumerate}
\end{thm}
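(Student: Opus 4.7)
The plan is to prove Theorem~\ref{thm:bigdecomp1} by a greedy decrement argument whose monovariant is the potential
\[\Phi(\rho_1,\dots,\rho_k) := \sum_{j=1}^k \langle \rho_j\rangle\cdot \DTC(\mu_{|\rho_j}),\]
which equals $\DTC(\mu)$ for the trivial partition $(\rho_1)=(1)$ and is bounded below by $0$.  Starting from the trivial partition, one repeatedly locates a piece $\rho_j$ with $\langle \rho_j\rangle$ above a threshold $\delta>0$ (to be calibrated) for which $\mu_{|\rho_j}$ fails T$(rn/200,r)$, and replaces $\rho_j$ by the two fuzzy parts $\rho_j\sigma$ and $\rho_j(1-\sigma)$ for a carefully chosen $\sigma:A^n\to[0,1]$; pieces with weight below $\delta$ are folded into $\rho_1$ and never split again.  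The process halts once every heavy piece satisfies T$(rn/200,r)$, which gives (c) by construction.

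The splitting function $\sigma$ is built from a failure of T$(rn/200,r)$ via the contrapositive of Proposition~\ref{prop:LSL-T}: applied with $\kappa = rn/200$ and tolerance $r/2$, that failure yields a $1$-Lipschitz $f:A^n\to\bbR$ (diameter of $(A^n,d_n)$ is $1$) and a real number $t\in[r/2,\,rn/200]$ with
\[\rmD\bigl((\mu_{|\rho_j})_{|\rme^{tf}}\,\big\|\,\mu_{|\rho_j}\bigr) \;>\; \frac{100\,t^2}{n}.\]
A natural choice is a smooth monotone function of $tf$---for instance the logistic $\sigma = \rme^{tf}/(\rme^{tf}+1)$, after centering $f$---so that the decomposition $\mu_{|\rho_j} = \langle\sigma\rangle\,(\mu_{|\rho_j})_{|\sigma} + \langle 1-\sigma\rangle\,(\mu_{|\rho_j})_{|1-\sigma}$ inherits a constant fraction of the Gibbs-type divergence above, producing a lower bound of the form $\rmI_{\mu_{|\rho_j}}(\sigma,1-\sigma) \geq c_1 t^2/n$ for a universal $c_1>0$.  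On the other hand, since $|g'|\le 1/4$ for the logistic and $f$ is $1$-Lipschitz for $d_n$, the function $\sigma$ has per-coordinate oscillation at most $t/(4n)$; a Bernoulli KL estimate (using that $\rmD(\rm{Ber}(g(u))\,\|\,\rm{Ber}(g(u')))\le (u-u')^2/8$) then bounds each term $\rmI_{\mu_{|\rho_j}}(\xi_i;\zeta\mid \xi_{[n]\setminus i})$ by $O(t^2/n^2)$ uniformly in $t$, and the sum over $i$ is $O(t^2/n)$ with a constant strictly less than $c_1/2$.

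Combining these estimates with the DTC decrement identity (Lemma~\ref{lem:Shearer-change}),
\[\Phi_{\rm{new}} \;\leq\; \Phi_{\rm{old}} - \tfrac12\,\langle\rho_j\rangle\,\rmI_{\mu_{|\rho_j}}(\sigma,1-\sigma),\]
and the fuzzy-partition chain rule~\eqref{eq:fuzzy-chain} identifies the subtracted quantity as exactly half the increase of $\rmI_\mu$ due to the split.  Summing over all iterations yields $\rmI_\mu(\rho_1,\dots,\rho_k) \le 2(\Phi_0 - \Phi_{\rm{final}}) \le 2\DTC(\mu)$, proving (a).  Since every split decreases $\Phi$ by at least $\delta\cdot c_1 r^2/(8n)$, the total number of iterations---and hence of pieces ever absorbed into $\rho_1$---is bounded in terms of $n$, $r$, and $\DTC(\mu)$, so $\delta$ may be chosen small enough that the aggregated mass in $\rho_1$ falls below $\eps$, giving (b).

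The principal obstacle will be executing the matching pair of estimates on $\rmI_{\mu_{|\rho_j}}(\sigma,1-\sigma)$ and $\sum_i\rmI_{\mu_{|\rho_j}}(\xi_i;\zeta\mid \xi_{[n]\setminus i})$ with compatible constants over the \emph{entire} range $t\in[r/2,rn/200]$: the lower bound has to transfer a violation phrased through the unbounded Gibbs weighting $\rme^{tf}$ into an analogous statement for the bounded weighting $\sigma$ (which requires a more careful analysis, perhaps via Lemma~\ref{lem:maxent}, when $t$ is so large that the logistic saturates and the small-$t$ Taylor heuristic breaks down), and the upper bound must be uniform in $t$.  The explicit constant $200$ in the T-parameter will emerge from balancing these two estimates against the factor $100$ supplied by Proposition~\ref{prop:LSL-T}.
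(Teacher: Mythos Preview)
Your overall strategy---a decrement argument on $\Phi = \sum_j \langle\rho_j\rangle\,\DTC(\mu_{|\rho_j})$, driven by a Lipschitz witness $f$ to the failure of an L-inequality via Proposition~\ref{prop:LSL-T}, with the chain rule~\eqref{eq:fuzzy-chain} linking the $\Phi$-decrement to the growth of $\rmI_\mu$---is exactly the paper's approach.  But your argument for (b) has a genuine gap.

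You claim that the number of splits is bounded ``in terms of $n$, $r$, and $\DTC(\mu)$,'' and that $\delta$ can then be chosen small enough to control the mass in $\rho_1$.  But your own per-split decrement is $\delta\cdot c_1 r^2/(8n)$, so the number of splits is at most $8n\,\DTC(\mu)/(\delta c_1 r^2)$, which \emph{does} depend on $\delta$.  Each split feeds at most $2\delta$ mass into $\rho_1$, so the total absorbed mass is bounded by $16n\,\DTC(\mu)/(c_1 r^2)$---a quantity independent of $\delta$ that cannot be made small by shrinking $\delta$.  The paper avoids this with a different stopping rule: at each stage it splits \emph{every} piece that fails T$(rn/200,r)$ (no weight threshold), and halts as soon as the \emph{total} weight of the failing pieces drops below $\eps$.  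Then $\rho_1$ is simply the sum of those remaining failing pieces, so (b) is automatic from the stopping condition.  Termination holds because, while that total weight is $\geq\eps$, $\Phi$ drops by at least the fixed amount $\tfrac{1}{2}\eps r^2 n^{-1}\rme^{-n}$ per stage.

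On the choice of splitting function: the paper takes $\rho_1 = \tfrac{1}{2}\rme^{-tf}$ rather than a logistic.  The point is that both the lower bound on $\rmI_\mu(\rho_1,\rho_2)$ (namely $\langle\rho_1\rangle\,\rmD(\mu_{|\rho_1}\|\mu)$) and the upper bound on $\sum_i\rmI(\xi_i;\zeta\mid\xi_{[n]\setminus i})$ (Lemmas~\ref{lem:1st-term} and~\ref{lem:2nd-term}) each carry a common factor of $\langle\rho_1\rangle$, which may be exponentially small in $n$ but cancels in the comparison.  This is exactly what resolves the ``principal obstacle'' you flag: one does not need estimates that are uniform in $t$, only ones that \emph{match}, and the shared exponential weight delivers that automatically.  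Your logistic would require a separate mechanism to force such a match in the saturated regime.
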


This differs from Theorem B in two respects.  Firstly, it gives control only over the mutual information $\rmI_\mu(\rho_1,\dots,\rho_k)$, not the actual number of terms $k$.  Secondly, that control is in terms of $\DTC(\mu)$, not $\TC(\mu)$ as in the statement of Theorem B.  The use of $\DTC$ here is crucial. Constructing the fuzzy partition in Theorem~\ref{thm:bigdecomp1} requires careful estimates on the behaviour of $\DTC$, and no suitable estimates seem to be available for $\TC$.

In the next section we improve Theorem~\ref{thm:bigdecomp1} to Theorem~\ref{thm:bigdecomp2}, which does control the number of functions in the fuzzy partition, but still using $\DTC(\mu)$.  Finally, we complete the proof of Theorem B using Lemma~\ref{lem:trimming}, which finds a slightly lower-dimensional projection of $\mu$ whose $\DTC$ is controlled in terms of $\TC(\mu)$.  I do not know a more direct way to use $\TC(\mu)$ for the proof of Theorem B.

\subsection{The DTC-decrement argument}\label{subs:DTC-dec-intro}

The key idea for the proof of Theorem~\ref{thm:bigdecomp1} is this: if $\mu$ itself does not satisfy T$(r n/200,r)$, then we can turn that failure of concentration into a representation of $\mu$ as a mixture whose terms have substantially smaller $\DTC$ on average.  In this mixture, if much of the mass is on pieces that still fail T$(r n/200,r)$, then those can be decomposed again.  This process cannot continue indefinitely, because the average $\DTC$ of the terms in these mixtures must remain non-negative.  Thus, after a finite number of steps, we reach a mixture whose terms mostly do satisfy T$(r n/200,r)$.  Moreover, our specific estimate on the decrement in average $\DTC$ at each step turns into a bound on the mutual information of this mixture: up to a constant, it is at most the total reduction we achieved in the average $\DTC$, which in turn is at most the initial value $\DTC(\mu)$.

In Subsection~\ref{subs:whyDTC}, we try to give an intuitive reason for expecting that such an argument can be carried out using $\DTC$.  I do not know of any reason to hope for a similar argument using $\TC$.  The reader may wish to consult Subsection~\ref{subs:whyDTC} before finishing the rest of this section.

Here is the proposition that provides this decrement in average $\DTC$:

\begin{prop}\label{prop:dec}
If $\mu \in \Pr(A^n)$ does not satisfy T$(r n/200,r)$, then there is a fuzzy partition $(\rho_1,\rho_2)$ such that
\[\rmI_\mu(\rho_1,\rho_2) \geq r^2n^{-1}\rme^{-n}\]
and
\begin{equation}\label{eq:decrement}
\langle \rho_1\rangle \cdot \DTC(\mu_{|\rho_1}) + \langle\rho_2\rangle \cdot \DTC(\mu_{|\rho_2}) \leq \DTC(\mu) - \frac{1}{2}\rmI_\mu(\rho_1,\rho_2).
\end{equation}
\end{prop}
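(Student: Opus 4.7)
The plan is to convert the failure of T$(rn/200,r)$ into a $1$-Lipschitz function witnessing a large Gibbs divergence (via Proposition~\ref{prop:LSL-T}), build a binary fuzzy partition directly from a rescaled Gibbs weight, and then use the $1/n$-per-coordinate Lipschitz constant of that function to bound the conditional mutual informations $\rmI(\xi_i;\zeta\mid\xi_{[n]\setminus i})$ which, via Lemma~\ref{lem:Shearer-change}, control the $\DTC$ decrement.

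First I would apply the contrapositive of Proposition~\ref{prop:LSL-T} with $\kappa=rn/200$ and with $r/2$ in its $r$-slot: since $(A^n,d_n)$ has diameter at most $1$, the failure of T$(rn/200,r)$ forces the failure of L$([r/2,rn/200],100/n)$, yielding a $1$-Lipschitz $f:A^n\to\bbR$ and a scalar $t\in[r/2,rn/200]$ with $\rmD(\mu_{|e^{tf}}\,\|\,\mu)>100t^2/n$. Setting $M:=\max f$, I would take
\[\rho_2(x):=\tfrac{1}{10}\,e^{t(f(x)-M)},\qquad \rho_1:=1-\rho_2.\]
The prefactor $\tfrac{1}{10}$ confines $\rho_2$ to $[e^{-t}/10,\,1/10]$ (crucially away from $1$), and since $\rho_2\propto e^{tf}$ we have $\mu_{|\rho_2}=\mu_{|e^{tf}}$. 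Writing $p_j:=\langle\rho_j\rangle$, Corollary~\ref{cor:I-and-KL} gives
\[\rmI_\mu(\rho_1,\rho_2)\,\geq\,p_2\,\rmD(\mu_{|e^{tf}}\,\|\,\mu)\,>\,100\,p_2\,t^2/n\,\geq\,10\,t^2\,e^{-t}/n,\]
which for $t\in[r/2,n]$ easily exceeds the required $r^2n^{-1}e^{-n}$.

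For~\eqref{eq:decrement} itself I would introduce $\zeta\in\{1,2\}$ with $P(\zeta=j\mid\xi=x)=\rho_j(x)$. Using $\DTC(\xi\mid\zeta)=p_1\DTC(\mu_{|\rho_1})+p_2\DTC(\mu_{|\rho_2})$ and $\rmI(\xi;\zeta)=\rmI_\mu(\rho_1,\rho_2)$, Lemma~\ref{lem:Shearer-change} rewrites~\eqref{eq:decrement} as the statement $\sum_{i=1}^n\rmI(\xi_i;\zeta\mid\xi_{[n]\setminus i})\leq\tfrac{1}{2}\rmI(\xi;\zeta)$. Fix a coordinate $i$ and condition on $\xi_{[n]\setminus i}=y$. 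Because $f$ is $1$-Lipschitz in $d_n$, changing only $\xi_i$ changes $f$ by at most $1/n$; hence on this slice $\rho_2$ varies over some interval $[a_0(y),\,a_0(y)e^{t/n}]\subseteq[0,\,1/10]$. The conditional mutual information on the slice is the Jensen gap $h_2(\bar q)-\int h_2(q)\,\d\nu_y$, where $q$ is $\rho_2$ restricted to the slice, $\bar q=\int q\,\d\nu_y$, and $\nu_y$ is the conditional law of $\xi_i$. A second-order Taylor remainder expresses this gap as $\tfrac{1}{2}\int(q-\bar q)^2/(\tilde q(1-\tilde q))\,\d\nu_y$ for some $\tilde q$ on the slice. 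Using $1/(\tilde q(1-\tilde q))\leq 10/(9a_0(y))$ (valid precisely because $\rho_2\leq 1/10$ stays clear of the singularity of $h_2''$ at $1$) and $(q-\bar q)^2\leq e^2a_0(y)^2(t/n)^2$ gives $\rmI(\xi_i;\zeta\mid\xi_{[n]\setminus i}=y)\leq(5e^2/9)\,a_0(y)\,t^2/n^2$. Since $a_0(y)\leq E[\rho_2\mid\xi_{[n]\setminus i}=y]$, averaging and summing over $i$ produces
\[\sum_{i=1}^n\rmI(\xi_i;\zeta\mid\xi_{[n]\setminus i})\,\leq\,\tfrac{5e^2}{9}\,p_2\,t^2/n,\]
which is comfortably below $\tfrac{1}{2}\cdot 100\,p_2\,t^2/n\leq\tfrac{1}{2}\rmI(\xi;\zeta)$.

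The main obstacle is obtaining a per-coordinate bound of order $t^2/n^2$ strong enough to survive summation over the $n$ coordinates. Without the cut-off $\rho_2\leq 1/10$, the Jensen-gap estimate would be ruined by the pole of $h_2''(p)=-1/(p(1-p))$ at $p=1$; the prefactor $\tfrac{1}{10}$ is introduced precisely to bypass that pole while preserving the identity $\mu_{|\rho_2}=\mu_{|e^{tf}}$. Its combination with the $1/n$-per-coordinate Lipschitz constant of $f$ is exactly what forces each individual coordinate to carry only $O(1/n^2)$ information about $\zeta$ given the other coordinates, and this is the numerical slack that lets the decrement argument close.
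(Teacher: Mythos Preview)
Your argument is correct and follows essentially the same route as the paper: invoke Proposition~\ref{prop:LSL-T} to get a $1$-Lipschitz $f$ and $t$ witnessing the failure of the L-inequality, build the binary fuzzy partition from a bounded rescaling of $e^{tf}$, and then control $\sum_i\rmI(\xi_i;\zeta\mid\xi_{[n]\setminus i})$ using the $1/n$-Lipschitz constant of $f$ on each coordinate slice to obtain the decrement via Lemma~\ref{lem:Shearer-change}. The one technical difference is that the paper splits each $\rmI(\xi_i;\zeta\mid\xi_{[n]\setminus i}=\mathbf{z})$ into two KL divergences as in~\eqref{eq:second-term-form} and bounds each separately with Lemma~\ref{lem:D-est} (Lemmas~\ref{lem:1st-term} and~\ref{lem:2nd-term}), whereas you compute the same quantity as a single Jensen gap $h_2(\bar q)-\int h_2(q)$ and bound it by a Taylor remainder; your cut-off $\rho_2\le 1/10$ plays exactly the role of the paper's factor $1/2$ in keeping the relevant derivative under control.
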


It is worth noting an immediate corollary: if a measure $\mu$ has $\DTC(\mu) < r^2n^{-1}\rme^{-n}/2$, then $\mu$ must already satisfy T$(rn/200,r)$, simply because the left-hand side of~\eqref{eq:decrement} cannot be negative.  This corollary can be seen as a robust version of concentration for product measures. However, since the value $r^2n^{-1}\rme^{-n}/2$ is so small as $n$ grows, I doubt that this version has any advantages over more standard results. For instance, the combination of Propositions~\ref{prop:Marton} and~\ref{prop:T-under-dbar} shows that if $\TC(\mu)$ is $o(n)$ then we can trim off a small piece of $\mu$ and be left with strong measure concentration.  By another of Han's inequalities from~\cite{Han78}, $\TC(\mu)$ is always at most $(n-1)\cdot \DTC(\mu)$, so we obtain the same conclusion if $\DTC(\mu)$ is $o(1)$.  Some recent stronger results are cited in Subsection~\ref{subs:EFKY} below. Qualitatively, this is the best one can do: the mixture of two product measures in Example~\ref{ex:prod-mix} has $\DTC$ roughly $\log 2$ and does not exhibit any non-trivial concentration. The particular inequality $\DTC(\mu) < r^2n^{-1}\rme^{-n}/2$ does not play any role in the rest of this paper.

Most of this section is spent proving Proposition~\ref{prop:dec}.  Before doing so, let us explain how it implies Theorem~\ref{thm:bigdecomp1}.  This implication starts with the following corollary.

\begin{cor}\label{cor:dec}
Let $\mu \in \Pr(A^n)$, and let $(\rho_i)_{i=1}^k$ be a fuzzy partition. Let $S$ be the set of all $i \in [k]$ for which $\mu_{|\rho_i}$ does not satisfy T$(r n/200,r)$.  Then there is another fuzzy partition $(\rho'_j)_{j=1}^\ell$ with the following properties:
\begin{enumerate}
\item[(a)] (growth in mutual information)
\[\rmI_\mu(\rho_1',\dots,\rho_\ell') \geq  \rmI_\mu(\rho_1,\dots,\rho_k) + r^2n^{-1}\rme^{-n}\sum_{i \in S}\langle \rho_i\rangle;\]
\item[(b)] (proportional decrease in average $\DTC$)
\begin{multline*}
\sum_{j=1}^\ell\langle \rho'_j\rangle \cdot \DTC(\mu_{|\rho'_j}) \\ \leq \sum_{i=1}^k \langle \rho_i\rangle \cdot \DTC(\mu_{|\rho_i}) - \frac{1}{2}\big[\rmI_\mu(\rho_1',\dots,\rho_\ell') - \rmI_\mu(\rho_1,\dots,\rho_k)\big].
\end{multline*}
\end{enumerate}
\end{cor}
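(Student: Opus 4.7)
The plan is to apply Proposition~\ref{prop:dec} separately to each ``bad'' piece $\mu_{|\rho_i}$ with $i \in S$, and then glue the resulting binary splittings into a single refinement of $(\rho_i)_{i=1}^k$. For each $i \in S$ the proposition supplies a fuzzy partition $(\sigma^{(i)}_1,\sigma^{(i)}_2)$ of $A^n$ satisfying $\rmI_{\mu_{|\rho_i}}(\sigma^{(i)}_1,\sigma^{(i)}_2) \geq r^2 n^{-1}\rme^{-n}$ together with the decrement inequality
\[
q^{(i)}_1\DTC\big((\mu_{|\rho_i})_{|\sigma^{(i)}_1}\big) + q^{(i)}_2\DTC\big((\mu_{|\rho_i})_{|\sigma^{(i)}_2}\big) \leq \DTC(\mu_{|\rho_i}) - \tfrac{1}{2}\rmI_{\mu_{|\rho_i}}(\sigma^{(i)}_1,\sigma^{(i)}_2),
\]
where $q^{(i)}_t := \int \sigma^{(i)}_t\,\d\mu_{|\rho_i}$. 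For $i \not\in S$ we would leave $\rho_i$ untouched.

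Next I would construct the refined fuzzy partition by replacing each $\rho_i$ with $i \in S$ by the two non-negative functions $\rho_i \sigma^{(i)}_1$ and $\rho_i \sigma^{(i)}_2$ (which sum to $\rho_i$, since the $\sigma^{(i)}_t$ sum to $1$ pointwise on $A^n$) and by keeping each remaining $\rho_i$. Re-indexing these as $(\rho'_j)_{j=1}^\ell$, there is a partition $J_1,\dots,J_k$ of $[\ell]$ for which $\rho_i = \sum_{j \in J_i}\rho'_j$ for every $i$, so the fuzzy-partition chain rule~\eqref{eq:fuzzy-chain} applies. For $i \in S$ the induced fuzzy partition $(\rho'_j/\rho_i)_{j \in J_i}$ is exactly $(\sigma^{(i)}_1,\sigma^{(i)}_2)$, and for $i \not\in S$ it is the singleton $(1)$, which contributes $0$ to both information and divergence terms.

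The remainder is bookkeeping. The routine identity $\mu_{|\rho_i\sigma^{(i)}_t} = (\mu_{|\rho_i})_{|\sigma^{(i)}_t}$ together with $\langle \rho_i\sigma^{(i)}_t\rangle = \langle \rho_i\rangle\, q^{(i)}_t$ lets us multiply the per-$i$ $\DTC$ inequality by $\langle\rho_i\rangle$ and sum over $i \in S$, absorbing trivially the contributions from $i \not\in S$, to obtain
\[
\sum_{j=1}^\ell \langle\rho'_j\rangle\DTC(\mu_{|\rho'_j}) \leq \sum_{i=1}^k \langle\rho_i\rangle\DTC(\mu_{|\rho_i}) - \tfrac{1}{2}\sum_{i\in S}\langle\rho_i\rangle\,\rmI_{\mu_{|\rho_i}}(\sigma^{(i)}_1,\sigma^{(i)}_2).
\]
On the other hand, the chain rule~\eqref{eq:fuzzy-chain} and the vanishing for $i \not\in S$ give the identity
\[
\rmI_\mu(\rho'_1,\dots,\rho'_\ell) - \rmI_\mu(\rho_1,\dots,\rho_k) = \sum_{i \in S}\langle\rho_i\rangle\,\rmI_{\mu_{|\rho_i}}(\sigma^{(i)}_1,\sigma^{(i)}_2).
\]
Substituting this into the previous display yields (b), and combining it with the uniform lower bound $\rmI_{\mu_{|\rho_i}}(\sigma^{(i)}_1,\sigma^{(i)}_2) \geq r^2 n^{-1}\rme^{-n}$ valid for every $i \in S$ yields (a).

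The only non-trivial input is Proposition~\ref{prop:dec} itself; after that, the whole corollary reduces to combining the fuzzy-partition chain rule with the conditional-measure identity above. I therefore do not anticipate a real obstacle at this stage, beyond taking care with indexing.
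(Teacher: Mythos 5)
Your proposal is correct and follows essentially the same route as the paper's proof: apply Proposition~\ref{prop:dec} to each bad piece $\mu_{|\rho_i}$, refine the fuzzy partition by the pointwise products $\rho_i\sigma^{(i)}_t$, and combine the chain rule~\eqref{eq:fuzzy-chain} with the identity $\mu_{|\rho_i\sigma^{(i)}_t} = (\mu_{|\rho_i})_{|\sigma^{(i)}_t}$ to obtain (a) and (b). No gaps.
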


\begin{proof}
	For each $i \in S$, let $(\rho_{i,1},\rho_{i,2})$ be the fuzzy partition obtained by applying Proposition~\ref{prop:dec} to the measure $\mu_{|\rho_i}$.  Now let $(\rho_j')_{j=1}^\ell$ be an enumeration of the following functions:
	\[\hbox{all}\ \rho_i \ \hbox{for}\ i \in [k]\setminus S \ \hbox{and all} \ \rho_i\cdot \rho_{i,1} \ \hbox{and} \ \rho_i\cdot \rho_{i,2} \ \hbox{for}\ i \in S.\]
	These functions satisfy
	\[\sum_{i\in [k]\setminus S}\rho_i + \sum_{i\in S}\rho_i\cdot \rho_{i,1} + \sum_{i\in S}\rho_i\cdot \rho_{i,2} = \sum_{i\in [k]\setminus S}\rho_i + \sum_{i\in S}\rho_i = 1,\]
	so $(\rho'_j)_{j=1}^\ell$ is a new fuzzy partition.
	
	Let us show that this new fuzzy partition has properties (a) and (b). Both of these follow from the version of the chain rule for mutual information and fuzzy partitions presented in equation~\eqref{eq:fuzzy-chain}.  In the present setting, that rule gives
	\begin{equation}\label{eq:fuzzy-chain-again}
\rmI_\mu(\rho'_1,\dots,\rho'_\ell) - \rmI_\mu(\rho_1,\dots,\rho_k) = \sum_{i\in S}\langle \rho_i\rangle\cdot  \rmI_{\mu_{|\rho_i}}(\rho_{i,1},\rho_{i,2}).
\end{equation}
	By the first conclusion of Proposition~\ref{prop:dec}, this is at least
	\[r^2 n^{-1}\rme^{-n}\sum_{i\in S}\langle \rho_i\rangle.\]
	This proves property (a).
	
	On the other hand, the second conclusion of Proposition~\ref{prop:dec} gives
	\begin{multline*}
\Big(\int \rho_{i,1}\,\d\mu_{|\rho_i}\Big)\cdot \DTC(\mu_{|\rho_i\rho_{i,1}}) + \Big(\int \rho_{i,2}\,\d\mu_{|\rho_i}\Big)\cdot \DTC(\mu_{|\rho_i\rho_{i,2}}) \\ \leq \DTC(\mu_{|\rho_i}) - \frac{1}{2}\rmI_{\mu_{|\rho_i}}(\rho_{i,1},\rho_{i,2}).
\end{multline*}
We multiply this inequality by $\langle \rho_i\rangle$, observe that
\[\langle \rho_i\rangle\cdot \Big(\int \rho_{i,a}\,\d\mu_{|\rho_i}\Big) = \langle \rho_i\rho_{i,a}\rangle \quad \hbox{for}\ a \in \{1,2\},\]
and add the results over $i \in S$.  This gives
\begin{align*}
\sum_{j=1}^\ell\langle \rho'_j\rangle \cdot \DTC(\mu_{|\rho'_j}) &= \sum_{i\in [k]\setminus S}\langle \rho_i\rangle \cdot \DTC(\mu_{|\rho_i}) \\
&\quad + \sum_{i\in S}\Big(\langle \rho_i\rho_{i,1}\rangle\cdot \DTC(\mu_{|\rho_i\rho_{i,1}}) + \langle \rho_i\rho_{i,2}\rangle\cdot \DTC(\mu_{|\rho_i\rho_{i,2}})\Big)\\
&\leq \sum_{i\in [k]\setminus S}\langle \rho_i\rangle \cdot \DTC(\mu_{|\rho_i})\\
&\quad + \sum_{i\in S}\langle \rho_i\rangle \cdot \DTC(\mu_{|\rho_i}) - \frac{1}{2}\sum_{i\in S}\langle \rho_i\rangle \cdot \rmI_{\mu_{|\rho_i}}(\rho_{i,1},\rho_{i,2}).
\end{align*}
Substituting from~\eqref{eq:fuzzy-chain-again}, the right-hand side here is equal to
\[\sum_{i=1}^k\langle \rho_i\rangle \cdot \DTC(\mu_{|\rho_i}) - \frac{1}{2}\big[\rmI_\mu(\rho_1',\dots,\rho_\ell') - \rmI_\mu(\rho_1,\dots,\rho_k)\big].\]
This proves property (b).
\end{proof}

\begin{proof}[Proof of Theorem~\ref{thm:bigdecomp1} from Corollary~\ref{cor:dec}]
We produce a finite sequence of fuzzy partitions $(\rho_{t,j})_{j=1}^{k_t}$ for $t=0,1,2,\dots,t_0$ by the following recursion.

To start the recursion, let $k_0 = 1$ and let $\rho_{0,1}$ be the constant function $1$. If $\mu$ already satisfies T$(rn/200,r)$, then we let $t_0 = 0$ and stop the recursion here.

Now suppose that $\mu$ does not satisfy T$(rn/200,r)$ and that we have already constructed the fuzzy partition $(\rho_{t,j})_{j=1}^{k_t}$ for some $t \geq 0$.  Let $S$ be the set of all $j \in [k_t]$ for which $\mu_{|\rho_{t,j}}$ does not satisfy T$(r n/200,r)$. If
\begin{equation}\label{eq:I-change}
\sum_{j\in S}\langle \rho_{t,j}\rangle < \eps,
\end{equation}
then set $t_0 := t$ and stop the recursion.  Otherwise, let $(\rho_{t+1,j})_{j=1}^{k_{t+1}}$ be a new fuzzy partition produced from $(\rho_{t,j})_{j=1}^{k_t}$ by Corollary~\ref{cor:dec}.

If this recursion does not stop at stage $t$, then the negation of~\eqref{eq:I-change} must hold at that stage.  Combining this fact with both conclusions of Corollary~\ref{cor:dec}, it follows that
\[\sum_{j=1}^{k_{t+1}}\langle \rho_{t+1,j}\rangle \cdot \DTC(\mu_{|\rho_{t+1,j}}) \leq \sum_{j=1}^{k_t}\langle \rho_{t,j}\rangle \cdot \DTC(\mu_{|\rho_{t,j}}) - \frac{1}{2}\eps r^2 n^{-1} \rme^{-n}.\]
Since the average $\DTC$ on the left cannot be negative, and the average $\DTC$ decreases here by at least the fixed amount $\eps r^2 n^{-1} \rme^{-n}/2$, the recursion must stop at some finite stage.

For each $t = 0,1,\dots,t_0-1$, conclusion (b) of Corollary~\ref{cor:dec} gives that
\begin{multline*}
\rmI_\mu(\rho_{t+1,1},\dots,\rho_{t+1,k_{t+1}}) - \rmI_\mu(\rho_{t,1},\dots,\rho_{t,k_t}) \\\leq 2\Big[\sum_{j=1}^{k_t}\langle \rho_{t,j}\rangle \cdot \DTC(\mu_{|\rho_{t,j}}) - \sum_{j=1}^{k_{t+1}}\langle \rho_{t+1,j}\rangle \cdot \DTC(\mu_{|\rho_{t+1,j}})\Big].
\end{multline*}
Summing these inequalities over those values of $t$, we obtain
\begin{equation}\label{eq:bd-I-by-DTC}
\rmI_\mu(\rho_{t_0,1},\dots,\rho_{t_0,k_{t_0}}) \leq 2\cdot \DTC(\mu) - 2\sum_{j=1}^{k_{t_0}}\langle \rho_{t_0,j}\rangle \cdot \DTC(\mu_{|\rho_{t_0,j}}) \leq 2\cdot \DTC(\mu).
\end{equation}

To finish, define $(\rho_1,\dots,\rho_k)$ by letting $\rho_1$ be the sum of all those functions $\rho_{t_0,j}$ with $1 \leq j \leq k_{t_0}$ for which $\mu_{|\rho_{t_0,j}}$ does not satisfy T$(r n/200,r)$, and letting $\rho_2$, \dots, $\rho_k$ be an enumeration of the remaining entries in $(\rho_{t_0,1},\dots,\rho_{t_0,k_{t_0}})$.  This new fuzzy partition $(\rho_1,\dots,\rho_k)$ satisfies all three of the required conclusions.  Conclusion (a) follows from~\eqref{eq:bd-I-by-DTC} and an application of inequality~\eqref{eq:refine-more-inf}.  Conclusions (b) and (c) are written into the conditions for stopping the recursion.
\end{proof}

\begin{rmk}
	The above proof gives a bound on the number of entries $k$ in the resulting fuzzy partition, but the bound is too poor to be useful beyond guaranteeing that the recursion terminates.  This is because the quantity $r^2 n^{-1}\rme^{-n}$ appearing in Proposition~\ref{prop:dec} is so small.
\end{rmk}

\begin{rmk}
The recursion above terminates at the first stage $t_0$ when most terms in the mixture exhibit sufficient concentration.  Then conclusion (a) is proved using the fact that the average $\DTC$ must remain non-negative.  However, there is no guarantee that most of the resulting measures $\mu_{|\rho_j}$ have $\DTC$ close to zero. It may be that we have obtained all the measure concentration we want, but many of the values $\DTC(\mu_{|\rho_j})$ are still large.  Indeed, if $\mu$ is one of the measures constructed using an $\eps$-uniform set in Example~\ref{ex:prod-img-meas}, then $\mu$ already satisfies a better T-inequality than those obtained in Theorem~\ref{thm:bigdecomp1}, and it cannot be decomposed into nontrivial summands using Proposition~\ref{prop:dec}.  However, as discussed in that example, its $\TC$ and $\DTC$ are both of order $n$, and it cannot be decomposed efficiently into approximate product measures.
\end{rmk}

\begin{rmk}
	`Increment' and `decrement' arguments have other famous applications, particularly in extremal combinatorics.  Perhaps the best known is the proof of Szemer\'edi's regularity lemma~\cite{Sze75,Sze76}, which uses an argument that is now often called the `energy increment'.  See~\cite[Section IV.5]{Bol98} for a modern textbook treatment of Szemer\'edi's lemma, and see~\cite{Tao06--hyperreg} and~\cite[Chapters 10 and 11]{TaoVu06} for a broader discussion of increment arguments in additive combinatorics.
	
	However, having made this connection, we should also stress the following difference.  In Szemer\'edi's regularity lemma, the vertex set of a large finite graph is partitioned into a controlled number of cells so that most \emph{pairs} among those cells have a property called `quasirandomness'.  This pairwise requirement on the cells leads to the extremely large tower-type bounds that necessarily appear in Szemer\'edi's lemma: see~\cite{Gow97}. By contrast, Theorem~\ref{thm:bigdecomp1} produces summands that mostly have a desired property --- the T-inequality --- individually, but are not required to interact with each other in any particular way.  This makes for less dramatic bounds: in particular, for the simple relationship in part (a) of that theorem.
	
	Another precedent for our `decrement' argument is the work of Linial, Samorodnitsky and Wigderson~\cite{LinSamWig00} giving a strongly polynomial time algorithm for permanent estimation.  Their algorithm relies on obtaining a substantial decrement in the permanent of a nonnegative matrix under a procedure called matrix scaling.
	\end{rmk}

\subsection{Proof of the DTC decrement}\label{subs:DTC-dec}

The rest of this section is spent proving Proposition~\ref{prop:dec}.

Given $\mu$ and a fuzzy partition $(\rho_1,\rho_2)$, let $(\zeta,\xi)$ be a randomization of the resulting mixture
\[\mu = \langle \rho_1\rangle\cdot \mu_{|\rho_1} + \langle \rho_2\rangle\cdot \mu_{|\rho_2}.\]
Thus, $(\zeta,\xi)$ takes values in $\{1,2\} \times X$.  For this pair of random variables, the left-hand side of~\eqref{eq:new-I-top} is
\[\DTC(\mu) - \langle\rho_1\rangle \cdot \DTC(\mu_{|\rho_1}) - \langle \rho_2\rangle \cdot \DTC(\mu_{|\rho_2}).\]
The proof of Proposition~\ref{prop:dec} rests on a careful analysis of the right-hand side of~\eqref{eq:new-I-top} for this pair $(\zeta,\xi)$, which reads
\begin{equation}\label{eq:new-I-top2}
\rmI(\xi\,;\,\zeta) - \sum_{i=1}^n\rmI(\xi_i\,;\,\zeta\,|\,\xi_{[n]\setminus i}).
\end{equation}
We next re-write this expression in terms of $\mu$, $\rho_1$ and $\rho_2$.

By Corollary~\ref{cor:I-and-KL}, the first term in~\eqref{eq:new-I-top2} is equal to
\begin{equation}\label{eq:first-term-form}
\langle\rho_1\rangle \cdot \rmD(\mu_{|\rho_1}\,\|\,\mu) + \langle\rho_2\rangle \cdot \rmD(\mu_{|\rho_2}\,\|\,\mu).
\end{equation}

The remaining terms in~\eqref{eq:new-I-top2} can be expressed similarly.  To this end, for each $i$, let $(\theta_{i,\bf{z}}:\ \bf{z} \in A^{[n]\setminus i})$ be a conditional distribution for $\xi$ given $\xi_{[n]\setminus i}$ according to $\mu$.  Thus, the only remaining randomness under $\theta_{i,\bf{z}}$ is in the coordinate $\xi_i$.  This conditional distribution represents $\mu$ as the following mixture:
\begin{equation}\label{eq:hookup}
\mu = \int \theta_{i,\bf{z}}\ \mu_{[n]\setminus i}(\d\bf{z}).
\end{equation}
For each $i$ and $\bf{z} \in A^{[n]\setminus i}$, let $\langle \cdot\rangle_{i,\bf{z}}$ denote integration with respect to $\theta_{i,\bf{z}}$. If we condition on the event $\{\xi_{[n]\setminus i} = \bf{z}\}$, then the pair $(\zeta,\xi)$ becomes a randomization of the mixture
\[\theta_{i,\bf{z}} = \langle\rho_1\rangle_{i,\bf{z}}\cdot (\theta_{i,\bf{z}})_{|\rho_1} + \langle \rho_2\rangle_{i,\bf{z}} \cdot (\theta_{i,\bf{z}})_{|\rho_2}.\]
This is because the conditional probability of the event $\{(\zeta,\xi) = (j,\bf{x})\}$ given the event $\{\xi_{[n]\setminus i} = \bf{z}\}$ equals
\[\frac{\rho_j(\bf{x})\cdot \mu(\bf{x})}{\mu_{[n]\setminus i}(\bf{z})} = \rho_j(\bf{x})\cdot \theta_{i,\bf{z}}(\bf{x})\]
whenever $j \in \{1,2\}$, $\bf{x} \in A^n$ and $\bf{z} = \bf{x}_{[n]\setminus i}$.  Now another appeal to Corollary~\ref{cor:I-and-KL} gives
\begin{align}\label{eq:second-term-form}
\rmI(\xi_i\,;\,\zeta\,|\,\xi_{[n]\setminus i} = \bf{z}) &= \rmI(\xi\,;\,\zeta\,|\,\xi_{[n]\setminus i} = \bf{z}) \nonumber \\
&= \langle \rho_1\rangle_{i,\bf{z}}\cdot  \rmD\big((\theta_{i,\bf{z}})_{|\rho_1}\,\big\|\,\theta_{i,\bf{z}}\big) + \langle \rho_2\rangle_{i,\bf{z}}\cdot  \rmD\big((\theta_{i,\bf{z}})_{|\rho_2}\,\big\|\,\theta_{i,\bf{z}}\big)
\end{align}
for $\mu_{[n]\setminus i}$-almost every $\bf{z}$. Averaging over $\bf{z}$, this becomes
\begin{align}\label{eq:2nd-terms}
\rmI(\xi_i\,;\,\zeta\,|\,\xi_{[n]\setminus i}) &= \int \langle \rho_1\rangle_{i,\bf{z}}\cdot  \rmD\big((\theta_{i,\bf{z}})_{|\rho_1}\,\big\|\,\theta_{i,\bf{z}}\big)\,\mu_{[n]\setminus i}(\d \bf{z}) \nonumber \\ &\quad + \int \langle \rho_2\rangle_{i,\bf{z}}\cdot  \rmD\big((\theta_{i,\bf{z}})_{|\rho_2}\,\big\|\,\theta_{i,\bf{z}}\big)\,\mu_{[n]\setminus i}(\d \bf{z}).
\end{align}

For the proof of Proposition~\ref{prop:dec}, we use these calculations in the following special case: suppose that $f:A^n\to [0,1]$ is $1$-Lipschitz and that $0 \leq t \leq n/200$, and let
\begin{equation}\label{eq:rhotf}
\rho_1 := \frac{1}{2}\rme^{-tf} \quad \hbox{and}\quad \rho_2 := 1 - \frac{1}{2}\rme^{-tf}.
\end{equation}
Note that
\[0 < \rho_1 \leq \frac{1}{2} \quad \hbox{and} \quad \frac{1}{2} \leq \rho_2 < 1.\]
These bounds simplify the proof below, and are the reason for the factor of $\frac{1}{2}$ in the definition of $\rho_1$.

For this choice of fuzzy partition, we need an upper bound for the right-hand side in~\eqref{eq:2nd-terms}.  It has two terms, which we estimate separately.  The key to both estimates is the following geometric feature of the present setting: the measure $\theta_{i,\bf{z}}$ is supported on the set
\[S_{i,\bf{z}} := \{\bf{x} \in A^n:\ \bf{x}_{[n]\setminus i} = \bf{z}\},\]
which has diameter $1/n$ in the normalized Hamming metric.

\begin{lem}\label{lem:1st-term}
For the choice of $\rho_1$ and $\rho_2$ in~\eqref{eq:rhotf}, we have
\[\int \langle \rho_1\rangle_{i,\bf{z}}\cdot  \rmD\big((\theta_{i,\bf{z}})_{|\rho_1}\,\big\|\,\theta_{i,\bf{z}}\big)\,\mu_{[n]\setminus i}(\d \bf{z}) \leq \frac{t^2}{n^2}\langle \rho_1\rangle.\]
\end{lem}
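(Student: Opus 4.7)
The plan is to exploit the fact that $\theta_{i,\bf{z}}$ is supported on the low-diameter slice $S_{i,\bf{z}}$, on which $-tf$ has very small oscillation, and then apply the exponential-moment estimate of Lemma~\ref{lem:D-est} slice by slice.

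First I would observe that $\rho_1 = \tfrac{1}{2}\rme^{-tf}$, so $(\theta_{i,\bf{z}})_{|\rho_1}$ is the Gibbs measure associated to $\theta_{i,\bf{z}}$ and the function $-tf + \log(1/2)$. Since the definition of a Gibbs measure is insensitive to adding a constant to the exponent, we have
\[
(\theta_{i,\bf{z}})_{|\rho_1} = (\theta_{i,\bf{z}})_{|\rme^{-tf}}.
\]
This reduces the problem to bounding $\rmD\bigl((\theta_{i,\bf{z}})_{|\rme^{-tf}}\,\bigl\|\,\theta_{i,\bf{z}}\bigr)$.

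Next, because $\theta_{i,\bf{z}}$ is concentrated on $S_{i,\bf{z}}$ and $\mathrm{diam}_{d_n}(S_{i,\bf{z}}) = 1/n$, and because $f$ is $1$-Lipschitz for $d_n$, the function $-tf$ takes values in an interval of length at most $t/n$ on the support of $\theta_{i,\bf{z}}$. Lemma~\ref{lem:D-est} applied with $b - a = t/n$ then yields
\[
\rmD\bigl((\theta_{i,\bf{z}})_{|\rho_1}\,\bigl\|\,\theta_{i,\bf{z}}\bigr) \leq \frac{t^2}{n^2}
\]
for $\mu_{[n]\setminus i}$-a.e.\ $\bf{z}$.

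Finally, I would integrate this pointwise bound against $\langle\rho_1\rangle_{i,\bf{z}}\,\mu_{[n]\setminus i}(\d\bf{z})$. Since~\eqref{eq:hookup} expresses $\mu$ as the mixture $\int \theta_{i,\bf{z}}\,\mu_{[n]\setminus i}(\d\bf{z})$, Fubini gives
\[
\int \langle\rho_1\rangle_{i,\bf{z}}\,\mu_{[n]\setminus i}(\d\bf{z}) = \int \rho_1\,\d\mu = \langle\rho_1\rangle,
\]
so the claimed inequality
\[
\int \langle\rho_1\rangle_{i,\bf{z}}\cdot\rmD\bigl((\theta_{i,\bf{z}})_{|\rho_1}\,\bigl\|\,\theta_{i,\bf{z}}\bigr)\,\mu_{[n]\setminus i}(\d\bf{z}) \leq \frac{t^2}{n^2}\langle\rho_1\rangle
\]
follows immediately. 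There is no real obstacle here, only the minor bookkeeping step of recognizing that the one-coordinate slice on which $\theta_{i,\bf{z}}$ lives has diameter $1/n$ in the normalized Hamming metric, so the oscillation bound on $-tf$ is $t/n$ rather than $t$; the factor $1/n^2$ in the conclusion is exactly the square of this oscillation.
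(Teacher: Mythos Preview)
Your proof is correct and follows essentially the same route as the paper: bound the oscillation of $-tf$ on the one-coordinate slice $S_{i,\bf{z}}$ by $t/n$, apply Lemma~\ref{lem:D-est} to obtain the pointwise bound $t^2/n^2$ on the KL divergence, and then integrate using~\eqref{eq:hookup}. The only cosmetic difference is that the paper phrases the oscillation bound as $\max \rho_1 \leq \rme^{t/n}\min\rho_1$ on $S_{i,\bf{z}}$ rather than first stripping off the constant $1/2$ to pass to $\rme^{-tf}$, but this is the same argument.
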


\begin{proof}
	The function $tf$ is $t$-Lipschitz, so for any $i$ and $\bf{z}$ we have
\[\max\big\{\rho_1(\bf{y}):\ \bf{y} \in S_{i,\bf{z}}\big\} \leq \rme^{t/n}\min\big\{\rho_1(\bf{y}):\ \bf{y} \in S_{i,\bf{z}}\big\}.\]
Therefore Lemma~\ref{lem:D-est} gives
\[\rmD\big((\theta_{i,\bf{z}})_{|\rho_1}\,\big\|\,\theta_{i,\bf{z}}\big) \leq t^2/n^2.\]
Substituting into the desired integral, we obtain
\[\int \langle \rho_1\rangle_{i,\bf{z}}\cdot  \rmD\big((\theta_{i,\bf{z}})_{|\rho_1}\,\big\|\,\theta_{i,\bf{z}}\big)\,\mu_{[n]\setminus i}(\d \bf{z}) \leq \frac{t^2}{n^2}\int \langle \rho_1\rangle_{i,\bf{z}}\ \mu_{[n]\setminus i}(\d \bf{z}),\]
and this right-hand integral is equal to $\langle \rho_1\rangle$ by~\eqref{eq:hookup}.
\end{proof}

\begin{lem}\label{lem:2nd-term}
For the choice of $\rho_1$ and $\rho_2$ in~\eqref{eq:rhotf}, we have
\[\int \langle \rho_2\rangle_{i,\bf{z}}\cdot  \rmD\big((\theta_{i,\bf{z}})_{|\rho_2}\,\big\|\,\theta_{i,\bf{z}}\big)\,\mu_{[n]\setminus i}(\d \bf{z}) \leq 32\frac{t^2}{n^2}\langle \rho_1\rangle.\]
\end{lem}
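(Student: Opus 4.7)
The plan is to bound $\rmD\big((\theta_{i,\bf{z}})_{|\rho_2}\,\big\|\,\theta_{i,\bf{z}}\big)$ using Lemma~\ref{lem:D-est}, exploiting the fact that $\theta_{i,\bf{z}}$ is supported on $S_{i,\bf{z}}$, which has diameter $1/n$ in the normalized Hamming metric. The asymmetry between $\rho_1$ and $\rho_2$ is crucial: although $\rho_1$ is small and can oscillate multiplicatively on $S_{i,\bf{z}}$ by the factor $e^{t/n}$, its values are bounded above by $1/2$, so the additive oscillation of $\rho_2 = 1-\rho_1$ is small, and because $\rho_2 \geq 1/2$ this also controls the oscillation of $\log \rho_2$.

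Concretely, since $f$ is $1$-Lipschitz and $\bf{y},\bf{y}' \in S_{i,\bf{z}}$ satisfy $d_n(\bf{y},\bf{y}') \leq 1/n$, letting $c(i,\bf{z}) := \min_{\bf{y} \in S_{i,\bf{z}}} \rho_1(\bf{y}) \leq 1/2$, one gets
\[|\rho_1(\bf{y}) - \rho_1(\bf{y}')| \leq c(i,\bf{z})(\rme^{t/n} - 1) \leq 2 c(i,\bf{z})\cdot t/n,\]
using $t/n \leq 1/200$. Then, since the map $x \mapsto -\log(1-x)$ has derivative at most $2$ on $[0,1/2]$, the oscillation of $\log \rho_2$ on $S_{i,\bf{z}}$ is at most $4c(i,\bf{z}) t/n$. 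Applying Lemma~\ref{lem:D-est} gives
\[\rmD\big((\theta_{i,\bf{z}})_{|\rho_2}\,\big\|\,\theta_{i,\bf{z}}\big) \leq 16\, c(i,\bf{z})^2 \cdot t^2/n^2.\]

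To finish, multiply by $\langle \rho_2\rangle_{i,\bf{z}} \leq 1$ and use $c(i,\bf{z}) \leq 1/2$ to replace one factor of $c(i,\bf{z})$, obtaining the pointwise bound $8 c(i,\bf{z}) t^2/n^2$. Since $c(i,\bf{z}) \leq \langle \rho_1\rangle_{i,\bf{z}}$ by definition of the minimum, integrating against $\mu_{[n]\setminus i}$ and using the mixture identity~\eqref{eq:hookup} yields $8 t^2/n^2 \cdot \langle \rho_1\rangle$, well within the claimed $32 t^2/n^2 \cdot \langle \rho_1\rangle$. The only nontrivial step is the careful propagation of the multiplicative oscillation of $\rho_1$ into an additive oscillation of $\log \rho_2$; everything else is direct bookkeeping, and the generous constant $32$ leaves ample room for looser estimates.
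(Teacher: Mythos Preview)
Your proof is correct and follows essentially the same approach as the paper: bound the oscillation of $\rho_2$ on the fibre $S_{i,\bf{z}}$ using the Lipschitz property of $f$, convert to an oscillation of $\log\rho_2$ via the lower bound $\rho_2\geq 1/2$, apply Lemma~\ref{lem:D-est}, and observe that the resulting bound is controlled by $\rho_1$ on the fibre. The only cosmetic differences are that the paper parameterizes by $\rme^{-tf(\bf{x})}$ for a representative $\bf{x}$ in the fibre rather than your minimum $c(i,\bf{z})$, and passes to an integral over $A^n$; your bookkeeping is arguably tidier and yields the better constant $8$.
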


Beware that the average on the right here is $\langle\rho_1\rangle$, not $\langle\rho_2\rangle$. Also, the factor of $32$ is chosen to be simple, not optimal.

\begin{proof}
We certainly have $\langle \rho_2\rangle_{i,\bf{z}} \leq 1$ for all $i$ and $\bf{z}$, so it suffices to show that
\[\int \rmD\big((\theta_{i,\bf{z}})_{|\rho_2}\,\big\|\,\theta_{i,\bf{z}}\big)\,\mu_{[n]\setminus i}(\d \bf{z}) \leq 32\frac{t^2}{n^2}\langle \rho_1\rangle.\]
This is the work of the rest of the proof.  It turns out to be slightly easier to work with an integral over all of $A^n$, so let us re-write the last integral as
\[\int \rmD\big((\theta_{i,\bf{x}_{[n]\setminus i}})_{|\rho_2}\,\big\|\,\theta_{i,\bf{x}_{[n]\setminus i}}\big)\,\mu(\d \bf{x}).\]

Now suppose that $\bf{x} \in A^n$ and that $\bf{y},\bf{y}'\in S_{i,\bf{x}_{[n]\setminus i}}$.  Then $\bf{x}$, $\bf{y}$ and $\bf{y}'$ agree in all but possibly the $i^{\rm{th}}$ coordinate.  Since $f$ is $1$-Lipschitz, it follows that
\[\rho_2(\bf{y}') - \rho_2(\bf{y}) = \frac{1}{2}(\rme^{-tf(\bf{y})} - \rme^{-tf(\bf{y}')}) \leq \frac{1}{2}\rme^{-tf(\bf{x})}(\rme^{t/n} - \rme^{-t/n}).\]
Since $0 < t/n \leq 1/200$, we have $\rme^{t/n} - \rme^{-t/n} < 4t/n$.  Also, we have arranged that $\rho_2$ is always at least $1/2$.  Therefore the last estimate implies that
\[\rho_2(\bf{y}') \leq \rho_2(\bf{y}) + \frac{2t}{n}\rme^{-tf(\bf{x})} \leq \rho_2(\bf{y})\Big(1 + \frac{4t}{n}\rme^{-tf(\bf{x})}\Big) \leq \rho_2(\bf{y})\exp\Big(\frac{4t}{n}\rme^{-tf(\bf{x})}\Big)\]
for all $\bf{y},\bf{y}' \in S_{i,\bf{x}_{[n]\setminus i}}$.  Since $\theta_{i,\bf{x}_{[n]\setminus i}}$ is supported on $S_{i,\bf{x}_{[n]\setminus i}}$, we may combine this estimate with Lemma~\ref{lem:D-est} to obtain
\[\rmD\big((\theta_{i,\bf{x}_{[n]\setminus i}})_{|\rho_2}\,\big\|\,\theta_{i,\bf{x}_{[n]\setminus i}}\big) \leq \frac{16t^2}{n^2}\rme^{-2tf(\bf{x})} \leq \frac{16t^2}{n^2}\rme^{-tf(\bf{x})} = 32\frac{t^2}{n^2}\rho_1(\bf{x}).\]
For the second inequality here, notice that we have bounded $\rme^{-2tf(\bf{x})}$ by $\rme^{-tf(\bf{x})}$. This is extremely crude, but it suffices for the present argument.

Substituting into the desired integral, we obtain
\[\int \rmD\big((\theta_{i,\bf{x}_{[n]\setminus i}})_{|\rho_2}\,\big\|\,\theta_{i,\bf{x}_{[n]\setminus i}}\big)\,\mu(\d \bf{x}) \leq 32\frac{t^2}{n^2}\int \rho_1(\bf{x})\ \mu(\d\bf{x}).\]
\end{proof}

Combining the preceding lemmas with~\eqref{eq:new-I-top2},~\eqref{eq:first-term-form} and~\eqref{eq:2nd-terms}, we have shown the following.

\begin{cor}\label{cor:dec-lower-bd}
	For the choice of $(\rho_1,\rho_2)$ in~\eqref{eq:rhotf} we have
	\begin{align*}
&\DTC(\mu) - \langle\rho_1\rangle \cdot \DTC(\mu_{|\rho_1}) - \langle\rho_2\rangle \cdot \DTC(\mu_{|\rho_2})  \\ &\geq \rmI_\mu(\rho_1,\rho_2) - n\Big(\frac{t^2}{n^2}\langle \rho_1\rangle + 32\frac{t^2}{n^2}\langle \rho_1\rangle\Big)\\
&=  \langle\rho_1\rangle \cdot \rmD(\mu_{|\rho_1}\,\|\,\mu) + \langle\rho_2\rangle \cdot \rmD(\mu_{|\rho_2}\,\|\,\mu) - 33\frac{t^2}{n}\langle \rho_1\rangle.
\end{align*}
\qed
\end{cor}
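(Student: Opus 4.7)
The plan is to apply the Shearer-change identity of Lemma~\ref{lem:Shearer-change} to a randomization $(\zeta,\xi)$ of the mixture $\mu = \langle\rho_1\rangle\mu_{|\rho_1} + \langle\rho_2\rangle\mu_{|\rho_2}$, and to match each of the four quantities appearing on the two sides of~\eqref{eq:new-I-top} with an expression that has already been computed or bounded above. On the left, $\DTC(\xi) - \DTC(\xi\,|\,\zeta)$ is exactly $\DTC(\mu) - \langle\rho_1\rangle\DTC(\mu_{|\rho_1}) - \langle\rho_2\rangle\DTC(\mu_{|\rho_2})$: since the conditional law of $\xi$ given $\{\zeta=j\}$ is $\mu_{|\rho_j}$, every Shannon (conditional) entropy in the definition of $\DTC(\xi\,|\,\zeta)$ splits as a $\langle\rho_j\rangle$-weighted average of the same entropy taken under $\mu_{|\rho_j}$.

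Next, on the right side of~\eqref{eq:new-I-top}, the first term $\rmI(\xi;\zeta)$ is by definition $\rmI_\mu(\rho_1,\rho_2)$, and Corollary~\ref{cor:I-and-KL} rewrites it as the KL expression in~\eqref{eq:first-term-form}; this accounts for the equality between the two stated forms of the right-hand side in the corollary. The subtracted sum $\sum_{i=1}^n \rmI(\xi_i;\zeta\,|\,\xi_{[n]\setminus i})$ has already been expanded term by term via~\eqref{eq:2nd-terms} into two $\bf{z}$-integrals, and Lemmas~\ref{lem:1st-term} and~\ref{lem:2nd-term} bound these integrals by $(t^2/n^2)\langle\rho_1\rangle$ and $32(t^2/n^2)\langle\rho_1\rangle$ respectively. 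Summing over the $n$ coordinates produces the total upper bound $33(t^2/n)\langle\rho_1\rangle$ on the subtracted sum, whose subtraction from $\rmI_\mu(\rho_1,\rho_2)$ is exactly the inequality claimed.

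Since the two analytic inputs (Lemmas~\ref{lem:1st-term} and~\ref{lem:2nd-term}) have already been proved using the specific Lipschitz-exponential form~\eqref{eq:rhotf} together with Lemma~\ref{lem:D-est} and the $1/n$-diameter bound on the fibres $S_{i,\bf{z}}$, the remaining work is essentially bookkeeping. The only point deserving a moment's care is the identification of $\DTC(\xi\,|\,\zeta)$ with the convex combination on the left-hand side, since one must check that the \emph{conditional} erasure entropies $\rmH(\xi_i\,|\,\xi_{[n]\setminus i},\zeta)$ also split correctly --- which they do, because conditioning on $\{\zeta=j\}$ simply means working throughout under the measure $\mu_{|\rho_j}$. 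I do not anticipate any genuine obstacle beyond keeping the indices straight.
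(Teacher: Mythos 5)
Your proposal is correct and follows exactly the route the paper takes: the corollary is obtained by combining the identity of Lemma~\ref{lem:Shearer-change} (whose left-hand side is the stated DTC decrement and whose right-hand side is evaluated via Corollary~\ref{cor:I-and-KL} and~\eqref{eq:2nd-terms}) with the bounds of Lemmas~\ref{lem:1st-term} and~\ref{lem:2nd-term}, summed over the $n$ coordinates. Your extra check that the conditional erasure entropies split correctly under conditioning on $\{\zeta=j\}$ is the right point to verify and is handled exactly as you describe.
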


\begin{proof}[Proof of Proposition~\ref{prop:dec}]
Since $d_n$ has diameter at most $1$, any measure on $(A^n,d_n)$ satisfies T$(\k,1)$ for all $\k > 0$.  We may therefore assume that $r < 1$. If $\mu$ does not satisfy T$(r n/200,r)$, then by Proposition~\ref{prop:LSL-T} it also does not satisfy L$([r/2,r n/200],100/n)$.  This means there are a $1$-Lipschitz function $f:A^n\to \bbR$ and a value $t \in [r/2,r n/200] \subseteq [0,n/200]$ such that
\[\rmD(\mu_{|\rme^{tf}}\,\|\,\mu) > 100\frac{t^2}{n}.\]
Replacing $f$ with $-f$ and then adding a constant if necessary, we may assume that $f$ takes values in $[0,1]$ and satisfies
\[\rmD(\mu_{|\rme^{-tf}}\,\|\,\mu) = \rmD(\mu_{|(\rme^{-tf}/2)}\,\|\,\mu) > 100\frac{t^2}{n}.\]

Now construct $\rho_1$ and $\rho_2$ from this function $f$ as in~\eqref{eq:rhotf}.  Combining the above lower bound on $\rmD(\mu_{|(\rme^{-tf}/2)}\,\|\,\mu) = \rmD(\mu_{|\rho_1}\,\|\,\mu)$ with Corollary~\ref{cor:dec-lower-bd}, we obtain
	\begin{align*}
&\DTC(\mu) - \langle\rho_1\rangle \cdot \DTC(\mu_{|\rho_1}) - \langle\rho_2\rangle \cdot \DTC(\mu_{|\rho_2})\\
&\geq \langle\rho_1\rangle \cdot \rmD(\mu_{|\rho_1}\,\|\,\mu) + \langle\rho_2\rangle \cdot \rmD(\mu_{|\rho_2}\,\|\,\mu) - 33\frac{t^2}{n}\langle \rho_1\rangle\\
&> \langle\rho_1\rangle \cdot \rmD(\mu_{|\rho_1}\,\|\,\mu) + \langle\rho_2\rangle \cdot \rmD(\mu_{|\rho_2}\,\|\,\mu) - \frac{1}{2}\langle\rho_1\rangle \cdot \rmD(\mu_{|\rho_1}\,\|\,\mu)\\
&\geq \frac{1}{2}\Big(\langle\rho_1\rangle \cdot \rmD(\mu_{|\rho_1}\,\|\,\mu) + \langle\rho_2\rangle \cdot \rmD(\mu_{|\rho_2}\,\|\,\mu)\Big)\\
&= \frac{1}{2}\rmI_\mu(\rho_1,\rho_2).
\end{align*}
The last line here follows from another appeal to Corollary~\ref{cor:I-and-KL}.

Finally, since $f\leq 1$ and $r/2 \leq t \leq r n/200 \leq n$, we also obtain
\[\rmI_\mu(\rho_1,\rho_2) \geq \langle\rho_1\rangle \cdot \rmD(\mu_{|\rho_1}\,\|\,\mu) \geq \int  \frac{1}{2}\rme^{-tf}\,\d\mu\cdot \Big(100\frac{r^2}{4n}\Big) \geq \frac{r^2 \rme^{-t}}{n} \geq r^2n^{-1}\rme^{-n}.\]
\end{proof}

\begin{rmk}
The proof of Proposition~\ref{prop:dec} actually exploits the failure of $\mu$ to satisfy the stronger L-inequality L$([r/2,rn/200],100/n)$, rather than the T-inequality in the statement of that proposition.  Our proof therefore gives the conclusion of Theorem~\ref{thm:bigdecomp1} with that L-inequality in place of the T-inequality for the measures $\mu_{|\rho_j}$, $2 \leq j \leq k$.  However, later steps in the proof of Theorem B use some properties that we know only for T-inequalities, such as those from Subsection~\ref{subs:BobGot}.  We do not refer to L-inequalities again after the present section.
\end{rmk}

\subsection{The use of DTC in this section}\label{subs:whyDTC}

Both $\TC$ and $\DTC$ are notions of multi-variate mutual information for a measure $\mu$ on $A^n$.  In searching for a `decrement' proof of Theorem B, it is natural that we try such a quantity.  This is because we regard product measures, which exhibit very strong concentration, as an extreme case, and they are precisely the measures for which any good notion of multi-variate mutual information should be zero.

It is more subtle to describe why $\DTC$, rather than $\TC$ or any other kind of multi-variate mutual information, is the right quantity for the decrement.  A valuable hint in this direction comes from inspecting the terms that define $\DTC$ and comparing them with older proofs of logarithmic Sobolev inequalities on Hamming cubes.  Let us discuss this with reference to the L-inequalities of Subsection~\ref{subs:logSob}, which are really a special case of logarithmic Sobolev inequalities.

Let $\mu$ be the uniform distribution on $\{0,1\}^n$, and let $\xi_i:\{0,1\}^n\to\{0,1\}$ be the $i^{\rm{th}}$ coordinate projection for $1 \leq i \leq n$.  Following~\cite[Section 4]{Led9597} (where the argument is credited to Bobkov), one obtains an L-inequality for the space $(\{0,1\}^n,d_n)$ and measure $\mu$ from the following pair of estimates:
\begin{enumerate}
	\item[(a)] For any other $\nu \in \Pr(\{0,1\}^n)$, we have
	\[\rmD(\nu\,\|\,\mu) \leq \sum_{i=1}^n \rmD(\nu\,\|\,\mu\,|\,\xi_{[n]\setminus i}),\]
	where $\rmD(\nu\,\|\,\mu\,|\,\xi_{[n]\setminus i})$ denotes a conditional KL divergence (see~\cite[Section 2.5]{CovTho06}; the summands on the right in~\cite[Proposition 4.1]{Led9597} have this form).
	\item[(b)] If $\nu = \mu_{|\rme^{-tf}}$ for some $1$-Lipschitz function $f$ on $\{0,1\}^n$, then
	\[\rmD(\nu\,\|\,\mu\,|\,\xi_{[n]\setminus i}) \leq \frac{t^2}{n^2} \quad \hbox{for each}\ i.\]
	\end{enumerate}
Combining (a) and (b), it follows that $\rmD(\mu_{|\rme^{-tf}}\,\|\,\mu) \leq t^2/n$ for any $1$-Lipschitz function $f$.

Now consider instead an arbitrary measure $\mu$ on $\{0,1\}^n$, and suppose that $\rho_1 = \rme^{-tf}$ and $\rho_2 = 1-\rho_1$ for a $1$-Lipschitz function $f$ and some $t$ (ignoring the technical factor of $1/2$ in~\eqref{eq:rhotf}).  Of the two estimates above, (b) still holds simply by Lemma~\ref{lem:D-est}, but (a) is often false unless $\mu$ is a product measure.

Observe that the quantity $\rmD(\mu_{|\rme^{-tf}}\,\|\,\mu)$ appears in the first term of~\eqref{eq:first-term-form}, the formula for $\rmI(\xi\,;\,\zeta)$ in the previous subsection.  Likewise, a simple re-write of the quantity $\rmD(\mu_{|\rme^{-tf}}\,\|\,\mu\,|\,\xi_{[n]\setminus i} = \bf{z})$ shows that it is equal to $\rmD((\theta_{i,\bf{z}})_{|\rho_1}\,\|\,\theta_{i,\bf{z}})$, again using the notation of the previous subsection.  This quantity appears in the first term of~\eqref{eq:second-term-form}, the formula for $\rmI(\xi_i\,;\,\zeta\,|\,\xi_{[n]\setminus i} = \bf{z})$.

So for a general measure $\mu$ the difference
\begin{equation}\label{eq:D-diffs}
\rmD(\mu_{|\rme^{-tf}}\,\|\,\mu) - \sum_{i=1}^n\rmD(\mu_{|\rme^{-f}}\,\|\,\mu\,|\,\xi_{[n]\setminus i})
\end{equation}
resembles part of the difference
\begin{equation}\label{eq:DTC-diffs}
\rmI(\xi\,;\,\zeta) - \sum_{i=1}^n\rmI(\xi_i\,;\,\zeta\,|\,\xi_{[n]\setminus i}) = \DTC(\mu) - \langle\rho_1\rangle\cdot \DTC(\mu_{|\rho_1}) - \langle \rho_2\rangle\cdot \DTC(\mu_{|\rho_2}).
\end{equation}
The difference in~\eqref{eq:D-diffs} does not quite appear in~\eqref{eq:DTC-diffs} because the coefficients of the various KL divergences here do not match.  But the resemblance is enough to suggest the following.  If a suitable L-inequality fails, so one can find a function $f$ and parameter $t$ for which $\rmD(\mu_{|\rme^{-tf}}\,\|\,\mu)$ is large, then the difference in~\eqref{eq:D-diffs} must also be large, and then one might hope to show that the DTC-decrement in~\eqref{eq:DTC-diffs} is also large.  The work of Subsection~\ref{subs:DTC-dec} consists of the tweaks and technicalities that are needed to turn this hope into rigorous estimates.

Ledoux uses the logarithmic Sobolev inequalities in~\cite[Section 4]{Led9597} to give a new proof of an exponential moment bound for Lipschitz functions on product spaces. This bound is in turn equivalent to Marton's transportation inequalities (Proposition~\ref{prop:Marton}) via the Bobkov--G\"{o}tze equivalence (see the discussion after Proposition~\ref{prop:BobGot}).  Marton's original proof of Proposition~\ref{prop:Marton} is quite different, and arguably more elementary.  She uses an induction on the dimension $n$, and no functional inequalities such as logarithmic Sobolev inequalities appear.  As remarked following Proposition~\ref{prop:BobGot}, yet another proof can be given using the Bobkov--G\"{o}tze equivalence and the method of bounded martingale differences.  This last proof does involve bounding an exponential moment, but it still seems more elementary than the logarithmic Sobolev approach.

\begin{ques}
	Is there a proof of Theorem~\ref{thm:bigdecomp1}, or more generally of Theorem B, which uses an induction on $n$ and either (i) some variants of Marton's ideas, or (ii) the Bobkov--G\"{o}tze equivalence and some more elementary way of controlling exponential moments of Lipschitz functions?
	\end{ques}

I expect that such an alternative proof would offer valuable additional insight into the phenomena behind Theorem B.

\section{Completed proof of Theorem B}

We still have to turn Theorem~\ref{thm:bigdecomp1} into Theorem B. This is the work of the present section, which has two stages.  The first, in Subsection~\ref{subs:another-aux-decomp}, continues to work solely with $\DTC$.  The second, in Subsection~\ref{subs:B}, takes us from $\DTC$ back to $\TC$.

\subsection{Another auxiliary decomposition}\label{subs:another-aux-decomp}

\begin{thm}\label{thm:bigdecomp2}
For any $\eps,r > 0$ there exists $c > 0$ such that the following holds.  Any $\mu \in \Pr(A^n)$ can be written as a mixture
\[\mu = p_1\mu_1 + \dots + p_m\mu_m\]
so that
\begin{enumerate}
	\item[(a)] $m \leq c\exp(c\cdot \DTC(\mu))$,
	\item[(b)] $p_1 < \eps$, and
	\item[(c)] the measure $\mu_j$ satisfies T$(r n/600,r)$ for every $j=2,3,\dots,m$.
	\end{enumerate}
\end{thm}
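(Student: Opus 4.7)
The plan is to bootstrap Theorem~\ref{thm:bigdecomp1}: apply it with tightened parameters, prune by Markov's inequality to control KL divergences, and then coarsen the resulting fuzzy partition into a mixture with few pieces via $\ol{d_n}$-clustering, accepting a controlled weakening of the transportation inequality.

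First I would apply Theorem~\ref{thm:bigdecomp1} with parameters $\eps/4$ and $r/3$ to obtain a fuzzy partition $(\rho_1,\dots,\rho_k)$ satisfying $\rmI_\mu(\rho_1,\dots,\rho_k) \leq 2\DTC(\mu)$, $\langle \rho_1\rangle < \eps/4$, and $\nu_j := \mu_{|\rho_j}$ satisfying T$(rn/600, r/3)$ for $j \geq 2$. Writing $p_j := \langle \rho_j\rangle$, Corollary~\ref{cor:I-and-KL} rewrites the mutual-information bound as $\sum_j p_j \rmD(\nu_j\,\|\,\mu) \leq 2\DTC(\mu)$. Setting $M := 8\DTC(\mu)/\eps$, Markov's inequality lets me absorb into the bad piece both $p_1\nu_1$ and every $j \geq 2$ with $\rmD(\nu_j\,\|\,\mu) > M$, of combined weight less than $\eps/2$. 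The surviving good indices $J$ then enjoy both T$(rn/600, r/3)$ and $\rmD(\nu_j\,\|\,\mu) \leq M$.

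The heart of the argument is a $\ol{d_n}$-coarsening of $\{\nu_j\}_{j \in J}$ into $m \leq c\exp(c\DTC(\mu))$ groups. The T-inequality, combined with the Bobkov--G\"otze equivalence and a standard L\'evy-style iteration, produces for each $j\in J$ a Hamming ball $V_j = B(c_j, r/3)$ with $\nu_j(V_j) \geq 1/2$, whose slight thickenings carry all but an exponentially small tail of $\nu_j$ once $n$ is large. I net the centers $\{c_j\}_{j\in J}$ greedily at Hamming-scale slightly above $2r/3$ to obtain representatives $c_{j_1},\dots,c_{j_m}$; the associated balls $V_{j_i}$ are then pairwise disjoint in $A^n$. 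The log-sum inequality applied to the two-set partition $\{V_{j_i}, A^n \setminus V_{j_i}\}$ yields
\[\rmD(\nu_{j_i}\,\|\,\mu) \;\geq\; \tfrac{1}{2}\log(1/\mu(V_{j_i})) - \log 2,\]
so $\mu(V_{j_i}) \geq \exp(-2M - 2\log 2)$, and disjointness together with $\sum_i \mu(V_{j_i}) \leq 1$ forces $m \leq c\exp(c\DTC(\mu))$. This is the single quantitative step where both the T-inequality (providing the concentration balls $V_j$) and the KL-divergence bound (controlling $\log(1/\mu(V_j))$) are genuinely used.

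For the final mixture I assign each $j \in J$ to its nearest representative to form groups $G_1,\dots,G_m$, with weights $\tilde p_i := \sum_{j\in G_i} p_j$ and merged measures $\tilde\mu_i := (1/\tilde p_i)\sum_{j\in G_i} p_j\nu_j$. For large $n$, the L\'evy tails place all but vanishing $\nu_j$-mass (for $j\in G_i$) inside a common Hamming ball around $c_{j_i}$; I transfer those tails into the bad piece, use Lemma~\ref{lem:RN-flat} to pass T$(rn/600, r/3)$ from $\nu_{j_i}$ to its main part, and then apply Lemma~\ref{lem:infty-transport} (with any infty-coupling inside the common ball) to transfer it to the main part of $\tilde\mu_i$, inflating the $r$-parameter by twice the coupling distance. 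The hard part is the bookkeeping: the net scale must be large enough for the concentration balls to be disjoint (which drives the counting bound) yet the induced coupling distance must be small enough that Lemma~\ref{lem:infty-transport} upgrades T$(rn/600, r/3)$ only to T$(rn/600, r)$. The factor-of-three gap between the $rn/200$ of Theorem~\ref{thm:bigdecomp1} and the $rn/600$ of Theorem~\ref{thm:bigdecomp2} is precisely the slack needed to absorb this inflation, possibly after refining $r/3$ to a smaller intermediate scale and diverting the additional mass to the bad piece.
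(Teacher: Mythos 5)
There is a genuine gap, and it sits at the heart of your counting step. You claim that because $\nu_j$ satisfies T$(rn/600,r/3)$, there is a Hamming ball $V_j=B(c_j,r/3)$ with $\nu_j(V_j)\geq 1/2$. This is false: a transportation inequality (equivalently, L\'evy-type concentration) controls the measure of \emph{neighbourhoods of large sets} and the fluctuations of Lipschitz functions, not the distance of the measure from a single point. The uniform measure on $\{0,1\}^n$ satisfies T$(8rn,r)$ for every $r>0$ by Proposition~\ref{prop:Marton}, yet every Hamming ball of radius $1/3$ carries exponentially small mass, so no such $V_j$ exists. Since the concentration balls do not exist, there are no centres to net, the disjointness of the $V_{j_i}$ and the lower bound $\mu(V_{j_i})\geq e^{-2M-2\log 2}$ have no basis, and the bound $m\leq ce^{c\DTC(\mu)}$ --- the one conclusion that Theorem~\ref{thm:bigdecomp1} does not already give --- is not established. (Even granting the balls, your merging step would also fail quantitatively: the sup-coupling between the main parts of $\nu_{j_i}$ and $\tilde\mu_i$ lives inside a ball whose radius is at least the netting scale, which you need to be \emph{larger} than $2r/3$ for disjointness, so Lemma~\ref{lem:infty-transport} would inflate the additive parameter well past $r$; shrinking the scales to fix this destroys the disjointness. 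But this is secondary to the nonexistence of the balls.)

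For comparison, the paper controls the number of summands by an entirely non-spatial argument: given the fuzzy partition from Theorem~\ref{thm:bigdecomp1} with mutual information $I\leq 2\DTC(\mu)$, one samples $m\approx e^{O(I/\eps)}$ indices i.i.d.\ from the mixing weights and shows (after truncating the Radon--Nikodym derivatives at $e^{O(I/\eps)}$ and a second-moment computation) that the empirical average of the sampled components approximates $\mu$ in total variation; a Jordan-decomposition correction and Lemma~\ref{lem:RN-flat} then repair the T-inequalities of the surviving terms. The point is that a low-information mixture can be \emph{resampled} down to few terms without any claim that the component measures are localized in the Hamming metric. If you want to rescue a clustering idea, you would have to cluster in the transportation metric $\ol{d_n}$ rather than via Hamming balls around points, and it is not clear how to get the required packing bound from the KL constraint alone.
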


Letting $\rho_j := \d(p_j\mu_j)/\d \mu$, we always have 
\[\rmI_\mu(\rho_1,\dots,\rho_m) \leq \rmH(p_1,\dots,p_m) \leq \log m.\]
Therefore Theorem~\ref{thm:bigdecomp2} is a strengthening of Theorem~\ref{thm:bigdecomp1}.  We deduce Theorem~\ref{thm:bigdecomp2} from Theorem~\ref{thm:bigdecomp1} by using a simple sampling argument to `coarsen' the representation of $\mu$ given by Theorem~\ref{thm:bigdecomp1} and allowing a slight degradation in the error estimates.

\begin{prop}[Sampling from a low-information mixture]\label{prop:sampling}
Let $(X,\mu)$ be a standard probability space, and let $\mu$ be written as a mixture $\int \mu_\bullet \,\d P$ using some other probability space $(\O,P)$ and a kernel $\mu_\bullet$ from $\O$ to $X$.  Let $\eps \in (0,1/2)$, let $\O_1 \subseteq \O$ be measurable with $P(\O_1) > 1 - \eps/2$, and assume that
\[I := \int \rmD(\mu_\omega\,\|\,\mu)\,P(\d\omega) < \infty.\]
Finally, let $m := \lceil 16\eps^{-2}\rme^{16(I+1)/\eps}\rceil$.  Then there are elements $\omega_1$, \dots, $\omega_m \in \O_1$, not necessarily distinct, such that
\[\Big\|\frac{1}{m}\sum_{j=1}^m \mu_{\omega_j} - \mu\Big\| < 3\eps.\]
\end{prop}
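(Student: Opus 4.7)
The natural approach is random sampling: draw $\omega_1,\dots,\omega_m$ i.i.d.\ from the conditional distribution $Q := P(\,\cdot\,|\,\Omega_1)$, form the random probability measure $\nu := \frac{1}{m}\sum_j \mu_{\omega_j}$, and show that $\mathbb{E}\|\nu - \mu\| < 3\eps$; a pigeonhole step then delivers the required deterministic $\omega_j \in \Omega_1$.

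The main obstacle is that a bound on $\rmD(\mu_\omega\,\|\,\mu)$ does not control the densities $g_\omega := \d\mu_\omega/\d\mu$ pointwise, so a naive second-moment estimate for $\nu$ is ruled out: typical densities can be arbitrarily large on sets of small $\mu$-measure. I would circumvent this by truncating at a level $K$, to be chosen of order $(I+1)/\eps$: set $h_\omega := \min(g_\omega, e^K)$ and let $\widetilde{\mu}_\omega$ be the sub-probability measure with density $h_\omega$ against $\mu$. Since $g_\omega\log g_\omega > K g_\omega$ on $\{g_\omega > e^K\}$, and since $-x\log x \leq 1/e$ on $[0,1]$ bounds the negative contribution to the $\rmD$-integral, one obtains the key truncation estimate
\[\int_\Omega \Big(1 - \int h_\omega\,\d\mu\Big)\,P(\d\omega) \;\leq\; (I+1/e)/K.\]
Averaging against $Q$ instead of $P$ costs only a constant factor, since $P(\Omega_1) > 1-\eps/2$.

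With $\widetilde{\nu} := \frac{1}{m}\sum_j \widetilde{\mu}_{\omega_j}$, $\mu_Q := \int \mu_\omega\,\d Q$ and $\widetilde{\mu}_Q := \int \widetilde{\mu}_\omega\,\d Q$, I split
\[\|\nu-\mu\| \;\leq\; \|\nu - \widetilde\nu\| + \|\widetilde\nu - \widetilde\mu_Q\| + \|\widetilde\mu_Q - \mu_Q\| + \|\mu_Q - \mu\|.\]
The first and third terms are total variations of positive-measure differences equal to truncation defects, so each is at most a constant multiple of $(I+1)/K$ in expectation. The fourth term is bounded by $2P(\Omega_1^{\mathrm c}) < \eps$, because $\mu$ is the convex combination of $\mu_Q$ with the complementary average over $\Omega_1^{\mathrm c}$. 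For the middle term, the uniform bound $h_\omega \leq e^K$ gives the pointwise variance estimate $\mathrm{Var}_Q(h_\omega(x)) \leq e^K\,\mathbb{E}_Q h_\omega(x)$; integrating over $\mu$ (and using $\int \mathbb{E}_Q h_\omega\,\d\mu \leq 1$), then applying Cauchy--Schwarz to pass from $L^2(\mu)$ to $L^1(\mu)$, yields $\mathbb{E}\|\widetilde\nu - \widetilde\mu_Q\|^2 \leq e^K/m$.

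Choosing $K := 16(I+1)/\eps$ makes the two truncation contributions together less than $\eps/2$, and the choice of $m$ in the statement forces $\sqrt{e^K/m} \leq \eps/4$. Combining with the bound on $\|\mu_Q - \mu\|$ gives $\mathbb{E}\|\nu - \mu\| < 3\eps$, which delivers the desired $\omega_j$. The tension between making $K$ large enough to suppress the truncation defects and making $m$ large enough that $e^K/m$ remains small is exactly what produces the $\exp((I+1)/\eps)$ growth of $m$; finding a more efficient truncation scheme would only sharpen the constants.
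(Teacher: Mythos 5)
Your proof is correct and follows essentially the same route as the paper's: truncate the densities at level $\rme^{\Theta((I+1)/\eps)}$, control the truncation defect via Markov's inequality applied to $\int g_\omega\log^+ g_\omega\,\d P \leq I+1$, and then run a second-moment (probabilistic method) argument on the empirical average of the truncated densities. The only minor divergence is in handling the constraint $\omega_j\in\Omega_1$: you sample from $P(\,\cdot\,|\,\Omega_1)$ and pay $\|\mu_Q-\mu\|\leq 2P(\Omega\setminus\Omega_1)<\eps$ up front, whereas the paper samples from $P$ and afterwards replaces the samples landing outside $\Omega_1$ by arbitrary elements of $\Omega_1$ at a cost of $2\eps$; both fit within the $3\eps$ budget.
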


This is proved using the probabilistic method, together with a simple truncation argument.  This combination is reminiscent of classical proofs of the weak law of large numbers for random variables without bounded second moments~\cite[Section X.2]{FellerVolI}.

\begin{proof}
\emph{Step 1: setup and truncation.}\quad Let $Q := P\ltimes \mu_\bullet$, the hookup introduced in Subsection~\ref{subs:basic}, and let $F$ be the Radon--Nikodym derivative $\d Q/ \d(P\times \mu)$.  Then we have
\[\mu_\omega = F(\omega,\cdot)\cdot \mu \quad \hbox{for}\ P\hbox{-a.e.}\ \omega\]
and
\[I = \int F\log F\,\d(P\times \mu).\]
The function $t\log t$ on $[0,\infty)$ has a global minimum at $t = \rme^{-1}$ and its value there is $-\rme^{-1}$.  Therefore
\begin{equation}\label{eq:log-log+}
\int  F\log^+ F\ \d(P\times \mu) \leq I + \rme^{-1} < I+1.
\end{equation}

Now define
\[F' := \min\{F,\rme^{8(I+1)/\eps}\} \quad \hbox{and} \quad \mu_\omega' := F'(\omega,\cdot)\cdot \mu \quad \hbox{for each}\ \omega.\]
Each $\mu_\omega'$ is a positive measure bounded by $\mu_\omega$, so Markov's inequality and~\eqref{eq:log-log+} give
\begin{align*}
\int \|\mu_\omega - \mu_\omega'\|\,P(\d\omega) &= \int (F-F')\ \d(P\times \mu)\\ 
&\leq \int_{\{F > \rme^{8(I+1)/\eps}\}} F\ \d(P\times \mu)\\
&\leq \frac{\eps}{8(I+1)}\int F\log^+ F\ \d(P\times \mu)\\
&< \eps/8.
\end{align*}

Define
\[\mu' := \int \mu_\omega'\,P(\d\omega) \quad \hbox{and} \quad f'(x) := \int F'(\omega,x)\,P(\d \omega).\]
Then $\mu'$ is a positive measure bounded by $\mu$ which satisfies $\|\mu - \mu'\| \leq \eps/8$, and $f'$ is a version of the Radon--Nikodym derivative $\d\mu'/\d\mu$.

\vspace{7pt}

\emph{Step 2: the probabilistic method.}\quad Let $Y_1$, \dots, $Y_m$ be i.i.d. random elements of $\O$, each with distribution $P$, and let $\sfP$ denote the underlying probability measure for those random elements. Let $\sfE$ and $\rm{Var}$ denote expectation and variance with respect to $\sfP$.

For each $x \in X$, consider the random variable
\[X_x := \frac{1}{m}\sum_{j=1}^m F'(Y_j,x).\]
This is an average of i.i.d. random variables. Each of them satisfies
\[\sfE F'(Y_j,x) = \int F'(\omega,x)\,P(\d\omega) = f'(x).\]
Also, each is bounded by $\rme^{8(I+1)/\eps}$, and hence satisfies $\rm{Var}(F'(Y_j,x)) \leq \rme^{16(I+1)/\eps}$.

Now consider the empirical average of the measures $\mu_{Y_j}'$: it is given by
\[\frac{1}{m}\sum_{j=1}^m \mu_{Y_j}' = \frac{1}{m}\sum_{j=1}^m (F'(Y_j,\,\cdot\,)\cdot \mu) = \Big(\frac{1}{m}\sum_{j=1}^m F'(Y_j,\,\cdot\,)\Big)\cdot \mu.\]
Therefore, by Fubini's theorem,
\begin{align*}
\sfE\Big\|\frac{1}{m}\sum_{j=1}^m \mu_{Y_j}' - \mu'\Big\| &= \sfE\int \Big|\frac{1}{m}\sum_{j=1}^m F'(Y_j,\,\cdot\,) - f'\Big|\,\d\mu\\
&= \int \sfE|X_x - \sfE X_x|\ \mu(\d x)\\
&\leq \int \sqrt{\rm{Var}(X_x)}\ \mu(\d x)\\
&\leq m^{-1/2}\rme^{8(I+1)/\eps}.
\end{align*}

Combining the bounds above, we obtain
\begin{align*}
\sfE\Big\|\frac{1}{m}\sum_{j=1}^m \mu_{Y_j} - \mu\Big\| &\leq \frac{1}{m}\sum_{j=1}^m \sfE\|\mu_{Y_j} - \mu_{Y_j}'\| + \sfE\Big\|\frac{1}{m}\sum_{j=1}^m \mu_{Y_j}' - \mu'\Big\| + \|\mu' - \mu\|\\
&= \int \|\mu_\omega - \mu_\omega'\|\,P(\d\omega) + \sfE\Big\|\frac{1}{m}\sum_{j=1}^m \mu_{Y_j}' - \mu'\Big\| + \|\mu' - \mu\|\\
&< \eps/4 + m^{-1/2}\rme^{8(I+1)/\eps} \leq \eps/2.
\end{align*}
Therefore Markov's inequality gives
\[\sfP\Big\{\Big\|\frac{1}{m}\sum_{j=1}^m \mu_{Y_j} - \mu\Big\| < \eps\Big\} > \frac{1}{2}.\]

On the other hand, we have
\[\sfE|\{j \in [m]:\ Y_j \in \O\setminus \O_1\}| = mP(\O\setminus \O_1) < m\eps /2,\]
so another appeal to Markov's inequality gives
\[\sfP\big\{|\{j \in [m]:\ Y_j \in \O_1\}| > (1-\eps) m\big\} > \frac{1}{2}.\]
Combining this probability lower bound with the previous one, it follows that the intersection of these events has positive probability.  Therefore some possible values $\o_1$, \dots, $\o_m$ of $Y_1$, \dots, $Y_m$ satisfy both
\[\Big\|\mu - \frac{1}{m}\sum_{j=1}^m\mu_{\o_j}\Big\| < \eps\]
and
\[|\{j \in [m]:\ \o_j \in \O_1\}| > (1-\eps) m.\]
To complete the proof, we simply discard any values $\o_j$ that lie in $\O\setminus \O_1$ and replace them with arbitrary members of $\O_1$.  This incurs an additional error of at most $2\eps$ in the total-variation approximation to $\mu$.
\end{proof}

\begin{proof}[Proof of Theorem~\ref{thm:bigdecomp2}]
By shrinking $\eps$ if necessary, we may assume that
\begin{equation}\label{eq:choose-eps}
\eps < \frac{1}{6} \quad \hbox{and also} \quad 400\log((1-3\eps/2)^{-1}) < r^2.
\end{equation}

Now apply Theorem~\ref{thm:bigdecomp1} with $\eps^2/2$ in place of $\eps$ and with the present value of $r$.  Let $(\rho_1,\dots,\rho_k)$ be the resulting fuzzy partition. 
Let $\O := [k]$, and let $P$ be the probability distribution on this set defined by $P(j) := p_j := \langle\rho_j\rangle$.  The formula from Corollary~\ref{cor:I-and-KL} and conclusion (a) of Theorem~\ref{thm:bigdecomp1} give
\[I := \rmI_\mu(\rho_1,\dots,\rho_k) = \sum_{j \in \O} p_j\cdot\rmD(\mu_{|\rho_j}\,\|\,\mu) \leq 2\cdot \DTC(\mu).\]
Also, let $\O_1 = \{2,3,\dots,k\}$, so conclusion (b) of Theorem~\ref{thm:bigdecomp1} gives $P(\O_1) > 1 - \eps^2/2$.

We now apply Proposition~\ref{prop:sampling} to $\mu$ and its representation as a mixture of the measures $\mu_{|\rho_j}$, which we abbreviate to $\mu_j$.  We apply that proposition with $\eps^2$ in place of $\eps$.  It provides a constant $c$ which depends on $\eps$ (and hence also on $r$, because of~\eqref{eq:choose-eps}), an integer $m \leq c\rme^{cI}$, and elements $i_1,\dots,i_m \in \O_1$ such that
\begin{equation}\label{eq:smalldiff}
\|\mu - \mu'\| < 3\eps^2 \quad \hbox{where} \quad \mu' := \frac{1}{m}\sum_{j=1}^m \mu_{i_j}.
\end{equation}

Using the Jordan decomposition of $\mu - \mu'$, we may now write
\[\mu = \g + \nu \quad \hbox{and} \quad \mu' = \g + \nu'\]
for some measures $\g$, $\nu$ and $\nu'$ such that $\nu$ and $\nu'$ are mutually singular.  These measures satisfy
\[\|\nu\| = \|\nu'\| = \frac{1}{2}\|\mu-\mu'\| < 3\eps^2/2 \quad \hbox{and hence} \quad \|\g\| > 1- 3\eps^2/2.\]
Let $f$ be the Radon--Nikodym derivative $\d\g/\d \mu'$.  Then $0\leq f \leq 1$, and we have
\[\frac{1}{m}\sum_{j=1}^m\int f\,\d\mu_{i_j} = \int f\,\d\mu' = \|\g\| > 1 - 3\eps^2/2.\]
Therefore, by Markov's inequality, the set $J$ of all $j \in [m]$ which satisfy
\begin{equation}\label{eq:f-int-large}
\int f\,\d\mu_{i_j} > 1 - 3\eps/2
\end{equation}
has cardinality at least $(1-\eps)m$.

If $j \in J$, then~\eqref{eq:f-int-large} implies that
\[\frac{\d(\mu_{i_j})_{|f}}{\d\mu_{i_j}} = \frac{f}{\int f\,\d\mu_{i_j}} < (1 - 3\eps/2)^{-1}.\]
For these $j$, Lemma~\ref{lem:RN-flat} gives that the measure $(\mu_{i_j})_{|f}$ still satisfies
\[\rm{T}\Big(r n/200,400\log((1-3\eps/2)^{-1})/r n + 2r\Big).\]
By the second upper bound in~\eqref{eq:choose-eps}, this implies T$(r n/200,3r)$.

So now we can write
\begin{align}\label{eq:mu-nu-g}
\mu &= \nu + \g \nonumber\\
&= \nu + f\cdot \mu' \nonumber \\
&= \nu + \frac{1}{m}\sum_{j\in [m]\setminus J} f\cdot \mu_{i_j} + \frac{1}{m}\sum_{j\in J} f\cdot \mu_{i_j}.
\end{align}
In this last sum, the first few terms satisfy
\[\Big\|\nu + \frac{1}{m}\sum_{j\in [m]\setminus J}f\cdot \mu_{i_j}\Big\| < 3\eps/2 + \frac{m - |J|}{m} < 4\eps.\]
On the other hand, the remainder
\[\frac{1}{m}\sum_{j\in J} f\cdot \mu_{i_j}\]
is a non-negative linear combination of probability measures that all satisfy T$(r n/200,3r)$.  So now let us combine the first few terms in~\eqref{eq:mu-nu-g} into a single `bad' term.  This gives a mixture of at most $m$ measures which has the desired properties, except that the bad term has total variation bounded by $4\eps$ rather than $\eps$, and the parameter $r$ has been replaced by $3r$ throughout.  Since $\eps > 0$ and $r > 0$ are both arbitrary, this completes the proof.
\end{proof}

Our route to Theorem~\ref{thm:bigdecomp2} is quite indirect.  This is because the proof of Theorem~\ref{thm:bigdecomp1} relates the decrement in the $\DTC$ to a change in the mutual information of the fuzzy partition $(\rho_j)_j$, not a change in the entropy of the probability vector $(\langle \rho_j\rangle)_j$.  As a result, Theorem~\ref{thm:bigdecomp1} might give a representation of $\mu$ as a mixture with far too many summands, and we must then go back and find a more efficient representation by sampling as in Proposition~\ref{prop:sampling}.

\begin{ques}
Can one give a more direct proof of Theorem~\ref{thm:bigdecomp2} by improving some of the estimates in the proof of Theorem~\ref{thm:bigdecomp1}?
\end{ques}

\subsection{Completed proof of Theorem B}\label{subs:B}

Let $\mu$ be as in the statement of Theorem B.  To deduce Theorem B from Theorem~\ref{thm:bigdecomp2}, the last step is to combine that theorem with Lemmas~\ref{lem:trimming} and~\ref{lem:stab-lift}.

\begin{proof}[Proof of Theorem B]
As remarked previously, $(A^n,d_n,\mu)$ must satisfy T$(\k,1)$ for all $\k > 0$, so we may assume that $r < 1$.
	
In the statement of Theorem B we have $E = \TC(\mu)$. Let $S \subseteq [n]$ be the subset provided by Lemma~\ref{lem:trimming}, so $|S| \geq (1-r)n$ and $\DTC(\mu_S) \leq E/r$.  Let $c_1$ be the constant given by Theorem~\ref{thm:bigdecomp2} for the current values of $\eps$ and $r$. Applying Theorem~\ref{thm:bigdecomp2} to $\mu_S$, we can write it as a mixture
\[\mu_S = \rho_1\cdot \mu_S + \cdots + \rho_m\cdot \mu_S\]
for some fuzzy partition $(\rho_j)_{j=1}^m$ on $A^S$ such that
\begin{enumerate}
\item[(a)] $m\leq c_1\exp(c_1\cdot \DTC(\mu_S)) \leq c\rme^{cE}$ where $c := \max\{c_1/r,1\}$,
\item[(b)] $\int \rho_1\,\d\mu_S < \eps$, and
\item[(c)] the measure $(\mu_S)_{|\rho_j}$ is defined and satisfies T$(r |S|/600,r)$ for every $j=2,3,\dots,m$.
\end{enumerate}
Let $\rho'_j(\bf{x}) = \rho_j(\bf{x}_S)$ for every $\bf{x} \in A^n$.  Then the resulting mixture
\[\mu = \rho'_1\cdot \mu + \cdots + \rho_m'\cdot \mu\]
still satisfies the analogues of properties (a) and (b) above.  Finally, property (c) above combines with Lemma~\ref{lem:stab-lift} to give that $\mu_{|\rho_j'}$ satisfies
\[\rm{T}\Big(\frac{n}{|S|}\cdot \frac{r|S|}{600},\frac{|S|}{n}r + r\Big)\]
for every $j=2,3,\dots,m$.  This inequality implies T$(rn/600,2r)$, and they coincide in case $S = [n]$. Since $r > 0$ is arbitrary, this completes the proof.
\end{proof}

\subsection{Aside: another possible connection}\label{subs:EFKY}

Theorem~\ref{thm:bigdecomp2} is worth comparing with recent results of Ellis, Friedgut, Kindler and Yehudayoff in~\cite{EllFriKinYeh16}.  In our terminology, they prove that if $\DTC(\mu)$ is very small, then $\mu$ must itself be close to a product measure in a rather strong sense (certainly strong enough to imply a good concentration inequality).  However, they prove this only when $\DTC(\mu)$ is bounded by a fixed and sufficiently small tolerance $\eps$, independently of the dimension $n$.  By contrast, in Part III we need to apply Theorem B when $\DTC(\mu)$ is of order $n$.  This is far outside the domain covered by the results of~\cite{EllFriKinYeh16}.

It would be interesting to understand whether our present work and the proofs in~\cite{EllFriKinYeh16} have some underlying structure in common.

\section{Proof of Theorem C}\label{sec:C}

To prove Theorem C, we carve out the desired partition of $A^n$ one set at a time.

\begin{prop}\label{prop:near-C}
	For any $r > 0$ there exist $c,\k > 0$ such that, for any alphabet $A$, the following holds for all sufficiently large $n$.  If $\mu \in \Pr(A^n)$, then there is a subset $V \subseteq A^n$ such that
	\[\mu(V) \geq \exp(-c\cdot \TC(\mu))\]
	and such that $\mu_{|V}$ satisfies T$(\k n,r)$.
\end{prop}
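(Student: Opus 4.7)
The plan is to derive Proposition~\ref{prop:near-C} from Theorem~B by extracting a dominant summand of the resulting mixture and converting the corresponding density into a hard subset of $A^n$. First, I would apply Theorem~B with $\eps := 1/2$ and some parameter $r' \leq r$ (to be tuned). This produces a mixture $\mu = \sum_{j=1}^m p_j\mu_j$ with $m \leq c_0\exp(c_0 E)$ (where $E = \TC(\mu)$), with $p_1 < 1/2$, and each $\mu_j$ for $j \geq 2$ satisfying T$(r'n/1200, r')$. A pigeonhole over the ``good'' indices $j \geq 2$ then yields some $j^*$ with weight $p_{j^*} \geq (2(m-1))^{-1} \geq c_1^{-1}\exp(-c_1 E)$.

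Write $\rho_{j^*} := \d(p_{j^*}\mu_{j^*})/\d\mu$. The conversion to a hard subset would proceed via a dichotomy on the size of the largest atom of $\mu$. If some $\bf{x}_0 \in A^n$ satisfies $\mu(\{\bf{x}_0\}) \geq p_{j^*}^2/16$, I take $V := \{\bf{x}_0\}$: then $\mu_{|V}$ is a Dirac mass, satisfying every T-inequality trivially, and $\mu(V) \geq \exp(-2c_1 E)/(16 c_1^2)$ gives the required lower bound. Otherwise $\max_{\bf{x}}\mu(\{\bf{x}\}) < p_{j^*}^2/16$, and I construct $V$ by independent Bernoulli sampling: each $\bf{x} \in A^n$ lies in $V$ with probability $\rho_{j^*}(\bf{x})$, independently across $\bf{x}$. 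Then $\bbE[\mu(V)] = p_{j^*}$ by Fubini, and $\mathrm{Var}(\mu(V)) \leq \sum_{\bf{x}}\mu(\{\bf{x}\})^2 \leq \max_{\bf{x}}\mu(\{\bf{x}\}) < p_{j^*}^2/16$, so Chebyshev's inequality guarantees a realization with $\mu(V) \geq p_{j^*}/2$. The goal is then to argue that $\mu_{|V}$ inherits a transportation inequality from $\mu_{j^*}$, possibly after a further restriction to some $V' \subseteq V$ via Proposition~\ref{prop:T-under-dbar}.

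The hard part is ensuring that the T-inequality obtained for $\mu_{|V}$ has second parameter of order $r$, rather than something that scales with $E/n$. A direct Radon--Nikodym bound gives $\d\mu_{|V}/\d\mu_{j^*}$ of size up to $1/p_{j^*}$ on $V$, whereupon Lemma~\ref{lem:RN-flat} would degrade the transportation parameter by $\log(1/p_{j^*})/(r'n) = O(E/(r'n))$; this exceeds $r$ precisely in the regime $E = \Omega(n)$ that is unavoidable for the applications in Part~III. Closing this gap requires exploiting the independence in the random-sampling construction: for each fixed $1$-Lipschitz $f$, $\int f\,\d\mu_{|V}$ should concentrate around $\int f\,\d\mu_{j^*}$ with standard deviation $O(\sqrt{\max_{\bf{x}}\mu(\{\bf{x}\})}/p_{j^*})$. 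A uniform-in-$f$ version of this bound, obtained by discretizing $\mathrm{Lip}_1(A^n, d_n)$ and invoking the dual formulation of $\ol{d_n}$ in Theorem~\ref{thm:MKR}, should then yield closeness of $\mu_{|V}$ and $\mu_{j^*}$ in $\ol{d_n}$. Proposition~\ref{prop:T-under-dbar} would then deliver the required T-inequality on a trimmed subset $V' \subseteq V$, with the lower bound on $\mu(V')$ still of the form $\exp(-c\TC(\mu))$.
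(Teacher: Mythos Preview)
Your overall architecture --- apply Theorem~B, pigeonhole a heavy concentrated summand, then Bernoulli-sample a hard set with probabilities $\rho_{j^*}(\bf{x})$ --- matches the paper's main case. The gap is in the last step, where you propose to show $\ol{d_n}(\mu_{|V},\mu_{j^*})$ is small by discretizing $\mathrm{Lip}_1(A^n,d_n)$ and taking a union bound. This does not work: the $\epsilon$-entropy of $\mathrm{Lip}_1(A^n,d_n)$ in sup norm is of order the $\epsilon$-covering number of $(A^n,d_n)$, which is $\exp(c(\epsilon,|A|)\,n)$ with $c(\epsilon,|A|)$ growing like $\log|A|$. So your union bound is over a net whose log-cardinality is already exponential in $n$. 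Meanwhile, your per-function Bernstein bound has exponent of order $1/p_{j^*}$, and with the dichotomy threshold $\max_{\bf{x}}\mu(\bf{x}) < p_{j^*}^2/16$ this is at best $\exp(c_1 E)$ when $p_{j^*}$ happens to be near its lower bound. Even granting $E > r^4 n$, you would need $c_1 r^4$ to dominate a constant that scales with $\log|A|$; since the constants $c,\k$ in Proposition~\ref{prop:near-C} must be independent of $A$, the argument collapses for large alphabets. (And when $p_{j^*}$ is of constant order --- which nothing rules out --- the tail bound is just $\exp(-O(1))$ and fails for any net.)

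The paper avoids this entirely by never union-bounding over Lipschitz functions. Instead it preprocesses $\mu$: first a case split disposes of the small-$\TC$ regime (via Marton and Proposition~\ref{prop:T-under-dbar}) and the large-atom regime; in the remaining case one passes to $\mu_{|P}$ with approximately equal atom weights (Lemma~\ref{lem:remaining-case}), then to $\mu_{|R}$ where a projection to $A^S$ with $|S|\geq (1-\delta)n$ has fibres $B_{\bf{z}}$ in which every atom is at most $e^{-\delta h/4}$ times the fibre mass (Proposition~\ref{prop:remaining-case}). Theorem~B is applied to $\nu=\mu_{|R}$, and the Bernoulli sampling is analysed \emph{fibre by fibre}: for each $\bf{z}$ one controls $|\nu(U\cap B_{\bf{z}}) - \int_{B_{\bf{z}}}\rho\,d\nu|$ in expectation (Lemma~\ref{lem:close-to-right-size}), then sums over $\bf{z}$. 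Because each $B_{\bf{z}}$ has $d_n$-diameter at most $\delta$, Corollary~\ref{cor:dbar-and-TV} converts this $\ell^1$ control on fibre masses directly into a bound on $\ol{d_n}(\nu_{|U},\nu_{|\rho})$ --- no supremum over $f$ is ever taken. The fibrewise diffuseness from Proposition~\ref{prop:remaining-case}(d) is exactly what makes the variance sum small, and it is obtained by structural work on $\mu$ rather than a crude atom-size dichotomy.
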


\begin{proof}[Proof of Theorem C from Proposition~\ref{prop:near-C}]
	Consider $\eps,r > 0$ as in the statement of Theorem C.  Clearly we may assume that $\eps < 1$. For this value of $r$ and for the alphabet $A$, let $n$ be large enough to apply Proposition~\ref{prop:near-C}.  Let $c_1$ and $\k$ be the new constants given by that proposition.

We now construct a finite disjoint sequence of subsets $V_1$, $V_2$, \dots, $V_{m-1}$ of $A^n$ by the following recursion.

To start, let $V_1$ be a subset such that 
\[\mu(V_1) \geq  \exp(-c_1\cdot \TC(\mu))\]
and such that $\mu_{|V_1}$ satisfies T$(\k n,r)$, as provided by Proposition~\ref{prop:near-C}.

Now suppose we have already constructed $V_1$, \dots, $V_\ell$ for some $\ell\geq 1$, and let $W:= A^n\setminus (V_1\cup\cdots \cup V_\ell)$. If $\mu(W) < \eps$, then stop the recursion and set $m := \ell+1$.  Otherwise, let $\mu':= \mu_{|W}$, and apply Proposition~\ref{prop:near-C} again to this new measure.  By Corollary~\ref{cor:cond-and-TC} we have
\[\TC(\mu') \leq \frac{1}{\mu(W)}(\TC(\mu) +\log 2) \leq \eps^{-1} (\TC(\mu) + \log 2),\]
so Proposition~\ref{prop:near-C} gives a new subset $V_{\ell+1}$ of $A^n$ such that
\[\mu(V_{\ell+1}) \geq \mu(W)\cdot \mu'(V_{\ell+1}) \geq \eps\exp(-c_1\cdot \TC(\mu')) \geq \eps 2^{-c_1/\eps}\exp(-c_1\cdot \TC(\mu)/\eps)\]
and such that $\mu'_{|V_{\ell+1}}$ satisfies T$(\k n,r)$.  Since $\mu'$ is supported on $W$, we may intersect $V_{\ell+1}$ with $W$ without disrupting either of these conclusions, and so assume that $V_{\ell+1}\subseteq W$.  Having done so, we have $\mu'_{|V_{\ell+1}} = \mu_{|V_{\ell+1}}$.  This continues the recursion.

The sets $V_j$ are pairwise disjoint, and they all have measure at least
\[\eps 2^{-c_1/\eps} \exp(-c_1\cdot \TC(\mu)/\eps).\]
Therefore the recursion must terminate at some finite value $\ell$ which satisfies
\[m = \ell+1 \leq \eps^{-1}2^{c_1/\eps}\exp(c_1\cdot\TC(\mu)/\eps) + 1 \leq (\eps^{-1}2^{c_1/\eps}+1)\exp(c_1\cdot\TC(\mu)/\eps).\]
Once it has terminated, let $U_j:= V_{j-1}$ for $j=2,3,\dots,m$, and let $U_1$ be the complement of all these sets.

If we set $c:= \max\{(2^{-c_1/\eps}/\eps +1 ),c_1/\eps\}$, then this partition $U_1$, \dots, $U_m$ has all three of the desired properties.
	\end{proof}

It remains to prove Proposition~\ref{prop:near-C}.  Fix $r > 0$ for the rest of the section, and consider $\mu \in \Pr(A^n)$.  Let $\k := r/1200$, and let $c_\rm{B}$ be the constant provided by Theorem B with the input parameters $r$ and $\eps := 1/2$.  Cearly we may assume that $c_\rm{B} \geq 1$ without disrupting the conclusions of Theorem B.  We prove Proposition~\ref{prop:near-C} with this value for $\k$ but with $9r$ in place of $r$ and with a new constant $c$ derived from $r$ and $c_\rm{B}$. Since $r > 0$ was arbitrary this still completes the proof.  Since any measure on $A^n$ satisfies T$(\k n,9r)$ if $r \geq 1/9$, we may also assume that $r < 1/9$.

To lighten notation, let us define
\begin{equation}\label{eq:choice-of-eta}
	\delta:= \min\{r^2/42,1/18\}.
\end{equation}
We make frequent references to this small auxiliary quantity below.   Its definition results in a correct dependence on $r$ for certain estimates near the end of the proof.

At a few points in this section it is necessary that $n$ be sufficiently large in terms of $r$ and $|A|$.  These points are explained as they arise.

There are three different cases in the proof of Proposition~\ref{prop:near-C}, in which the desired set $V$ exists for different reasons.  Two of those cases are very simple. All of the real work goes into the third case, including an appeal to Theorem B.  The analysis of that third case could be applied to any measure $\mu$ on $A^n$, but I do not see how to control the necessary estimates unless we assume the negations of the first two cases --- this is why we separate the proof into cases at all.

The first and simplest case is when $\mu$ has a single atom of measure at least
\begin{equation}\label{eq:atom-bd}
\exp\Big(-\frac{161 c_\rm{B}}{\delta^2} \cdot \TC(\mu)\Big).
\end{equation}
  Then we just take $V$ to be that singleton.  So in the rest of our analysis we may assume that all atoms of $\mu$ weigh less than this.  Of course, the constant that appears in front of $\TC(\mu)$ in~\eqref{eq:atom-bd} is chosen to meet our needs when we come to apply this assumption later.

The next case is when $\TC(\mu)$ is sufficiently small.  This is dispatched by the following lemma.

\begin{lem}\label{lem:small-TC}
	If $\TC(\mu) \leq r^4n$, and if $n$ is sufficiently large in terms of $r$, then there is a subset $V$ of $A^n$ satisfying $\mu(V) \geq 1/2$ such that $\mu_{|V}$ satisfies T$(8rn,9r)$.
	\end{lem}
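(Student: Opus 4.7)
The plan is to reduce to measure concentration for a product measure by comparing $\mu$ to the product $\mu_0 := \mu_{\{1\}}\times \cdots\times \mu_{\{n\}}$ of its one-dimensional marginals. By the identity~\eqref{eq:TC2}, $\rmD(\mu\,\|\,\mu_0) = \TC(\mu) \le r^4 n$. Marton's inequality (Proposition~\ref{prop:Marton}), applied to the Hamming space $(A^n,d_n)$ of diameter $1$, will give us two things simultaneously: (i) $\mu_0$ itself satisfies T$(8rn,r)$, and (ii) $\ol{d_n}(\mu,\mu_0) \le \sqrt{\rmD(\mu\,\|\,\mu_0)/(2n)} \le r^2/\sqrt 2$.

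I would then invoke Proposition~\ref{prop:T-under-dbar} to transfer the T-inequality from $\mu_0$ to a large-measure restriction of $\mu$. Take $\delta := r$; the standing assumption $r < 1/9$ ensures $\delta < 1/8$, and the estimate above ensures $\ol{d_n}(\mu,\mu_0) \le r^2/\sqrt 2 \le \delta^2$. The proposition then produces a subset $V \subseteq A^n$ with $\mu(V) \ge 1-4r > 1/2$ (again using $r < 1/9$) such that $\mu_{|V}$ satisfies
\[
\mathrm{T}\Bigl(8rn,\ \frac{8r + 2\log 4}{8rn} + 4r + 4r\Bigr).
\]

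Finally, for $n$ sufficiently large in terms of $r$ --- explicitly whenever $n \ge (8r + 2\log 4)/(8r^2)$ --- the error term $(8r + 2\log 4)/(8rn)$ is at most $r$, so the second parameter is at most $9r$, giving T$(8rn,9r)$ as required.

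I do not anticipate a significant obstacle here: the argument is just a two-step chain of classical ingredients (Marton, then the quantitative stability of T-inequalities under $\ol{d_n}$-perturbation). The only delicate point is bookkeeping of constants to confirm that the parameter $\delta = r$ meets the range hypothesis of Proposition~\ref{prop:T-under-dbar} and that the residual term $O(1/(rn))$ really can be absorbed into the allowed budget of $9r$ once $n$ is large, which is exactly the reason the statement requires $n$ large in terms of $r$.
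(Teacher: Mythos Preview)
Your proposal is correct and follows essentially the same route as the paper: identify $\TC(\mu)=\rmD(\mu\,\|\,\mu_0)$, apply Marton's inequality to bound $\ol{d_n}(\mu,\mu_0)\le r^2$ and to give $\mu_0$ the inequality T$(8rn,r)$, then invoke Proposition~\ref{prop:T-under-dbar} with $\delta=r$ and absorb the $O(1/(rn))$ term for large $n$. The constants and the use of the standing hypothesis $r<1/9$ match the paper's argument exactly.
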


\begin{proof}
Let $\mu_{\{i\}}$ be the $i^{\rm{th}}$ marginal of $\mu$ for $i=1,2,\dots,n$.  Recall from~\eqref{eq:TC2} that
\[\TC(\mu) = \rmD(\mu\,\|\,\mu_{\{1\}}\times \cdots \times \mu_{\{n\}}).\]
Therefore Marton's inequality from the first part of Proposition~\ref{prop:Marton} gives
\[\ol{d_n}(\mu,\mu_{\{1\}}\times \cdots \times \mu_{\{n\}}) \leq \sqrt{\frac{\TC(\mu)}{2n}} \leq r^2,\]
where in the end we ignore a factor of $\sqrt{1/2}$. Since our assumptions imply that $r < 1/8$, we may now apply the second part of Proposition~\ref{prop:Marton} and then Proposition~\ref{prop:T-under-dbar}. These give a subset $V$ of $A^n$ satisfying $\mu(V) \geq 1 - 4r \geq 1/2$ and such that $\mu_{|V}$ satisfies
	\[\rm{T}\Big(8rn,\frac{8r + 2\log 4}{8rn} + 4r + 4r\Big).\]
	If $n$ is sufficiently large, this implies T$(8rn,9r)$.
\end{proof}

So now we may assume that $\mu$ has no atoms of measure at least~\eqref{eq:atom-bd}, and also that $\TC(\mu)$ is greater than $r^4 n$.  We keep these assumptions in place until the proof of Proposition~\ref{prop:near-C} is completed near the end of this section.

This leaves the case in which we must use Theorem B.  This argument is more complicated.  It begins by deriving some useful structure for the measure $\mu$ from the two assumptions above.  This is done in two steps which we formulate as Lemma~\ref{lem:remaining-case} and Proposition~\ref{prop:remaining-case}.  Lemma~\ref{lem:remaining-case} can be seen as a quantitative form of the asymptotic equipartition property for a big piece of the measure $\mu$.

\begin{lem}\label{lem:remaining-case}
Consider a measure $\mu$ satisfying the assumptions above, and let $M := \lceil n\log|A|\rceil$.  If $n$ is sufficiently large, then there are a subset $P\subseteq A^n$ and a constant $h \geq 160c_\rm{B}\TC(\mu)/\delta^2$ such that $\mu(P) \geq 1/4M$,
	\[\TC(\mu_{|P}) \leq 4\TC(\mu),\]
	and
	\[\rme^{-h} \geq \mu_{|P}(\bf{x}) > \rme^{-h - 1} \quad \forall \bf{x} \in P.\]
	\end{lem}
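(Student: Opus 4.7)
I will stratify $A^n$ by the magnitudes of the $\mu$-atoms and isolate a level set that inherits both the $\TC$ bound and the near-uniformity. For each integer $k \geq 0$ define
\[ P_k := \{\bf{x} \in A^n : \rme^{-k-1} < \mu(\bf{x}) \leq \rme^{-k}\}; \]
these partition the support of $\mu$. Setting $h_1 := \lfloor 161 c_{\rm B}\TC(\mu)/\delta^2\rfloor$, the no-heavy-atom hypothesis now in force guarantees $P_k = \emptyset$ for every $k < h_1$, while the trivial bound $|A|^n \leq \rme^M$ gives
\[ \sum_{k > h_1 + M}\mu(P_k) \leq |A|^n\cdot \rme^{-(h_1+M)} \leq \rme^{-h_1}. \]
Writing $Q := \bigcup_{h_1 \leq k \leq h_1 + M}P_k$, we have $\mu(Q) \geq 1 - \rme^{-h_1}$, which is close to $1$ because $h_1$ is of order $n$ under $\TC(\mu) > r^4 n$.

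Next I apply Lemma~\ref{lem:TC-approx-concave} to the mixture $\mu = \sum_k \mu(P_k)\cdot \mu_{|P_k}$, which yields
\[ \sum_k \mu(P_k)\cdot \TC(\mu_{|P_k}) \leq \TC(\mu) + \rmH\big((\mu(P_k))_k\big). \]
I decompose the right-hand entropy into its restrictions to $Q$ and to the tail: the former is at most $\log(M+1)$ because $Q$ contains at most $M+1$ levels, and the latter contributes at most $\rme^{-h_1}\cdot M$ via the crude bound $\log|A|^n \leq M$ on the number of tail levels. Thus for $n$ large enough that $h_1 > \log M$ the total entropy is $\log M + O(1) < \TC(\mu)$, using once more $\TC(\mu) > r^4 n$, so the weighted average $\TC$ of the pieces is at most $2\TC(\mu)$.

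Markov's inequality now shows that levels $k$ with $\TC(\mu_{|P_k}) > 4\TC(\mu)$ carry total mass at most $1/2$, so the mass on levels in $Q$ with $\TC(\mu_{|P_k}) \leq 4\TC(\mu)$ is at least $\mu(Q) - 1/2 \geq 1/2 - \rme^{-h_1}$, and this exceeds $(M+1)/(4M)$ for $n$ large. Averaging over the at most $M+1$ levels of $Q$ produces some $k^* \in [h_1,h_1+M]$ with $\mu(P_{k^*}) \geq 1/(4M)$ and $\TC(\mu_{|P_{k^*}}) \leq 4\TC(\mu)$. Setting $P := P_{k^*}$ and $h := k^* + \log\mu(P)$ then translates the definition of $P_{k^*}$ into the required atom-size condition
\[ \rme^{-h-1} < \mu_{|P}(\bf{x}) = \frac{\mu(\bf{x})}{\mu(P)} \leq \rme^{-h}, \qquad \bf{x} \in P. \]

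The main obstacle, and the precise reason the two easy cases ($\TC(\mu) \leq r^4 n$ and the existence of a very heavy atom) were peeled off beforehand, is verifying $h \geq 160 c_{\rm B}\TC(\mu)/\delta^2$. Combining $k^* \geq h_1 \geq 161 c_{\rm B}\TC(\mu)/\delta^2 - 1$ with $\log\mu(P) \geq -\log(4M)$ gives
\[ h \geq 160 c_{\rm B}\TC(\mu)/\delta^2 + \big(c_{\rm B}\TC(\mu)/\delta^2 - 1 - \log(4M)\big), \]
and the parenthetical slack is positive for $n$ sufficiently large in terms of $r$ and $|A|$ thanks to $\TC(\mu) > r^4 n$. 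This is the one step where the gap between $160$ and $161$ (and between $\TC(\mu)$ and $4\TC(\mu)$) is needed to absorb logarithmic error terms.
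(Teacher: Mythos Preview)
Your proof is correct and follows essentially the same level-set stratification as the paper: partition by atom weights into bands of width $\rme$, use Lemma~\ref{lem:TC-approx-concave} to control the average $\TC$ across bands, apply Markov, and pigeonhole to find a good band. The only noticeable difference is cosmetic: the paper lumps all atoms of weight at most $\rme^{-h_0-M}$ into a single catchall cell $P_M$, so the partition has exactly $M+1$ parts and the entropy term is bounded directly by $\log(M+1)$, avoiding your separate tail-entropy estimate; your route via the grouping inequality and the bound $\log|A|^n \leq M$ on the log of the number of tail levels is equally valid but slightly longer.
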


\begin{proof}
	Let $E := \TC(\mu)$ and let $h_0 := 161c_\rm{B}E/\delta^2$ (the exponent from~\eqref{eq:atom-bd}). Let
	\[P_j := \{\bf{x}:\ \rme^{-h_0-j} \geq \mu(\bf{x}) > \rme^{-h_0-j -1}\}\quad \hbox{for}\ j=0,1,\dots,M-1\]
	and let
	\[P_M := \{\bf{x}:\ \rme^{-h_0- M} \geq \mu(\bf{x})\}.\]
	Since, by assumption, all atoms of $\mu$ weigh less than $\rme^{-h_0}$, the sets $P_0,P_1,\dots,P_M$ constitute a partition of $A^n$.  Also, from its definition, the last of these sets must satisfy
	\begin{equation}\label{eq:M+1-small}
	\mu(P_M) \leq \rme^{-h_0 -M}|P_M| \leq \rme^{- h_0}|A|^{-n}|A|^n = \rme^{-161c_\rm{B}E/\delta^2}.
	\end{equation}
Since we are also assuming that $E > r^4 n$, this bound is less than $1/4$ provided $n$ is large enough in terms of $r$.
	
	Using this partition, we may write
	\[\mu = \sum_{j=0}^M \mu(P_j)\cdot \mu_{|P_j}.\]
	By Lemma~\ref{lem:TC-approx-concave}, this leads to
	\begin{equation}\label{eq:TC-approx-concave-appn}
		\rm{TC}(\mu) + \rmH\big(\mu(P_0),\dots,\mu(P_M)\big) \geq \sum_{j=0}^M \mu(P_j)\cdot \rm{TC}(\mu_{|P_j}).
	\end{equation}
	The left-hand side here is at most
	\[\rm{TC}(\mu) + \log (M+1) \leq \rm{TC}(\mu) + \log\log|A| + \log n + 2.\]
	Since we are assuming that $\rm{TC}(\mu) > r^4 n$, this upper bound is less than $2\rm{TC}(\mu)$ provided $n$ is sufficiently large in terms of $r$ and $|A|$.
	
	Therefore, provided $n$ is sufficiently large, the right-hand side of~\eqref{eq:TC-approx-concave-appn} is at most $2\TC(\mu)$.  By Markov's inequality, it follows that more than half of $\mu$ is supported on cells $P_j$ which satisfy
	\begin{equation}\label{eq:TC-still-bdd}
		\rm{TC}(\mu_{|P_j}) \leq 4\rm{TC}(\mu).
	\end{equation}
	By~\eqref{eq:M+1-small}, it follows that more than a quarter of $\mu$ is supported on such cells $P_j$ with $j \leq M-1$.
	
	Now choose $j \leq M-1$ for which~\eqref{eq:TC-still-bdd} holds and such that $\mu(P_j)$ is maximal, and let $P := P_j$.  This maximal value must be at least $1/4M$.  By the definition of $P$, each $\bf{x} \in P$ satisfies
	\[\rme^{a - h_0 - j} \geq \mu_{|P}(\bf{x}) > \rme^{a - h_0 - j -1}, \quad \hbox{where}\ \rme^a := 1/\mu(P).\]
	Finally, let $h := h_0 + j - a$, and observe that
	\[h \geq h_0 - a \geq 161c_\rm{B}E/\delta^2 - \log \big(4\lceil n\log |A|\rceil\big).\]
	Since we are assuming that $E > r^4n$, this is at least $160c_\rm{B}E/\delta^2$ provided $n$ is sufficiently large in terms of $r$ and $|A|$.
	\end{proof}

\begin{prop}\label{prop:remaining-case}
Consider a measure $\mu$ as in the preceding lemma. Let $P$ be the set and $h$ the constant obtained there.  Let $m:= \lceil (1-\delta)n\rceil$.  Then there are subsets
\[S\subseteq [n], \quad R \subseteq A^n \quad \hbox{and} \quad Q\subseteq A^S\]
such that
\begin{itemize}
	\item[(a)] $|S| = m$,
	\item[(b)] $\mu(R) \geq \delta/32M$,
\item[(c)] $\bf{x}_S\in Q$ for all $\bf{x} \in R$,
\item[(d)] every $\bf{z} \in Q$ has the property that
\[\max_{\bf{x} \in R:\ \bf{x}_S = \bf{z}}\mu_{|R}(\bf{x}) \leq \rme^{-\delta h/4}\mu_{|R}\{\bf{x}:\ \bf{x}_S = \bf{z}\},\]
and
\item[(e)] $\TC(\mu_{|R}) \leq 33\TC(\mu)/\delta$.
\end{itemize}
	\end{prop}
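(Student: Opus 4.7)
The plan is to combine a first-moment argument over random coordinate subsets $S$ with a Markov inequality on fiber sizes, exploiting the near-uniformity of $\mu_{|P}$ throughout. Write $\xi \sim \mu_{|P}$, so $\rmH(\xi) \in [h,h+1]$, and recall from Lemma~\ref{lem:remaining-case} that $\TC(\mu_{|P}) \leq 4\TC(\mu)$. For a uniformly random $S\subseteq [n]$ of size $m$, subadditivity of entropy and linearity of $\bbE_S$ (each $i$ lies in $S$ with probability $m/n = 1-\delta + O(1/n)$) give
\[
\bbE_S \rmH(\xi_S) \leq \frac{m}{n}\sum_{i=1}^n \rmH(\xi_i) = \frac{m}{n}\big(\rmH(\xi) + \TC(\mu_{|P})\big),
\]
so $\bbE_S \rmH(\xi_{[n]\setminus S}\mid \xi_S) \geq \delta h - 4\TC(\mu)$. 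The hypothesis $h \geq 160 c_\rm{B}\TC(\mu)/\delta^2$ makes the error $4\TC(\mu) \leq \delta^2 h/40$ negligible, so the average is at least $0.9\delta h$ and we may fix an $S$ achieving this.

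Next, convert this into a bound on fiber sizes. Let $f_S(\bf{z}) := |P\cap \pi_S^{-1}(\bf{z})|$ and let $\nu_{\bf{z}}$ be the conditional law of $\xi$ given $\xi_S = \bf{z}$. Near-uniformity of $\mu_{|P}$ forces $\nu_{\bf{z}}(\bf{x}) \in [\rme^{-1}/f_S(\bf{z}),\,\rme/f_S(\bf{z})]$, so $|\rmH(\nu_{\bf{z}}) - \log f_S(\bf{z})| \leq 1$ and $\bbE_{\bf{z}} \log f_S(\bf{z}) \geq 0.9\delta h - 1$. The crucial point is that since $\mu_{|P}(\bf{x}) > \rme^{-h-1}$ for every $\bf{x}\in P$, summing gives $|P| \leq \rme^{h+1}$, whence $\log f_S(\bf{z}) \leq h+1$ --- much tighter than the naive $(n-m)\log|A|$. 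Markov's inequality on $\log f_S(\xi_S)$ with this sharp upper tail bound then yields
\[
p := \bbP\{\log f_S(\xi_S) \geq \delta h/4 + 1\} \geq \delta/2,
\]
once $n$ (and hence $h$) is large enough in terms of $r,\delta$.

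Finally, set $Q := \{\bf{z}: f_S(\bf{z}) \geq \rme^{\delta h/4+1}\}$ and $R := P\cap \pi_S^{-1}(Q)$, and verify the five conditions. Property (a) holds by construction, (c) is automatic, and (b) follows from $\mu(R) = \mu(P)\cdot p \geq (1/4M)(\delta/2) \geq \delta/(32M)$. For (d), since $R\cap \pi_S^{-1}(\bf{z}) = P\cap \pi_S^{-1}(\bf{z})$ for $\bf{z}\in Q$, the ratio $\mu_{|R}(\bf{x})/\mu_{|R}\{\bf{x}_S=\bf{z}\}$ equals $\nu_{\bf{z}}(\bf{x}) \leq \rme/f_S(\bf{z}) \leq \rme^{-\delta h/4}$. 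For (e), $\mu_{|R} = (\mu_{|P})_{|R}$ with $\mu_{|P}(R) = p \geq \delta/2$, so Corollary~\ref{cor:cond-and-TC} gives $\TC(\mu_{|R}) \leq (\TC(\mu_{|P}) + \log 2)/p \leq 2(4\TC(\mu)+\log 2)/\delta \leq 33\TC(\mu)/\delta$ once $\TC(\mu)$ is large, which is assured by the standing assumption $\TC(\mu) > r^4 n$ and taking $n$ large. The main obstacle is the Markov step in the second paragraph: without the sharp bound $\log f_S \leq h+1$ obtained from $|P| \leq \rme^{h+1}$, using the trivial $(n-m)\log|A|$ instead would yield a probability of only $\Theta(h/(n\log|A|))$, which collapses below $\delta$ for large alphabets; the whole argument hinges on using the intrinsic size of $P$ to avoid any dependence on $|A|$.
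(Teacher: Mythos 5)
Your proof is correct and follows essentially the same route as the paper: an averaging argument to choose $S$ so that $\rmH_{\mu_{|P}}(\xi_S)$ is at most about $(1-\delta)$ of the total (the paper sorts coordinates by marginal entropy where you average over random $S$), followed by Markov's inequality combined with the near-uniformity of $\mu_{|P}$ to extract a set $Q$ of thick fibers of total mass at least a fixed fraction of $\delta$. Your reverse-Markov on the bounded variable $\log f_S(\xi_S)\in[0,h+1]$ is just the cardinality-side version of the paper's Markov bound on $-\log\mu'_{[m]}(\xi_S)$, and you correctly identify that the bound $|P|\le\rme^{h+1}$ from Lemma~\ref{lem:remaining-case} is what keeps the estimate independent of $|A|$.
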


\begin{proof}
	Let $\mu':= \mu_{|P}$, and let $\xi_i:A^n\to A$ be the coordinate projection for $i=1,\dots,n$.
	
	By re-ordering the coordinates of $A^n$ if necessary, let us assume that
\begin{equation}\label{eq:order-coords}
\rmH_{\mu'}(\xi_1) \leq \cdots \leq \rmH_{\mu'}(\xi_n).
\end{equation}
Having done so, let $S := \{1,2,\dots,m\}$.  Let $\mu'_{[m]}$ be the projection of $\mu'$ to $A^m$.

From~\eqref{eq:order-coords} and the subadditivity of entropy, we obtain
\[\rmH_{\mu'}(\xi_{[m]}) \leq \sum_{i=1}^m \rmH_{\mu'}(\xi_i) \leq \frac{m}{n}\sum_{i=1}^n \rmH_{\mu'}(\xi_i) = \frac{m}{n}\big(\rmH(\mu') + \TC(\mu')\big).\]
Provided $n$ is sufficiently large in terms of $r$, this is at most
\[(1-3\delta/4)\rmH(\mu') + (1-3\delta/4)\TC(\mu') \leq (1-3\delta/4)\rmH(\mu') + \TC(\mu').\]
The first conclusion of Lemma~\ref{lem:remaining-case} gives that $\TC(\mu') \leq 4\TC(\mu) = 4E$. On the other hand, the second conclusion gives that
\begin{equation}\label{eq:bds-H-mu'}
h + 1 > \rmH(\mu') \geq h,
\end{equation}
and $h$ is greater than $16E/\delta$ because of our earlier assumption that $c_\rm{B} \geq 1$. Combining these inequalities, we deduce that
\begin{multline}\label{eq:proj-H-small}
\rmH_{\mu'}(\xi_{[m]}) \leq (1-3\delta/4)\rmH(\mu') + 4E
< (1-3\delta/4)\rmH(\mu') + \frac{\delta}{4}\rmH(\mu') \\ \leq (1-\delta/2)\rmH(\mu') < (1-\delta/2)(h+1).
\end{multline}

Let
\begin{align*}
Q &:= \big\{\bf{z}\in A^m:\ \mu'_{[m]}(\bf{z}) \geq \rme^{-(1-\delta/4)h}\big\}\\
&= \big\{\bf{z}\in A^m:\ -\log \mu'_{[m]}(\bf{z}) \leq (1-\delta/4)h\big\}.
\end{align*}
Since
\[\rmH_{\mu'}(\xi_{[m]}) = \sum_{\bf{z} \in A^m}\mu'_{[m]}(\bf{z})\cdot \big(-\log \mu'_{[m]}(\bf{z})\big),\]
we deduce from~\eqref{eq:proj-H-small} and Markov's inequality that
\[\mu_{[m]}'(Q) \geq 1 - \frac{(1-\delta/2)(h+1)}{(1-\delta/4)h} = \frac{\delta/4}{(1-\delta/4)} - \frac{(1-\delta/2)}{(1-\delta/4)h}.\]
Since our assumptions on $\mu$ give
\[h \geq 160c_\rm{B}E/\delta^2 \geq 160c_\rm{B}r^4n/\delta^2,\]
it follows that $\mu'_{[m]}(Q) \geq \delta/8$ provided $n$ is large enough in terms of $r$.

Let $R := \{\bf{x} \in P:\ \bf{x}_{[m]} \in Q\}$.  Since $R \subseteq P$, we have $\mu'_{|R} = \mu_{|R}$.

This completes the construction of $S$, $R$ and $Q$.  Among the desired properties, (a) and (c) are clear from the construction.  Property (b) holds because
\[\mu(R) = \mu_{|P}(R)\cdot \mu(P) = \mu'_{[m]}(Q)\cdot \mu(P) \geq \frac{\delta}{8}\cdot \frac{1}{4M}.\]

To prove property (d), observe that every $\bf{x}\in R\subseteq P$ satisfies
\[\mu_{|R}(\bf{x}) = \frac{\mu'(\bf{x})}{\mu'(R)} = \frac{\mu'(\bf{x})}{\mu'_{[m]}(Q)} \leq \frac{\rme^{-h}}{\mu'_{[m]}(Q)},\]
whereas every $\bf{z} \in Q$ satisfies
\[\mu_{|R}\{\bf{x}:\ \bf{x}_{[m]} = \bf{z}\} = \frac{\mu'\{\bf{x}:\ \bf{x}_{[m]} = \bf{z}\}}{\mu'(R)} = \frac{\mu'_{[m]}(\bf{z})}{\mu'_{[m]}(Q)} \geq \frac{\rme^{-(1-\delta/4)h}}{\mu'_{[m]}(Q)}.\]

Finally, towards property (e), Corollary~\ref{cor:cond-and-TC} and Lemma~\ref{lem:remaining-case} give
\[\TC(\mu_{|R}) \leq \frac{1}{\mu'(R)}(\TC(\mu') + \log 2) \leq \frac{8}{\delta}(4\TC(\mu) + \log 2).\]
Since we assume $\TC(\mu) > r^4 n$, this last upper bound is less than $33\TC(\mu)/\delta$ provided $n$ is sufficiently large in terms of $r$.
\end{proof}

Next we explain how the sets $S$, $R$ and $Q$ obtained by Proposition~\ref{prop:remaining-case} can be turned into a set $V$ as required by Proposition~\ref{prop:near-C}.

Let $\nu := \mu_{|R}$, and let $\langle \cdot \rangle_\nu$ denote integration with respect to $\nu$. We apply Theorem B to $\nu$ with input parameters $r$ and $\eps := 1/2$.  The number of terms in the resulting mixture is at most $c_\rm{B}\exp(c_\rm{B}\cdot \TC(\nu))$, and by conclusion (e) of Proposition~\ref{prop:remaining-case} this is at most $c_\rm{B}\exp(33c_\rm{B}E/\delta)$.  At least half the weight in the mixture must belong to terms that satisfy T$(\k n,r)$. Therefore there is at least one term, which we may write in the form $\rho\cdot \nu$, which satisfies T$(\k n,r)$ and also
\begin{equation}\label{eq:rho-int-big}
\langle \rho\rangle_\nu \geq  \frac{1}{2c_\rm{B}}\rme^{-33c_\rm{B}E/\delta}.
\end{equation}

For each $\bf{z} \in Q$, let
\[B_{\bf{z}} := \{\bf{x} \in R:\ \bf{x}_S = \bf{z}\},\]
and now let
\[Q_1 := \Big\{\bf{z} \in Q:\ \int_{B_{\bf{z}}}\rho\,\d\nu \geq \delta\cdot \nu(B_{\bf{z}})\cdot \langle \rho\rangle_\nu\Big\}.\]

The next step in the construction of $V$ is to choose a subset $U\subseteq R$ at random so that the events $\{U\ni \bf{x}\}$ have independent probabilities $\rho(\bf{x})$ for $\bf{x} \in R$.  In the final step, $V$ is obtained from $U$ by trimming it slightly.  Let $\bbE$ and $\rm{Var}$ denote expectation and variance with respect to the random choice of $U$.

\begin{lem}\label{lem:close-to-right-size}
	Provided $n$ is sufficiently large in terms of $r$, we have
	\[	\bbE\Big|\nu(U\cap B_{\bf{z}}) - \int_{B_{\bf{z}}} \rho\,\d\nu\Big| \leq \delta \int_{B_{\bf{z}}} \rho\,\d\nu\]
	for every $\bf{z} \in Q_1$.
\end{lem}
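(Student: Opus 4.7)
The random variable $X_{\bf{z}} := \nu(U\cap B_{\bf{z}}) = \sum_{\bf{x}\in B_{\bf{z}}} \nu(\bf{x})\mathbf{1}_{\{\bf{x}\in U\}}$ is a sum of independent (weighted) indicators, one for each $\bf{x}\in B_{\bf{z}}$, and by the definition of $U$ its mean is $\bbE X_{\bf{z}} = \int_{B_{\bf{z}}} \rho\,\d\nu$. So the plan is to bound the first absolute moment of $X_{\bf{z}} - \bbE X_{\bf{z}}$ through its variance using Jensen's inequality, and then estimate that variance using the structural information that properties (d) of Proposition~\ref{prop:remaining-case} and the definition of $Q_1$ give us.

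Concretely, since the indicators are independent,
\[\mathrm{Var}(X_{\bf{z}}) = \sum_{\bf{x}\in B_{\bf{z}}}\nu(\bf{x})^2\rho(\bf{x})(1-\rho(\bf{x})) \leq \Big(\max_{\bf{x}\in B_{\bf{z}}}\nu(\bf{x})\Big)\cdot \int_{B_{\bf{z}}}\rho\,\d\nu.\]
Property (d) of Proposition~\ref{prop:remaining-case} provides $\max_{\bf{x}\in B_{\bf{z}}}\nu(\bf{x}) \leq \rme^{-\delta h/4}\nu(B_{\bf{z}})$, and the definition of $Q_1$ provides $\nu(B_{\bf{z}}) \leq \delta^{-1}\langle\rho\rangle_\nu^{-1}\int_{B_{\bf{z}}}\rho\,\d\nu$ for $\bf{z}\in Q_1$. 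Combining these two bounds and then applying Jensen's inequality yields
\[\bbE\big|X_{\bf{z}} - \bbE X_{\bf{z}}\big| \leq \sqrt{\mathrm{Var}(X_{\bf{z}})} \leq \sqrt{\frac{\rme^{-\delta h/4}}{\delta\langle\rho\rangle_\nu}}\cdot \int_{B_{\bf{z}}}\rho\,\d\nu.\]

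It therefore suffices to show that $\rme^{-\delta h/4} \leq \delta^3\langle\rho\rangle_\nu$. Inserting the lower bound~\eqref{eq:rho-int-big} on $\langle\rho\rangle_\nu$, this reduces to the numerical inequality
\[\delta h/4 \ \geq\ \frac{33c_{\mathrm{B}}E}{\delta} + \log\!\Big(\frac{2c_{\mathrm{B}}}{\delta^3}\Big),\]
where $E = \TC(\mu)$. Lemma~\ref{lem:remaining-case} guarantees $h \geq 160c_{\mathrm{B}}E/\delta^2$, hence $\delta h/4 \geq 40 c_{\mathrm{B}}E/\delta$, which already handles the $33c_{\mathrm{B}}E/\delta$ term with room to spare; the remaining slack of at least $7c_{\mathrm{B}}E/\delta \geq 7c_{\mathrm{B}}r^4 n/\delta$ (using the standing assumption $E > r^4 n$) absorbs the constant $\log(2c_{\mathrm{B}}/\delta^3)$ once $n$ is sufficiently large in terms of $r$. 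No serious obstacle is expected: the main content lies in bookkeeping the two estimates above; the variance computation is routine by independence, and the crucial maximum-to-average ratio control for $\nu$ on $B_{\bf{z}}$ has already been engineered into the set $R$ by Proposition~\ref{prop:remaining-case}(d).
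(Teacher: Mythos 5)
Your proposal is correct and follows essentially the same route as the paper: a second-moment bound via independence, $\bbE|X|\leq\sqrt{\mathrm{Var}(X)}$, with the variance controlled by the maximum atom of $\nu$ on $B_{\bf{z}}$, which in turn is bounded using Proposition~\ref{prop:remaining-case}(d), the definition of $Q_1$, and~\eqref{eq:rho-int-big}, reducing to exactly the numerical inequality $\rme^{-\delta h/4}\leq \frac{\delta^3}{2c_{\rm{B}}}\rme^{-33c_{\rm{B}}E/\delta}$ that the paper isolates and verifies from $h\geq 160c_{\rm{B}}E/\delta^2$ and $E>r^4n$. The only difference is cosmetic bookkeeping in the order of the estimates.
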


\begin{proof}
	In this proof we need the inequality
	\begin{equation}\label{eq:n-lg-enough}
	\rme^{-\delta h/4} \leq \frac{\delta^3}{2c_\rm{B}}\exp(-33c_\rm{B}E/\delta),
	\end{equation}
	where $h$ is the quantity from Lemma~\ref{lem:remaining-case}.  Since $h \geq 160c_\rm{B}E/\delta^2$, this holds provided
	\[\exp(-40c_\rm{B}E/\delta) \leq \frac{\delta^3}{2c_\rm{B}}\exp(-33c_\rm{B}E/\delta).\]
	Given our current assumption that $E > r^4n$, this last requirement holds for all $n$ that are sufficiently large in terms of $r$.

So now assume that~\eqref{eq:n-lg-enough} holds. Then for each $\bf{z} \in Q_1$ we may combine~\eqref{eq:rho-int-big},~\eqref{eq:n-lg-enough}, conclusion (d) from Proposition~\ref{prop:remaining-case}, and the definition of $Q_1$ to obtain
	\begin{equation}\label{eq:diffuse}
	\max_{\bf{x} \in B_{\bf{z}}}\nu(\bf{x}) \leq \frac{\delta^3}{2c_\rm{B}}\rme^{-33c_\rm{B}E/\delta}\nu(B_{\bf{z}}) \leq \delta^3\cdot \nu(B_{\bf{z}})\cdot \langle \rho\rangle_\nu \leq \delta^2\int_{B_{\bf{z}}}\rho\,\d\nu.
	\end{equation}

	Now consider the random variable
\[X := \nu(U\cap B_{\bf{z}}) - \int_{B_{\bf{z}}} \rho\,\d\nu = \sum_{\bf{x} \in B_{\bf{z}}}\nu(\bf{x})\cdot (1_{\{U\ni \bf{x}\}} - \rho(\bf{x})).\]
From the distribution of the random set $U$, it follows that $\bbE X = 0$ and
\[\rm{Var}(X) = \sum_{\bf{x}\in B_{\bf{z}}}(\nu(\bf{x}))^2\cdot \rm{Var}(1_{\{U\ni \bf{x}\}}) \leq \sum_{\bf{x}\in B_{\bf{z}}}(\nu(\bf{x}))^2\cdot \rho(\bf{x}).\]
By~\eqref{eq:diffuse}, this is at most
\[\delta^2 \int_{B_{\bf{z}}} \rho\,\d\nu\cdot \sum_{\bf{x}\in B_{\bf{z}}}\nu(\bf{x})\rho(\bf{x}) = \delta^2 \Big(\int_{B_{\bf{z}}} \rho\,\d\nu\Big)^2.\]
Therefore, by the monotonicity of Lebesgue norms,
\[\bbE|X| \leq \sqrt{\bbE (X^2)} = \sqrt{\rm{Var}(X)} \leq \delta\int_{B_{\bf{z}}} \rho\,\d\nu.\]
\end{proof}

\begin{cor}\label{cor:close-to-right-size}
	Provided $n$ is sufficiently large in terms of $r$,  we have
	\[	\bbE\big|\nu(U) - \langle\rho\rangle_\nu\big| \leq \sum_{\bf{z} \in Q}\bbE\Big|\nu(U\cap B_{\bf{z}}) - \int_{B_{\bf{z}}} \rho\,\d\nu\Big| \leq 3\delta \langle \rho \rangle_\nu.\]
\end{cor}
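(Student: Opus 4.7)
The plan is straightforward once one notices that the sets $B_{\bf{z}}$ for $\bf{z} \in Q$ partition $R$ (by conclusion (c) of Proposition~\ref{prop:remaining-case}, every $\bf{x}\in R$ has $\bf{x}_S \in Q$), and that $U \subseteq R$ by construction. Hence $\nu(U) = \sum_{\bf{z}\in Q}\nu(U\cap B_{\bf{z}})$ and $\langle \rho\rangle_\nu = \sum_{\bf{z}\in Q}\int_{B_{\bf{z}}}\rho\,\d\nu$. The first inequality in the corollary is then immediate from the triangle inequality followed by taking expectations.

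For the second inequality, I would split the sum over $\bf{z}\in Q$ according to whether $\bf{z}\in Q_1$ or $\bf{z}\in Q\setminus Q_1$. Over $Q_1$, Lemma~\ref{lem:close-to-right-size} directly gives
\[
\sum_{\bf{z}\in Q_1}\bbE\Big|\nu(U\cap B_{\bf{z}}) - \int_{B_{\bf{z}}}\rho\,\d\nu\Big| \leq \delta\sum_{\bf{z}\in Q_1}\int_{B_{\bf{z}}}\rho\,\d\nu \leq \delta\langle\rho\rangle_\nu.
\]
For $\bf{z}\in Q\setminus Q_1$, I would not try to control $\nu(U\cap B_{\bf{z}})$ via its variance; instead, I would use the crude bound
\[
\bbE\Big|\nu(U\cap B_{\bf{z}}) - \int_{B_{\bf{z}}}\rho\,\d\nu\Big| \leq \bbE\nu(U\cap B_{\bf{z}}) + \int_{B_{\bf{z}}}\rho\,\d\nu = 2\int_{B_{\bf{z}}}\rho\,\d\nu,
\]
since $\bbE\nu(U\cap B_{\bf{z}}) = \sum_{\bf{x}\in B_{\bf{z}}}\nu(\bf{x})\rho(\bf{x}) = \int_{B_{\bf{z}}}\rho\,\d\nu$. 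The defining inequality of $Q_1$ then gives $\int_{B_{\bf{z}}}\rho\,\d\nu < \delta\cdot \nu(B_{\bf{z}})\cdot\langle\rho\rangle_\nu$ for every $\bf{z}\in Q\setminus Q_1$, so summing and using $\sum_{\bf{z}\in Q\setminus Q_1}\nu(B_{\bf{z}})\le 1$ yields
\[
\sum_{\bf{z}\in Q\setminus Q_1}\bbE\Big|\nu(U\cap B_{\bf{z}}) - \int_{B_{\bf{z}}}\rho\,\d\nu\Big| \leq 2\delta\langle\rho\rangle_\nu.
\]
Adding the two contributions gives $3\delta\langle\rho\rangle_\nu$, as required.

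There is essentially no obstacle here: the whole content was pushed into Lemma~\ref{lem:close-to-right-size}, which uses the crucial small-atom property~\eqref{eq:diffuse} to obtain a second-moment estimate on the cells where $\rho\,\d\nu$ is an appreciable fraction of $\nu(B_{\bf{z}})\cdot\langle\rho\rangle_\nu$. The only subtlety in the present proof is the realization that on the complementary set $Q\setminus Q_1$, where the second-moment argument might fail, both $\nu(U\cap B_{\bf{z}})$ and $\int_{B_{\bf{z}}}\rho\,\d\nu$ are already so small in expectation that the trivial triangle bound suffices. This is the reason for introducing the threshold $\delta$ in the definition of $Q_1$ in the first place.
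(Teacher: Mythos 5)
Your proof is correct and follows essentially the same route as the paper's: the same partition observation for the first inequality, the same split of $Q$ into $Q_1$ and $Q\setminus Q_1$, the same appeal to Lemma~\ref{lem:close-to-right-size} on $Q_1$, and the same crude bound via $\bbE\,\nu(U\cap B_{\bf{z}}) = \int_{B_{\bf{z}}}\rho\,\d\nu$ combined with the defining inequality of $Q_1$ on the complement.
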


\begin{proof}
	The left-hand inequality follows from the triangle inequality and the fact that $(B_{\bf{z}}:\ \bf{z} \in Q)$ is a partition of $R$. So consider the right-hand inequality.  By separating $Q$ into $Q_1$ and $Q\setminus Q_1$, the sum over $Q$ is at most
	\begin{align}\label{eq:close-to-right-size}
		&\sum_{\bf{z} \in Q_1}\bbE\Big|\nu(U\cap B_{\bf{z}}) - \int_{B_{\bf{z}}} \rho\,\d\nu\Big| + \sum_{\bf{z} \in Q\setminus Q_1}\bbE \nu(U\cap B_{\bf{z}}) + \sum_{\bf{z} \in Q\setminus Q_1}\int_{B_{\bf{z}}} \rho\,\d\nu \nonumber \\
		&= 		\sum_{\bf{z} \in Q_1}\bbE\Big|\nu(U\cap B_{\bf{z}}) - \int_{B_{\bf{z}}} \rho\,\d\nu\Big| + 2\sum_{\bf{z} \in Q \setminus Q_1}\int_{B_{\bf{z}}} \rho\,\d\nu.
	\end{align}
	Using the previous lemma, the first sum in~\eqref{eq:close-to-right-size} is at most
	\[\delta\sum_{\bf{z} \in Q_1}\int_{B_{\bf{z}}}\rho\,\d\nu \leq \delta \langle \rho\rangle_\nu.\]
	By the definition of $Q_1$, the second sum in~\eqref{eq:close-to-right-size} is at most
	\[2\delta\sum_{\bf{z} \in Q \setminus Q_1}\nu(B_{\bf{z}})\cdot \langle \rho\rangle_\nu \leq 2\delta \langle \rho\rangle_\nu.\]
\end{proof}

\begin{lem}\label{lem:dbar-small}
		Provided $n$ is sufficiently large in terms of $r$, we have
	\[\bbE\big[ \nu(U)\cdot \ol{d_n}(\nu_{|U},\nu_{|\rho})\big] \leq 7\delta\langle \rho\rangle_\nu.\]
\end{lem}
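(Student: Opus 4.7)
The plan is to combine Corollary~\ref{cor:dbar-and-TV} with the already established Corollary~\ref{cor:close-to-right-size}. The key geometric input is that, since $B_{\bf z} \subseteq \{\bf x : \bf x_S = \bf z\}$ and $|S| = m \geq (1-\delta)n$, each fiber $B_{\bf z}$ has $d_n$-diameter at most $(n-m)/n \leq \delta$. Applying Corollary~\ref{cor:dbar-and-TV} to the pair $(\nu_{|U},\nu_{|\rho})$ with the partition $(B_{\bf z})_{\bf z \in Q}$ of $R$ (padded by $A^n \setminus R$, on which both measures vanish), and using that $A^n$ has diameter at most $1$, gives on the event $\{\nu(U) > 0\}$
\[
\ol{d_n}(\nu_{|U},\nu_{|\rho}) \;\leq\; \delta \;+\; \tfrac12 \sum_{\bf z \in Q}\bigl|\nu_{|U}(B_{\bf z}) - \nu_{|\rho}(B_{\bf z})\bigr|.
\]

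Next I would multiply through by $\nu(U)$, which converts $\nu(U)\,\nu_{|U}(B_{\bf z})$ into $\nu(U \cap B_{\bf z})$ and makes the bound valid even when $\nu(U) = 0$:
\[
\nu(U)\cdot \ol{d_n}(\nu_{|U},\nu_{|\rho}) \;\leq\; \delta\,\nu(U) \;+\; \tfrac12 \sum_{\bf z \in Q}\bigl|\nu(U\cap B_{\bf z}) - \nu(U)\,\nu_{|\rho}(B_{\bf z})\bigr|.
\]
To match this against the quantities controlled in Corollary~\ref{cor:close-to-right-size}, I would insert the intermediate term $\int_{B_{\bf z}} \rho\,\d\nu = \langle \rho\rangle_\nu \cdot \nu_{|\rho}(B_{\bf z})$ via the triangle inequality, obtaining
\[
\bigl|\nu(U\cap B_{\bf z}) - \nu(U)\,\nu_{|\rho}(B_{\bf z})\bigr| \;\leq\; \Bigl|\nu(U\cap B_{\bf z}) - \int_{B_{\bf z}}\rho\,\d\nu\Bigr| \;+\; \nu_{|\rho}(B_{\bf z})\cdot \bigl|\langle\rho\rangle_\nu - \nu(U)\bigr|.
\]
Summing over $\bf z \in Q$ and using $\sum_{\bf z} \nu_{|\rho}(B_{\bf z}) = 1$ contracts the last family of terms into the single quantity $|\langle\rho\rangle_\nu - \nu(U)|$.

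Finally, taking expectations and applying Corollary~\ref{cor:close-to-right-size} bounds $\sum_{\bf z}\bbE|\nu(U\cap B_{\bf z}) - \int_{B_{\bf z}}\rho\,\d\nu|$ and $\bbE|\nu(U) - \langle\rho\rangle_\nu|$ each by $3\delta\langle\rho\rangle_\nu$. Combined with $\bbE[\nu(U)] = \langle\rho\rangle_\nu$, this produces the estimate
\[
\bbE\bigl[\nu(U)\cdot \ol{d_n}(\nu_{|U},\nu_{|\rho})\bigr] \;\leq\; \delta\langle\rho\rangle_\nu + \tfrac12(3\delta + 3\delta)\langle\rho\rangle_\nu \;=\; 4\delta\langle\rho\rangle_\nu,
\]
which is comfortably within the claimed $7\delta\langle\rho\rangle_\nu$. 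The only substantive step is the opening application of Corollary~\ref{cor:dbar-and-TV}, which leverages the small diameter of the fibers $B_{\bf z}$ to reduce a transportation bound to a total-variation bound; everything else is triangle-inequality bookkeeping on top of the probabilistic estimates already contained in Lemma~\ref{lem:close-to-right-size} and Corollary~\ref{cor:close-to-right-size}.
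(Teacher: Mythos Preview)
Your proof is correct and follows essentially the same approach as the paper's: apply Corollary~\ref{cor:dbar-and-TV} with the partition $(B_{\bf{z}})_{\bf{z}\in Q}$, multiply through by $\nu(U)$, split via the triangle inequality using the intermediate term $\int_{B_{\bf{z}}}\rho\,\d\nu$, and then invoke Corollary~\ref{cor:close-to-right-size} for both resulting sums. The only difference is that the paper drops the factor $\tfrac12$ from Corollary~\ref{cor:dbar-and-TV} ``for simplicity'' and so arrives at $7\delta\langle\rho\rangle_\nu$, whereas you retain it and obtain the sharper $4\delta\langle\rho\rangle_\nu$.
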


\begin{proof}
	In the event that $\nu(U) > 0$, we can bound the distance $\ol{d_n}(\nu_{|U},\nu_{|\rho})$ using Corollary~\ref{cor:dbar-and-TV} with the partition $(B_{\bf{z}}:\ \bf{z} \in Q)$ of $R$. Each $B_{\bf{z}}$ has diameter at most $\delta$ according to $d_n$, and the diameter of the whole of $A^n$ is $1$, so that corollary gives
\[		\ol{d_n}(\nu_{|U},\nu_{|\rho}) \leq \delta\sum_{\bf{z} \in Q}\nu_{|\rho}(B_{\bf{z}}) + \frac{1}{2}\sum_{\bf{z} \in Q}|\nu_{|U}(B_{\bf{z}}) - \nu_{|\rho}(B_{\bf{z}})|.\]
Since the values $\nu_{|\rho}(B_{\bf{z}})$ for $\bf{z} \in Q$ must sum to $1$, the first term on the right is just $\delta$.  Let us also remove the factor $1/2$ from the second right-hand term for simplicity.
	
	Multiplying by $\nu(U)$, we obtain
	\begin{align*}
		\nu(U)\cdot \ol{d_n}(\nu_{|U},\nu_{|\rho}) &\leq \delta\nu(U) + \nu(U)\sum_{\bf{z}\in Q}|\nu_{|U}(B_{\bf{z}}) - \nu_{|\rho}(B_{\bf{z}})| \nonumber \\
		&= \delta\nu(U) + \sum_{\bf{z} \in Q}\big|\nu(U\cap B_{\bf{z}}) - \nu(U)\nu_{|\rho}(B_{\bf{z}})\big| \nonumber \\
		&\leq \delta\nu(U) + \sum_{\bf{z} \in Q}\Big|\nu(U\cap B_{\bf{z}}) - \int_{B_{\bf{z}}} \rho\,\d\nu \Big| \nonumber \\
		& \qquad \qquad \qquad \qquad  + \sum_{\bf{z} \in Q}|\nu(U) - \langle\rho\rangle_\nu|\cdot \nu_{|\rho}(B_{\bf{z}}),
	\end{align*}
	using that $\int_{B_{\bf{z}}}\rho\,\d\nu = \langle\rho\rangle_\nu\cdot \nu_{|\rho}(B_{\bf{z}})$ for the last step.  These inequalities also hold in case $\nu(U) = 0$.
	
	Taking expectations, we obtain
	\begin{align*}
		&\bbE\big[\nu(U)\cdot \ol{d_n}(\nu_{|U},\nu_{|\rho})\big] \nonumber \\ &\leq \delta\bbE \nu(U) + \sum_{\bf{z} \in Q}\bbE\Big|\nu(U\cap B_{\bf{z}}) - \int_{B_{\bf{z}}} \rho\,\d\nu \Big| + \bbE|\nu(U) - \langle\rho\rangle_\nu|\cdot \sum_{\bf{z}\in Q}\nu_{|\rho}(B_{\bf{z}}) \nonumber \\
		&= \delta\langle \rho\rangle_\nu + \sum_{\bf{z} \in Q}\bbE\Big|\nu(U\cap B_{\bf{z}}) - \int_{B_{\bf{z}}} \rho\,\d\nu \Big| + \bbE|\nu(U) - \langle\rho\rangle_\nu|.
	\end{align*}
Corollary~\ref{cor:close-to-right-size} bounds both the second and the third terms here: each is at most $3\delta\langle \rho\rangle_\nu$.  Adding these estimates completes the proof.
\end{proof}

\begin{proof}[Proof of Proposition~\ref{prop:near-C}]
	We may assume that $r < 1/8$ without loss of generality.
	
	As explained previously, there are three cases to consider.  In case $\mu$ has an atom $\bf{x}$ of measure at least~\eqref{eq:atom-bd}, we can let $V := \{\bf{x}\}$.  In case $\TC(\mu)\leq r^4n$, we use the set $V$ provided by Lemma~\ref{lem:small-TC}.
	
	So now suppose that neither of those cases holds.  This assumption provides the hypotheses for Proposition~\ref{prop:remaining-case}.  Let $R$ be the set provided by that proposition, and let $\nu:= \mu_{|R}$.  Choose $\rho$ and construct a random subset $U$ of $R$ as above.  By Corollary~\ref{cor:close-to-right-size}, Lemma~\ref{lem:dbar-small}, and two applications of Markov's inequality, the random set $U$ satisfies each of the inequalities
	\[\big|\nu(U) - \langle \rho\rangle_\nu\big| \leq 9\delta\langle \rho\rangle_\nu\]
	and
	\[\nu(U)\cdot \ol{d_n}(\nu_{|U},\nu_{|\rho}) \leq 21\delta\langle \rho\rangle_\nu\]
with probability at least $2/3$.
	
	Therefore there exists a subset $U \subseteq R$ which satisfies both of these inequalities.  For that set $U$, and recalling that we chose $\delta \leq 1/18$ (see~\eqref{eq:choice-of-eta}), the first inequality gives $\nu(U) \geq \langle \rho\rangle_\nu /2$.  Using this, the second inequality implies that
	\[\ol{d_n}(\nu_{|U},\nu_{|\rho}) \leq 42\delta.\]

Since we also chose $\delta \leq r^2/42$ (see~\eqref{eq:choice-of-eta} again), and since $\nu_{|\rho}$ satisfies T$(\k n,r)$, we may now repeat the proof of Lemma~\ref{lem:small-TC} (based on Proposition~\ref{prop:T-under-dbar}).  It gives another subset $V$ of $A^n$ with $\nu_{|U}(V) \geq 1 - 4r \geq 1/2$ and such that the measure $(\nu_{|U})_{|V}$ satisfies T$(\k n,9r)$, provided $n$ is sufficiently large in terms of $\k$ and $r$.  Since  $\nu_{|U}$ is supported on $U$, we may replace $V$ with $U\cap V$ if necessary and then assume that $V\subseteq U$.  Having done so, we have $(\nu_{|U})_{|V} = (\mu_{|U})_{|V} = \mu_{|V}$.

Finally we must prove the lower bound on $\mu(V)$ for a suitable constant $c$.  Recalling~\eqref{eq:rho-int-big}, it follows from the estimates above that
\begin{multline*}
\mu(V) \geq \mu(U)\cdot \mu(V\,|\,U) = \mu(R)\cdot \nu(U)\cdot \nu(V\,|\,U) \\ \geq \frac{\delta}{32M}\cdot \frac{\langle\rho\rangle_\nu}{2}\cdot \frac{1}{2}
	\geq \frac{\delta}{256c_\rm{B}M}\cdot \rme^{-33c_\rm{B}E/\delta}.
	\end{multline*}
Recalling the definition of $M$ in Lemma~\ref{lem:remaining-case}, this last expression is equal to
\[\frac{\delta}{256c_\rm{B}\lceil n\log|A|\rceil}\cdot \rme^{-33c_\rm{B}E/\delta}.\]
Since we have restricted attention to the last of our three cases, we know that $E > r^4 n$.  Therefore, for any fixed $c > 33c_\rm{B}/\delta$, the last expression must be greater than $\rme^{-cE}$ for all sufficiently large $n$, since the negative exponential is stronger than the prefactor as $n\to\infty$.
\end{proof}

\begin{rmk}
In Part III we apply Theorem C during the proof of Theorem A.  In that application, the measure $\mu$ is the conditional distribution of $n$ consecutive letters in a stationary ergodic process, given the corresponding letters in another process coupled to the first. In this case, the Shannon--McMillan theorem enables us to obtain conclusion (d) of Proposition~\ref{prop:remaining-case} after just trimming the measure $\mu$ slightly, without using Lemma~\ref{lem:remaining-case}.  This makes for a slightly easier proof of Theorem C in this case.
\end{rmk}

\begin{rmk}
	In the proof of Theorem C, we first obtain the new measure $\nu = \mu_{|R}$. Then we apply Theorem B to it, consider just one of the summands $\rho\cdot \nu$ in the resulting mixture, and use a random construction to produce a set $U$ such that $\ol{d_n}(\nu_{|U},\nu_{|\rho})$ is small.
	
	An alternative is to consider the whole mixture
	\begin{equation}\label{eq:mixture-from-B}
\nu = \rho_1\cdot \nu_1 + \cdots + \rho_m\cdot \nu_m
\end{equation}
	given by Theorem B, and then use a random partition $U_1$, \dots, $U_m$ so that (i) each $\bf{x} \in A^n$ chooses independently which cell to belong to and (ii) each event $\{U_j\ni\bf{x}\}$ has probability $\rho_j(\bf{x})$.  Using similar estimates to those above, one can show that, in expectation, most of the weight in the mixture is on terms for which $\ol{d_n}(\nu_{|U_j},\nu_{|\rho_j})$ is small.
	
	However, in this alternative approach, we must still begin by passing from $\mu$ to $\nu = \mu_{|R}$.  It does not give directly a partition as required by Theorem C, since we have $\nu_{|U_j} = \mu_{|R\cap U_j}$, and the sets $R\cap U_j$ do not cover the whole of $A^n$.  For this reason, it seems easier to use just one well-chosen term from~\eqref{eq:mixture-from-B}, as we have above.
\end{rmk}

\section{A reformulation: extremality}\label{sec:ext}

The work of Parts II and III is much easier if we base it on a slightly different notion of measure concentration than that studied so far.  This alternative notion comes from an idea of Thouvenot in ergodic theory, described in~\cite[Definition 6.3]{Tho02}.  It is implied by a suitable T-inequality, so we can still bring Theorem C to bear when we need it later, but this new notion is more robust and therefore easier to carry from one setting to another.

\subsection{Extremal measures}

Here is our new notion of measure concentration.

\begin{dfn}\label{dfn:pre-ext}
	Let $(K,d,\mu)$ be a metric probability space.  It is \textbf{$(\k,r)$-extremal} if any representation of $\mu$ as a mixture $\int \mu_\bullet\,\d P$ satisfies
	\[\int \ol{d}(\mu_\omega,\mu)\,P(\d\omega) \leq \frac{1}{\k}\int \rmD(\mu_\omega\,\|\,\mu)\,P(\d\omega) + r.\]
\end{dfn}

In the first place, notice that if $r$ is at least the diameter of $(K,d)$ then $(K,d,\mu)$ is $(\k,r)$-extremal for every $\k > 0$.

If $(K,d,\mu)$ satisfies T$(\k,r)$ then it is also $(\k,r)$-extremal: T$(\k,r)$ gives an inequality for each $\ol{d}(\mu_\o,\mu)$, and we can simply integrate this with respect to $P(\d \o)$.  However, extremality is generally slightly weaker than a T-inequality.  This becomes clearer from various stability properties of extremality.  In particular, the next lemma shows that extremality survives with roughly the same constants under sufficiently small perturbations in $\ol{d}$, whereas T-inequalities can degrade substantially unless we also condition the perturbed measure on a subset (Proposition~\ref{prop:T-under-dbar}).  This deficiency of T-inqualities may be seen in the examples sketched in the second remark after the proof of Proposition~\ref{prop:T-under-dbar}.

\begin{lem}[Stability under perturbation in transportation]\label{lem:ext-dbar-stab}
	Let $\mu,\mu' \in \Pr(K)$, assume that $\mu$ is $(\k,r)$-extremal, and let $\delta := \ol{d}(\mu,\mu')$.  Then $\mu'$ is $(\k,r+2\delta)$-extremal.
\end{lem}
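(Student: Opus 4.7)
The plan is to transfer any given mixture representation of $\mu'$ into one of $\mu$ by composing with a Markov kernel built from an optimal coupling of $\mu$ and $\mu'$, then to apply the known extremality of $\mu$ together with the data-processing inequality for $\rmD$.

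First I would fix a representation $\mu' = \int \mu'_\o\,P(\d\o)$. By compactness of $(K,d)$, choose an optimal coupling $\l$ of $\mu$ and $\mu'$ with $\int d\,\d\l = \delta$, and disintegrate it along its second marginal as $\l(\d x,\d y) = \theta_y(\d x)\,\mu'(\d y)$. Define a new kernel on the same base space by
\[\mu_\o := \int \theta_y\,\mu'_\o(\d y).\]
Then $\int \mu_\o\,P(\d\o) = \int \theta_y\,\mu'(\d y) = \mu$, so $\mu_\bullet$ is a bona fide mixture representation of $\mu$.

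Next, for each $\o$ the very formula defining $\mu_\o$ supplies a coupling of $\mu_\o$ with $\mu'_\o$, namely $\theta_y(\d x)\,\mu'_\o(\d y)$, whose cost I denote
\[\delta_\o := \int \int d(x,y)\,\theta_y(\d x)\,\mu'_\o(\d y).\]
Fubini gives $\int \delta_\o\,P(\d\o) = \int d\,\d\l = \delta$. Combining the bound $\ol{d}(\mu_\o,\mu'_\o)\le\delta_\o$ with the triangle inequality $\ol{d}(\mu'_\o,\mu')\le\ol{d}(\mu'_\o,\mu_\o)+\ol{d}(\mu_\o,\mu)+\ol{d}(\mu,\mu')$ and integrating against $P$, I obtain
\[\int \ol{d}(\mu'_\o,\mu')\,P(\d\o) \le 2\delta + \int \ol{d}(\mu_\o,\mu)\,P(\d\o).\]
Applying the $(\k,r)$-extremality of $\mu$ to the kernel $\mu_\bullet$ bounds the last integral by $\k^{-1}\int \rmD(\mu_\o\,\|\,\mu)\,P(\d\o) + r$.

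Finally, since $\mu_\o$ and $\mu$ arise from $\mu'_\o$ and $\mu'$ respectively by applying the same Markov operation $\nu\mapsto\int\theta_y\,\nu(\d y)$, the data-processing inequality yields $\rmD(\mu_\o\,\|\,\mu)\le\rmD(\mu'_\o\,\|\,\mu')$ for $P$-a.e.\ $\o$. Substituting this gives exactly the inequality required for $(\k,r+2\delta)$-extremality of $\mu'$. The only steps needing any care are the existence and measurability of the disintegration $\theta_\bullet$, which is routine since $K$ is a standard Borel space, and the invocation of the data-processing inequality; neither is a genuine obstacle, so there is no single hard step in the argument.
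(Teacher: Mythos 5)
Your proof is correct and follows essentially the same route as the paper's: both construct the new mixture representation of $\mu$ by pushing $\mu'_\bullet$ through the disintegration kernel of an optimal coupling, bound the transportation terms via the triangle inequality and Fubini, and finish with the data-processing inequality for KL divergence under a common Markov kernel. The only differences are notational (which coordinate of the coupling is disintegrated).
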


\begin{proof}
	By the vague compactness of $\Pr(K)$, there is a coupling $\l$ of $\mu'$ and $\mu$ such that
	\[\int d(x',x)\,\l(\d x',\d x) = \delta.\]
	Let us disintegrate $\l$ over the first coordinate, thus:
	\begin{equation}\label{eq:kappa-to-mu}
	\l = \int_K \delta_{x'}\times \l_{x'}\ \mu'(\d x').
	\end{equation}
	
	Now suppose that $\mu'$ is equal to the mixture $\int \mu'_\bullet\,\d P$. Combining this with~\eqref{eq:kappa-to-mu}, we obtain
\[		\mu = \int_K \l_{x'}\ \mu'(\d x') = \int_\O\Big[\int_K\l_{x'}\ \mu'_\o(\d x')\Big]\ P(\d \o) = \int_\O\mu_\o\ P(\d \o),\]
	where
	\begin{equation}\label{eq:nu-from-g}
		\mu_\o := \int_K\l_{x'} \ \mu'_\o(\d x').
	\end{equation}
	Thus, we have turned our representation of $\mu'$ as a mixture into a corresponding representation of $\mu$. Therefore the assumed extremality gives
	\[\int \ol{d}(\mu_\o,\mu)\ P(\d\o) \leq \frac{1}{\k}\int \rmD(\mu_\o\,\|\,\mu)\ P(\d\o) + r.\]
	
	The triangle inequality for $\ol{d}$ gives
\[		\int \ol{d}(\mu'_\o,\mu')\ P(\d\o) \leq \int \ol{d}(\mu'_\o,\mu_\o)\ P(\d\o) + \int \ol{d}(\mu_\o,\mu)\ P(\d\o) + \ol{d}(\mu,\mu')\]
	The last right-hand term here is $\delta$.  We have just found a bound for the middle term.  Finally, by the definition~\eqref{eq:nu-from-g}, the first term is at most
\begin{multline*}
\int \Big[\iint d(x',x)\ \l_{x'}(\d x)\ \mu'_\o(\d x')\Big]\ P(\d\o)
		= \iint d(x',x)\ \l_{x'}(\d x)\ \mu'(\d x')\\
		= \int d\ \d\l = \delta.
	\end{multline*}
	Combining these estimates gives
	\[\int \ol{d}(\mu'_\o,\mu')\ P(\d\o) \leq \frac{1}{\k}\int \rmD(\mu_\o\,\|\,\mu)\ P(\d\o) + r + 2\delta.\]
	Finally, for the remaining integral on the right-hand side, we apply the fact that KL divergence is non-increasing when one forms the compounds of two probability measures with the same kernel. See, for instance,~\cite[Section 4.4, part 1]{CovTho06} for this standard consequence of the chain rule. This gives
	\begin{align*}
		\int \rmD(\mu_\o\,\|\,\mu)\ P(\d\o)
		&= \int \rmD\Big(\int_K\l_{x'}\ \mu'_\o(\d x')\,\Big\|\,\int_K\l_{x'}\ \mu'(\d x') \Big)\ P(\d\o)\\
		&\leq \int \rmD(\mu'_\o\,\|\,\mu')\ P(\d\o).
	\end{align*}
\end{proof}

Next we show how extremality is inherited by product measures.

\begin{lem}[Products of extremal measures]\label{lem:pre-ext-prod}
	Let $(K,d_K)$ and $(L,d_L)$ be compact metric spaces, let $0 < \a < 1$, and let $d$ be the following metric on $K\times L$:
	\begin{equation}\label{eq:prod-met-again}
	d((x,y),(x',y')) := \a d_K(x,x') + (1-\a)d_L(y,y').
	\end{equation}
	Let $\mu \in \Pr(K)$ and $\nu \in \Pr(L)$, and assume that they are $(\a\k,r_K)$-extremal and $((1-\a)\k,r_L)$-extremal, respectively.  Let $r := \a r_K + (1-\a)r_L$. Then $\mu\times \nu$ is $(\k,r)$-extremal on the metric space $(K\times L,d)$.
\end{lem}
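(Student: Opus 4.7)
The plan is to combine the transportation-distance bound of Lemma~\ref{lem:dist-from-prod-meas} with the two separate extremality hypotheses on $\mu$ and $\nu$, and then reassemble the resulting KL-divergence terms via the chain rule for Kullback--Leibler divergence. Given any mixture representation $\mu\times \nu = \int \l_\o\,P(\d\o)$, for each $\o$ let $\mu_\o$ denote the $K$-marginal of $\l_\o$ and let $x\mapsto \l_{\o,x}$ denote a version of the regular conditional distribution on $L$ given the $K$-coordinate. Then $\mu = \int \mu_\o\,P(\d\o)$ is a mixture representation of $\mu$, and
\[\nu = \int\Big[\int_K \l_{\o,x}\,\mu_\o(\d x)\Big]\,P(\d\o)\]
is a mixture representation of $\nu$ indexed by the pairs $(\o,x)$ distributed according to the hookup $P\ltimes\mu_\bullet$.

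Lemma~\ref{lem:dist-from-prod-meas} applied to each $\l_\o$ with the product metric $d=\a d_K+(1-\a)d_L$ gives
\[\ol{d}(\l_\o,\mu\times \nu) \leq \a\,\ol{d_K}(\mu_\o,\mu) + (1-\a)\int_K \ol{d_L}(\l_{\o,x},\nu)\,\mu_\o(\d x).\]
Integrate this over $P(\d\o)$, apply the $(\a\k,r_K)$-extremality of $\mu$ to the first right-hand term, and the $((1-\a)\k,r_L)$-extremality of $\nu$ to the second right-hand term (using the mixture representation of $\nu$ displayed above). The prefactors $\a$ and $1-\a$ coming from Lemma~\ref{lem:dist-from-prod-meas} cancel exactly against the scaled extremality constants $\a\k$ and $(1-\a)\k$, yielding
\[\int \ol{d}(\l_\o,\mu\times \nu)\,P(\d\o) \leq \frac{1}{\k}\int \rmD(\mu_\o\,\|\,\mu)\,P(\d\o) + \frac{1}{\k}\iint \rmD(\l_{\o,x}\,\|\,\nu)\,\mu_\o(\d x)\,P(\d\o) + r.\]

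The final step is to recognize the sum of the two KL terms on the right as a single KL quantity. Since the conditional on $L$ given the $K$-coordinate under the product reference measure $\mu\times\nu$ is just $\nu$ (independent of $x$), the chain rule for KL divergence reads
\[\rmD(\l_\o\,\|\,\mu\times \nu) = \rmD(\mu_\o\,\|\,\mu) + \int_K \rmD(\l_{\o,x}\,\|\,\nu)\,\mu_\o(\d x) \quad \forall \o;\]
integrating this identity over $P(\d\o)$ and substituting above produces exactly $\frac{1}{\k}\int \rmD(\l_\o\,\|\,\mu\times\nu)\,P(\d\o) + r$, which is the required $(\k,r)$-extremality. The proof is essentially bookkeeping once these three ingredients are aligned; the only delicate point is the precise cancellation of $\a$ and $1-\a$ between Lemma~\ref{lem:dist-from-prod-meas} and the scaled extremality constants, which explains the form of the hypotheses. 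Cases where the relevant KL divergences are infinite render the target inequality vacuous, so no integrability issues arise.
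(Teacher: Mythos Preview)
Your proof is correct and follows essentially the same route as the paper: apply Lemma~\ref{lem:dist-from-prod-meas} pointwise in $\o$, integrate, invoke the two extremality hypotheses using the marginal mixture for $\mu$ and the hookup $P\ltimes\mu_\bullet$ for $\nu$, and then reassemble via the chain rule for KL divergence. The paper's argument differs only in notation.
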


\begin{proof}
	Consider a representation
	\[\mu\times \nu = \int_\O \l_\o\ P(\d\o).\]
	For each $\o$, let $\l_{K,\o}$ be the marginal of $\l_\o$ on $K$, and consider the disintegration of $\l_\o$ over the first coordinate:
	\[\l_\o = \int_K (\delta_x \times \l_{L,(\o,x)})\ \l_{K,\o}(\d x).\]
	
	We apply Lemma~\ref{lem:dist-from-prod-meas} to each $\o$ separately and then integrate.  This gives
	\begin{multline}\label{eq:int-dbar-bd}
		\int \ol{d}(\l_\o,\mu\times \nu)\ P(\d \o) \\ \leq \a\int \ol{d_K}(\l_{K,\o},\mu)\ P(\d \o) + (1-\a)\iint \ol{d_L}(\l_{L,(\o,x)},\nu)\ \l_{K,\o}(\d x)\ P(\d \o).
	\end{multline}
	
	The measure $\mu$ is equal to the mixture $\int \l_{K,\o}\,\d P$.  Since $\mu$ is $(\a\k,r_K)$-extremal, it follows that the first right-hand term in~\eqref{eq:int-dbar-bd} is at most
	\[\frac{1}{\k}\int \rmD(\l_{K,\o}\,\|\,\mu)\ P(\d \o) + \a r_K.\]
	
	On the other hand, let $\O' := \O \times K$, and let $Q := P\ltimes \l_{K,\bullet}$. Then $\nu$ is equal to the mixture
	\[\iint \l_{L,(\o,x)}\,\l_{K,\o}(\d x)\,P(\d \o) = \int \l_{L,(\o,x)}\,Q(\d\o,\d x).\]
	Applying the fact that $\nu$ is $((1-\a)\k,r_L)$-extremal to this mixture, it follows that the second right-hand term in~\eqref{eq:int-dbar-bd} is at most
	\[\frac{1}{\k}\iint \rmD(\l_{L,(\o,x)}\,\|\,\nu)\ \l_{K,\o}(\d x)\ P(\d \o) + (1-\a)r_L.\]
	
	Adding these two estimates, we obtain
	\begin{multline*}
		\int \ol{d}(\l_\o,\mu\times \nu)\ P(\d \o) \\ \leq \frac{1}{\k}\int \rmD(\l_{K,\o}\,\|\,\mu)\ P(\d \o) +\frac{1}{\k}\iint \rmD(\l_{L,(\o,x)}\,\|\,\nu)\ \l_{K,\o}(\d x)\ P(\d \o) + r.
	\end{multline*}
	By the chain rule for KL divergence, this is equal to
	\[\frac{1}{\k}\int \rmD(\l_\o\,\|\,\mu\times \nu)\ P(\d \o) + r.\]
\end{proof}

\subsection{Extremal kernels}

In Parts II and III, we use extremality not just for one measure on a compact metric space $(K,d)$, but for a $\Pr(K)$-valued kernel $\mu_\bullet$ on another probability space $(Y,\nu)$.  In those applications, the important property is that the individual measures $\mu_y$ are highly extremal for most $y \in Y$ according to $\nu$. An exceptional set of small probability is allowed.

With this in mind, let us extend Definition~\ref{dfn:pre-ext} to kernels.

\begin{dfn}\label{dfn:ext}
	Let $(Y,\nu)$ be a probability space, let $(K,d)$ be a compact metric space, and let $\mu_\bullet$ be a kernel from $Y$ to $K$.  The pair $(\nu,\mu_\bullet)$ is \textbf{$(\k,r)$-extremal} if there is a measurable function
	\[Y\to (0,\infty):y\mapsto r_y\]
	such that $\mu_y$ is $(\k,r_y)$-extremal for every $y \in Y$ and such that
	\begin{equation}\label{eq:r-ave}
	\int r_y\,\nu(\d y) \leq r.
	\end{equation}
\end{dfn}

Thus, we control the extremality of the individual measures $\mu_y$ by the average of their parameters in~\eqref{eq:r-ave}, rather than by explicitly introducing a `bad set' of small probability.  This turns out to give much simpler estimates in many of the proofs below, starting with Lemma~\ref{lem:ext-ker-dbar-stab}.  Of course, Definition~\ref{dfn:ext} is easily related to such a `bad set' via Markov's inequality:

\begin{lem}\label{lem:ker-ext-bad-set}
If $(\nu,\mu_\bullet)$ is $(\k,r)$-extremal, then there is a subset $Y_1 \subseteq Y$ such that $\nu(Y_1) \geq 1-\sqrt{r}$ and such that $\mu_y$ is $(\k,\sqrt{r})$-extremal for every $y \in Y$.

On the other hand, if $(K,d)$ has diameter $r_0$, $Y_1 \subseteq Y$ is measurable, and $\mu_y$ is $(\k,r)$-extremal for all $y\in Y_1$, then $(\nu,\mu_\bullet)$ is extremal with parameters
\[\big(\k,r + r_0\nu(Y\setminus Y_1)\big).\]
\qed
	\end{lem}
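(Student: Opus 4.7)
The plan is to prove both parts by directly unpacking Definition~\ref{dfn:ext} and using Markov's inequality in one direction and the diameter bound in the other.

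For the first claim, start with a measurable function $y \mapsto r_y$ witnessing $(\k,r)$-extremality of $(\nu,\mu_\bullet)$, so that $\mu_y$ is $(\k,r_y)$-extremal for all $y$ and $\int r_y\,\nu(\d y) \leq r$. Apply Markov's inequality to the non-negative function $r_\bullet$ at level $\sqrt{r}$: the set $Y_1 := \{y : r_y \leq \sqrt{r}\}$ satisfies $\nu(Y\setminus Y_1) \leq r/\sqrt{r} = \sqrt{r}$, hence $\nu(Y_1) \geq 1 - \sqrt{r}$. It remains to observe that $(\k,r')$-extremality of a single measure is monotone in $r'$: if $\mu_y$ is $(\k,r_y)$-extremal with $r_y \leq \sqrt{r}$, then the defining inequality of Definition~\ref{dfn:pre-ext} holds with $\sqrt{r}$ on the right-hand side as well, so $\mu_y$ is $(\k,\sqrt{r})$-extremal.

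For the second claim, define
\[
r_y := \begin{cases} r & \text{if } y \in Y_1, \\ r_0 & \text{if } y \in Y\setminus Y_1. \end{cases}
\]
By hypothesis $\mu_y$ is $(\k,r)$-extremal for $y \in Y_1$. For $y \in Y\setminus Y_1$, the diameter bound $\rm{diam}(K,d) \leq r_0$ forces $\ol{d}(\mu_\omega,\mu_y) \leq r_0$ for \emph{any} $\mu_\omega$, so the defining inequality of Definition~\ref{dfn:pre-ext} is automatically satisfied with constant $r_0$; that is, $\mu_y$ is $(\k,r_0)$-extremal. Thus the function $r_\bullet$ witnesses the extremality of each individual $\mu_y$. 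Averaging,
\[
\int r_y\,\nu(\d y) = r \cdot \nu(Y_1) + r_0\cdot \nu(Y\setminus Y_1) \leq r + r_0\cdot \nu(Y\setminus Y_1),
\]
which verifies Definition~\ref{dfn:ext} with the claimed parameters.

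Neither part presents a real obstacle: the content is essentially unpacking the definition, together with the elementary observation that extremality of an individual measure is monotone in the error parameter $r$ and that a diameter bound gives a trivial extremality constant $r_0$ for free. The only care needed is to ensure that the function $y \mapsto r_y$ constructed in the second part is measurable, which follows because $Y_1$ is measurable.
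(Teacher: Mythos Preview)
Your proof is correct and is precisely the argument the paper has in mind: the paper omits the proof entirely (note the \qed\ at the end of the statement) after remarking just beforehand that ``Definition~\ref{dfn:ext} is easily related to such a `bad set' via Markov's inequality.'' Your write-up simply fills in these routine details; note also that the conclusion of the first part should read ``for every $y \in Y_1$'' rather than ``for every $y \in Y$,'' which is evidently a typo in the statement and which your argument handles correctly.
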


\begin{ex}
Let $p_1\mu_1 + \dots + p_m\mu_m$ be a mixture given by Theorem B.  Regard $p = (p_1,\dots,p_m)$ as a measure on $[m]$ and $(\mu_j)_{j=1}^m$ as a kernel from $[m]$ to $A^n$. Then the pair $(p,\mu_\bullet)$ is $(rn/1200,r+\eps)$-extremal. \qed
\end{ex}

Definition~\ref{dfn:ext} clearly depends on the kernel $\mu_\bullet$ only up to agreement $\nu$-almost everywhere.  Therefore it may be regarded unambiguously as a property of the measure $\nu\ltimes \mu_\bullet$ on the product space $Y\times K$.  In the sequel we sometimes write $(\k,r)$-extremality as a property of a measure on $Y\times K$ rather than of a pair such as $(\nu,\mu_\bullet)$.

The next result is an easy extension of Lemma~\ref{lem:ext-dbar-stab} to this setting.

\begin{lem}[Stability under perturbation in transportation]\label{lem:ext-ker-dbar-stab}
	Let $(Y,\nu)$ be a probability space, let $(K,d)$ be a compact metric space, and let $\mu_\bullet$ and $\mu'_\bullet$ be two kernels from $Y$ to $K$.  Assume that $(\nu,\mu_\bullet)$ is $(\k,r)$-extremal, and let
	\[\delta := \int \ol{d}(\mu_y,\mu'_y)\,\nu(\d y).\]
	Then $(\nu, \mu'_\bullet)$ is $(\k,r+2\delta)$-extremal.
\end{lem}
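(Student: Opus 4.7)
The plan is to reduce this to the pointwise stability result already established in Lemma~\ref{lem:ext-dbar-stab}. Since $(\nu,\mu_\bullet)$ is $(\k,r)$-extremal, I can unpack Definition~\ref{dfn:ext} to obtain a measurable function $y\mapsto r_y$ with $\int r_y\,\nu(\d y)\leq r$ such that each individual measure $\mu_y$ is $(\k,r_y)$-extremal on $(K,d)$.

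Next, for each $y\in Y$ separately, apply Lemma~\ref{lem:ext-dbar-stab} to the pair $(\mu_y,\mu'_y)$. Setting $\delta_y := \ol{d}(\mu_y,\mu'_y)$, that lemma yields that $\mu'_y$ is $(\k,r_y+2\delta_y)$-extremal. Taking $r'_y := r_y + 2\delta_y$, I then just need to verify the averaged bound
\[\int r'_y\,\nu(\d y) = \int r_y\,\nu(\d y) + 2\int \ol{d}(\mu_y,\mu'_y)\,\nu(\d y) \leq r + 2\delta,\]
which directly gives that $(\nu,\mu'_\bullet)$ is $(\k,r+2\delta)$-extremal via Definition~\ref{dfn:ext}.

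The only mildly technical point I anticipate is measurability: one must know that the assignment $y\mapsto \ol{d}(\mu_y,\mu'_y)$ is measurable, so that $y\mapsto r'_y$ is measurable, and that the $r_y$ promised by the extremality of $(\nu,\mu_\bullet)$ can indeed be taken measurably (which is built into Definition~\ref{dfn:ext}). Measurability of $y\mapsto \ol{d}(\mu_y,\mu'_y)$ follows from the Monge--Kantorovich--Rubinstein duality (Theorem~\ref{thm:MKR}), which expresses $\ol{d}$ as a supremum over a separable family of continuous functionals of the measures, combined with the measurability of the kernels $\mu_\bullet$ and $\mu'_\bullet$ as maps into $\Pr(K)$ with its vague topology. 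Once these routine measurability checks are in place, the argument above is complete.
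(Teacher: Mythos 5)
Your proof is correct and is essentially identical to the paper's own argument: apply the single-measure stability result (Lemma~\ref{lem:ext-dbar-stab}) pointwise to get that $\mu'_y$ is $(\k,r_y+2\ol{d}(\mu_y,\mu'_y))$-extremal, then integrate. The measurability remarks are a reasonable extra precaution but not a substantive difference.
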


\begin{proof}
	Let $y\mapsto r_y$ be the function promised by Definition~\ref{dfn:ext} for $(\nu,\mu_\bullet)$.  By applying Lemma~\ref{lem:ext-dbar-stab} for each $y$ separately, it follows that the function
	\[y\mapsto r_y + 2\ol{d}(\mu_y,\mu'_y)\]
	serves the same purpose for $(\nu,\mu'_\bullet)$.  This function has integral at most $r+2\delta$.
\end{proof}

Lemma~\ref{lem:pre-ext-prod} also has an easy generalization to kernels.

\begin{lem}[Pointwise products of extremal measures]\label{lem:ext-prod}
		Let $(K,d_K)$ and $(L,d_L)$ be compact metric spaces, let $0 < \a < 1$, and let $d$ be the metric~\eqref{eq:prod-met-again} on $K\times L$.
		
		Let $\mu_\bullet$ and $\mu'_\bullet$ be kernels from $(Y,\nu)$ to $K$ and $L$, respectively, and assume that $(\nu,\mu_\bullet)$ and $(\nu,\mu'_\bullet)$ are $(\a\k,r_K)$-extremal and $((1-\a)\k,r_L)$-extremal, respectively.  Let $r := \a r_K + (1-\a)r_L$.
		
		Then the measure $\nu$ and pointwise-product kernel $y\mapsto \mu_y\times \mu'_y$ are $(\k,r)$-extremal on the metric space $(K\times L,d)$.
	\end{lem}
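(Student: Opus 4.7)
The plan is to reduce this to the pointwise product lemma (Lemma~\ref{lem:pre-ext-prod}) by using the witnessing functions promised by Definition~\ref{dfn:ext}. There should be no serious obstacle: the only real work is choosing the right pointwise extremality parameters and verifying that they integrate correctly.

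First I would unpack the hypotheses. By Definition~\ref{dfn:ext}, there exist measurable functions $y \mapsto r_{K,y}$ and $y \mapsto r_{L,y}$ from $Y$ to $(0,\infty)$ such that $\mu_y$ is $(\alpha\kappa, r_{K,y})$-extremal and $\mu'_y$ is $((1-\alpha)\kappa, r_{L,y})$-extremal for every $y \in Y$, and such that
\[
\int r_{K,y}\,\nu(\d y) \leq r_K \qquad \text{and} \qquad \int r_{L,y}\,\nu(\d y) \leq r_L.
\]

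Next I would apply Lemma~\ref{lem:pre-ext-prod} pointwise. For each fixed $y \in Y$, the two measures $\mu_y$ and $\mu'_y$ satisfy the hypotheses of that lemma with parameters $(\alpha\kappa, r_{K,y})$ and $((1-\alpha)\kappa, r_{L,y})$ respectively. The conclusion is that the product measure $\mu_y \times \mu'_y$ is $(\kappa, \alpha r_{K,y} + (1-\alpha) r_{L,y})$-extremal on $(K\times L, d)$.

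Finally, I would define $r_y := \alpha r_{K,y} + (1-\alpha) r_{L,y}$, which is measurable in $y$ since $r_{K,\bullet}$ and $r_{L,\bullet}$ are, and compute
\[
\int r_y\,\nu(\d y) = \alpha \int r_{K,y}\,\nu(\d y) + (1-\alpha)\int r_{L,y}\,\nu(\d y) \leq \alpha r_K + (1-\alpha) r_L = r.
\]
This function $y \mapsto r_y$ then witnesses that $(\nu, y \mapsto \mu_y \times \mu'_y)$ is $(\kappa, r)$-extremal in the sense of Definition~\ref{dfn:ext}, completing the proof.
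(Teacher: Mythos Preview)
Your proof is correct and follows exactly the same approach as the paper: extract the witnessing functions $r_{K,y}$ and $r_{L,y}$ from Definition~\ref{dfn:ext}, apply Lemma~\ref{lem:pre-ext-prod} pointwise to obtain $(\kappa,\alpha r_{K,y}+(1-\alpha)r_{L,y})$-extremality of each $\mu_y\times\mu'_y$, and then integrate. The paper's version is just slightly terser.
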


\begin{proof}
Let $y\mapsto r_{K,y}$ and $y\mapsto r_{L,y}$ be functions as promised by Definition~\ref{dfn:ext} for $\mu_\bullet$ and $\mu'_\bullet$, respectively.  For each $y \in Y$, apply Lemma~\ref{lem:pre-ext-prod} to conclude that $\mu_y\times \mu'_y$ is $(\k,\a r_{K,y} + (1-\a)r_{L,y})$-extremal.  Finally, observe that
\[\int (\a r_{K,y} + (1-\a)r_{L,y})\,\nu(\d y) \leq \a r_K + (1-\a)r_L. \]
\end{proof}

We often use extremality when $\mu_\bullet$ is a conditional distribution of a $K$-valued random variable $F$ given $\G$, where $F$ is defined on a probability space $(\O,\F,P)$ and $\G$ is a $\s$-subalgebra of $\F$.  Then the following nomenclature is useful.

\begin{dfn}\label{dfn:RV-ext}
In the setting above, $F$ is \textbf{$(\k,r)$-extremal over $\G$} if $P\ltimes \mu_\bullet$ is $(\k,r)$-extremal, where $\mu_\bullet$ is a conditional distribution of $F$ given $\G$.
\end{dfn}

In this setting, the important consequence of extremality is the following.

\begin{lem}\label{lem:ext-for-finite}
Let $(\O,\F,P)$ be a probability space, let $\G\subseteq \scrH$ be $\s$-subalgebras of $\F$ that are countably generated modulo $P$, let $(K,d)$ be a finite metric space, and let $F:\O\to K$ be measurable and $(\k,r)$-extremal over $\G$.  Let $\mu_\bullet$ and $\nu_\bullet$ be conditional distributions for $F$ given $\G$ and $\scrH$, respectively.  Then
\[\int \ol{d}(\nu_\o,\mu_\o)\,P(\d\o) \leq \frac{1}{\k}\big[\rmH(F\,|\,\G) - \rmH(F\,|\,\scrH)\big] + r.\]
\end{lem}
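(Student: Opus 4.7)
The plan is to exhibit, for $P$-a.e.\ $\o$, a canonical representation of $\mu_\o$ as a mixture of the finer conditional distributions $\nu_{\o'}$, apply the assumed extremality of $\mu_\o$ to this representation, and then integrate. Because $\G$ and $\scrH$ are countably generated modulo $P$, there is a regular conditional probability $Q_\bullet$ for $P$ given $\G$ on $(\O,\scrH)$. The tower property for regular conditional distributions, together with the fact that $\mu_\bullet$ and $\nu_\bullet$ are conditional distributions of the same random variable $F$ with respect to the nested $\s$-algebras $\G \subseteq \scrH$, implies that for $P$-a.e.\ $\o$ one has the mixture identity
\[\mu_\o \;=\; \int \nu_{\o'}\,Q_\o(\d\o').\]

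Next I would let $\o\mapsto r_\o$ be the measurable function furnished by Definition~\ref{dfn:ext} for the extremal pair $(P,\mu_\bullet)$, so that each $\mu_\o$ is $(\k,r_\o)$-extremal and $\int r_\o\,\d P \le r$. Applying Definition~\ref{dfn:pre-ext} to the mixture above gives, for $P$-a.e.\ $\o$,
\[\int \ol{d}(\nu_{\o'},\mu_\o)\,Q_\o(\d\o') \;\leq\; \frac{1}{\k}\int \rmD(\nu_{\o'}\,\|\,\mu_\o)\,Q_\o(\d\o') + r_\o.\]
I would then integrate both sides with respect to $P(\d\o)$.

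The reduction of the double integrals to single integrals over $\o$ uses two observations. First, since $\mu_\bullet$ is $\G$-measurable, $\mu_{\o'}=\mu_\o$ for $Q_\o$-almost every $\o'$ (for $P$-a.e.\ $\o$), so the quantities $\ol{d}(\nu_{\o'},\mu_\o)$ and $\rmD(\nu_{\o'}\,\|\,\mu_\o)$ in the inner integrals may be rewritten as $\ol{d}(\nu_{\o'},\mu_{\o'})$ and $\rmD(\nu_{\o'}\,\|\,\mu_{\o'})$, which are now $\scrH$-measurable functions of $\o'$ alone. Second, the tower property $\int\!\!\int g(\o')\,Q_\o(\d\o')\,P(\d\o) = \int g\,\d P$ for any $\scrH$-measurable $g$ collapses each double integral to a single integral over $\o$. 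The result is
\[\int \ol{d}(\nu_\o,\mu_\o)\,P(\d\o) \;\leq\; \frac{1}{\k}\int \rmD(\nu_\o\,\|\,\mu_\o)\,P(\d\o) + \int r_\o\,P(\d\o).\]

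To finish, Lemma~\ref{lem:I-and-KL} identifies $\int\rmD(\nu_\o\,\|\,\mu_\o)\,P(\d\o)=\rmH(F\,|\,\G)-\rmH(F\,|\,\scrH)$, and the averaged bound on $r_\o$ controls the last term by $r$. I do not expect any genuine obstacle here: the only points requiring a little care are the existence of the regular conditional probability $Q_\bullet$ (given by the countable-generation hypothesis) and the measurability/tower manipulations, both of which are standard.
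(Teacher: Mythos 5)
Your proposal is correct and follows essentially the same route as the paper: represent each $\mu_\o$ as a mixture of the finer conditional distributions via the tower property, apply $(\k,r_\o)$-extremality pointwise, integrate against $P$, and invoke Lemma~\ref{lem:I-and-KL}. The only (immaterial) difference is that the paper realizes the mixing measure by factoring $\scrH$ through a map $\psi$ to a standard space and conditioning $\psi$ on $\G$, whereas you take a regular conditional probability of $P$ on $(\O,\scrH)$ given $\G$ directly; both devices are justified by the countable-generation hypothesis.
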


One can drop the assumption of countable generation, but we omit the details.

\begin{proof}
	Let $\o \mapsto r_\o$ be the function promised by Definition~\ref{dfn:ext} for $(P,\mu_\bullet)$.
	
Since $\scrH$ is countably generated modulo $P$, there is a measurable map $\psi:\O \to Y$ to a standard measurable space such that $\psi$ generates $\scrH$ modulo $P$. Up to agreement $P$-a.e., we may write $\nu_\o$ as $\nu'_{\psi(\o)}$ for some kernel $\nu'_\bullet$ from $Y$ to $K$.

In addition, let $Q_\bullet:\O\to \Pr(Y)$ be a conditional distribution for $\psi$ given $\G$.  Such a $Q_\bullet$ exists because $Y$  is standard. Since $\scrH \supseteq \G$, the tower property of conditional expectation gives
\begin{equation}\label{eq:decomp-each-omega}
\mu_\o = \int_Y \nu'_y\ \ Q_\o(\d y) \quad \hbox{for}\ P\hbox{-a.e.}\ \o.
\end{equation}
Therefore, for $P$-a.e. $\o$, we may apply the $(\k,r_\o)$-extremality of $\mu_\o$ to obtain
\[\int \ol{d}(\nu'_y,\mu_\o)\,Q_\o(\d y) \leq \frac{1}{\k}\int \rmD(\nu'_y\,\|\,\mu_\o)\ Q_\o(\d y) + r_\o.\]
Integrating with respect to $P(\d \o)$, this gives
\begin{align*}
\int \ol{d}(\nu_\o,\mu_\o)\,P(\d\o) &= \iint \ol{d}(\nu'_y,\mu_\o)\,Q_\o(\d y)\,P(\d \o) \\ &\leq \frac{1}{\k}\iint \rmD(\nu'_y\,\|\,\mu_\o)\ Q_\o(\d y)\ P(\d \o) + \int r_\o\,P(\d\o)\\
&\leq \frac{1}{\k}\int \rmD(\nu_\o\,\|\,\mu_\o)\ P(\d \o) + r.
\end{align*}

On the other hand, using again that $\scrH\supseteq \G$, Lemma~\ref{lem:I-and-KL} gives
\[\int \rmD(\nu_\o\,\|\,\mu_\o)\ P(\d\o) = \rmH(F\,|\,\G) - \rmH(F\,|\,\scrH).\]
\end{proof}

Combining Lemmas~\ref{lem:ext-for-finite} and~\ref{lem:ext-ker-dbar-stab} gives the following useful consequence.

\begin{cor}[Inheritance of extremality]\label{cor:refinement-still-ext}
	In the setting of Lemma~\ref{lem:ext-for-finite}, let
	\[a := \rmH(F\,|\,\G) - \rmH(F\,|\,\scrH).\]
Then $F$ is $(\k,2a/\k + 3r)$-extremal over $\scrH$. \qed
\end{cor}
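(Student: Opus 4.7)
The proof is a direct two-step combination of the two lemmas cited just before the corollary, so the plan is short.

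First, I would apply Lemma~\ref{lem:ext-for-finite} to the hypothesis that $F$ is $(\k,r)$-extremal over $\G$. With $\mu_\bullet$ and $\nu_\bullet$ as in that lemma (conditional distributions of $F$ given $\G$ and given $\scrH$, respectively), this immediately yields
\[
\delta := \int \ol{d}(\nu_\o,\mu_\o)\,P(\d\o) \leq \frac{a}{\k} + r.
\]

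Next, I would observe that both $\mu_\bullet$ and $\nu_\bullet$ are kernels from the probability space $(\O,P)$ into $K$, so Lemma~\ref{lem:ext-ker-dbar-stab} applies with $(Y,\nu) = (\O,P)$. Since $(P,\mu_\bullet)$ is $(\k,r)$-extremal by hypothesis and the two kernels are $\delta$-close on average in the sense above, Lemma~\ref{lem:ext-ker-dbar-stab} tells us that $(P,\nu_\bullet)$ is $(\k,r+2\delta)$-extremal. Substituting the bound on $\delta$ from the first step gives
\[
r + 2\delta \leq r + 2\bigl(a/\k + r\bigr) = 2a/\k + 3r,
\]
which is precisely the required conclusion. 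There is no real obstacle here; the two preceding lemmas are tailored to fit together in exactly this way, and the only thing to verify is that the conditional distribution $\nu_\bullet$ is also a legitimate kernel on $(\O,P)$, which is immediate from the standardness assumptions already in force.
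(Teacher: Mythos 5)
Your proposal is correct and is exactly the argument the paper intends: the text introduces the corollary with "Combining Lemmas~\ref{lem:ext-for-finite} and~\ref{lem:ext-ker-dbar-stab} gives the following useful consequence," and your two steps (bound $\delta$ by $a/\k + r$ via Lemma~\ref{lem:ext-for-finite}, then invoke Lemma~\ref{lem:ext-ker-dbar-stab} to pass to $(\k,r+2\delta)$-extremality over $\scrH$) reproduce that combination with the right arithmetic.
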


Finally, here is an analog of Lemma~\ref{lem:stab-lift} for extremal kernels.

\begin{lem}[Stability under lifting]\label{lem:ext-stab-lift}
	Let $d_n$ be the normalized Hamming metric on $A^n$ for some finite set $A$, and let $\nu\ltimes \mu_\bullet$ be a probability measure on $Y\times A^n$.  Let $S\subseteq [n]$ satisfy $|S| \geq (1-a)n$, and let $\mu_{S,y}$ be the projection of $\mu_y$ to $A^S$ for each $y \in Y$.  If $\nu\ltimes \mu_{S,\bullet}$ is $(\k,r)$-extremal, then $\nu\ltimes \mu_\bullet$ is $(n\k/|S|,r + a)$-extremal.
\end{lem}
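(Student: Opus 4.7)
The plan is to reduce everything to the single-measure version of the lifting argument used to prove Lemma~\ref{lem:stab-lift}, then average over $y \in Y$. By Definition~\ref{dfn:ext}, the hypothesis supplies a measurable function $y \mapsto r'_y$ with $\int r'_y\,\nu(\d y) \leq r$ and with $\mu_{S,y}$ being $(\k, r'_y)$-extremal for every $y$. I will show the pointwise implication that, for each fixed $y$, the measure $\mu_y$ is $(n\k/|S|,\,(|S|/n) r'_y + a)$-extremal. Setting $r_y := (|S|/n) r'_y + a$ and integrating then gives $\int r_y\,\nu(\d y) \leq (|S|/n) r + a \leq r + a$, which is the required conclusion via Definition~\ref{dfn:ext}.

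For the pointwise step, fix $y$, write $\mu := \mu_y$ and $\mu_S := \mu_{S,y}$, and consider an arbitrary representation $\mu = \int \mu_\omega\,P(\d\omega)$. Projecting both sides to $A^S$ yields the mixture $\mu_S = \int \mu_{S,\omega}\,P(\d\omega)$, and the assumed $(\k, r'_y)$-extremality of $\mu_S$ gives
\[
\int \ol{d_S}(\mu_{S,\omega}, \mu_S)\,P(\d\omega) \leq \frac{1}{\k} \int \rmD(\mu_{S,\omega}\,\|\,\mu_S)\,P(\d\omega) + r'_y.
\]
Two fibre-wise estimates then close the loop. First, conditional Jensen applied to the definition of $\rmD$ (just as in the proof of Lemma~\ref{lem:stab-lift}) gives $\rmD(\mu_{S,\omega}\,\|\,\mu_S) \leq \rmD(\mu_\omega\,\|\,\mu)$ for $P$-a.e.\ $\omega$. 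Second, by lifting any coupling of $\mu_{S,\omega}$ and $\mu_S$ on $A^S \times A^S$ to a coupling on $A^n \times A^n$---using disintegrations of $\mu_\omega$ and $\mu$ over their $S$-projections, exactly as in that same proof---we obtain
\[
\ol{d_n}(\mu_\omega, \mu) \leq \frac{|S|}{n} \ol{d_S}(\mu_{S,\omega}, \mu_S) + \frac{n - |S|}{n} \leq \frac{|S|}{n} \ol{d_S}(\mu_{S,\omega}, \mu_S) + a,
\]
since the coordinates outside $S$ contribute at most the diameter $1$ of the corresponding normalized Hamming space.

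Combining these ingredients and integrating over $\omega$ yields
\[
\int \ol{d_n}(\mu_\omega, \mu)\,P(\d\omega) \leq \frac{|S|}{n\k} \int \rmD(\mu_\omega\,\|\,\mu)\,P(\d\omega) + \frac{|S|}{n} r'_y + a,
\]
which is precisely the $(n\k/|S|,\,r_y)$-extremality of $\mu$ for the choice $r_y = (|S|/n) r'_y + a$. I expect no real obstacle: the whole statement is a direct fibre-wise transcription of Lemma~\ref{lem:stab-lift} followed by an unremarkable average over $y$. The only book-keeping worth doing carefully is writing out the lifted coupling and tracking the explicit $r_y$, so that the averaged constraint $\int r_y\,\nu(\d y) \leq r + a$ falls out automatically from the hypothesis $\int r'_y\,\nu(\d y) \leq r$.
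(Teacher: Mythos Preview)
Your proposal is correct and follows essentially the same route as the paper: reduce to a single fibre $y$, run the lifting argument from Lemma~\ref{lem:stab-lift} (project the mixture, use monotonicity of $\rmD$ under projection, lift couplings to bound $\ol{d_n}$ by $(|S|/n)\ol{d_S}+a$), and then average over $y$. Your explicit bookkeeping of the function $r_y = (|S|/n)r'_y + a$ is slightly more careful than the paper's version, which simply notes that ``casually taking $r$ outside the parentheses'' yields $(n\k/|S|,r+a)$-extremality and then says to apply this to each $\mu_y$ separately.
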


\begin{proof}
First consider the case when $Y$ is a single point.  Effectively this means we have a single measure $\mu$ on $A^n$ for which $\mu_S$ is $(\k,r)$-extremal.  Suppose $\mu$ is represented as the mixture $\int \mu_\o\,P(\d\o)$. Projecting to $A^S$, this becomes $\mu_S = \int \mu_{S,\o}\,P(\d\o)$, and we have $\rmD(\mu_{S,\o}\,\|\,\mu_S) \leq \rmD(\mu_\o\,\|\,\mu)$ for each $\o$ as in the proof of Lemma~\ref{lem:stab-lift}. So our assumption on $\mu_S$ gives
\[\int \ol{d_S}(\mu_{S,\o},\mu_S)\,P(\d \o) \leq \frac{1}{\k}\int \rmD(\mu_{S,\o}\,\|\,\mu_S)\,P(\d \o) + r \leq \frac{1}{\k}\int \rmD(\mu_\o\,\|\,\mu)\,P(\d \o) + r .\]
Now, just as in the proof of Lemma~\ref{lem:stab-lift}, we may lift a family of couplings which realize the distances $\ol{d_S}(\mu_{S,\o},\mu_S)$, and thereby turn the above inequalities into
\[\int \ol{d_n}(\mu_\o,\mu)\,P(\d \o) \leq \frac{|S|}{n}\Big(\frac{1}{\k}\int \rmD(\mu_\o\,\|\,\mu)\,P(\d \o) + r\Big) + a.\]
Casually taking $r$ outside the parentheses, this shows ${(n\k/|S|,r+a)}$-extremality.

In the case of general $Y$ and a kernel $\mu_\bullet$, simply apply the special case above to each $\mu_y$ separately.
\end{proof}

\part{RELATIVE BERNOULLICITY}

To prove Theorem A, we construct a factor of $(X,\mu,T)$ using a sequence of applications of Theorem C, and then show that the measure concentration promised by that theorem implies relative Bernoullicity.  The second of these steps rests on Thouvenot's relative version of Ornstein theory.  In this part of the paper we recall the main results of that theory, and provide some small additions which assist in its application later.

At the heart of Ornstein's original work on Bernoullicity are two necessary and sufficient conditions for an ergodic finite-state process to be isomorphic to a Bernoulli shift: the finitely determined property, and the very weak Bernoulli property.  More recently, other necessary and sufficient conditions have been added to this list, such as extremality~\cite[Definition 6.3]{Tho02} or a version of measure concentration~\cite{MarShi94} (historically referred to as the `almost blowing-up property' in ergodic theory).

This theory can be generalized to characterize relative Bernoullicity of an ergodic measure-preserving system over a distinguished factor map.  This was realized by Thouvenot, who proved the first key extensions of Ornstein's results in~\cite{Tho75,Tho75b} (see also~\cite{Kie84} for another treatment).  Thouvenot's more recent survey~\cite{Tho02} describes the broader context.  In the first instance, the theory is based on a relative version of the finitely determined property.  Thouvenot's theorem that relative finite determination implies relative Bernoullicity is recalled as Theorem~\ref{thm:FD-Bern} below.

In general, a proof that a given system is relatively Bernoulli over some factor map has two stages: (i) a proof of relative finite determination, and then (ii) an appeal to Theorem~\ref{thm:FD-Bern}.  To prove Theorem A, we construct the new factor map $\pi_1$ that it promises, and then show relative Bernoullicity over $\pi_1$ following those two stages.

Stage (ii) simply amounts to citing Theorem~\ref{thm:FD-Bern} from Thouvenot's work.  That theorem is the most substantial fact that we bring into play from outside this paper.

There is more variety among instances of stage (i) in the literature: that is, proofs of relative finite determination itself.  For the construction in our proof of Theorem A, we obtain it from a relative version of extremality.  Relative extremality is a much more concrete property than finite determination, and is built on the ideas of Section~\ref{sec:ext}.

Regarding these two stages, let us make a curious observation. The assumption of ergodicity in Theorem A is needed only as a necessary hypothesis for Theorem~\ref{thm:FD-Bern}. We do not use it anywhere in stage (i) of our proof of Theorem A.  In particular, because we have chosen to prove Theorem C in general, rather than just for the examples coming from ergodic theory, we make no direct use of the Shannon--McMillan theorem in this paper (see the first remark at the end of Section~\ref{sec:C}). Ergodicity does play a crucial role within Thouvenot's proof of Theorem~\ref{thm:FD-Bern} via the ergodic and Shannon--McMillan theorems.

This part of the paper has the following outline. Section~\ref{sec:rel-ent} recalls some standard theory of Kolmogorov--Sinai entropy relative to a factor.  Sections~\ref{sec:version-of-Orn} and~\ref{sec:rel-Orn} introduce relative finite determination and relative extremality, respectively.  Then Section~\ref{sec:rel-Orn-proofs} proves the crucial implication from the latter to the former.

The ideas in these sections have all been known in this branch of ergodic theory for many years, at least as folklore. For the sake of completeness, I include full proofs of all but the most basic results that we subsequently use, with the exception of Theorem~\ref{thm:FD-Bern}.  Nevertheless, this part of the paper covers only those pieces of relative Ornstein theory that are needed in Part III, and omits many other topics.  For example, a knowledgeable reader will see a version of relative very weak Bernoullicity at work in the proof of Proposition~\ref{prop:ext-ABI} below, and will recognize a relative version of `almost block independence' in its conclusion: see the remark following that proof.  But we do not introduce these properties formally, and leave aside the implications between them.  Relative very weak Bernoullicity already appears in~\cite{Tho75}, and is known to be equivalent to relative finite determination by results of Thouvenot~\cite{Tho75}, Rahe~\cite{Rah78} and Kieffer~\cite{Kie82}.

\section{Background on relative entropy in ergodic theory}\label{sec:rel-ent}

This section contains the most classical facts that we need. We recount some of them carefully for ease of reference later and to establish notation, but we omit most of the proofs.

\subsection{Factors, observables, and relative entropy}

For any set $A$ we write $T_A$ for the leftward coordinate shift on $A^\bbZ$.  In the sequel the letters $A$ and $B$ always denote finite alphabets.

Most traditional accounts of entropy in ergodic theory are explained in terms of finite measurable partitions.  Here we adopt the slightly different language of observables, which is closer to the spirit of coding and information theory (see, for instance,~\cite[Chapter 5]{Bil65}).

Let $(X,\mu,T)$ be a measure-preserving system.  An $A$-valued \textbf{observable} on $X$ is simply a measurable map $X\to A$.  An $A$-valued \textbf{process} is a bi-infinite sequence
\[\xi = (\dots ,\xi_{-1},\xi_0,\xi_1,\dots)\]
of observables which together define a factor map from $(X,\mu,T)$ to $(A^\bbZ,\xi_\ast\mu,T_A)$.  We often adopt the point of view that a process is a special kind of factor map.  By equivariance, it follows that these maps must satisfy
\begin{equation}\label{eq:obs-proc}
\xi_n(x) = \xi_0(T^nx) \quad \mu\hbox{-a.s.}
\end{equation}
Thus, up to agreement modulo $\mu$, $\xi$ is completely specified by the time-zero coordinate $\xi_0$.  On the other hand, any $A$-valued observable $\xi_0$ gives rise to a unique $A$-valued process: simply use~\eqref{eq:obs-proc} as the definition of all the other $\xi_n$s.

Given a sequence $\xi$ as above and any subset $F \subseteq \bbZ$, we let
\[\xi_F := (\xi_n)_{n\in F}.\]
We use this notation mostly when $F$ is a discrete interval.  For any integers $a$ and $b$, we write
\[[a;b] = (a-1;b] = [a;b+1) = (a-1;b+1) := \{a,a+1,\dots,b\},\] interpreting this as $\emptyset$ if $b < a$.  We extend this notation to allow $a = -\infty$ or $b = \infty$ in the obvious way.

Given a system $(X,\mu,T)$ and a process $\xi$, the \textbf{finite-dimensional distributions} of $\xi$ are the distributions of the observables $\xi_F$ corresponding to finite subsets $F \subseteq \bbZ$.  In case $X = A^\bbZ$, $T = T_A$, and $\xi$ is the canonical process, we denote the distribution of $\xi_F$ by $\mu_F$.

The following conditional version of this notation is less standard, but also useful later in the paper.

\begin{dfn}[Block kernels]\label{dfn:block-kernel}
	Given a system $(X,\mu,T)$, processes $\xi:X\to A^\bbZ$ and $\pi:X\to B^\bbZ$, and $n \in \bbN$, the \textbf{$(\xi,\pi,n)$-block kernel} is the conditional distribution
	\[\mu(\xi_{[0;n)} = \bf{a}\,|\,\pi_{[0;n)} = \bf{b}) \quad (\bf{a} \in A^n,\ \bf{b} \in B^n),\]
	regarded as a kernel from $B^n$ to $A^n$. We define it arbitrarily for any $\bf{b}$ for which $\mu(\pi_{[0;n)} = \bf{b}) = 0$.
	
	In case $X = B^\bbZ\times A^\bbZ$, $T = T_{B\times A}$, and $\xi$ and $\pi$ are the canonical $A$- and $B$-valued processes on $X$, we may refer simply to the \textbf{$n$-block kernel}, and denote it by $\mu^\bl_{[0;n)}(\bf{a}\,|\,\bf{b})$.
\end{dfn}

Now let $\xi:X \to A^\bbZ$ be a process and let $\pi:(X,\mu,T)\to (Y,\nu,S)$ be a general factor map.  The \textbf{relative Kolmogorov--Sinai} (`\textbf{KS}') \textbf{entropy of $(X,\mu,T,\xi)$ over $\pi$} is the quantity
\[\rmh(\xi,\mu,T\,|\,\pi) = \lim_{n\to\infty}\frac{1}{n}\rmH_\mu(\xi_{[0;n)}\,|\,\pi).\]
The limit exists by the usual subadditivity argument.  In case $\pi$ is trivial it simplifies to
\[\rmh(\xi,\mu,T) = \rmh(\xi_\ast\mu,T_A),\] the KS entropy of the factor system $(A^\bbZ,\xi_\ast\mu,T_A)$, by the Kolmogorov--Sinai theorem.

Henceforth we frequently omit the subscript $\mu$ from $\rmH_\mu$, provided this measure is clear from the context.

In general, if $\pi$ and $\phi$ are measurable maps on $(X,\mu)$, then we say that $\phi$ is \textbf{$\pi$-measurable} if $\phi$ is measurable with respect to the $\s$-algebra generated by $\pi$ up to $\mu$-negligible sets.

The next lemma describes a standard alternative approach to relative KS entropy in case the factor map $\pi$ is also a process.

\begin{lem}\label{lem:rel-ent-formula}
If $\pi:X \to B^\bbZ$ is a process, then 
\[\rmH(\xi_{[0;n)}\,|\,\pi_{[0;n)}) \geq \rmH(\xi_{[0;n)}\,|\,\pi)\]
for every $n$, both of these sequences are subadditive in $n$, and both have the form
\[\rmh(\xi,\mu,T\,|\,\pi)\cdot n + o(n) \quad \hbox{as} \ n\to\infty.\]
If $\pi$ is an arbitrary factor map, then
\[\rmh(\xi,\mu,T\,|\,\pi) = \inf\big\{\rmh(\xi,\mu,T\,|\,\pi'):\ \pi'\ \hbox{is a $\pi$-measurable process}\big\}.\]
\qed
\end{lem}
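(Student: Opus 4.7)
The first inequality is immediate: since $\pi$ is a process, the finite-dimensional observable $\pi_{[0;n)}$ generates a sub-$\sigma$-algebra of the $\sigma$-algebra generated by $\pi$, and conditional Shannon entropy is non-increasing under enlargement of the conditioning $\sigma$-algebra.  Subadditivity of both sequences follows from a chain-rule decomposition together with the $T$-invariance of $\mu$: for instance, for any $m,n \geq 0$, the chain rule gives
\[
\rmH(\xi_{[0;m+n)}\,|\,\pi_{[0;m+n)}) \leq \rmH(\xi_{[0;m)}\,|\,\pi_{[0;m+n)}) + \rmH(\xi_{[m;m+n)}\,|\,\pi_{[0;m+n)}),
\]
each right-hand term is bounded above by the corresponding entropy conditioned on the narrower block, and stationarity converts the second into $\rmH(\xi_{[0;n)}\,|\,\pi_{[0;n)})$.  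The same reasoning applies to $\rmH(\xi_{[0;n)}\,|\,\pi)$, using that the $\sigma$-algebra of $\pi$ is $T$-invariant.  Fekete's lemma then gives that $(1/n)$ times each sequence converges to its infimum.

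For the second sequence the limit equals $\rmh(\xi,\mu,T\,|\,\pi)$ by definition.  The main work is to show that
\[
\lim_{n\to\infty}\frac{1}{n}\rmH(\xi_{[0;n)}\,|\,\pi_{[0;n)}) = \rmh(\xi,\mu,T\,|\,\pi).
\]
One direction is given by the inequality already established.  For the reverse, expand by the chain rule and use stationarity:
\[
\rmH(\xi_{[0;n)}\,|\,\pi_{[0;n)}) = \sum_{k=0}^{n-1}\rmH(\xi_0\,|\,\xi_{[-k;0)},\pi_{[-k;n-k)}).
\]
Each summand is bounded above by $\log |A|$, and as $k\to\infty$ and $n-k\to\infty$ the conditioning $\sigma$-algebras increase to that of $(\xi_{(-\infty;0)},\pi)$, so by the martingale convergence theorem for conditional entropy each summand tends to $\rmH(\xi_0\,|\,\xi_{(-\infty;0)},\pi)$; this last quantity equals $\rmh(\xi,\mu,T\,|\,\pi)$ by the standard stationary-process formula.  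A Ces\`aro argument, splitting the sum into a central block where the summands are within $\varepsilon$ of the target and two end-blocks of controlled length with the trivial bound, yields $\limsup_n(1/n)\rmH(\xi_{[0;n)}\,|\,\pi_{[0;n)}) \leq \rmh(\xi,\mu,T\,|\,\pi) + \varepsilon$, completing the argument.  I expect this to be the main obstacle, because everything else is a bookkeeping exercise while this step genuinely requires the martingale-type behaviour of conditional entropy.

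For the final identity, one inequality is trivial: if $\pi'$ is a $\pi$-measurable process, then the $\sigma$-algebra of $\pi'$ is contained in that of $\pi$, so $\rmH(\xi_{[0;n)}\,|\,\pi') \geq \rmH(\xi_{[0;n)}\,|\,\pi)$ for every $n$, and therefore $\rmh(\xi,\mu,T\,|\,\pi') \geq \rmh(\xi,\mu,T\,|\,\pi)$.  For the reverse, write $\pi:(X,\mu,T)\to(Y,\nu,S)$ and pick a countable generating algebra $\{B_1,B_2,\dots\}$ for $\B_Y$ modulo $\nu$.  For each $k$, let $\alpha^{(k)}:Y\to\{0,1\}^k$ be the observable $y\mapsto (1_{B_i}(y))_{i=1}^k$, and let $\pi'^{(k)}$ be the corresponding $\pi$-measurable process on $X$.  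The $T$-invariant $\sigma$-algebras generated by the $\pi'^{(k)}$ increase to $\pi^{-1}(\B_Y)$ modulo $\mu$, so for each fixed $n$, $\rmH(\xi_{[0;n)}\,|\,\pi'^{(k)}) \searrow \rmH(\xi_{[0;n)}\,|\,\pi)$ as $k\to\infty$ by martingale convergence.  Since each $\rmh(\xi,\mu,T\,|\,\pi'^{(k)})$ equals $\inf_n(1/n)\rmH(\xi_{[0;n)}\,|\,\pi'^{(k)})$ by subadditivity, interchanging the two infima yields
\[
\inf_k \rmh(\xi,\mu,T\,|\,\pi'^{(k)}) = \inf_{k,n}\frac{1}{n}\rmH(\xi_{[0;n)}\,|\,\pi'^{(k)}) = \inf_n\frac{1}{n}\rmH(\xi_{[0;n)}\,|\,\pi) = \rmh(\xi,\mu,T\,|\,\pi),
\]
completing the proof.
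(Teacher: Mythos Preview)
The paper states this lemma without proof (the \qed\ immediately follows the statement), treating it as standard background in relative entropy theory. Your argument is a correct and entirely standard derivation: monotonicity under conditioning for the inequality, chain rule plus stationarity for subadditivity, martingale convergence plus a Ces\`aro average for the equality of the two limits, and an approximation by finitely generated sub-$\sigma$-algebras for the final infimum formula. One small remark: your identification of the limit with $\rmH(\xi_0\,|\,\xi_{(-\infty;0)},\pi)$ is exactly what the paper records separately as Lemma~\ref{lem:rel-past}, so there is a mild forward reference, but since both facts are classical this is harmless.
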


If $\phi$ is another general factor map of $(X,\mu,T)$, then we define
\[\rmh(\phi,\mu,T\,|\,\pi) := \sup\big\{\rmh(\xi,\mu,T\,|\,\pi):\ \xi\ \hbox{is a $\phi$-measurable process}\big\}.\]
In particular,
\[\rmh(\mu,T\,|\,\pi) := \rmh(\rm{id}_X,\mu,T\,|\,\pi) = \sup\big\{\rmh(\xi,\mu,T\,|\,\pi):\ \xi\ \hbox{a process on $(X,\mu,T)$}\big\}.\]

Once again, an important special case arises when $X = B^\bbZ\times A^\bbZ$, $T = T_{B\times A}$, and $\pi$ is the canonical $B$-valued process on $X$. In this case we frequently abbreviate
\[\rmh(\mu,T\,|\,\pi) =: \rmh(\mu\,|\,\pi).\]

If $\pi:X \to Y$ and $\phi:X\to Z$ are maps between measurable spaces, then we write $\pi\vee \phi$ for the map
\[X\to Y\times Z:x\mapsto (\pi(x),\phi(x)).\]
In the case of observables, this notation deliberately suggests the common refinement of a pair of partitions.  We extend it to larger collections of maps or observables in the obvious way.

Relative KS entropy is a dynamical cousin of conditional Shannon entropy, and it has many analogous properties. The next result is an analog of the chain rule for Shannon entropy.  It follows at once from that rule, Lemma~\ref{lem:rel-ent-formula}, and the definitions above.

\begin{lem}[Chain rule]\label{lem:chain}
For any factor maps $\pi$, $\phi$ and $\psi$ on $(X,\mu,T)$, we have
\begin{equation}\label{eq:chain}
\rmh(\phi\vee \psi,\mu,T\,|\,\pi) = \rmh(\phi,\mu,T\,|\,\pi) + \rmh(\psi,\mu,T\,|\,\phi\vee\pi).
\end{equation}
\qed
\end{lem}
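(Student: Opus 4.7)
My plan follows the two-stage reduction that the author's remark implicitly suggests: first prove the chain rule when all three factor maps are processes, then extend to arbitrary factor maps by approximation. For Stage~1, write $\phi=\eta$, $\psi=\zeta$, $\pi=\a$ as processes in finite alphabets; the joins $\eta\vee\zeta$ and $\eta\vee\a$ are then again processes. Apply the Shannon chain rule to the discrete random variables $\eta_{[0;n)},\zeta_{[0;n)},\a_{[0;n)}$:
\[
\rmH\bigl((\eta\vee\zeta)_{[0;n)}\,\bigm|\,\a_{[0;n)}\bigr)=\rmH\bigl(\eta_{[0;n)}\,\bigm|\,\a_{[0;n)}\bigr)+\rmH\bigl(\zeta_{[0;n)}\,\bigm|\,\eta_{[0;n)}\vee\a_{[0;n)}\bigr).
\]
Dividing by $n$ and letting $n\to\infty$, Lemma~9.1 identifies each normalized term with the corresponding process-level relative KS entropy (the third term uses that $\eta\vee\a$ is itself a process), giving the identity in this case.

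\textbf{Stage~2 (general factor maps).} For each of $\phi,\psi,\pi$, I will fix an increasing cofinal sequence of finite observables generating the $\s$-algebra modulo $\mu$; denote the associated processes $\eta^{(k)},\zeta^{(k)},\a^{(k)}$. For any $\phi$-measurable process $\eta$ and $\psi$-measurable process $\zeta$, I apply Stage~1 to $(\eta,\zeta,\a^{(k)})$ and take $\inf_k$; since refining $\a^{(k)}$ toward $\pi$ monotonically decreases both $\rmh(\eta\,|\,\a^{(k)})$ and $\rmh(\zeta\,|\,\eta\vee\a^{(k)})$, two appeals to Lemma~9.1 yield
\[
\rmh(\eta\vee\zeta,\mu,T\,|\,\pi)=\rmh(\eta,\mu,T\,|\,\pi)+\rmh(\zeta,\mu,T\,|\,\eta\vee\pi).
\]
Taking $\sup_\zeta$ over $\psi$-measurable processes converts the last term into $\rmh(\psi,\mu,T\,|\,\eta\vee\pi)$ by definition, and the identity becomes $\rmh(\eta\vee\psi\,|\,\pi)=\rmh(\eta\,|\,\pi)+\rmh(\psi\,|\,\eta\vee\pi)$. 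Finally I let $\eta=\eta^{(k)}\nearrow\phi$: the first right-hand term increases to $\rmh(\phi\,|\,\pi)$ by the definition of $\rmh(\phi\,|\,\pi)$ as a supremum, the second decreases to $\rmh(\psi\,|\,\phi\vee\pi)$ via Lemma~9.1 applied to $\psi$ over the factor $\phi\vee\pi$, and the left-hand side increases to $\rmh(\phi\vee\psi\,|\,\pi)$ because the family of processes $\eta^{(k)}\vee\psi'$ (with $\psi'$ a $\psi$-measurable process) is cofinal among $(\phi\vee\psi)$-measurable processes.

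\textbf{Main obstacle.} The subtle point is the last limit passage: refining $\eta$ simultaneously increases the first right-hand term and decreases the second, so the desired exchange of $\lim_k$ with addition is not automatic term by term. It is forced, however, by the fact that the sum equals $\rmh(\eta^{(k)}\vee\psi\,|\,\pi)$, which is monotone increasing in $k$; since each individual term has a monotone (hence convergent) limit, the sum of the limits equals the limit of the sums, yielding the chain rule. Granted these routine but not tautological limit manipulations, the result drops out, justifying the author's ``follows at once.''
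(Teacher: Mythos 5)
Your overall strategy --- Shannon's chain rule for $\eta_{[0;n)},\zeta_{[0;n)},\a_{[0;n)}$, normalization via Lemma~\ref{lem:rel-ent-formula}, then passage to general factor maps through the infima and suprema in the definitions --- is exactly what the paper's one-line justification intends, and Stage~1 together with the first two limits of Stage~2 (refining $\a^{(k)}$, then taking $\sup_\zeta$) is sound. The problem is the very last limit, and it is not the one you flag. Your ``main obstacle'' paragraph only verifies that the limit of the sums is the sum of the limits; that yields $\rmh(\phi\vee\psi\,|\,\pi) = \rmh(\phi\,|\,\pi) + B$ with $B := \lim_k \rmh(\psi\,|\,\eta^{(k)}\vee\pi)$, and monotonicity gives only $B \geq \rmh(\psi\,|\,\phi\vee\pi)$, i.e.\ the ``$\geq$'' half of the chain rule. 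The missing step is the identification $B = \rmh(\psi\,|\,\phi\vee\pi)$, which you justify by ``Lemma~\ref{lem:rel-ent-formula} applied to $\psi$ over the factor $\phi\vee\pi$''. That lemma's infimum formula applies when the object being conditioned is a \emph{process}; here $\psi$ is a general factor map, so $\rmh(\psi\,|\,\eta^{(k)}\vee\pi) = \sup_\zeta \rmh(\zeta\,|\,\eta^{(k)}\vee\pi)$ and the claim amounts to exchanging $\inf_k$ with $\sup_\zeta$. For an array that is merely decreasing in $k$ and increasing in $\zeta$ this exchange is false (take $g_k(j) = 1_{\{j\geq k\}}$), so it needs an argument specific to entropy; as written, your proof establishes only one inequality.

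The gap is fixable from what you have already proved. From the process-level identity, for any $\phi$-measurable process $\eta$ refining $\eta^{(k)}$ and any $\psi$-measurable process $\zeta$,
\begin{equation*}
\rmh(\zeta\,|\,\eta^{(k)}\vee\pi)-\rmh(\zeta\,|\,\eta\vee\pi) \;=\; \big[\rmh(\eta^{(k)}\vee\zeta\,|\,\pi)-\rmh(\eta\vee\zeta\,|\,\pi)\big]+\big[\rmh(\eta\,|\,\pi)-\rmh(\eta^{(k)}\,|\,\pi)\big] \;\leq\; \rmh(\phi\,|\,\pi)-\rmh(\eta^{(k)}\,|\,\pi),
\end{equation*}
since the first bracket is $\leq 0$. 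Taking the infimum over such $\eta$ and then the supremum over $\zeta$ gives $\rmh(\psi\,|\,\eta^{(k)}\vee\pi) \leq \rmh(\psi\,|\,\phi\vee\pi) + \big[\rmh(\phi\,|\,\pi)-\rmh(\eta^{(k)}\,|\,\pi)\big]$, and the bracket tends to $0$ \emph{uniformly in $\zeta$} provided $\rmh(\phi\,|\,\pi)<\infty$ (if $\rmh(\phi\,|\,\pi)=\infty$ both sides of \eqref{eq:chain} are infinite and there is nothing to prove). Alternatively, avoid the minimax issue by reordering the quantifiers in a direct proof of ``$\leq$'': given a $(\phi\vee\psi)$-measurable process $\xi$ and $\eps>0$, first choose $\zeta$ and $\eta'$ with $\rmH(\xi_0\,|\,\s(\eta')\vee\s(\zeta))<\eps$, and only then choose a $\phi$-measurable $\eta$ refining $\eta'$ with $\rmh(\zeta\,|\,\eta\vee\pi)\leq\rmh(\zeta\,|\,\phi\vee\pi)+\eps$. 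Either repair completes your argument.
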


\begin{cor}[Chain inequality]\label{cor:chain-ineq}
For any factor maps  $\pi$, $\phi$ and $\psi$, we have
\[\rmh(\psi,\mu,T\,|\,\phi\vee \pi) \geq \rmh(\psi,\mu,T\,|\,\pi) - \rmh(\phi,\mu,T\,|\,\pi).\]
\end{cor}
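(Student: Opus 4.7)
The plan is to deduce the inequality directly from the chain rule (Lemma~\ref{lem:chain}) combined with a simple monotonicity property of relative KS entropy. Rearranging the claimed inequality, it is equivalent to
\[\rmh(\psi,\mu,T\,|\,\phi\vee \pi) + \rmh(\phi,\mu,T\,|\,\pi) \geq \rmh(\psi,\mu,T\,|\,\pi),\]
and by Lemma~\ref{lem:chain} the left-hand side equals $\rmh(\phi\vee\psi,\mu,T\,|\,\pi)$. So the whole statement reduces to the monotonicity inequality
\[\rmh(\phi\vee\psi,\mu,T\,|\,\pi) \geq \rmh(\psi,\mu,T\,|\,\pi).\]

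To prove this monotonicity I would simply unwind the definition of $\rmh(\,\cdot\,,\mu,T\,|\,\pi)$ for general factor maps, which was given as a supremum of $\rmh(\xi,\mu,T\,|\,\pi)$ over processes $\xi$ measurable with respect to the relevant factor. Every $\psi$-measurable process is automatically $(\phi\vee\psi)$-measurable, since the $\sigma$-algebra generated by $\psi$ modulo $\mu$ is contained in that generated by $\phi\vee\psi$. Hence the supremum defining $\rmh(\phi\vee\psi,\mu,T\,|\,\pi)$ is taken over a family that contains the family defining $\rmh(\psi,\mu,T\,|\,\pi)$, and the desired inequality follows at once. Combining this with the identity from Lemma~\ref{lem:chain} and rearranging gives the corollary.

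There is no real obstacle here: the only point worth a moment's care is that the definition of $\rmh(\phi,\mu,T\,|\,\pi)$ for a general (not necessarily process-valued) factor map $\phi$ is the one supplied just before Lemma~\ref{lem:chain}, so the monotonicity step is literally a statement about enlarging the family over which one takes a supremum. Everything else is a single application of the chain rule.
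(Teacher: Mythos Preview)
Your proof is correct and follows exactly the same route as the paper: rearrange, apply the chain rule (Lemma~\ref{lem:chain}) to identify the sum as $\rmh(\phi\vee\psi,\mu,T\,|\,\pi)$, and then invoke the monotonicity $\rmh(\phi\vee\psi,\mu,T\,|\,\pi) \geq \rmh(\psi,\mu,T\,|\,\pi)$. The paper simply cites this monotonicity without justification, whereas you spell out why it holds from the supremum definition, which is a fine addition.
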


\begin{proof}
	This follows by re-arranging~\eqref{eq:chain} and using the monotonicity
	\[\rmh(\phi\vee \psi,\mu,T\,|\,\pi) \geq \rmh(\psi,\mu,T\,|\,\pi).\]
\end{proof}

The next lemma generalizes the well-known equality between the KS entropy of a process and the Shannon entropy of its present conditioned on its past (see, for instance,~\cite[Equation (12.5)]{Bil65}).  The method of proof is the same. That method also appears in the proof of a further generalization in Lemma~\ref{lem:rel-ent-over-periodic} below.

\begin{lem}\label{lem:rel-past}
For any process $\xi$ and factor map $\pi$, we have
\[\rmh(\xi,\mu,T\,|\,\pi) = \rmH(\xi_0\,|\,\pi\vee \xi_{(-\infty;0)}).\]
\qed
\end{lem}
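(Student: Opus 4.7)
The plan is to mimic the classical (unconditional) proof, which writes the entropy of a finite block as a sum via the chain rule, uses stationarity to normalize each summand, monotonicity in the conditioning to pass to a limit, and Cesàro averaging to obtain the per-step limit.

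First I would expand, using the chain rule for conditional Shannon entropy,
\[\rmH(\xi_{[0;n)}\,|\,\pi) = \sum_{k=0}^{n-1} \rmH(\xi_k\,|\,\xi_{[0;k)},\pi).\]
For each fixed $k$, the next step is to rewrite $\rmH(\xi_k\,|\,\xi_{[0;k)},\pi)$ as $\rmH(\xi_0\,|\,\xi_{[-k;0)},\pi)$. This is the only step where the hypothesis that $\pi$ is a factor map actually enters: the $\s$-algebra generated by $\pi$ is $T$-invariant, so $\pi\circ T^{-k}$ generates the same $\s$-algebra as $\pi$ modulo $\mu$. Combined with the fact that $\mu$ is $T$-invariant and $\xi_j = \xi_0\circ T^j$ $\mu$-a.s., a change of variable by $T^{-k}$ inside the entropy yields the claimed equality.

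Next I would observe that the sequence $k\mapsto \rmH(\xi_0\,|\,\xi_{[-k;0)},\pi)$ is non-increasing, since enlarging the conditioning $\s$-algebra can only decrease conditional Shannon entropy. Because the $\s$-algebras $\s(\xi_{[-k;0)})\vee \s(\pi)$ increase with $k$ to $\s(\xi_{(-\infty;0)})\vee\s(\pi)$, the standard convergence theorem for conditional Shannon entropy of a finite-valued random variable (a consequence of the martingale convergence theorem applied via the formula~\eqref{eq:cond-ent-formula}) gives
\[\rmH(\xi_0\,|\,\xi_{[-k;0)},\pi)\ \longrightarrow\ \rmH(\xi_0\,|\,\xi_{(-\infty;0)}\vee \pi) \quad \text{as}\ k\to\infty.\]
Finally, a Cesàro average of a convergent sequence has the same limit, so
\[\frac{1}{n}\rmH(\xi_{[0;n)}\,|\,\pi) = \frac{1}{n}\sum_{k=0}^{n-1}\rmH(\xi_0\,|\,\xi_{[-k;0)},\pi)\ \longrightarrow\ \rmH(\xi_0\,|\,\xi_{(-\infty;0)}\vee\pi),\]
and the left-hand side tends to $\rmh(\xi,\mu,T\,|\,\pi)$ by definition.

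No step should be seriously problematic. The only mild care needed is the $T$-invariance argument in Step 2, since $\pi$ need not be a process; but that is exactly what the definition of a factor map provides. Everything else is straightforward adaptation of the classical argument of Billingsley~\cite[Equation (12.5)]{Bil65}, as the author already notes.
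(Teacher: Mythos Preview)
Your proof is correct and is exactly the classical argument the paper has in mind: the paper does not actually write out a proof of this lemma, but simply notes that ``the method of proof is the same'' as~\cite[Equation (12.5)]{Bil65} and points to the proof of Lemma~\ref{lem:rel-ent-over-periodic} for the same technique in a more general setting. Your chain-rule, stationarity shift (using $T$-invariance of $\s(\pi)$), martingale convergence, and Ces\`aro steps are precisely that method.
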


\subsection{The use of a periodic set}

At a few points later we need a slightly less standard variation on the calculations above.  To formulate it, we say that a measurable subset $F \subseteq X$ is \textbf{$m$-periodic under $T$ modulo $\mu$} for some $m \in \bbN$ if (i) the sets $F$, $T^{-1}F$, \dots, $T^{-(m-1)}F$ form a partition of $X$ modulo $\mu$-negligible sets and (ii) $T^{-m}F = F$ modulo $\mu$.

\begin{lem}\label{lem:rel-ent-over-periodic}
If $\pi$ is a factor map of $(X,\mu,T)$, $\xi$ is a process, and $F$ is $\pi$-measurable and $m$-periodic under $T$ modulo $\mu$, then
	\[\rmH(\xi_{[0;m)}\,|\,\pi;F) \geq \rmH(\xi_{[0;m)}\,|\,\pi\vee \xi_{(-\infty;0)};F)=  \rmh(\xi,\mu,T\,|\,\pi)\cdot m.\]
\end{lem}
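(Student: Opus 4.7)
The first inequality is just monotonicity of Shannon entropy under extra conditioning, applied under the measure $\mu(\cdot\,|\,F)$. For the equality, the plan is to apply the chain rule under $\mu(\cdot\,|\,F)$ to expand
\[\rmH(\xi_{[0;m)}\,|\,\pi\vee\xi_{(-\infty;0)};F) = \sum_{j=0}^{m-1}\rmH(\xi_j\,|\,\pi\vee\xi_{(-\infty;j)};F),\]
and then to show that this sum equals $m\cdot\rmh(\xi,\mu,T\,|\,\pi)$ via a symmetry argument based on the $m$-periodic partition.

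The symmetry comes from the fact that $F,T^{-1}F,\dots,T^{-(m-1)}F$ are all $\pi$-measurable (because $F$ is and $\sigma(\pi)$ is $T$-invariant) and each has $\mu$-measure $1/m$. Thus for every fixed $j\in\bbZ$ the standard conditional-entropy decomposition over this partition gives
\[\rmH_\mu(\xi_j\,|\,\pi\vee\xi_{(-\infty;j)}) = \frac{1}{m}\sum_{k=0}^{m-1}\rmH(\xi_j\,|\,\pi\vee\xi_{(-\infty;j)};T^{-k}F),\]
and the left-hand side equals $\rmh(\xi,\mu,T\,|\,\pi)$ by Lemma~\ref{lem:rel-past} together with ordinary $T$-invariance of $\mu$.

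Next I would convert each term on the right into an entropy computed under $\mu(\cdot\,|\,F)$, using that $T^k$ is a measure-preserving isomorphism from $(X,\mu(\cdot\,|\,T^{-k}F))$ to $(X,\mu(\cdot\,|\,F))$. The key identities to check are $\xi_j\circ T^{-k}=\xi_{j-k}$, the invariance of $\sigma(\pi)$ under $T$ (so it pulls back to itself), and the fact that $\xi_{(-\infty;j)}\circ T^{-k}$ generates the same $\sigma$-algebra as $\xi_{(-\infty;j-k)}$. Together these yield
\[\rmH(\xi_j\,|\,\pi\vee\xi_{(-\infty;j)};T^{-k}F) = \rmH_{\mu(\cdot|F)}(\xi_{j-k}\,|\,\pi\vee\xi_{(-\infty;j-k)}).\]
Substituting and reindexing by $\ell=j-k$ produces
\[m\cdot \rmh(\xi,\mu,T\,|\,\pi) = \sum_{\ell=j-m+1}^{j}\rmH_{\mu(\cdot|F)}(\xi_\ell\,|\,\pi\vee\xi_{(-\infty;\ell)})\]
for every $j\in\bbZ$. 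Specialising to $j=m-1$ exactly matches the chain-rule sum from the first paragraph, which closes the argument.

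The main obstacle is the careful change of variables in the third step: keeping track of how $T^k$ acts simultaneously on the conditioned measure, on the individual random variables $\xi_j$, on the past $\sigma$-algebra $\sigma(\xi_{(-\infty;j)})$, and on $\sigma(\pi)$. Once that symmetry is correctly set up, the remainder is routine chain-rule and partition bookkeeping and relies on nothing beyond Lemma~\ref{lem:rel-past}.
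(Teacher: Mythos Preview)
Your argument is correct, and it takes a genuinely different route from the paper's proof. The paper mimics the classical derivation of Lemma~\ref{lem:rel-past}: it expands $\rmH(\xi_{[0;km)}\,|\,\pi;F)$ by the chain rule into $k$ blocks of length $m$, uses $m$-periodicity of $F$ to shift each block back to $[0;m)$, applies martingale convergence to identify the Ces\`aro limit as $\rmH(\xi_{[0;m)}\,|\,\pi\vee\xi_{(-\infty;0)};F)$, and separately averages $\rmH(\xi_{[0;km)}\,|\,\pi;T^{-t}F)$ over $t$ to connect with $\rmH(\xi_{[0;km)}\,|\,\pi)$ up to a bounded error; dividing by $k$ and letting $k\to\infty$ finishes. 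Your approach instead takes Lemma~\ref{lem:rel-past} as a black box, expands $\rmH(\xi_{[0;m)}\,|\,\pi\vee\xi_{(-\infty;0)};F)$ into single coordinates, and then uses the $\pi$-measurable periodic partition $\{T^{-k}F\}$ to redistribute the identity $\rmH_\mu(\xi_j\,|\,\pi\vee\xi_{(-\infty;j)})=\rmh(\xi\,|\,\pi)$ across the cells. This avoids any limiting argument or martingale convergence and is shorter; the paper's route, by contrast, is more self-contained and makes explicit that the lemma is really Abramov's formula for inducing on a periodic set.
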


(Recall~\eqref{eq:ent-on-event} for the notation $\rmH(\,\cdot\,|\,\cdot\,;F)$.)

\begin{proof}
The inequality on the left follows from the monotonicity of $\rmH$ under conditioning, so we focus on the equality on the right.  In case $m=1$, this is precisely the statement of Lemma~\ref{lem:rel-past}.  To handle the general case we simply mimic the usual proof of that special case.
	
	First, for any $k \in \bbN$, the chain rule for Shannon entropy gives
	\begin{equation}\label{eq:m-and-km}
		\rmH(\xi_{[0;km)}\,|\,\pi;F) = \sum_{i=0}^{k-1}\rmH(\xi_{[im;(i+1)m)}\,|\,\pi \vee \xi_{[0;im)};F).
	\end{equation}
For each $i$ in this sum we have $T^{-im}F = F$ modulo $\mu$, because $F$ is $m$-periodic.  Since the $\s$-algebra generated by $\pi$ is globally $T$-invariant, the conditional distribution of $\xi_{[im;(i+1)m)}$ given $\pi\vee \xi_{[0;im)}$ is the same as the conditional distribution of $\xi_{[0;m)}$ given $\pi\vee \xi_{[-im;0)}$, up to shifting by $T$.  Therefore the right-hand side of~\eqref{eq:m-and-km} is equal to
\[\sum_{i=0}^{k-1}\rmH(\xi_{[0;m)}\,|\,\pi\vee \xi_{[-im;0)};F).\]
By the usual appeal to the martingale convergence theorem (see, for instance,~\cite[Theorem 12.1]{Bil65}), this sum is equal to
\begin{equation}\label{eq:m-and-km-lim}
k\cdot \rmH(\xi_{[0;m)}\,|\,\pi\vee \xi_{(-\infty;0)};F) + o(k) \quad \hbox{as}\ k\to\infty.
\end{equation}
	
	On the other hand, consider some $t \in \{1,\dots,m-1\}$.  Then, reasoning again from the $T$-invariance of the $\s$-algebra generated by $\pi$, we have
	\begin{multline*}
		\rmH(\xi_{[0;km)}\,|\,\pi;T^{-t}F) = \rmH(\xi_{[t;t+km)}\,|\,\pi;F) \\ = \rmH(\xi_{[0;km)}\,|\,\pi;F) + O(t\log|A|) = \rmH(\xi_{[0;km)}\,|\,\pi;F) + O(m\log|A|),
	\end{multline*}
	since each of the observables $\xi_{[t;t+km)}$ and $\xi_{[0;km)}$ determines all but $t$ coordinates of the other.  Therefore
	\begin{align*}
		\rmH(\xi_{[0;km)}\,|\,\pi) &= \frac{1}{m}\sum_{t=0}^{m-1}\rmH(\xi_{[0;km)}\,|\,\pi;T^{-t}F)\\
		&= \rmH(\xi_{[0;km)}\,|\,\pi;F) + O(m\log |A|),
	\end{align*}
where the first equality holds because the partition $F$, $T^{-1}F$, \dots, $T^{-(m-1)}F$ is $\pi$-measurable and all these sets have measure $1/m$. Combining this with~\eqref{eq:m-and-km} and~\eqref{eq:m-and-km-lim}, dividing by $k$, and then letting $k\to\infty$, we obtain
\begin{multline*}
\rmH(\xi_{[0;m)}\,|\,\pi\vee \xi_{(-\infty;0)};F) = 
\lim_{k\to\infty} \frac{1}{k}\rmH(\xi_{[0;km)}\,|\,\pi;F) \\= \lim_{k\to\infty}\frac{1}{k} \rmH(\xi_{[0;km)}\,|\,\pi) = \rmh(\xi,\mu,T\,|\,\pi)\cdot m.
\end{multline*}
\nopagebreak\end{proof}

\begin{rmk}
Lemma~\ref{lem:rel-ent-over-periodic} is really a relative-entropy version of Abramov's formula for the entropy of an induced transformation~\cite{Abr59}, in the special case where we induce on a periodic set.
\end{rmk}

\section{Relative finite determination}\label{sec:version-of-Orn}

\subsection{Extensions, joinings, and relative $\ol{\d}$-distance}\label{subs:exts-etc}

Before we define relative finite determination, we need some more notation in the setting of finite-alphabet processes.  Let $A$ and $B$ be finite sets, and let $\b$ be the canonical process from $B^\bbZ\times A^\bbZ$ to $B^\bbZ$.  Let $\nu$ be a shift-invariant probability measure on $B^\bbZ$, and let $\rm{Ext}(\nu,A)$ be the set of all shift-invariant probability measures $\l$ on $B^\bbZ\times A^\bbZ$ which satisfy $\b_\ast\l = \nu$.

Given $\l,\theta\in\rm{Ext}(\nu,A)$, let
\[\rm{Join}(\l,\theta\,|\,\b)\]
be the set of all invariant probability measures $\g$ on $B^\bbZ \times A^\bbZ\times A^\bbZ$ which satisfy
\[\g\{(b,a,a'):\ (b,a) \in U\} = \l (U) \quad \hbox{and} \quad \g\{(b,a,a'):\ (b,a') \in U\} = \theta (U)\]
for all measurable $U \subseteq B^\bbZ \times A^\bbZ$. Members of $\rm{Join}(\l,\theta\,|\,\b)$ are called \textbf{relative joinings of $\l$ and $\theta$ over $\b$}.

The set $\rm{Join}(\l,\theta\,|\,\b)$ always contains the relative product of $\l$ and $\theta$ over the map $\b$, so this set is never empty. If $\l$ and $\theta$ are both ergodic, then $\rm{Join}(\l,\theta\,|\,\b)$ also contains all ergodic components of that relative product, so in particular it has some ergodic members.

\begin{dfn}\label{dfn:rel-dbar}
	In the setting above, the \textbf{relative $\ol{\d}$-distance between $\l$ and $\theta$ over $\b$} is the quantity
	\[\ol{\d}(\l,\theta\,|\,\b) = \inf\Big\{\g\{(b,a,a'):\ a_0 \neq a'_0\}:\ \g \in \rm{Join}(\l,\theta\,|\,\b)\Big\}.\]
\end{dfn}

A standard vague-compactness argument in the space $\rm{Join}(\l,\theta\,|\,\b)$ shows that this infimum is achieved by some $\g$.

In order to estimate $\ol{\d}(\l,\theta\,|\,\b)$, it is helpful to relate it to transportation distances and the finite-dimensional distributions $\l_{[0;n)}$ and $\theta_{[0;n)}$.  We may do this in terms of the associated block kernels from Definition~\ref{dfn:block-kernel}.

\begin{lem}\label{lem:dbar-and-dbar}
For any $\l,\theta \in \rm{Ext}(\nu,A)$ we have
\[\ol{\d}(\l,\theta\,|\,\b) = \lim_{n\to\infty} \int \ol{d_n}\Big(\l_{[0;n)}^\bl(\,\cdot\,|\,b_{[0;n)}),\ \theta_{[0;n)}^\bl(\,\cdot\,|\,b_{[0;n)})\Big)\ \nu(\d b),\]
\end{lem}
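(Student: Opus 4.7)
The plan is to prove two inequalities: $I_n \leq \ol{\d}(\l,\theta\,|\,\b)$ for every $n$, and $\ol{\d}(\l,\theta\,|\,\b) \leq \liminf_{n\to\infty} I_n$, where $I_n$ abbreviates the integral on the right-hand side. Together these force the limit to exist and equal $\ol{\d}(\l,\theta\,|\,\b)$.

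First I would prove the lower bound, which is direct. Fix $n$ and $\g \in \rm{Join}(\l,\theta\,|\,\b)$, and let $\g_n$ be the pushforward of $\g$ under the projection $(b,a,a') \mapsto (b_{[0;n)}, a_{[0;n)}, a'_{[0;n)})$ to $B^n\times A^n\times A^n$. Because $\g$ is a relative joining, the two $(B^n, A^n)$-marginals of $\g_n$ coincide with $\l_{[0;n)}$ and $\theta_{[0;n)}$, so conditional on $b_{[0;n)} = \bf{b}$ the measure $\g_n$ is a coupling of $\l^\bl_{[0;n)}(\,\cdot\,|\,\bf{b})$ with $\theta^\bl_{[0;n)}(\,\cdot\,|\,\bf{b})$ on $A^n\times A^n$, by the very definition of the block kernels. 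Shift-invariance of $\g$ yields
\[\g\{a_0 \neq a_0'\} = \frac{1}{n}\sum_{k=0}^{n-1}\g\{a_k \neq a_k'\} = \int d_n(\bf{a},\bf{a}')\,\g_n(\d\bf{b},\d\bf{a},\d\bf{a}'),\]
and the right-hand integral dominates $I_n$ by the pointwise definition of the transportation metric on each fiber. Taking the infimum over $\g$ gives $\ol{\d}(\l,\theta\,|\,\b) \geq I_n$.

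For the upper bound, I would construct near-optimal shift-invariant joinings. Select measurably a family of optimal block couplings $\kappa_{\bf{b}}$ realizing $\ol{d_n}(\l^\bl_{[0;n)}(\,\cdot\,|\,\bf{b}),\,\theta^\bl_{[0;n)}(\,\cdot\,|\,\bf{b}))$. Build a non-invariant auxiliary measure $\sigma^{(n)}$ on $B^\bbZ\times A^\bbZ\times A^\bbZ$ as follows: sample $b \sim \nu$; sample $a \sim \l(\,\cdot\,|\,b)$ from the true $\l$-conditional; then, for each window $[jn;(j+1)n)$ in a fixed tiling of $\bbZ$, independently draw $a'_{[jn;(j+1)n)}$ from the conditional $\kappa_{b_{[jn;(j+1)n)}}(\,\cdot\,|\,a_{[jn;(j+1)n)})$ of the block coupling. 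By construction the $(b,a)$-marginal of $\sigma^{(n)}$ is exactly $\l$, and $\sigma^{(n)}\{a_k\neq a_k'\} = I_n$ for each $k \in [0;n)$; however, the $(b,a')$-marginal has the correct length-$n$ block kernels of $\theta$ but is ``block-$n$-independent'' rather than equal to $\theta$.

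The main obstacle is repairing the $(b,a')$-marginal to equal $\theta$ at a cost of only $o_n(1)$. The key tool is the standard Ornstein-theoretic fact that a stationary ergodic source is the $\ol{\d}$-limit of its $n$-block-independent siblings: after simultaneous ergodic decomposition of $\nu$, $\l$ and $\theta$, one shows that the marginal discrepancy of $\sigma^{(n)}$ against $\theta$ tends to zero in $\ol{\d}$ as $n\to\infty$. Combining this with the stability of relative joinings under $\ol{d}$-perturbation in the spirit of Lemma~\ref{lem:ext-dbar-stab}, and then extracting a vague-limit point in the (vaguely compact) space of shift-invariant probability measures on $B^\bbZ\times A^\bbZ\times A^\bbZ$, produces a genuine $\g^{(n)} \in \rm{Join}(\l,\theta\,|\,\b)$ with $\g^{(n)}\{a_0\neq a_0'\} \leq I_n + o_n(1)$, whence $\ol{\d}(\l,\theta\,|\,\b) \leq \liminf_n I_n$. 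The delicate step here is the simultaneous marginal repair on both copies while preserving shift-invariance; this is where an appeal to the machinery of relative Ornstein theory, rather than any purely combinatorial argument, appears unavoidable.
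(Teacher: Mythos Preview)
Your lower bound is fine and matches the paper's argument. The upper bound, however, has a genuine gap, and the paper's route is both simpler and correct.

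The ``standard Ornstein-theoretic fact'' you invoke --- that a stationary ergodic source is the $\ol{\d}$-limit of its $n$-block-independent siblings --- is not a general fact at all: it is precisely the almost block independence property, which for ergodic processes is \emph{equivalent to Bernoullicity}. For a general $\theta \in \rm{Ext}(\nu,A)$ (which need not even be ergodic in this lemma), the block-independent approximation of $\theta$ need not converge to $\theta$ in $\ol{\d}$, and so your marginal repair step cannot be carried out. Your remark that ``an appeal to the machinery of relative Ornstein theory \ldots\ appears unavoidable'' is also misplaced: this lemma is a \emph{prerequisite} for the relative Ornstein theory in the paper, so invoking that theory here would be circular.

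The paper avoids all of this with a direct construction. For each $n$, hook the optimal fiberwise couplings up to $\nu_{[0;n)}$ to obtain $\g_n$ on $B^n\times A^n\times A^n$; extend $\g_n$ arbitrarily to a measure $\g'_n$ on $B^\bbZ\times A^\bbZ\times A^\bbZ$; Ces\`aro-average to form $\g''_n := n^{-1}\sum_{t=0}^{n-1}(T_{B\times A\times A}^t)_\ast\g'_n$; and take a subsequential vague limit $\g''$ along a sequence realizing the liminf $\delta$. The averaging makes $\g''$ shift-invariant, and the inequality $\g''\{a_0\neq a'_0\}\leq\delta$ follows from the construction. The crucial point is that the marginals of $\g''$ can be checked by hand: for any cylinder set $Y$ of length $\ell$, one computes $\g''_n(Y\times A^\bbZ)$ by noting that for all but $O(\ell)$ of the $n$ shifted copies the relevant coordinates lie inside $[0;n)$, where $\g'_n$ agrees with $\g_n$ and hence with $\l_{[0;n)}$; the result is $\l(Y) + O(\ell/n)$. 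No repair, no ergodicity, and no Ornstein theory are needed.
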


\begin{proof}
This is a relative version of a well-known formula for Ornstein's original $\ol{\d}$-metric: see, for instance,~\cite[Theorem I.9.7]{Shi96}.  The existence of the right-hand limit is part of the conclusion to be proved.

We see easily that $\ol{\d}(\l,\theta\,|\,\b)$ is at least the limit supremum of the right-hand side: if $\g$ is any element of $\rm{Join}(\l,\theta\,|\,\b)$ which achieves the value of $\ol{\d}(\l,\theta\,|\,\b)$, then its finite-dimensional distributions $\g_{[0;n)}$ give an upper bound for that limit supremum.  So now let $\delta$ be the limit infimum of the right-hand side, and let us show that $\ol{\d}(\l,\theta\,|\,\b) \leq \delta$. Suppose $\delta$ is the true limit along the subsequence indexed by $n_1 < n_2 < \dots$.

For each $n \in \bbN$ and $\bf{b} \in B^n$, select an optimal coupling of $\l^\bl_{[0;n)}(\,\cdot\,|\,\bf{b})$ and $\theta^\bl_{[0;n)}(\,\cdot\,|\,\bf{b})$. These couplings together define a kernel from $B^n$ to $A^n\times A^n$ for each $n$.  Hooking these kernels up to the measures $\nu_{[0;n)}$, we obtain a sequence of probability measures $\g_n$ on $B^n\times A^n\times A^n$ such that $\g_n$ couples $\l_{[0;n)}$ and $\theta_{[0;n)}$ over a common copy of $B^n$ and such that
\begin{multline}\label{eq:ave-prob-small}
\int_{B^{n_k}\times A^{n_k}\times A^{n_k}} d_{n_k}(\bf{a},\bf{a}')\ \g_{n_k}(\d\bf{b},\d\bf{a},\d\bf{a}') \\ = \frac{1}{n_k}\sum_{i=1}^{n_k} \g_{n_k}\{(\bf{b},\bf{a},\bf{a}'):\ a_i\neq a'_i\}\to \delta \quad \hbox{as}\ k \to\infty.
\end{multline}
For each $n$, let $\g'_n$ be any probability measure on $B^\bbZ \times A^\bbZ \times A^\bbZ$ whose projection to $B^{[0;n)}\times A^{[0;n)}\times A^{[0;n)}$ agrees with $\g_n$, and let $\g''_n$ be the following averaged measure on $B^\bbZ \times A^\bbZ \times A^\bbZ$:
\[\g''_n := \frac{1}{n}\sum_{t=0}^{n-1}(T_{B\times A\times A}^t)_\ast \g'_n.\]
Because of this averaging, the bound~\eqref{eq:ave-prob-small} implies that
\begin{equation}\label{eq:ave-prob-small2}
\g''_{n_k}\{(b,a,a'):\ a_0 \neq a'_0\}	= \frac{1}{n_k}\sum_{i=0}^{n_k-1}\g'_{n_k}\{(b,a,a'):\ a_i \neq a'_i\} \to \delta \quad \hbox{as}\ k \to\infty.
\end{equation}

Finally, let $\g''$ be any subsequential vague limit of the sequence $(\g''_{n_k})_{k=1}^\infty$. Then $\g''$ is invariant, because the averaged measures $\g''_n$ satisfy
\[\|\g''_n - (T_{B\times A\times A})_\ast\g''_n\| = O(1/n).\]
Taking the limit in~\eqref{eq:ave-prob-small2}, we have
\[\g''\{(b,a,a'):\ a_0 \neq a'_0\} = \delta.\]

It remains to show that $\g''$ is a coupling of $\l$ and $\theta$ over $\b$.  We show that the first projection of $\g''$ to $B^\bbZ\times A^\bbZ$ is $\l$; the argument for the second projection is the same.

Fix $\ell \in \bbN$ and two strings $\bf{b} \in B^\ell$ and $\bf{a} \in A^\ell$, and let
\[Y := \big\{(b,a) \in B^\bbZ\times A^\bbZ:\ b_{[0;\ell)} = \bf{b}\ \hbox{and}\ a_{[0;\ell)} = \bf{a}\big\}.\]
For each $n$, the definition of $\g_n''$ gives
\begin{equation}\label{eq:another-ave}
\g_n''(Y\times A^\bbZ) = \frac{1}{n}\sum_{t=0}^{n-1}\g_n'(T_{B\times A\times A}^{-t}(Y\times A^\bbZ)).
\end{equation}
For $0 \leq t\leq n - \ell$, the set
\[T_{B\times A\times A}^{-t}(Y\times A^\bbZ) = \big\{(b,a,a'):\ b_{[t;t+\ell)} = \bf{b}\ \hbox{and}\ a_{[t;t+\ell)} = \bf{a}\big\}\]
depends only on coordinates with indices in $[0;n)$, and so we have
\begin{align*}
	&\g_n'(T_{B\times A\times A}^{-t}(Y\times A^\bbZ))\\
	&= \l_{[0;n)}\big\{(b_0,\dots,b_{n-1},a_0,\dots,a_{n-1}):\\
	&\qquad \qquad \qquad \qquad (b_t,\dots,b_{t+\ell-1}) = \bf{b}\ \hbox{and}\ (a_t,\dots,a_{t+\ell-1}) = \bf{a}\big\}\\ &= \l(Y).
\end{align*}
Inserting this into~\eqref{eq:another-ave}, we obtain
\begin{align*}
	\frac{1}{n}\sum_{t=0}^{n-\ell}\l(Y) + O(\ell/n) &= \l(Y) + O(\ell/n).
\end{align*}
Taking $n = n_k$ here and letting $k \to\infty$, we conclude that $\g''(Y\times A^\bbZ) = \l(Y)$ for any such choice of $\ell$, $\bf{b}$ and $\bf{a}$.  Therefore the first projection of $\g''$ to $B^\bbZ\times A^\bbZ$ is equal to $\l$.
\end{proof}

It follows from Lemma~\ref{lem:dbar-and-dbar} that the function $\ol{\d}(\,\cdot\,,\,\cdot\,|\,\b)$ is indeed a metric on $\rm{Ext}(\nu,A)$, but we do not use this particular fact later in the paper.  See~\cite[Section I.9]{Shi96} for the analogous proof in the non-relative setting.

\subsection{Relative finite determination}

The relative version of the finitely determined property has been studied in various papers, particularly~\cite{Tho75},~\cite{Rah78} and~\cite{Kie84}.  Here we essentially follow the ideas of~\cite{Tho75}, except that we also define a quantitative version of this property, in which various small parameters are fixed rather than being subject to a universal or existential quantifier.  We do this because several arguments later in the paper require that we keep track of these parameters explicitly.

We define relative finite determination in two steps: for shift-invariant measures, and then for processes. Suppose first that $\nu \in \Pr(B^\bbZ)$ is shift-invariant, and let $\b$ be the canonical $B$-valued process on $B^\bbZ\times A^\bbZ$.

\begin{dfn}\label{dfn:rel-FD1}
	Let $\delta > 0$. A measure $\l \in \rm{Ext}(\nu,A)$ is \textbf{relatively $\delta$-finitely determined} (`\textbf{relatively $\delta$-FD}') \textbf{over $\b$} if there exist $\eps > 0$ and $n\in\bbN$ for which the following holds: if another measure $\theta\in \rm{Ext}(\nu,A)$ satisfies
	\begin{itemize}
		\item[(i)] $\rmh(\theta\,|\,\b) > \rmh(\l\,|\,\b) - \eps$ and
		\item[(ii)] $\|\theta_{[0;n)} - \l_{[0;n)}\| < \eps$,
	\end{itemize}
	then $\ol{\d}(\l,\theta\,|\,\b) < \delta$.
	
	The measure $\l$ is \textbf{relatively FD over} $\b$ if it is relatively $\delta$-FD over $\b$ for every $\delta > 0$.
\end{dfn}

Now consider a general system $(X,\mu,T)$ and a pair of processes $\pi:X \to B^\bbZ$ and $\xi:X \to A^\bbZ$.

\begin{dfn}\label{dfn:rel-FD2}
The process $\xi$ is \textbf{relatively $\delta$-FD} (resp. \textbf{relatively FD}) \textbf{over $\pi$} if the joint distribution
\[\l = (\pi \vee \xi)_\ast \mu\]
is relatively $\delta$-FD (resp. relatively FD) over $\b$.
\end{dfn}

The part of this definition which quantifies over all $\delta$ agrees with the definition in~\cite{Tho75,Kie84}.

The following result is the heart of Thouvenot's original work~\cite{Tho75}.

\begin{thm}\label{thm:FD-Bern}
If the system $(B^\bbZ\times A^\bbZ,\l,T_{B\times A})$ is ergodic and $\l$ is relatively FD over $\b$, then $(B^\bbZ\times A^\bbZ,\l,T_{B\times A})$ is relatively Bernoulli over $\b$. \qed
\end{thm}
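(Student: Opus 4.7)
The plan is to follow the relative version of Ornstein's classical ``FD implies Bernoulli'' argument, as carried out by Thouvenot in~\cite{Tho75}. Write $h := \rmh(\l\,|\,\b)$. I aim to exhibit a Bernoulli factor $\phi:(B^\bbZ\times A^\bbZ, \l, T_{B\times A}) \to \bf{B}$ of entropy $h$ that is independent of $\b$ and such that $\b \vee \phi$ generates the full $\s$-algebra modulo $\l$-null sets. Equivalently, choosing a Bernoulli shift $(C^\bbZ,\theta^\bbZ,T_C)$ of entropy exactly $h$ and forming the ``target'' extension $\l_0 := \nu\times \theta^\bbZ \in \rm{Ext}(\nu,C)$, I want to realise a $C$-valued process $\phi$ on $(B^\bbZ\times A^\bbZ,\l,T_{B\times A})$ whose joint distribution with $\b$ equals $\l_0$ and for which $\b\vee\phi$ is a generator.

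First I would construct $\phi$ as the limit of a Cauchy sequence of approximations $\phi^{(k)}:B^\bbZ\times A^\bbZ \to C^\bbZ$ in a suitable metric (say, $\l$-probability of disagreement at coordinate $0$). The inductive design ensures at each stage~$k$ that
\begin{itemize}
\item the joint distribution $(\b\vee \phi^{(k)})_\ast\l$ matches $\l_0$ to within $\eps_k$ in total variation on a window $[0;n_k)$;
\item $\rmh(\phi^{(k)},\l,T_{B\times A}\,|\,\b) \geq h - \eps_k$; and
\item $\l\{\phi^{(k+1)}_0 \neq \phi^{(k)}_0\} \leq 2^{-k}$,
\end{itemize}
with $\eps_k\to 0$ fast enough. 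Since the target $\l_0$ is a relative product of $\nu$ with an independent Bernoulli shift, one can check that $\l_0$ is itself relatively FD over $\b$; this lets Definition~\ref{dfn:rel-FD1} and Lemma~\ref{lem:dbar-and-dbar} convert the approximate finite-block matching together with near-equality of relative entropies into a small $\ol{\d}(\l',\l_0\,|\,\b)$ for the auxiliary extension $\l' := (\b\vee\phi^{(k)})_\ast\l$.

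The heart of the iteration is a relative ``copying'' step: given the $\ol{\d}$-smallness obtained above, one extracts a shift-invariant relative joining $\g^{(k)}$ of $\l'$ and $\l_0$ over $\b$ in which coordinates agree with high probability, and uses it to produce the next approximation $\phi^{(k+1)}$ on the \emph{original} system while preserving $\b$ pointwise and only flipping a small fraction of time-zero labels. Ergodicity of $(B^\bbZ\times A^\bbZ,\l,T_{B\times A})$ is invoked here through the Shannon--McMillan and Birkhoff theorems for $\l$, so that block-level entropy statistics and empirical finite-dimensional distributions agree with their averages on sets of nearly full measure, which is what justifies the choice of $\eps_k$ and $n_k$ and keeps the bookkeeping honest. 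After passing to the limit $\phi := \lim_k \phi^{(k)}$, its joint distribution with $\b$ is $\l_0$, so $\b$ and $\phi$ are independent and $\phi$ alone generates a Bernoulli factor of entropy~$h$. A final appeal to a relative Sinai factor theorem (or a direct entropy comparison) shows that $\b\vee\phi$ is full, completing the splitting.

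The main obstacle is the copying step itself. One must take a finite-window joining witnessing the $\ol{\d}$-bound and globalise it to a shift-equivariant, $\b$-respecting observable on $B^\bbZ\times A^\bbZ$ without ruining the estimates already secured at stage~$k$; this is the relative analog of Ornstein's original copying lemma, and it is where the relative FD hypothesis is really used. Once this lemma is in hand (Thouvenot's contribution in~\cite{Tho75}), the rest of the argument is a standard Cauchy-sequence limit plus a generator check, with ergodicity entering only through the classical almost-sure theorems.
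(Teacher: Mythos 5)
You should first be aware that the paper does not prove Theorem~\ref{thm:FD-Bern} at all: it is stated with an immediate \qed and explicitly used as a ``black box'', being the main external input cited from Thouvenot's work~\cite{Tho75} (the surrounding text says so in Section 11 and again at the start of Part II). So there is no internal proof to compare against; your proposal is an outline of the argument that lives in~\cite{Tho75}, and judged as a self-contained proof it has two genuine gaps.

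First, the ``relative copying lemma'' that you defer to~\cite{Tho75} is not an incidental technicality -- it is essentially the entire content of the theorem. Globalising a finite-window relative joining witnessing a small $\ol{\d}(\l',\l_0\,|\,\b)$ into a shift-equivariant, $\b$-respecting perturbation of $\phi^{(k)}$ on the original system, while controlling both the change in $\phi^{(k)}_0$ and the relative entropy, is exactly the hard step; citing it leaves the proposal at the same level as the paper's own citation. Second, your closing claim that fullness of $\b\vee\phi$ follows from ``a direct entropy comparison'' is false: a factor of full entropy need not generate (any direct product of a Bernoulli shift with a nontrivial zero-entropy system gives a counterexample), and indeed the equality $\rmh(\b\vee\phi)=\rmh(\l)$ is automatic from your construction and carries no information about generation. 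Generation has to be secured inside the induction, by arranging that the canonical $A$-process is approximately $(\b\vee\phi^{(k)})$-measurable with errors summing to something finite, and this is precisely where the relative FD hypothesis on $\l$ itself is used -- not merely the (easier) relative FD property of the target product $\l_0=\nu\times\theta^{\bbZ}$, which is all your sketch invokes for the $\ol{\d}$ estimates. As written, the FD hypothesis on $\l$ never enters your argument in a way that would fail for a non-Bernoulli extension, which is a sign that the sketch cannot be complete.
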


We use this theorem as a `black box' in Part III.  It is the most significant result that we cite from outside this paper.

\begin{lem}\label{lem:FD-and-FD}
Let $(X,\mu,T)$ be a system, let
\[\xi:X \to A^\bbZ, \quad \pi:X \to B^\bbZ \quad \hbox{and} \quad \pi':X \to (B')^\bbZ\]
be three processes, and assume that $\pi$ and $\pi'$ generate the same factor of $(X,\mu,T)$ modulo $\mu$-neglible sets.  If $\xi$ is relatively $\delta$-FD over $\pi$, then it is also relatively $\delta$-FD over $\pi'$.
\end{lem}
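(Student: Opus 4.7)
The plan is to translate between the two factors measure-theoretically and then verify the finite-determination property by approximating $\pi_{[0;n)}$ using a finite window of $\pi'$.

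First I would exploit the hypothesis that $\pi$ and $\pi'$ generate the same factor: modulo $\mu$-negligible sets there are equivariant measurable maps $\phi : B^\bbZ \to (B')^\bbZ$ and $\phi' : (B')^\bbZ \to B^\bbZ$, mutually inverse with respect to $\nu := \pi_\ast\mu$ and $\nu' := \pi'_\ast\mu$, satisfying $\phi \circ \pi = \pi'$ and $\phi' \circ \pi' = \pi$ almost surely. The maps $(\phi\times \id)_\ast$ and $(\phi'\times \id)_\ast$ then give a measure-preserving bijection between $\rm{Ext}(\nu,A)$ and $\rm{Ext}(\nu',A)$ which sends $\lambda := (\pi\vee \xi)_\ast \mu$ to $\lambda' := (\pi'\vee \xi)_\ast \mu$. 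Because this bijection is a measure-isomorphism intertwining the $\sigma$-algebras generated by $\b$ and $\b'$, it preserves both the relative KS entropy $\rmh(\,\cdot\,|\,\b)$ and (by transporting relative joinings coordinate-by-coordinate) the relative $\ol{\d}$-distance. So to verify $\delta$-FD of $\xi$ over $\pi'$, it suffices to transfer any competitor $\theta' \in \rm{Ext}(\nu',A)$ to the corresponding $\theta := (\phi'\times \id)_\ast \theta' \in \rm{Ext}(\nu,A)$ and reduce to the hypothesis for $\pi$.

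Next, let $\eps > 0$ and $n \in \bbN$ be the parameters witnessing $\delta$-FD of $\xi$ over $\pi$. Since $\pi_0$ is $\pi'$-measurable modulo $\mu$, the martingale convergence theorem produces an integer $M$ and a function $H_M : (B')^{[-M;M+1)} \to B$ with $\mu\bigl(H_M(\pi'_{[-M;M+1)}) \neq \pi_0\bigr) < \eps/(16n)$. Applying $T^i$ for $0 \le i < n$ and a union bound yields a function $F : (B')^{[-M;M+n)} \to B^n$ with
\begin{equation*}
\mu\bigl(F(\pi'_{[-M;M+n)}) \neq \pi_{[0;n)}\bigr) < \eps/16.
\end{equation*}
After shifting by $M$, I regard $F$ as a function on $(B')^{n'}$ with $n' := n + 2M$ that approximates $\pi_{[M;M+n)}$. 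Because only the $\nu'$-marginal of any extension matters for this event, the same estimate holds with $\mu$ replaced by any measure in $\rm{Ext}(\nu',A)$.

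Now set $\eps' := \eps/2$ and suppose $\theta' \in \rm{Ext}(\nu',A)$ satisfies $\rmh(\theta'|\b') > \rmh(\lambda'|\b') - \eps'$ and $\|\theta'_{[0;n')} - \lambda'_{[0;n')}\| < \eps'$. Entropies transfer immediately, so $\rmh(\theta|\b) > \rmh(\lambda|\b) - \eps$. For the marginal condition, shift-invariance of $\theta$ and $\lambda$ gives $\theta_{[0;n)} = \theta_{[M;M+n)}$ and likewise for $\lambda$. Writing $\theta(b_{[M;M+n)} = \bf{b}, a_{[M;M+n)} = \bf{a}) = \theta'(\phi'(b')_{[M;M+n)} = \bf{b}, a_{[M;M+n)} = \bf{a})$ and replacing $\phi'(b')_{[M;M+n)}$ by $F(b'_{[0;n')})$ at a cost of at most $\eps/8$ in total variation for both $\theta'$ and $\lambda'$, the remaining difference is the TV distance between two pushforwards of $\theta'_{[0;n')}$ and $\lambda'_{[0;n')}$ under a common map and hence bounded by $\eps'$. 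Summing,
\begin{equation*}
\|\theta_{[0;n)} - \lambda_{[0;n)}\| < \eps/8 + \eps/8 + \eps/2 < \eps.
\end{equation*}

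Finally, applying Definition~\ref{dfn:rel-FD1} to $\theta$ gives $\ol{\d}(\lambda,\theta\,|\,\b) < \delta$, which transfers back to $\ol{\d}(\lambda',\theta'\,|\,\b') < \delta$ via the bijection of the first paragraph. Thus $\xi$ is relatively $\delta$-FD over $\pi'$ with parameters $\eps'$ and $n'$. The main point requiring care is the careful shift-invariance argument linking the finite marginals over $\pi'_{[0;n')}$ to the finite marginals over $\pi_{[0;n)}$; everything else (entropy and $\ol{\d}$) transfers by a formal isomorphism of factors and requires no quantitative work.
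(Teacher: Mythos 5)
Your proposal is correct and follows essentially the same route as the paper: transfer extensions via the a.e.-defined isomorphism between the two factor systems, note that relative entropy and relative $\ol{\d}$ transfer formally, and handle the finite-dimensional marginal condition by approximating the coding map with a finite-window function (your probabilistic error event of mass $\eps/16$ plays the role of the paper's good set $U$ with $\nu'(U)>1-\eps/6$). The constants differ slightly but the argument is the same.
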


\begin{proof}
	Since $\pi$ and $\pi'$ generate the same $\s$-subalgebra of $\B_X$ modulo negligible sets, and since our measurable spaces are all standard, there is a commutative diagram of factor maps
\begin{center}
	$\phantom{i}$\xymatrix{ & (X,\mu,T) \ar_\pi[dl]\ar^{\pi'}[dr] \\
		(B^\bbZ,\nu,T_B) & & ((B')^\bbZ,\nu',T_{B'}), \ar^\phi[ll]}
\end{center}
	where $\phi$ is an isomorphism of systems.  (See, for instance, the subsection on isomorphism vs. conjugacy in~\cite[Section 5]{Bil65}.)

Let
\[\l := (\pi\vee \xi)_\ast \mu \quad \hbox{and} \quad \l' := (\pi' \vee \xi)_\ast \mu.\]
Let $\b$ be the coordinate projection $B^\bbZ\times A^\bbZ \to B^\bbZ$, and define $\b'$ analogously.  Let $\a$ be the canonical process on $A^\bbZ$, so that $\a_F$ is the coordinate projection $A^\bbZ\to A^F$ for any $F\subseteq \bbZ$.

For the given value of $\delta$ and for the processes $\xi$ and $\pi$, let $\eps > 0$ and $n \in \bbN$ be the values promised by Definition~\ref{dfn:rel-FD2}.

Let $\eps' := \eps/3$.  Since $\phi_{[0;n)}:(B')^\bbZ\to B^{[0;n)}$ is measurable, there exist $m \in \bbN$, a map $\Phi:(B')^\bbZ \to B^{[0;n)}$ that depends only on coordinates in $[-m;n+m)$, and a measurable subset $U \subseteq (B')^\bbZ$ with $\nu'(U) > 1 - \eps/6$ such that
\begin{equation}\label{eq:phi-finitary-approx}
\phi_{[0;n)}|U = \Phi|U.
\end{equation}
Let $n' := n + 2m$, and let $U^\rm{c} := (B')^\bbZ\setminus U$.

Now suppose that $\theta' \in \rm{Ext}(\nu',A)$ satisfies
\begin{itemize}
	\item[(i$'$)] $\rmh(\theta'\,|\,\b') > \rmh(\l'\,|\,\b') - \eps'$ and
	\item[(ii$'$)] $\|\theta'_{[0;n')} - \l_{[0;n')}'\| < \eps'$, hence also
	\begin{equation}\label{eq:TV-xi}
	\|\theta'_{[-m;n+m)} - \l'_{[-m;n+m)}\| < \eps'
	\end{equation}
by shift-invariance. 
\end{itemize}
Let $\theta := (\phi \times \a)_\ast \theta' \in \Pr(B^\bbZ\times A^\bbZ)$.  We shall deduce from (i$'$) and (ii$'$) that $\theta$ and $\l$ satisfy the anologous conditions (i) and (ii) in Definition~\ref{dfn:rel-FD1}.

Towards (i), the chain rule and the Kolmogorov--Sinai theorem give that
\[\rmh(\l\,|\,\b) = \rmh(\l) - \rmh(\nu) = \rmh(\l') - \rmh(\nu') = \rmh(\l'\,|\,\b'),\]
and similarly $\rmh(\theta\,|\,\b) = \rmh(\theta'\,|\,\b')$.  Therefore, by (i$'$),
\[\rmh(\theta\,|\,\b) > \rmh(\l\,|\,\b) - \eps' > \rmh(\l\,|\,\b) - \eps.\]

To verify (ii), consider the estimates
\begin{align}\label{eq:theta-lambda1}
\|\theta_{[0;n)} - \l_{[0;n)}\| &= \big\|(\phi_{[0;n)}\times \a_{[0;n)})_\ast (\theta' - \l')\big\| \nonumber\\  &\leq \|(\phi_{[0;n)}\times \a_{[0;n)})_\ast(1_{U^{\rm{c}}\times A^\bbZ}\cdot \theta')\| \nonumber\\
&\quad + \big\|(\phi_{[0;n)}\times \a_{[0;n)})_\ast(1_{U\times A^\bbZ}\cdot (\theta' - \l'))\big\| \nonumber\\
&\quad + \|(\phi_{[0;n)}\times \a_{[0;n)})_\ast(1_{U^{\rm{c}}\times A^\bbZ}\cdot \l')\|.
\end{align}
Since $\theta'(U\times A^\bbZ) = \l'(U\times A^\bbZ) = \nu'(U)$, the first and last terms here are both less than $\eps/6$.  On the other hand, by~\eqref{eq:phi-finitary-approx}, the middle term is equal to
\[\big\|(\Phi\times \a_{[0;n)})_\ast(1_{U\times A^\bbZ}\cdot (\theta' - \l'))\big\|,\]
which is at most
\begin{equation}\label{eq:theta-lambda2}
\big\|(\Phi\times \a_{[0;n)})_\ast(\theta' - \l')\big\| + 2\cdot (\eps/6)
\end{equation}
by the same reasoning that gave~\eqref{eq:theta-lambda1}.  Since $\Phi$ depends only on coordinates in $[-m;n+m)$, the first term of~\eqref{eq:theta-lambda2} is bounded by the left-hand side of~\eqref{eq:TV-xi}, hence is at most $\eps/3$.  Adding up these estimates, we obtain condition (ii):
\[\|\theta_{[0;n)} - \l_{[0;n)}\| < \eps.\]

Having shown conditions (i) and (ii) for $\theta$ and $\l$, our choice of $\eps$ and $n$ gives $\ol{\d}(\l,\theta\,|\,\b) < \delta$.  Let $\g$ be a triple joining that witnesses this inequality, and let 
\[\g':= (\phi^{-1}\times \a\times \a)_\ast\g.\]
This new triple joining witnesses that $\ol{\d}(\l',\theta'\,|\,\b') < \delta$.
\end{proof}

\section{Relative extremality}\label{sec:rel-Orn}

Extremality is one of the characterizations of Bernoullicity for a finite-state ergodic process.  It was introduced by Thouvenot in the mid 1970s, and began to circulate in unpublished notes by Feldman and in Antoine Lamotte's PhD thesis, also unpublished.  The earliest published reference I know is~\cite{Rud78--twopt}, where Rudolph uses this characterization of Bernoullicity without proof.  Ornstein and Weiss introduce extremality and prove its equivalence to Bernoullicity in the general setting of amenable groups in~\cite[Section III.4]{OrnWei87}, and use it for their extension of Ornstein theory to that setting. Extremality is also defined and compared to other characterizations of Bernoullicity in~\cite[Definition 6.3]{Tho02}, and a complete account of its place in the theory for a single automorphism appears in~\cite[Chapter 5]{KalMcC10}.

In this paper we need a relative version of extremality for one process over another.  We define this as a natural sequel to Definition~\ref{dfn:RV-ext}.  The resulting definition is superficially a little different from Thouvenot's, but equivalent in all important respects.

Consider again a general system $(X,\mu,T)$ and a pair of processes $\xi: X \to A^\bbZ$ and $\pi: X \to B^\bbZ$.

\begin{dfn}\label{dfn:rel-ext-period}
	Let $r,\k > 0$.  The process $\xi$ is \textbf{relatively $(\k,r)$-extremal over $\pi$} if the map $\xi_{[0;n)}$ is $(\k n,r)$-extremal over $\pi_{[0;n)}$ on the probability space $(X,\mu)$ for all sufficiently large $n$.
\end{dfn}

This definition was the motivation for our work in Section~\ref{sec:ext}.  Unpacking Definition~\ref{dfn:RV-ext}, we can  write Definition~\ref{dfn:rel-ext-period} more explicitly as follows.  Let $\nu := \pi_\ast\mu$, let $\l:= (\pi\vee \xi)_\ast\mu$, and let $\l^\bl_{[0;n)}$ be the $(\xi,\pi,n)$-block kernel from Definition~\ref{dfn:block-kernel}.  The process $\xi$ is $(\k,r)$-extremal over $\pi$ if, for every sufficiently large $n$, there is a real-valued function $\bf{b}\mapsto r_{\bf{b}}$ on $B^n$ such that
\begin{itemize}
	\item the conditional measure $\l^\bl_{[0;n)}(\,\cdot\,|\,\bf{b})$ is a $(\k n,r_{\bf{b}})$-extremal measure on the metric space $(A^n,d_n)$ for each $\bf{b} \in B^n$, and
	\item we have
	\[\int r_{\bf{b}}\ \nu_{[0;n)}(\d\bf{b}) \leq r.\]
	\end{itemize}

Extremality is the notion that forms the key link between measure concentration as studied in Part I and relative Bernoullicity.  As such, it is the backbone of this whole paper.

The next lemma is a dynamical version of Corollary~\ref{cor:refinement-still-ext}.  It is used during the proof of Theorem A in Subsection~\ref{subs:AA}.  During that proof, we need to construct a new observable with respect to which another is fairly extremal, and then enlarge that new observable a little further without losing too much extremality.

\begin{lem}\label{lem:ext-robust}
Let $\xi$ and $\pi$ be processes on $(X,\mu,T)$ and suppose that $\xi$ is relatively $(\k,r)$-extremal over $\pi$.  Let $\pi'$ be another process such that $\pi'_0$ refines $\pi_0$, and assume that
	\begin{equation}\label{eq:ents-close}
	\rmh(\xi,\mu,T\,|\,\pi) - \rmh(\xi,\mu,T\,|\,\pi') \leq \k r.
	\end{equation}
	Then $\xi$ is relatively $(\k,6r)$-extremal over $\pi'$.
	\end{lem}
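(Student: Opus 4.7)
The plan is to reduce the lemma to a fixed-dimensional statement, namely Corollary~\ref{cor:refinement-still-ext}, applied at each time-horizon $n$. Fix $n$ large enough that the hypothesized $(\k n,r)$-extremality of $\xi_{[0;n)}$ over $\pi_{[0;n)}$ is in force. Let $\G$ be the $\s$-algebra generated by $\pi_{[0;n)}$ and let $\scrH$ be the $\s$-algebra generated by $\pi'_{[0;n)}$. Because $\pi'_0$ refines $\pi_0$ and both processes are equivariant, $\pi'_{[0;n)}$ refines $\pi_{[0;n)}$, so $\G \subseteq \scrH$. Corollary~\ref{cor:refinement-still-ext} then gives that $\xi_{[0;n)}$ is $(\k n,2a_n/(\k n) + 3r)$-extremal over $\scrH$, where
\[a_n := \rmH(\xi_{[0;n)}\,|\,\pi_{[0;n)}) - \rmH(\xi_{[0;n)}\,|\,\pi'_{[0;n)}).\]

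The core of the argument is to show that $a_n \leq \k r n + o(n)$, so that $2 a_n/(\k n) + 3 r \leq 5r + o(1) \leq 6r$ for sufficiently large $n$. This is where Lemma~\ref{lem:rel-ent-formula} enters: the sequence $\rmH(\xi_{[0;n)}\,|\,\pi_{[0;n)})$ is subadditive in $n$ and has entropy rate $\rmh(\xi,\mu,T\,|\,\pi)$, so Fekete's lemma gives
\[\rmH(\xi_{[0;n)}\,|\,\pi_{[0;n)}) = \rmh(\xi,\mu,T\,|\,\pi)\cdot n + o(n) \quad \text{as}\ n \to \infty.\]
On the other hand, Lemma~\ref{lem:rel-ent-formula} also gives that $\rmH(\xi_{[0;n)}\,|\,\pi'_{[0;n)}) \geq \rmH(\xi_{[0;n)}\,|\,\pi')$, and applying the same subadditivity-based lower bound to $\pi'$ yields
\[\rmH(\xi_{[0;n)}\,|\,\pi'_{[0;n)}) \geq \rmh(\xi,\mu,T\,|\,\pi')\cdot n.\]
Subtracting and invoking the hypothesis~\eqref{eq:ents-close}:
\[a_n \leq \big[\rmh(\xi,\mu,T\,|\,\pi) - \rmh(\xi,\mu,T\,|\,\pi')\big] n + o(n) \leq \k r n + o(n).\]

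Combining these, for all sufficiently large $n$ one has both the input extremality and $2a_n/(\k n) + 3r \leq 6r$, so Corollary~\ref{cor:refinement-still-ext} delivers $(\k n, 6r)$-extremality of $\xi_{[0;n)}$ over $\pi'_{[0;n)}$. By Definition~\ref{dfn:rel-ext-period}, this is exactly the statement that $\xi$ is relatively $(\k,6r)$-extremal over $\pi'$. There is no serious obstacle here; the argument is essentially the dynamical packaging of the finitary stability result in Corollary~\ref{cor:refinement-still-ext}, with the entropy-rate asymptotics supplying the bound on $a_n/(\k n)$. The slack between $5r$ and $6r$ in the conclusion is what absorbs the $o(1)$ error from the entropy-rate convergence.
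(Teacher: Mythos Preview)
Your proof is correct and follows essentially the same approach as the paper: bound the entropy difference $a_n$ using the convergence in Lemma~\ref{lem:rel-ent-formula} (the paper makes the explicit choice $\rmH(\xi_{[0;n)}\,|\,\pi_{[0;n)}) \leq (\rmh(\xi\,|\,\pi) + \k r/2)n$ rather than writing $o(n)$, obtaining $a_n \leq 3\k r n/2$ and hence exactly $6r$), then apply Corollary~\ref{cor:refinement-still-ext}. One small remark: in your lower bound for $\rmH(\xi_{[0;n)}\,|\,\pi'_{[0;n)})$, the monotonicity inequality you state is not actually needed --- the subadditivity of $n\mapsto \rmH(\xi_{[0;n)}\,|\,\pi'_{[0;n)})$ by itself gives $\rmH(\xi_{[0;n)}\,|\,\pi'_{[0;n)}) \geq \rmh(\xi\,|\,\pi')\cdot n$ directly via Fekete.
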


\begin{proof}
Let $n$ be large enough that
\begin{itemize}
\item $\xi_{[0;n)}$ is $(\k n,r)$-extremal over $\pi_{[0;n)}$, and
\item $\rmH(\xi_{[0;n)}\,|\,\pi_{[0;n)}) \leq \big(\rmh(\xi,\mu,T\,|\,\pi) + \k r/2\big)\cdot n$.
\end{itemize}
Then Lemma~\ref{lem:rel-ent-formula} and the assumed bound~\eqref{eq:ents-close} give
\begin{multline*}
\rmH(\xi_{[0;n)}\,|\,\pi_{[0;n)}) - \rmH(\xi_{[0;n)}\,|\,\pi'_{[0;n)}) \\ \leq \big(\rmh(\xi,\mu,T\,|\,\pi) - \rmh(\xi,\mu,T\,|\,\pi') + \k r/2\big)\cdot n \leq 3\k r n/2.
\end{multline*}
Now apply Corollary~\ref{cor:refinement-still-ext} to $\xi_{[0;n)}$, $\pi_{[0;n)}$ and $\pi'_{[0;n)}$. It gives that $\xi_{[0;n)}$ is extremal over $\pi'_{[0;n)}$ with parameters
\[\Big(\k n,\frac{2\cdot (3\k r n/2)}{\k n} + 3r\Big) = (\k n,6r).\]
\end{proof}

\section{From extremality to finite determination}\label{sec:rel-Orn-proofs}

Consider again a system $(X,\mu,T)$ and processes $\xi:X\to A^\bbZ$ and $\pi:X\to B^\bbZ$. The main result of this section is that extremality implies finite determination, with some explicit dependence between the quantitative versions of these properties.

\begin{prop}\label{prop:ABI-FD}
	Let $r > 0$. If $\xi$ is relatively $(\k,r)$-extremal over $\pi$ for some $\k > 0$, then $\xi$ is relatively $(7r)$-FD over $\pi$.
\end{prop}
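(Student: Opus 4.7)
The goal is Definition~\ref{dfn:rel-FD2} with $\delta = 7r$: find $\eps > 0$ and $n \in \bbN$ such that any $\theta \in \rm{Ext}(\nu,A)$ satisfying conditions (i) and (ii) has $\ol{\d}(\l,\theta\,|\,\b) < 7r$. By Lemma~\ref{lem:dbar-and-dbar}, this reduces to bounding in the limit $n \to \infty$ the quantity
\[
a_n \ := \ \int \ol{d_n}\big(\mu^n_{\bf{b}},\nu^n_{\bf{b}}\big)\,\nu(\d\bf{b}),
\]
where $\mu^n_{\bf{b}} := \l^{\bl}_{[0;n)}(\,\cdot\,|\,\bf{b})$ and $\nu^n_{\bf{b}} := \theta^{\bl}_{[0;n)}(\,\cdot\,|\,\bf{b})$. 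I would choose $n$ large enough that the hypothesis provides a parameter function $\bf{b}\mapsto r_{\bf{b}}$ giving $(\k n, r_{\bf{b}})$-extremality of $\mu^n_{\bf{b}}$ with $\int r_{\bf{b}}\,\nu(\d\bf{b}) \leq r$ and $(\log 2)/(\k n) \leq r$; then pick $\eps \in (0,2r)$.

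The core fibrewise estimate uses the common-mass mixture decomposition
\[
\mu^n_{\bf{b}} = c_{\bf{b}}\rho^1_{\bf{b}} + (1-c_{\bf{b}})\rho^2_{\bf{b}}, \quad c_{\bf{b}} := \|\min(\mu^n_{\bf{b}},\nu^n_{\bf{b}})\|, \quad \rho^1_{\bf{b}} := c_{\bf{b}}^{-1}\min(\mu^n_{\bf{b}},\nu^n_{\bf{b}}),
\]
so that $1 - c_{\bf{b}} = \tfrac{1}{2}\|\mu^n_{\bf{b}} - \nu^n_{\bf{b}}\|$. Since $\d\rho^1_{\bf{b}}/\d\mu^n_{\bf{b}} \leq 1/c_{\bf{b}}$ and $\d\rho^2_{\bf{b}}/\d\mu^n_{\bf{b}} \leq 1/(1-c_{\bf{b}})$, the weighted average of $\rmD(\rho^i_{\bf{b}} \,\|\,\mu^n_{\bf{b}})$ is at most the binary entropy $h(c_{\bf{b}}) \leq \log 2$. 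Applying $(\k n, r_{\bf{b}})$-extremality to this mixture, combined with the triangle inequality in $\ol{d_n}$ and the elementary bound $\ol{d_n}(\nu^n_{\bf{b}},\rho^1_{\bf{b}}) \leq 1-c_{\bf{b}}$, yields pointwise
\[
\ol{d_n}(\nu^n_{\bf{b}},\mu^n_{\bf{b}}) \leq (1-c_{\bf{b}}) + c_{\bf{b}}^{-1}\big((\log 2)/(\k n) + r_{\bf{b}}\big).
\]
Integrating against $\nu$, using condition (ii) in the form $\int(1-c_{\bf{b}})\,\nu(\d\bf{b}) < \eps/2$ and splitting off the Markov-small set $\{c_{\bf{b}} < 1/2\}$ on which we revert to the trivial bound $\ol{d_n} \leq 1$, produces $a_n \leq 3\eps/2 + 2(\log 2)/(\k n) + 2r < 7r$ at the chosen scale.

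The main obstacle, and where I expect the substantive work to lie, is that this only bounds $a_n$ at the specific scale $n$, whereas $\ol{\d}(\l,\theta\,|\,\b) = \lim_{m \to\infty} a_m$ and the inequality $a_m \leq \ol{\d}$ --- obtained by projecting any relative joining to its first $m$ coordinates --- makes a single-scale bound a \emph{lower} estimate on $\ol{\d}$, not an upper one. To propagate the estimate to the limit, I would use an almost block independence reduction: let $\hat{\l}_n$ denote the measure on $(B \times A)^\bbZ$ whose disjoint $A^n$-blocks are conditionally independent given $\b$ with marginal $\l^{\bl}_{[0;n)}$, and similarly define $\hat{\theta}_n$. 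The fibrewise optimal couplings from the core estimate glue into an explicit $T^n$-invariant relative joining of $\hat{\l}_n$ and $\hat{\theta}_n$ with cost $a_n$; averaging over $T^0, \ldots, T^{n-1}$ and taking a vague limit restores $T$-invariance. It remains to show that $\ol{\d}(\l,\hat{\l}_n\,|\,\b)$ and $\ol{\d}(\theta,\hat{\theta}_n\,|\,\b)$ are both small for $n$ large --- the former from extremality at every scale, the latter from extremality of $\l$ combined with conditions (i) and (ii) --- after which the triangle inequality for $\ol{\d}$ delivers $\ol{\d}(\l,\theta\,|\,\b) < 7r$.
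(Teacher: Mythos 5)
Your overall architecture matches the paper's: a single-scale comparison of block kernels, a reduction of both $\l$ and $\theta$ to their $n$-block-independent versions, and a triangle inequality, with Lemma~\ref{lem:dbar-and-dbar} converting finite-scale transportation bounds into a bound on $\ol{\d}(\l,\theta\,|\,\b)$. (Your common-mass mixture argument for the single-scale step is an unnecessary detour: condition (ii), the matching $B^n$-marginals, and the bound $\ol{d_n}\le\tfrac12\|\cdot\|$ already give $\int\ol{d_n}(\mu^n_{\bf{b}},\nu^n_{\bf{b}})\,\nu(\d\bf{b})<\eps$ directly, and your derived bound with the extra $c_{\bf{b}}^{-1}$ term is in fact weaker than this trivial one.)

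The genuine gap is in the two statements you defer at the end: that $\l$ and $\theta$ are each close to their $n$-block-independent versions at every scale $kn$. This is the substance of the proposition, and your stated justifications do not suffice. For $\l$, ``extremality at every scale'' is not enough: the product of $n$-block kernels is not a component of any natural mixture representation of the $kn$-block kernel, so extremality cannot be applied to it directly. What is needed (Proposition~\ref{prop:ext-ABI}) is an induction on $k$ in which the conditional law of the $k$-th block given $\b$ and the entire past of $\a$ is compared, via Lemma~\ref{lem:ext-for-finite}, with its conditional law given only $\b_{[(k-1)n;kn)}$; for that lemma to give a small bound one must know that the conditional Shannon entropy drop between these two $\s$-algebras is at most $\k r n$, which forces the additional hypothesis $\rmH_\l(\a_{[0;n)}\,|\,\b_{[0;n)}) < (\rmh(\l\,|\,\b)+\k r)\cdot n$ --- true for large $n$ by Lemma~\ref{lem:rel-ent-formula}, but nowhere invoked in your argument. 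For $\theta$ the same block independence must be re-derived from scratch: one transfers extremality from $\l$ to $\theta$ at scale $n$ using condition (ii) and Lemma~\ref{lem:ext-ker-dbar-stab}, and obtains the entropy hypothesis for $\theta$ from continuity of $\rmH_\theta(\a_{[0;n)}\,|\,\b_{[0;n)})$ in $\theta_{[0;n)}$ together with condition (i) --- this is exactly, and only, where (i) enters, and your proposal never actually uses it. Without these two executions of Proposition~\ref{prop:ext-ABI} (giving constants $2r$ and $4r$), plus the $(k-1)$-fold application of Lemma~\ref{lem:dist-from-prod-meas} comparing the two products of $n$-block kernels (cost $<\eps<r$), the final count $2r+4r+r=7r$ --- and hence the specific constant in the statement --- cannot be recovered.
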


The constant $7$ that appears here is convenient, but certainly not optimal.

\subsection{Comparison with a product of block kernels}

Let $\l$ be a measure on $B^\bbZ\times A^\bbZ$, and let $\nu$ be its marginal on $B^\bbZ$.  Let $\a$ and $\b$ be the canonical $A$- and $B$-valued processes on $B^\bbZ \times A^\bbZ$, respectively.  In this subsection we write $\rmH$ for $\rmH_\l$.

The key to Proposition~\ref{prop:ABI-FD} is a comparison between the block kernel $\l^\bl_{[0;kn)}$ and the product of $k$ copies of $\l_{[0;n)}^\bl$.  This is given in Proposition~\ref{prop:ext-ABI}. This comparison is also used for another purpose later in the paper: see the proof of Lemma~\ref{lem:more-ext1}.  For that later application, we need a little extra generality: we can assume that $\l$ is invariant only under the $n$-fold shift $T_{B\times A}^n$, not necessarily under the single shift $T_{B\times A}$.  We can still define the block kernels $\l^\bl_{[0,kn)}$ as in Definition~\ref{dfn:block-kernel}.  We return to the setting of true shift-invariance in Subsection~\ref{subs:ABI-FD}, where we apply Proposition~\ref{prop:ext-ABI} with $\l := (\pi\vee \xi)_\ast\mu$ to prove Proposition~\ref{prop:ABI-FD}.

So now assume that $\l$ is $T_{B\times A}^n$-invariant, but not necessarily $T_{B\times A}$-invariant.

\begin{lem}\label{lem:rel-n-past}
	We have
	\[\rmH(\a_{[0;n)}\,|\,\b\vee \a_{(-\infty;0)}) = \rmh(\a_{[0;n)},\l,T_{B\times A}^n\,|\,\b).\]
	\end{lem}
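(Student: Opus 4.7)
The plan is to recognize this as nothing more than Lemma~\ref{lem:rel-past} applied inside the auxiliary system $(B^\bbZ\times A^\bbZ,\l,T_{B\times A}^n)$, which is a bona fide measure-preserving system by the assumed $T_{B\times A}^n$-invariance of $\l$.

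First I would set up the objects. In this auxiliary system, treat the $A^n$-valued observable $\a_{[0;n)}$ as the time-zero coordinate of a process $\xi = (\xi_k)_{k\in\bbZ}$ under $T_{B\times A}^n$; by equivariance we have
\[\xi_k = \a_{[0;n)}\circ T_{B\times A}^{kn} = \a_{[kn;(k+1)n)} \quad \l\hbox{-a.s.}\]
The $\sigma$-algebra generated by the strict past $\xi_{(-\infty;0)} = (\xi_{-1},\xi_{-2},\dots)$ is the $\sigma$-algebra generated by the family $(\a_{[-kn;-(k-1)n)})_{k\geq 1}$, which coincides with that generated by $\a_{(-\infty;0)}$. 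Meanwhile $\b$ remains a factor map of $(B^\bbZ\times A^\bbZ,\l,T_{B\times A}^n)$, since $\b\circ T_{B\times A}^n = T_B^n\circ \b$ and $\b_\ast\l$ is $T_B^n$-invariant.

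Now apply Lemma~\ref{lem:rel-past} to this system, the process $\xi$, and the factor map $\b$. The conclusion reads
\[\rmh(\xi,\l,T_{B\times A}^n\,|\,\b) = \rmH(\xi_0\,|\,\b\vee \xi_{(-\infty;0)}) = \rmH(\a_{[0;n)}\,|\,\b\vee \a_{(-\infty;0)}),\]
using the identifications above. Since the process $\xi$ is generated by the observable $\a_{[0;n)}$, the left-hand side is by definition $\rmh(\a_{[0;n)},\l,T_{B\times A}^n\,|\,\b)$, which is the desired equality.

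There is no real obstacle here: the only thing to double-check is that Lemma~\ref{lem:rel-past} was stated only for genuinely shift-invariant measures, but its proof depends only on the $T^n$-invariance of the relevant system, so nothing is changed by taking $T_{B\times A}^n$ as the transformation. The finiteness of $A^n$ ensures all Shannon entropies in sight are well-defined.
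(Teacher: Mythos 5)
Your proof is correct and is essentially identical to the paper's: both regard $(B^\bbZ\times A^\bbZ,\l,T_{B\times A}^n)$ as a measure-preserving system in its own right, view $\a_{[0;n)}$ as the time-zero observable of a process over the enlarged alphabet $A^n$, and apply Lemma~\ref{lem:rel-past}. Your closing caveat is unnecessary: Lemma~\ref{lem:rel-past} is stated for an arbitrary system, process, and factor map, so it applies verbatim to the auxiliary $T_{B\times A}^n$-system without any reinspection of its proof.
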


\begin{proof}
This just requires the right point of view.  The triple $(B^\bbZ\times A^\bbZ,\l,T_{B\times A}^n)$ is a measure-preserving system.  In this system, we may regard the map $\a_{[0;n)}$ as a single observable taking values in $A^n$.  Under the transformation $T_{B\times A}^n$, this observable generates the process
\[(\dots,\a_{[-2n;n)},\a_{[n;0)},\a_{[0;n)},\a_{[n;2n)},\dots).\]
This is just a copy of $\a$, divided into consecutive blocks of $n$ letters which we now regard as single letters in the enlarged alphabet $A^n$.  We may identify $\b$ with a process generated by $\b_{[0;n)}$ in the same way.

Now apply Lemma~\ref{lem:rel-past} to this system and pair of processes.
\end{proof}

\begin{prop}\label{prop:ext-ABI}
In the situation above, assume that
\begin{itemize}
	\item[(i)] (extremality) $\a_{[0;n)}$ is relatively $(\k n,r)$-extremal over $\b_{[0;n)}$ according to $\l$, and
	\item[(ii)] (conditional Shannon entropy is close to its Kolmogorov--Sinai limit)
	\[\rmH(\a_{[0;n)}\,|\,\b_{[0;n)}) < \rmh(\a_{[0;n)},\l,T_{B\times A}^n\,|\,\b) + \k r n.\]
\end{itemize}
	 Then
	\[\int \ol{d_{kn}}\Big(\l^\bl_{[0;kn)}(\,\cdot\,|\,b_{[0;kn)}),\ \l^\bl_{[0;n)}(\,\cdot\,|\,b_{[0;n)})\times \cdots \times \l^\bl_{[0;n)}(\,\cdot\,|\,b_{[(k-1)n;kn)})\Big)\ \nu(\d b) < 2r\]
	for every $k \in \bbN$.  Here $\ol{d_{kn}}$ is the transportation metric associated to $(A^{kn},d_{kn})$.
\end{prop}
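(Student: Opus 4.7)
The plan is to construct, for each $\bf{b} \in B^{kn}$, a coupling of $\l^\bl_{[0;kn)}(\,\cdot\,|\,\bf{b})$ with the product $\bigotimes_{i=0}^{k-1}\l^\bl_{[0;n)}(\,\cdot\,|\,\bf{b}_{[in;(i+1)n)})$ block-by-block, and to bound each block's contribution to the expected $d_{kn}$-cost using the extremality hypothesis inherited by each block from the $T_{B\times A}^n$-invariance of $\l$.

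By $T_{B\times A}^n$-invariance, for every $i$ the pair $(\b_{[in;(i+1)n)},\a_{[in;(i+1)n)})$ has the same joint distribution as $(\b_{[0;n)},\a_{[0;n)})$; in particular, $\l^\bl_{[0;n)}(\,\cdot\,|\,\bf{b}_i)$ is a version of the conditional distribution of $\a_{[in;(i+1)n)}$ given $\b_{[in;(i+1)n)}$, and this observable inherits $(\k n, r)$-extremality (on average) from the hypothesis (i). I would then apply Lemma~\ref{lem:ext-for-finite} with $\G$ generated by $\b_{[in;(i+1)n)}$ and $\scrH$ generated by $\b_{[0;kn)}\vee \a_{[0;in)}$ to bound the expected $\ol{d_n}$-distance between the conditional distribution of $\a_{[in;(i+1)n)}$ given $(\b_{[0;kn)},\a_{[0;in)})$ and $\l^\bl_{[0;n)}(\,\cdot\,|\,\b_{[in;(i+1)n)})$ by
\[\frac{1}{\k n}\big[\rmH(\a_{[in;(i+1)n)}\,|\,\b_{[in;(i+1)n)}) - \rmH(\a_{[in;(i+1)n)}\,|\,\b_{[0;kn)}\vee \a_{[0;in)})\big] + r.\]
Summing over $i$, the chain rule collapses the second block of entropies into $\rmH(\a_{[0;kn)}\,|\,\b_{[0;kn)})$, and the $T^n$-system version of Lemma~\ref{lem:rel-ent-formula} supplies the lower bound $\rmH(\a_{[0;kn)}\,|\,\b_{[0;kn)}) \geq k\cdot \rmh(\a_{[0;n)},\l,T_{B\times A}^n\,|\,\b)$. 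Combined with hypothesis (ii), the total of the $k$ entropy differences is strictly less than $k\k r n$, so the total of the $k$ averaged $\ol{d_n}$-distances is strictly less than $2kr$.

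Finally, I would turn this into the claim by building a measure $\Lambda$ on $B^{kn}\times A^{kn}\times A^{kn}$ sequentially: draw $\bf{b}\sim \nu_{[0;kn)}$ and, for each $i=0,\dots,k-1$ in turn, draw $(\bf{a}_i,\bf{a}_i')$ from an optimal $d_n$-coupling between $\l(\a_{[in;(i+1)n)}\in \cdot\,|\,\bf{b},\bf{a}_{[0;in)})$ and $\l^\bl_{[0;n)}(\,\cdot\,|\,\bf{b}_i)$. By construction, the $(\bf{b},\bf{a})$ marginal of $\Lambda$ is $\nu_{[0;kn)}\ltimes \l^\bl_{[0;kn)}$; the key observation for the $(\bf{b},\bf{a}')$ marginal is that the conditional distribution of $\bf{a}_i'$ given $(\bf{b},\bf{a}_{[0;in)},\bf{a}_{[0;in)}')$ is $\l^\bl_{[0;n)}(\,\cdot\,|\,\bf{b}_i)$ with no dependence on $\bf{a}_{[0;in)}'$, so conditionally on $\bf{b}$ the $\bf{a}_i'$ are independent with the required marginals, and $\bf{a}'$ has the product distribution. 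Since $d_{kn}(\bf{a},\bf{a}')=k^{-1}\sum_i d_n(\bf{a}_i,\bf{a}_i')$, integrating against $\Lambda$ and applying the block-wise bound delivers $\int \ol{d_{kn}}(\cdot,\cdot)\,\nu(\d b) \leq \int d_{kn}\,\d\Lambda < 2r$.

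The one delicate point is verifying the conditional-independence claim that makes $\bf{a}'$ marginally a product: the sequential construction with optimal couplings introduces cross-dependencies between $\bf{a}_i$ and $\bf{a}_i'$, and one must check that these do not propagate into the $\bf{a}'$-only marginal. Once that bookkeeping is in hand, the remaining ingredients---Lemma~\ref{lem:ext-for-finite}, the chain rule, the subadditivity bound $\rmH(\a_{[0;kn)}\,|\,\b_{[0;kn)})\geq k\cdot \rmh$, and the definition of $\ol{d_{kn}}$---combine directly to give the estimate.
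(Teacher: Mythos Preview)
Your approach is correct and reaches the same bound, but it is organised differently from the paper's proof. The paper proceeds by induction on $k$: it formulates a strengthened inductive hypothesis with an auxiliary conditioning set $S\supseteq[0;kn)$ (needed because projecting $\l^\bl_{[0;kn)}(\,\cdot\,|\,\bf{b})$ to the first $k-1$ blocks retains conditioning on all of $\b_{[0;kn)}$), and then peels off the last block using Lemma~\ref{lem:dist-from-prod-meas}. For that last block it bounds the entropy drop individually: $\rmH(\a_{[(k-1)n;kn)}\,|\,\G)-\rmH(\a_{[(k-1)n;kn)}\,|\,\scrH)<\k r n$ follows from hypothesis~(ii) together with Lemma~\ref{lem:rel-n-past}, which compares against the infinite past $\b\vee\a_{(-\infty;(k-1)n)}$. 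By contrast, you construct the coupling directly and control all $k$ entropy drops at once: the chain rule collapses the sum of subtracted terms to $\rmH(\a_{[0;kn)}\,|\,\b_{[0;kn)})$, and subadditivity (Lemma~\ref{lem:rel-ent-formula} for the $T^n$-system) gives the global lower bound $\geq k\cdot\rmh$. Your route avoids the need for the generalised conditioning set $S$ and the appeal to Lemma~\ref{lem:dist-from-prod-meas}; the paper's route avoids having to build the coupling $\Lambda$ by hand and check its marginals.

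On the ``delicate point'' you flag: the verification is genuinely routine. At step $i$ the conditional law of $(\bf{a}_i,\bf{a}_i')$ given $(\bf{b},\bf{a}_{[0;in)},\bf{a}'_{[0;in)})$ depends only on $(\bf{b},\bf{a}_{[0;in)})$ by construction, and its second marginal is $\l^\bl_{[0;n)}(\,\cdot\,|\,\bf{b}_i)$ for \emph{every} value of $\bf{a}_{[0;in)}$. Integrating out $\bf{a}_{[0;in)}$ therefore leaves the conditional law of $\bf{a}_i'$ given $(\bf{b},\bf{a}'_{[0;in)})$ equal to $\l^\bl_{[0;n)}(\,\cdot\,|\,\bf{b}_i)$, which is exactly the product-measure requirement. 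So there is no gap.
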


If $\l$ is actually shift-invariant, then hypothesis (ii) of Proposition~\ref{prop:ext-ABI} may be rewritten as
\[\rmH(\a_{[0;n)}\,|\,\b_{[0;n)}) < \big(\rmh(\l,T_{B\times A}\,|\,\b) + \k r\big)\cdot n.\]
Therefore, in this special case, hypothesis (ii) holds for all sufficiently large $n$ by Lemma~\ref{lem:rel-ent-formula}.

\begin{proof}
We prove this by induction on $k$, but first we need to formulate a slightly more general inductive hypotheses.  Suppose that $k \in \bbN$ and that $S$ is any subset of $\bbZ$.  Then we write
\[B^S \to \Pr(A^{kn}):\bf{b} \mapsto \l^\bl_{[0;kn)\,|\,S}(\,\cdot\,|\,\bf{b})\]
for a version of the conditional distribution of $\a_{[0;kn)}$ given $\b_S$ under $\l$.  In this notation we have $\l^\bl_{[0;kn)} = \l^\bl_{[0;kn)\,|\,[0;kn)}$.  We show by induction on $k$ that
\begin{equation}\label{eq:extra-ext-ABI}
\int \ol{d_{kn}}\Big(\l^\bl_{[0;kn)\,|\,S}(\,\cdot\,|\,b_S),\ \l^\bl_{[0;n)}(\,\cdot\,|\,b_{[0;n)})\times \cdots \times \l^\bl_{[0;n)}(\,\cdot\,|\,b_{[(k-1)n;kn)})\Big)\ \nu(\d b) < 2r
\end{equation}
whenever $k \in \bbN$ and $S \supseteq [0;kn)$.  The case $S = [0;kn)$ gives Proposition~\ref{prop:ext-ABI}.

The proof of~\eqref{eq:extra-ext-ABI} when $k=1$ is a slightly degenerate version of the proof when $k > 1$, so we explain them together.

Thus, assume we already know~\eqref{eq:extra-ext-ABI} with $k-1$ in place of $k$ and whenever $S \supseteq [0;(k-1)n)$.  If $k=1$, then regard this inductive assumption as vacuous. To extend the desired conclusion to $k$ we use Lemma~\ref{lem:dist-from-prod-meas}.  Assume now that $S\supseteq [0;kn)$.

For each $\bf{b} \in B^S$, we have a probability measure $\l^\bl_{[0;kn)\,|\,S}(\,\cdot\,|\,\bf{b})$ on
	\[A^{kn} = \underbrace{A^n\times\cdots \times A^n}_k.\]
	The projection of this measure onto the first $k-1$ copies of $A^n$ is $\l^\bl_{[0;(k-1)n)\,|\,S}(\,\cdot\,|\,\bf{b})$. This is where we need the additional flexibility that comes from choosing $S$ separately: the projection of $\l^\bl_{[0;kn)}(\,\cdot\,|\,\bf{b})$ is $\l^\bl_{[0;(k-1)n)\,|\,[0;kn)}(\,\cdot\,|\,\bf{b})$, and in general this can be different from $\l^\bl_{[0;(k-1)n)}(\,\cdot\,|\,\bf{b}_{[0;(k-1)n)})$.

	We now disintegrate the measure $\l^\bl_{[0;kn)\,|\,S}(\,\cdot\,|\,\bf{b})$ further: for each $\bf{b} \in B^S$, let
	\[A^{(k - 1)n}\to \Pr(A^n):\bf{a}\mapsto \l'(\,\cdot\,|\,\bf{b},\bf{a})\]
	be a conditional distribution under $\l^\bl_{[0;kn)\,|\,S}(\,\cdot\,|\,\bf{b})$ of the last $n$ coordinates in $A^{k n}$, given that the first $(k -1)n$ coordinates agree with $\bf{a}$.  If $k = 1$, then simply ignore $\bf{a}$ and set $\l'(\,\cdot\,|\,\bf{b}) := \l^\bl_{[0;n)\,|\,S}(\,\cdot\,|\,\bf{b})$.
		
	In terms of this further disintegration, we may apply Lemma~\ref{lem:dist-from-prod-meas} to obtain
	\begin{align*}
		&\int \ol{d_{k n}}\Big(\l^\bl_{[0;k n)\,|\,S}(\,\cdot\,|\,b_S),\ \bigtimes_{j=0}^{k-1}\l^{\rm{block}}_{[0;n)}(\,\cdot\,|\,b_{[jn;(j+1)n)})\Big)\ \nu(\d b)\\
		&\leq \frac{k-1}{k}\int \ol{d_{(k-1)n}}\Big(\l^\bl_{[0;(k-1)n)\,|\,S}(\,\cdot\,|\,b_S),\ \bigtimes_{j=0}^{k-2}\l^{\rm{block}}_{[0;n)}(\,\cdot\,|\,b_{[jn;(j+1)n)})\Big)\ \nu(\d b)\\
		&\quad + \frac{1}{k}\iint \ol{d_n}\big(\l'(\,\cdot\,|\,b_S,\bf{a}),\ \l^{\rm{block}}_{[0;n)}(\,\cdot\,|\,b_{[(k-1)n;kn)})\big)\ \l^\bl_{[0;(k-1)n)\,|\,S}(\d\bf{a}\,|\,b_S)\ \nu(\d b).
	\end{align*}
	If $k=1$, then simply ignore the first term on the right of this inequality.
	
	By the inductive hypothesis, the first right-hand term here is less than ${2(k-1)r/k}$.  To finish the proof, we show that
	\[\iint \ol{d_n}\big(\l'(\,\cdot\,|\,b_S,\bf{a}),\ \l^{\rm{block}}_{[0;n)}(\,\cdot\,|\,b_{[(k-1)n;kn)})\big)\ \l^\bl_{[0;(k-1)n)\,|\,S}(\d\bf{a}\,|\,b_S)\ \nu(\d b) < 2r.\]
	
 The two kernels appearing inside the transportation distance here,
	\[\l'(\,\cdot\,|\,b_S,\bf{a}) \quad \hbox{and} \quad \l_{[0;n)}^{\rm{block}}(\,\cdot\,|\,b_{[(k-1)n;kn)}),\]
are conditional distributions for $\a_{[(k-1)n;kn)}$ under $\l$ given the $\s$-algebras
	\[\scrH \quad \hbox{generated by}\ \b_S \vee \a_{[0;(k-1)n)} \quad \hbox{and} \quad \G \quad \hbox{generated by}\ \b_{[(k-1)n;kn)},\]
	respectively.
	
	Clearly $\scrH \supseteq \G$. Therefore, by Lemma~\ref{lem:ext-for-finite}, the desired estimate follows from the $(\k n,r)$-extremality of $\a_{[0;n)}$ over $\b_{[0;n)}$, provided we show that
	\[\rmH(\a_{[(k-1)n;kn)}\,|\,\scrH) > \rmH(\a_{[(k-1)n;kn)}\,|\,\G) - \k rn.\]
	This, in turn, holds because of assumption (ii) in the statement of the current proposition:
	\begin{align*}
		\rmH(\a_{[(k-1)n;kn)}\,|\,\scrH) &= \rmH(\a_{[(k-1)n;kn)}\,|\,\b_S \vee \a_{[0;(k-1)n)})\\
		&\geq \rmH(\a_{[(k-1)n;kn)}\,|\,\b \vee \a_{(-\infty;(k-1)n)})\\
		&= \rmh(\a_{[0;n)},\l,T_{B\times A}^n\,|\,\b)\\
		&> \rmH(\a_{[(k-1)n;kn)}\,|\,\G) - \k rn,
	\end{align*}
	where the third line is obtained from Lemma~\ref{lem:rel-n-past} and invariance under $T_{B\times A}^n$, and the fourth is our appeal to assumption (ii).
	
	This continues the induction, and hence completes the proof.
\end{proof}

\begin{rmk}
Proposition~\ref{prop:ext-ABI} may be seen as a relative version of one of the standard implications of Ornstein theory: that extremality implies almost block independence.  We do not formulate a precise relative version of almost block independence in this paper, but this could easily be done along the same lines as the non-relative version: see, for instance,~\cite[Section IV.1]{Shi96} or~\cite[Definitions 473 and 476]{KalMcC10} (the latter reference calls this the `independent concatenations' property). Then the conclusion of Proposition~\ref{prop:ext-ABI} should imply that $\l$ is $(2r)$-almost block independent.  I expect this leads to another characterization of relative Bernoullicity for this $\l$, but we do not pursue this idea here.
	\end{rmk}

In the next subsection we use Proposition~\ref{prop:ext-ABI} to prove Proposition~\ref{prop:ABI-FD}.  At another point later we also need a different consequence of Proposition~\ref{prop:ext-ABI}:

\begin{cor}[Stretching property of extremality]\label{cor:stretch}
	Assume hypotheses (i) and (ii) from the statement of Proposition~\ref{prop:ext-ABI}.  Then the observable $\a_{[0;kn)}$ is relatively $(\k k n,5r)$-extremal over $\b_{[0;kn)}$ for all $k \in \bbN$. 
\end{cor}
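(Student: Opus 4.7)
My plan is to assemble three earlier results in sequence: Proposition~\ref{prop:ext-ABI} itself, the pointwise product rule Lemma~\ref{lem:pre-ext-prod}, and the perturbation-stability Lemma~\ref{lem:ext-ker-dbar-stab}. Throughout, the relevant metric on $A^{kn}=(A^n)^k$ is
\[d_{kn}(\bf{a},\bf{a}') = \frac{1}{k}\sum_{j=0}^{k-1} d_n(\bf{a}_{[jn;(j+1)n)},\bf{a}'_{[jn;(j+1)n)}),\]
the Hamming average with equal weights $1/k$.

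First, I would invoke Proposition~\ref{prop:ext-ABI} to obtain that the block kernel $\bf{b}\mapsto \l^\bl_{[0;kn)}(\,\cdot\,|\,\bf{b})$ is, in $\ol{d_{kn}}$-average over $\bf{b}\sim \nu_{[0;kn)}$, at distance less than $2r$ from the pointwise product kernel
\[\theta_{\bf b} := \bigtimes_{j=0}^{k-1} \l^\bl_{[0;n)}(\,\cdot\,|\,\bf{b}_{[jn;(j+1)n)}).\]

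Second, I would establish that $(\nu_{[0;kn)},\theta_\bullet)$ is itself $(k\k n,r)$-extremal on $(A^{kn},d_{kn})$. Let $\bf{b}'\mapsto r_{\bf{b}'}$ be the extremality-parameter function on $B^n$ provided for $\l^\bl_{[0;n)}(\,\cdot\,|\,\bullet)$ by hypothesis (i), so that $\int r_{\bf{b}'}\,\nu_{[0;n)}(\d\bf{b}') \leq r$. Viewing $(A^{kn},d_{kn})=(A^n\times A^{(k-1)n},d)$ with the weight $\a = 1/k$ in~\eqref{eq:prod-met-again} matches the metric exactly, and induction on $k$ via Lemma~\ref{lem:pre-ext-prod} yields that, for each fixed $\bf{b}\in B^{kn}$, the measure $\theta_{\bf b}$ is $(k\k n,\bar r_{\bf b})$-extremal with $\bar r_{\bf b} := \frac{1}{k}\sum_{j=0}^{k-1} r_{\bf{b}_{[jn;(j+1)n)}}$. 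Since $\l$ is $T_{B\times A}^n$-invariant, every length-$n$ block marginal of $\nu_{[0;kn)}$ coincides with $\nu_{[0;n)}$, so averaging gives
\[\int \bar r_{\bf b}\,\nu_{[0;kn)}(\d\bf b) = \int r_{\bf{b}'}\,\nu_{[0;n)}(\d\bf{b}') \leq r.\]
By Definition~\ref{dfn:ext}, this is $(k\k n,r)$-extremality of $(\nu_{[0;kn)},\theta_\bullet)$.

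Third and finally, I would apply Lemma~\ref{lem:ext-ker-dbar-stab} to transfer this extremality from $\theta_\bullet$ to the true block kernel, at a cost of $2\delta$ where $\delta<2r$ is the average transportation distance furnished by Step 1; the parameters become $(k\k n, r + 2\delta) < (k\k n, 5r)$, which is exactly the claim.

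The only moderately delicate point—and the main thing to verify carefully—is the inductive calculation in Step 2: one must keep track of how the weights $\a$ and $1-\a$ rebalance the extremality parameter $\k n$ into $k\k n$ at each step, and how the weighted average of the $r_j$'s collapses to a uniform $\frac{1}{k}\sum_j r_j$. Neither presents a genuine obstacle, but the bookkeeping is where any arithmetic slip would occur. Everything else is a direct citation of prior results.
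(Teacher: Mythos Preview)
Your proof is correct and follows essentially the same three-step approach as the paper: establish extremality of the pointwise-product kernel, invoke Proposition~\ref{prop:ext-ABI} for the $\ol{d_{kn}}$-closeness, and transfer via Lemma~\ref{lem:ext-ker-dbar-stab}. The only cosmetic difference is that the paper cites the kernel-level Lemma~\ref{lem:ext-prod} directly for the product step, whereas you apply the measure-level Lemma~\ref{lem:pre-ext-prod} pointwise in $\bf{b}$ and then integrate---which is precisely how Lemma~\ref{lem:ext-prod} is proved, so the content is identical.
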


\begin{proof}
Let us write a general element of $B^{kn}$ as $(\bf{b}_1,\dots,\bf{b}_k)$, where $\bf{b}_j \in B^n$ for each $j=1,2,\dots,k$.  A $k$-fold application of Lemma~\ref{lem:ext-prod} gives that the measure $\nu_{[0;kn)}$ and the pointwise-product kernel
	\[(\bf{b}_1,\dots,\bf{b}_k)\mapsto \l^\bl_{[0;n)}(\,\cdot\,|\,\bf{b}_1)\times \cdots \times \l^\bl_{[0;n)}(\,\cdot\,|\,\bf{b}_k)\]
	are $(\k kn,r)$-extremal.  Now combine this fact with Proposition~\ref{prop:ext-ABI} and Lemma~\ref{lem:ext-ker-dbar-stab}.
\end{proof}

\subsection{From extremality to finite determination}\label{subs:ABI-FD}

We return to the system $(X,\mu,T)$ and processes $\xi$ and $\pi$ that appear in Proposition~\ref{prop:ABI-FD}. Let $\nu := \pi_\ast\mu$ and $\l := (\pi\vee \xi)_\ast\mu$.  Let $\a$ and $\b$ be the canonical $A$- and $B$-valued processes on $B^\bbZ\times A^\bbZ$, respectively.

\begin{proof}[Proof of Proposition~\ref{prop:ABI-FD}]
	Choose $n \in \bbN$ so large that both of the following hold:
	\begin{itemize}
		\item[(a)] $\a_{[0;n)}$ is $(\k n,r)$-extremal over $\b_{[0;n)}$, and
		\item[(b)] $\rmH_\l(\a_{[0;n)}\,|\,\b_{[0;n)}) < (\rmh(\a,\l,T_{B\times A}\,|\,\b) + \k r)\cdot n = (\rmh(\l\,|\,\b) + \k r)\cdot n$.
	\end{itemize}
	The former holds for all sufficiently large $n$ by assumption. The latter holds for all sufficiently large $n$ by Lemma~\ref{lem:rel-ent-formula}.
	
By Proposition~\ref{prop:ext-ABI}, conditions (a) and (b) imply that
	\begin{equation}\label{eq:lambdas-close}
\int \ol{d_{kn}}\Big(\l^\bl_{[0;kn)}(\,\cdot\,|\,b_{[0;kn)}),\ \bigtimes_{j=0}^{k-1}\l^\bl_{[0;n)}(\,\cdot\,|\,b_{[jn;(j+1)n)})\Big)\ \nu(\d b) < 2r
	\end{equation}
for all $k\in\bbN$.
	
	Now suppose that $\theta \in \rm{Ext}(\nu,A)$ satisfies the conditions
	\begin{itemize}
		\item[(i)] $\rmh(\theta\,|\,\b) > \rmh(\l\,|\,\b) - \eps$, and
		\item[(ii)] $\|\theta_{[0;n)} - \l_{[0;n)}\| < \eps$
	\end{itemize}
	for some $\eps > 0$.
	
	Since $\l_{[0;n)}$ and $\theta_{[0;n)}$ have the same marginal on $B^n$ (namely, $\nu_{[0;n)}$), assumption (ii) implies that the following also holds:
	\begin{itemize}
		\item[(ii$'$)] (conditional distributions close in $\ol{d_n}$)
		\[\int \ol{d_n}\big(\l_{[0;n)}^\bl(\,\cdot\,|\,b_{[0;n)}),\theta_{[0;n)}^\bl(\,\cdot\,|\,b_{[0;n)})\big)\ \nu(\d b) < \eps.\]
	\end{itemize}
If $\eps$ is sufficiently small in terms of $r$, then we can apply Lemma~\ref{lem:ext-ker-dbar-stab} to conclude that
\begin{equation}\label{eq:i'}
\hbox{$\a_{[0;n)}$ is $(\k n,2r)$-extremal over $\b_{[0;n)}$ according to $\theta$,}
\end{equation}
in addition to the extremality according to $\l$.
	
	On the other hand, the quantity
	\[\rmH_\theta(\a_{[0;n)}\,|\,\b_{[0;n)}) = \rmH_\theta(\a_{[0;n)}\vee\b_{[0;n)}) - \rmH_\theta(\b_{[0;n)})\]
is a continuous function of the joint distribution $\theta_{[0;n)}$.  Therefore, provided we chose $\eps$ suffficiently small, assumption (ii) implies that
	\[\rmH_\theta(\a_{[0;n)}\,|\,\b_{[0;n)}) < \rmH_\l(\a_{[0;n)}\,|\,\b_{[0;n)}) + \k rn/2. \]
	Provided also that we chose $\eps < \k r/2$, condition (b) and assumption (i) give that this right-hand side is less than
	\[(\rmh(\l\,|\,\b) + \k r + \k r/2)\cdot n < (\rmh(\theta\,|\,\b) + 2\k r)\cdot n.\]
	Combining these inequalities, we obtain
\begin{equation}\label{eq:ii'}
\rmH_\theta(\a_{[0;n)}\,|\,\b_{[0;n)}) < (\rmh(\theta\,|\,\b) + 2\k r)\cdot n.
\end{equation}
	
Conclusions~\eqref{eq:i'} and~\eqref{eq:ii'} now provide the hypotheses for another application of Proposition~\ref{prop:ext-ABI}, this time to the system ${(B^\bbZ\times A^\bbZ,\theta,T_{B\times A})}$ and with $r$ replaced by $2r$.  We conclude that
	\begin{equation}\label{eq:thetas-close}
\int \ol{d_{kn}}\Big(\theta^\bl_{[0;kn)}(\,\cdot\,|\,b_{[0;kn)}),\ \bigtimes_{j=0}^{k-1}\theta^\bl_{[0;n)}(\,\cdot\,|\,b_{[jn;(j+1)n)})\Big)\ \nu(\d b) < 4r
\end{equation}
for all $k\in \bbN$.

Finally, for any $k\in \bbN$ and $b \in B^\bbZ$, a $(k-1)$-fold application of Lemma~\ref{lem:dist-from-prod-meas} gives
\begin{multline*}
\ol{d_{kn}}\Big(\bigtimes_{j=0}^{k-1}\l^\bl_{[0;n)}(\,\cdot\,|\,b_{[jn;(j+1)n)}),\ \bigtimes_{j=0}^{k-1}\theta^\bl_{[0;n)}(\,\cdot\,|\,b_{[jn;(j+1)n)})\Big) \\ \leq \frac{1}{k}\sum_{j=0}^{k-1}\ol{d_n}\Big(\l^\bl_{[0;n)}(\,\cdot\,|\,b_{[jn;(j+1)n)}),\ \theta^\bl_{[0;n)}(\,\cdot\,|\,b_{[jn;(j+1)n)})\Big).
\end{multline*}
Here we have used only the special case of Lemma~\ref{lem:dist-from-prod-meas} that compares two product measures.  Integrating this inequality with respect to $\nu(\d b)$, and recalling condition (ii$'$), it follows that
\begin{equation}\label{eq:lambda-theta-prods}
	\int \ol{d_{kn}}\Big(\bigtimes_{j=0}^{k-1}\l^\bl_{[0;n)}(\,\cdot\,|\,b_{[jn;(j+1)n)}),\ \bigtimes_{j=0}^{k-1}\theta^\bl_{[0;n)}(\,\cdot\,|\,b_{[jn;(j+1)n)})\Big)\ \nu(\d b) < \eps,
\end{equation}
which we now assume is less than $r$.  Combined with~\eqref{eq:lambdas-close},~\eqref{eq:thetas-close}, and the triangle inequality for $\ol{d_{kn}}$, we obtain that
\[	\int \ol{d_{kn}}\Big(\l_{[0;kn)}^\bl(\,\cdot\,|\,b_{[0;kn)}),\ \theta_{[0;kn)}^\bl(\,\cdot\,|\,b_{[0;kn)})\Big)\ \nu(\d b) < 7r.\]
Letting $k\to\infty$, Lemma~\ref{lem:dbar-and-dbar} completes the proof.
\end{proof}

\part{THE WEAK PINSKER PROPERTY}

\section{A quantitative step towards Theorem A}\label{sec:preA}

In this section, we consider a factor map $\pi:(X,\mu,T) \to (Y,\nu,S)$ of ergodic systems such that $(Y,\nu,S)$ satisfies the following special conditions:
\begin{quote}
	The entropy $\rmh(\nu,S)$ is finite, and $S$ has an $N$-periodic set modulo $\nu$ for every $N \in \bbN$.
\end{quote}
We refer to this as \textbf{assumption (A)}.  There certainly exist atomless and ergodic automorphisms that satisfy these conditions. For instance, let
	\[Y := \prod_{N\geq 1}(\bbZ/N\bbZ),\]
let $S$ be the rotation of this compact group by the element $(1,1,\dots)$, and let $\nu$ be any $S$-invariant ergodic measure.

Since $\rmh(\nu,S)$ is finite, Krieger's generator theorem~\cite{Kri70} tells us that $(Y,\nu,S)$ is isomorphic to a shift-system with a finite state space.  Since the conclusion of Theorem A depends on $(Y,\nu,S)$ only up to isomorphism, we are free to replace $(Y,\nu,S)$ with that shift-system: that is, we may assume that $Y = B^\bbZ$ and $S = T_B$ for some finite set $B$.  Accordingly, $\pi$ is now a $B$-valued process on $(X,\mu,T)$. We retain these assumptions until the proof of Theorem A under assumption (A) is completed in Subsection~\ref{subs:AA}.

Now let $\xi:X\to A^\bbZ$ be another process in addition to $\pi$.

\begin{prop}\label{prop:basic-ext2}
	Suppose assumption (A) holds, and let $r > 0$ and $\eps > 0$.  Then there exist a new process $\phi$ on $(X,\mu,T)$ and a value $\k > 0$ such that
	\begin{itemize}
		\item[(a)] $\rmh(\phi,\mu,T) < \eps$, and
		\item[(b)] $\xi$ is relatively $(\k,r)$-extremal over $\pi\vee \phi$.
	\end{itemize}
\end{prop}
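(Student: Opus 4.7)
The plan is to apply Theorem~C to the conditional distribution of a long block of $\xi$ and to encode the resulting partition as the new process $\phi$. By assumption (A), for every large $N$ there is an $N$-periodic set $F\subseteq Y$ modulo $\nu$. Pulling back through $\pi$, it is a $\pi$-measurable set in $X$ whose translates $F,T^{-1}F,\dots,T^{-(N-1)}F$ partition $X$. Equivariance along this tower lets us specify a process $\phi$ by a single observable on the base, and the entropy rate of the result equals $(1/N)$ times the Shannon entropy of that base observable (by Lemma~\ref{lem:rel-ent-over-periodic}).

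Given $x\in F$, consider the conditional distribution $\mu_x$ on $A^N$ of $\xi_{[0;N)}$ given a $\sigma$-algebra $\G$ that contains $\pi$ and an appropriate amount of information about $\xi$ \emph{outside} the block $[0;N)$. Apply Theorem~C to $\mu_x$ with extremality tolerance $r$ and bad-set tolerance $\eps':=\eps/4$: it produces a partition $A^N=U_{x,1}\cup\cdots\cup U_{x,m_x}$ with $m_x\le c\rme^{c\,\TC(\mu_x)}$ in which the non-negligible cells support conditional measures satisfying $\mathrm{T}(\kappa N,r)$. Let $\phi_0(x)$ be the index $j$ with $\xi_{[0;N)}(x)\in U_{x,j}$, and extend by $T$-equivariance across the tower to obtain a process $\phi$. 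Its entropy rate satisfies $\rmh(\phi,\mu,T)\le (\log c+c\,\bbE_\nu \TC(\mu_x))/N$, and the bound on cells gives, via the averaged-parameter formulation of Definition~\ref{dfn:ext}, that $\xi_{[0;N)}$ is $(\kappa N,r+O(\eps'))$-extremal over $\pi\vee\phi$ on $(A^N,d_N)$. By the stretching property (Corollary~\ref{cor:stretch}) applied at scales $kN$, this promotes to relative $(\kappa,r)$-extremality of $\xi$ over $\pi\vee\phi$, absorbing the $O(\eps')$ loss.

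The main obstacle is controlling $\bbE_\nu \TC(\mu_x)/N$ so that it is smaller than $\eps/(2c)$. Conditioning only on $\pi$ gives
\[
\frac{\bbE \TC(\mu_x)}{N}\ \longrightarrow\ \rmH(\xi_0\mid\pi)-\rmh(\xi,\mu,T\mid\pi)
\qquad\text{as } N\to\infty,
\]
a fixed, possibly large, process-dependent constant. To shrink this, $\G$ must also contain information about $\xi$ outside $[0;N)$. The key point, to be encapsulated in a lemma analogous to the ``good set'' lemma mentioned in Subsection~\ref{subs:preA1}, is that upon conditioning additionally on $\xi_{\bbZ\setminus[0;N)}$ the marginal entropies $\rmH(\xi_i\mid\G)$ at interior indices approach $\rmh(\xi,\mu,T\mid\pi)$, whereas the full joint entropy $\rmH(\xi_{[0;N)}\mid\G)$ is at most $N\,\rmh(\xi,\mu,T\mid\pi)$ by Lemma~\ref{lem:rel-past} and stationarity; the relative Shannon--McMillan theorem then forces $\bbE\TC(\mu_x)=o(N)$. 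Measurability of the partition selection is finitary: because the cells depend only on $\mu_x$, one may quantize $\G$ to depend on finitely many coordinates of $\pi$ and $\xi$, giving $\phi_0$ as a legitimate observable. Choosing $N$ large enough to drive $\bbE\TC(\mu_x)/N$ below $\eps/(2c)$ then yields $\rmh(\phi,\mu,T)<\eps$ and, together with the extremality analysis above, delivers~(a) and~(b) simultaneously.
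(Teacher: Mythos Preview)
Your overall architecture is right: build $\phi$ from a Theorem~C partition of the conditional law of $\xi_{[0;N)}$ over a periodic tower, then use Corollary~\ref{cor:stretch} to go from scale $N$ to all large scales. The gap is in how you control the total correlation.

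You correctly observe that conditioning only on $\pi$ gives $\bbE\TC(\mu_x)/N\to\rmH(\xi_0\mid\pi)-\rmh(\xi\mid\pi)$, a fixed positive constant. But your proposed cure --- conditioning additionally on $\xi_{\bbZ\setminus[0;N)}$ --- does not work. You claim that the single-letter marginals $\rmH(\xi_i\mid\G)$ then drop to $\rmh(\xi\mid\pi)$ for interior $i$. They do not: for interior $i$ the extra $\xi$-conditioning is \emph{far away}, so by mixing it has almost no effect, and $\rmH(\xi_i\mid\pi,\xi_{(-\infty;0)},\xi_{[N;\infty)})\to\rmH(\xi_0\mid\pi)$, not $\rmh(\xi\mid\pi)$. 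Concretely, if $\pi$ is trivial and $\xi$ is a mixing Markov chain, the conditional law of $\xi_{[0;N)}$ given $\xi_{-1},\xi_N$ is a Markov bridge; its joint entropy is $\approx Nh$ by the chain rule, while each interior marginal has entropy $\approx\rmH(\xi_0)$, so $\TC\approx N(\rmH(\xi_0)-h)$, which is linear in $N$. Thus $\bbE\TC(\mu_x)/N$ does not tend to zero and the bound $m_x\le c\rme^{c\TC(\mu_x)}$ gives nothing for $\rmh(\phi)$. (There is also a secondary issue: if $\G$ contains $\xi$-information, then $\phi$ leaks $\xi$-values between blocks, which would complicate the later use of Corollary~\ref{cor:stretch}; but the $\TC$ problem is already fatal.)

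The paper resolves this with a different, two-scale idea. It does \emph{not} enlarge the conditioning; it enlarges the alphabet. First choose $\ell$ so large that $\rmH(\xi_{[0;\ell)}\mid\pi_{[0;\ell)})<(\rmh(\xi\mid\pi)+\delta)\ell$, which is possible by subadditivity. Then set $N=n\ell$ and regard $A^N$ as $(A^\ell)^n$. Apply Theorem~C to the conditional law of $\xi_{[0;N)}$ given \emph{only} $\pi_{[0;N)}$, but viewed as a measure on the $n$-fold product of $\t A:=A^\ell$. Now the $i^{\rm th}$ marginal entropy in the $\TC$ computation is $\rmH(\xi_{[(i-1)\ell;i\ell)}\mid\pi_{[0;N)})\le\rmH(\xi_{[0;\ell)}\mid\pi_{[0;\ell)})<(\rmh(\xi\mid\pi)+\delta)\ell$, while the joint entropy is $\ge N\rmh(\xi\mid\pi)$ by Lemma~\ref{lem:rel-ent-over-periodic}. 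Hence $\bbE\TC\le\delta N$ on the nose (this is Lemma~\ref{lem:good-set}), and the number of Theorem~C cells is at most $\rme^{\eps N}$ once $\delta<\eps/c$. The rest of the proof --- encoding the cell index as a $\{0,1\}$-string along the tower, and then upgrading from extremality on $\mu(\cdot\mid F)$ at scale $N$ to genuine relative extremality via Corollary~\ref{cor:stretch} and Lemma~\ref{lem:ext-stab-lift} --- follows the outline you sketched.
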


In the light of Proposition~\ref{prop:ABI-FD} and Theorem~\ref{thm:FD-Bern}, this should be seen as a quantitative step towards relative Bernoullicity for $\xi$ over $\pi \vee \phi$.  In Subsection~\ref{subs:AA}, we apply Proposition~\ref{prop:basic-ext2} repeatedly with smaller and smaller values of $r$ to prove Theorem A under assumption (A).  Then it only remains to use some orbit equivalence theory to deduce Theorem A in full.

Throughout this section we abbreviate $\rmH_\mu$ to $\rmH$ and $\rmh(\xi,\mu,T\,|\,\pi)$ to $\rmh(\xi\,|\,\pi)$, and similarly.  When the measure is omitted from the notation, the correct choice is always $\mu$.

\begin{rmk}
It is possible to prove Theorem A in a single step, without ever treating the special case of assumption (A), by replacing periodic sets with sufficiently good Rokhlin sets (see, for instance,~\cite[p71]{Hal60}).  However, the use of Rokhlin's lemma introduces another error tolerance, which then complicates several of the estimates in the proof.  Since we use orbit equivalences to extend Theorem A to general amenable groups anyway, there seems to be little value in trying to avoid assumption (A).  Of course, Rokhlin's lemma is still implicitly at work within the results that we cite from orbit equivalence theory.
\end{rmk}

\subsection{The construction of a new observable}\label{subs:preA1}

Before proving Proposition~\ref{prop:basic-ext2}, we formulate and prove a proposition that gives a new observable $\phi$ with some related but more technical properties.

\begin{prop}\label{prop:basic-ext}
Suppose assumption (A) holds, and let $r > 0$ and $\eps > 0$.  Then there exist a new process $\phi$ on $(X,\mu,T)$, a positive integer $N$, a value $\k > 0$, and a $\phi_0$-measurable and $N$-periodic set $F$ modulo $\mu$ such that:
\begin{enumerate}
	\item[(a)] $\rmh(\phi) < \eps$;
	\item[(b)] $\xi_{[0;N)}$ is relatively $(\k N,r)$-extremal over $(\pi\vee \phi)_{[0;N)}$ according to $\mu(\,\cdot\,|\,F)$;
	\item[(c)] $\rmH(\xi_{[0;N)}\,|\,(\pi\vee \phi)_{[0;N)};F) < \big(\rmh(\xi\,|\,\pi\vee \phi) + r\k\big)\cdot N$.
\end{enumerate}
\end{prop}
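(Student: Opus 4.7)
My plan is to produce $\phi$ by applying Theorem~C fibrewise to the conditional distribution of $\xi_{[0;N)}$ given $\pi_{[0;N)}$ on the event $F$, and then defining $\phi$ so that $\phi_0$ records (i) the position in the $N$-cycle of $F$ and (ii) the index of the Theorem~C cell that $\xi_{[0;N)}$ falls into.  Specifically, I first choose parameters: given $r,\eps>0$, apply Theorem~C with inputs $\eps_0 := \min(\eps/2,r/2)$ and $r_0 := r/2$ to obtain constants $c>0$ and $\k>0$.  Next, use assumption~(A) to pick a large integer $N$ (to be fixed below) and an $N$-periodic set $F_0 \subseteq Y$; set $F := \pi^{-1}(F_0)$, so that $F$ is $N$-periodic under $T$ modulo $\mu$ and is $\pi$-measurable.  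For each $\bf{b}\in B^N$ with positive conditional probability, let $\theta_{\bf{b}} := \mu_{\xi_{[0;N)}}(\,\cdot\,|\,\pi_{[0;N)}=\bf{b},F) \in \Pr(A^N)$.

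The heart of the proof is controlling the average total correlation
\[\bar E \;:=\; \int \TC(\theta_{\bf{b}})\ \nu_{[0;N)}(\d \bf{b}\,|\,F)\;=\;\sum_{i=0}^{N-1}\rmH(\xi_i\,|\,\pi_{[0;N)};F) - \rmH(\xi_{[0;N)}\,|\,\pi_{[0;N)};F).\]
I will argue (in the vein of the referenced Lemma~\ref{lem:good-set}) that for any $\delta>0$, choosing $N$ sufficiently large makes $\bar E<\delta N$.  The main input is Lemma~\ref{lem:rel-ent-over-periodic}, which, applied to the $N$-periodic set $F$, identifies $\rmH(\xi_{[0;N)}\,|\,\pi;F)$ with $\rmh(\xi\,|\,\pi)\cdot N$ exactly; combined with the convergence of $\rmH(\xi_{[0;N)}\,|\,\pi_{[0;N)})/N$ to $\rmh(\xi\,|\,\pi)$ and a standard martingale-convergence estimate on the marginals $\rmH(\xi_i\,|\,\pi_{[0;N)};F)$, this forces the gap above to be $o(N)$.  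This is the step I expect to be the main obstacle, since $\TC(\theta_{\bf{b}})$ can be of order $N$ for individual $\bf{b}$, so the control must genuinely come from averaging and from the fact that $F$ has been chosen periodic.

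Granted the TC bound, I apply Theorem~C to each $\theta_{\bf{b}}$ to obtain a partition $A^N = U_1(\bf{b}) \cup \cdots \cup U_{m(\bf{b})}(\bf{b})$ with $m(\bf{b}) \leq c\rme^{cE(\bf{b})}$, a ``bad'' cell of mass $<\eps_0$, and the remaining conditioned measures $(\k N,r_0)$-extremal on $(A^N,d_N)$.  Then I define $\phi_0(x) := (i,\ell(x))$ for $x\in T^{-i}F$ with $0\le i<N$, where $\ell(x)$ is the index of the cell containing $\xi_{[-i,N-i)}(x)$ computed from $\bf{b}=\pi_{[-i,N-i)}(x)$; the clock part has entropy rate $0$ and the cell-label part contributes at most $(c\bar E+\log c)/N$, yielding (a) for $\delta$ and $N$ suitably chosen.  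Conclusion~(b) follows because the conditional law of $\xi_{[0;N)}$ given $(\pi\vee\phi)_{[0;N)}=(\bf{b},\ell)$ and $F$ equals $\theta_{\bf{b}}(\,\cdot\,|\,U_\ell(\bf{b}))$, which is extremal for $\ell\geq 2$; the bad cell of conditional mass $<\eps_0$ is absorbed via Lemma~\ref{lem:ker-ext-bad-set}, and averaging gives the required $(\k N,r)$-extremality.  Finally, (c) follows from applying Lemma~\ref{lem:rel-ent-over-periodic} to $\pi\vee\phi$, giving $\rmH(\xi_{[0;N)}\,|\,\pi\vee\phi;F) = N\rmh(\xi\,|\,\pi\vee\phi)$, together with the standard fact that the excess of $\rmH(\xi_{[0;N)}\,|\,(\pi\vee\phi)_{[0;N)};F)$ over this quantity is $o(N)$, so is less than $r\k\cdot N$ for $N$ large.
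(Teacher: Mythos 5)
Your overall architecture (condition on a periodic set, apply Theorem~C fibrewise, encode the cell label into a new low-entropy process) matches the paper's, but the step you yourself flag as the main obstacle is a genuine gap, and it cannot be closed as stated. You compute the total correlation of $\theta_{\bf{b}}$ with respect to the $N$ \emph{single-letter} coordinates, so that
\[\bar E = \sum_{i=0}^{N-1}\rmH(\xi_i\,|\,\pi_{[0;N)};F) - \rmH(\xi_{[0;N)}\,|\,\pi_{[0;N)};F),\]
and claim this is $o(N)$. The subtracted term is indeed $\approx \rmh(\xi\,|\,\pi)\cdot N$ by Lemma~\ref{lem:rel-ent-over-periodic}, but each summand in the first term conditions only on the $\pi$-process and \emph{not} on the other $\xi$-coordinates, so it is of size roughly $\rmH(\xi_0\,|\,\pi)$, not $\rmh(\xi\,|\,\pi) = \rmH(\xi_0\,|\,\pi\vee\xi_{(-\infty;0)})$. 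Whenever $\xi$ has any temporal dependence given $\pi$ (e.g.\ $\pi$ trivial and $\xi$ a zero-entropy process with $\rmH(\xi_0)>0$), $\bar E$ is of order $N$ with a constant that does not shrink as $N\to\infty$; no amount of averaging over $\bf{b}$ or appeal to periodicity repairs this, and then Theorem~C gives $\exp(\Theta(N))$ cells with a constant too large for conclusion~(a). The fix is the paper's two-time-scale device: first choose $\ell$ with $\rmH(\xi_{[0;\ell)}\,|\,\pi_{[0;\ell)}) < (\rmh(\xi\,|\,\pi)+\delta^2/2)\ell$, set $N=n\ell$, and regard $A^N$ as $(\t{A})^n$ with $\t{A}:=A^\ell$, so the total correlation is taken over the $n$ blocks; then the first sum is $\le n(\rmh(\xi\,|\,\pi)+\delta^2)\ell$ and the gap really is $\le \delta^2 N$ (Lemma~\ref{lem:good-set}). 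Theorem~C is then applied with alphabet $A^\ell$ and dimension $n$, and $\k=\k_{\rm C}/\ell$.

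Two secondary points. First, you take $F$ to be the preimage of an arbitrary $N$-periodic set, but the argument needs $F$ chosen among the $N$ translates $F_j$ so that both $\rmH(\xi_{[0;N)}\,|\,\pi_{[0;N)};F_j)$ and the averaged block entropies $\frac1n\sum_i\rmH(\xi_{[0;\ell)}\,|\,\pi_{[0;\ell)};T^{-i\ell}F_j)$ are close to their Kolmogorov--Sinai limits; this requires the Markov-inequality selection of Lemma~\ref{lem:choice-of-F}, since an arbitrary translate can fail either bound. Second, your argument for (c) invokes ``the excess of $\rmH(\xi_{[0;N)}\,|\,(\pi\vee\phi)_{[0;N)};F)$ over $N\rmh(\xi\,|\,\pi\vee\phi)$ is $o(N)$ for $N$ large,'' but $\phi$ depends on $N$, so there is no limit to take. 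The paper instead gets (c) by an exact chain-rule computation: on $F$ the label is a function of $(\pi\vee\xi)_{[0;N)}$, so $\rmH(\xi_{[0;N)}\,|\,(\pi\vee\phi)_{[0;N)};F)=\rmH(\xi_{[0;N)}\,|\,\pi_{[0;N)};F)-\rmH(\psi_{[0;N)}\,|\,\pi_{[0;N)};F)$, and the two terms are controlled by Lemma~\ref{lem:choice-of-F}(a) and Lemma~\ref{lem:rel-ent-over-periodic} respectively, together with the identity $\rmh(\xi\,|\,\pi)-\rmh(\psi\,|\,\pi)=\rmh(\xi\,|\,\pi\vee\phi)$.
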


The proof of Proposition~\ref{prop:basic-ext} occupies this subsection.  It is the longest and trickiest part of the proof of Theorem A.  It includes the key point at which we apply Theorem C.

A warning is in order at this juncture.  In the statement of Theorem C we denote the alphabet by $A$.  However, in this subsection our application of Theorem C does not use the alphabet $A$ of the process $\xi$, but rather some large Cartesian power $A^\ell$. The integer $\ell$ is chosen in Step 1 of the proof below, based on the behaviour of the process $\xi$ (see choice (P2)).

The data $N$ and $F$ given by Proposition~\ref{prop:basic-ext} do not appear in Proposition~\ref{prop:basic-ext2}, but they play an essential auxiliary role in the construction of the new process $\phi$ and the proof that it has all the desired properties.  This is why we formulate and prove Proposition~\ref{prop:basic-ext} separately.

Conclusions (b) and (c) of Proposition~\ref{prop:basic-ext} match hypotheses (i) and (ii) of Proposition~\ref{prop:ext-ABI}, but for the conditioned measure $\mu(\,\cdot\,|\,F)$ rather than $\mu$.  In the next subsection we show how true extremality of $\xi$ over $\pi\vee \phi$ can be deduced from Proposition~\ref{prop:basic-ext} using Corollary~\ref{cor:stretch}. 

We break the construction of $F$ and $\phi$ into several steps, and complete the proof of Proposition~\ref{prop:basic-ext} at the end of this subsection.

\subsection*{Step 1: choice of parameters}

By reducing $\eps$ if necessary, we may assume it is less than $\log 2$. We now choose several more auxiliary parameters as follows.
\begin{itemize}
\item[(P1)] (Constants provided by Theorem C.) Let $c_{\rm{C}}$ and $\k_{\rm{C}}$ be the constants provided by Theorem C when both the input error tolerances in that theorem are set equal to $r$. Now choose some positive $\delta < \min\{\eps/c_{\rm{C}},r\}$.
\item[(P2)] (First time-scale.) Choose $\ell \in \bbN$ so large that
\[\rmH(\xi_{[0;\ell)}\,|\,\pi_{[0;\ell)}) < (\rmh(\xi\,|\,\pi) + \delta^2/2)\cdot \ell.\]
This choice of $\ell$ is possible by Lemma~\ref{lem:rel-ent-formula}.  Set $\k := \k_{\rm{C}}/\ell$.
\item[(P3)] (Second time-scale.) Lastly, choose $n \in\bbN$ so large that all of the following hold:
\begin{itemize}
\item[(P3.1)] (entropy estimate)
\[\rmH(\xi_{[0;n\ell)}\,|\,\pi_{[0;n\ell)}) < (\rmh(\xi\,|\,\pi) + \k r/2)\cdot n\cdot \ell;\]
\item[(P3.2)] (requirement for Theorem C) Theorem C applies to the $n$-fold power of the alphabet $A^\ell$ when both of the input error tolerances equal $r$;
\item[(P3.3)] (another auxiliary estimate needed below)
\[c_{\rm{C}}\exp(c_{\rm{C}}\delta n\ell) < \exp(\eps n\ell).\]
\end{itemize}
The fact that (P3.1) holds for all sufficiently large $n$ is a consequence of Lemma~\ref{lem:rel-ent-formula}.  The fact that (P3.2) can be satisfied is precisely Theorem C.

Let $N:= n\ell$.
\end{itemize}

Each of these choices is needed for a particular step in the argument below, and is explained when we reach that step.  The relation between $\ell$ and $n$ is very important: it allows us to regard certain conditional distributions on $A^{n\ell}$ as measures on the $n$-fold product of the alphabet $A^\ell$ for which we have control over the total correlation.  This control is established in Lemma~\ref{lem:good-set} below.

\subsection*{Step 2: introducing a periodic set}

Having chosen $N$, it is time to invoke assumption (A).  It provides a $\pi$-measurable set $F_0 \subseteq X$ which is $N$-periodic under $T$ modulo $\mu$.  Modulo negligible sets, we can now define a $\pi$-measurable factor map $\tau:X\to \bbZ/N\bbZ$ to a finite rotation by requiring that
\[\tau(x) = j \!\!\mod N\quad \hbox{when} \quad T^{-j}x \in F_0.\]
Let $F_j := \{\tau = j\!\!\mod N\}$ for each $j\in \bbZ$, so this equals $T^j F_0$ modulo $\mu$, and let $\ol{F}_j := \{\tau = j \!\!\mod \ell\}$, which agrees with
\[F_j \cup T^{-\ell}F_j\cup \cup\cdots \cup T^{-(n-1)\ell}F_j\]
modulo $\mu$. Each $F_j$ is $N$-periodic under $T$ modulo $\mu$, each $\ol{F}_j$ is $\ell$-periodic under $T$ modulo $\mu$, and $\ol{F}_{j+\ell} = \ol{F}_j$ modulo $\mu$ for every $j \in \bbZ$.

\begin{lem}\label{lem:choice-of-F}
	There exists $j \in \{0,1,\dots,N-1\}$ satisfying both
	\begin{equation}\tag{\emph{a}}
		\rmH(\xi_{[0;N)}\,|\,\pi_{[0;N)};F_j) < (\rmh(\xi\,|\,\pi) + \k r)\cdot N
		\end{equation}
	and
	\begin{equation}\tag{\emph{b}}
\frac{1}{n}\sum_{i=0}^{n-1}\rmH(\xi_{[0;\ell)}\,|\,\pi_{[0;\ell)};T^{-i\ell}F_j) < (\rmh(\xi\,|\,\pi) + \delta^2)\cdot \ell.
\end{equation}
\end{lem}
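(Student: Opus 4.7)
The plan is to show that two ``bad sets'' of indices in $\{0,1,\dots,N-1\}$ each have size less than $N/2$ by a straightforward Markov argument, so their union cannot exhaust $\{0,1,\dots,N-1\}$. Set $m := \rmh(\xi\,|\,\pi)$, and for each $j$ define
$$a_j := \rmH(\xi_{[0;N)}\,|\,\pi_{[0;N)};F_j) \quad\text{and}\quad b_j := \frac{1}{n}\sum_{i=0}^{n-1}\rmH(\xi_{[0;\ell)}\,|\,\pi_{[0;\ell)};T^{-i\ell}F_j),$$
so that conditions (a) and (b) become $a_j < (m+\k r)N$ and $b_j < (m+\delta^2)\ell$ respectively.

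The first step is to bound the two averages. Let $\tau$ denote the $\pi$-measurable observable indexing the equal-measure partition $(F_j)_{j=0}^{N-1}$. Since conditioning on $\tau$ can only decrease entropy,
$$\frac{1}{N}\sum_{j=0}^{N-1} a_j = \rmH(\xi_{[0;N)}\,|\,\pi_{[0;N)},\tau) \leq \rmH(\xi_{[0;N)}\,|\,\pi_{[0;N)}) < (m+\k r/2)N$$
by choice (P3.1). Interchanging the sums over $j$ and $i$ in the definition of $b_j$, and observing that $j\mapsto T^{-i\ell}F_j$ permutes the partition $(F_k)_{k=0}^{N-1}$ for each fixed $i$, the same argument yields
$$\frac{1}{N}\sum_{j=0}^{N-1} b_j = \rmH(\xi_{[0;\ell)}\,|\,\pi_{[0;\ell)},\tau) \leq \rmH(\xi_{[0;\ell)}\,|\,\pi_{[0;\ell)}) < (m+\delta^2/2)\ell$$
by choice (P2).

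The key step is to produce matching pointwise lower bounds $a_j \geq mN$ and $b_j \geq m\ell$; without these, Markov's inequality applied to $b_j$ would be ineffective whenever $m$ is large compared with $\delta^2$. The lower bound $a_j \geq mN$ is a direct application of Lemma~\ref{lem:rel-ent-over-periodic} to the $\pi$-measurable, $N$-periodic set $F_j$, combined with the trivial inequality $\rmH(\xi_{[0;N)}\,|\,\pi_{[0;N)};F_j) \geq \rmH(\xi_{[0;N)}\,|\,\pi;F_j)$. For $b_j$, apply the chain rule inside $\mu(\,\cdot\,|\,F_j)$ to split $\xi_{[0;N)}$ into its $n$ consecutive $\ell$-blocks, drop the cross-block conditioning in each term, and use $T$-equivariance to identify each block-entropy with an entropy of the form $\rmH(\xi_{[0;\ell)}\,|\,\pi_{[0;\ell)};T^{i\ell}F_j)$. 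Reindexing $i$ then turns the resulting sum into $nb_j$, so $a_j \leq n b_j$, and hence $b_j \geq a_j/n \geq m\ell$.

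Finally, the quantities $a_j - mN$ and $b_j - m\ell$ are both non-negative with averages strictly bounded by $\k r N/2$ and $\delta^2 \ell /2$ respectively, so Markov's inequality yields $|\{j : a_j \geq (m+\k r)N\}| < N/2$ and $|\{j : b_j \geq (m+\delta^2)\ell\}| < N/2$. These two exceptional sets cannot cover $\{0,1,\dots,N-1\}$, leaving at least one $j$ that satisfies both (a) and (b). The main conceptual subtlety is the cross-scale lower bound $b_j \geq m\ell$: it does not follow from any property of $b_j$ in isolation, but only by cascading the lower bound on $a_j$ through the chain-rule inequality $a_j \leq nb_j$, and it is precisely what makes the Markov argument effective at the fine tolerance $\delta^2\ell$ no matter how large $m$ may be.
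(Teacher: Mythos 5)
Your proof is correct. The skeleton — average the two families of quantities over $j$, establish matching pointwise lower bounds of $\rmh(\xi\,|\,\pi)\cdot N$ and $\rmh(\xi\,|\,\pi)\cdot\ell$, run Markov's inequality twice, and take a union bound — is exactly the paper's, and your treatment of inequality (a) is identical. Where you genuinely diverge is in how the lower bound $b_j \geq \rmh(\xi\,|\,\pi)\cdot\ell$ is obtained. The paper never bounds $b_j$ from below at all: it introduces the $\ell$-periodic sets $\ol{F}_j = \bigcup_i T^{-i\ell}F_j$, observes via the chain rule that $b_j \leq \rmH(\xi_{[0;\ell)}\,|\,\pi_{[0;\ell)};\ol{F}_j)$, and runs the Markov argument on these dominating quantities, whose pointwise lower bound comes from Lemma~\ref{lem:rel-ent-over-periodic} applied at the $\ell$-periodic scale. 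You instead cascade the $N$-scale bound downward through the chain-rule inequality $a_j \leq n b_j$ (which is sound: the sets $T^{i\ell}F_j$ and $T^{-i\ell}F_j$ for $0\leq i<n$ coincide as families by $N$-periodicity, so the reindexing works). Your route avoids introducing the sets $\ol{F}_j$ entirely and only ever invokes the periodic-set entropy lemma at the single scale $N$; the paper's route buys a slightly cleaner Markov step (on only $\ell$ distinct values) and reuses the $\ol{F}_j$ machinery it has already set up. Both are equally rigorous, and your observation that the cross-scale transfer is the crux — since a naive lower bound on $b_j$ in isolation is unavailable — is exactly the right diagnosis of where the difficulty sits.
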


\begin{proof}
Up to a negligible set, the sets $F_j$ for $0 \leq j \leq N-1$ agree with the partition of $X$ generated by $\tau$.  Therefore we can compute $\rmH(\xi_{[0;N)}\,|\,\tau\vee \pi_{[0;N)})$ by first conditioning onto each of the sets $F_j$, and then refining that partition by $\pi_{[0;N)}$.  This gives
	\begin{multline*}
		\frac{1}{N}\sum_{j =0}^{N-1}\rmH(\xi_{[0;N)}\,|\,\pi_{[0;N)};F_j) = \rmH(\xi_{[0;N)}\,|\,\tau\vee \pi_{[0;N)}) \\ \leq \rmH(\xi_{[0;N)}\,|\,\pi_{[0;N)}) < (\rmh(\xi\,|\,\pi) + \k r/2)\cdot N,
	\end{multline*}
	by assumption (P3.1). On the other hand, each of the sets $F_j$ is $\pi$-measurable and $N$-periodic under $T$, so the terms in the left-hand average here all satisfy
	\[\rmH(\xi_{[0;N)}\,|\,\pi_{[0;N)};F_j) \geq \rmH(\xi_{[0;N)}\,|\,\pi;F_j) \geq \rmh(\xi\,|\,\pi)\cdot N,\]
	using Lemma~\ref{lem:rel-ent-over-periodic} for the second inequality.  Therefore, by Markov's inequality, inequality (a) is satisfied for more than half of the values $j \in \{0,1,\dots,N-1\}$.
	
Turning to inequality (b), we start by observing the following.  Since $\ol{F}_{j+\ell} = \ol{F}_j$ modulo $\mu$ and $N$ is a multiple of $\ell$, we have
\[\frac{1}{N}\sum_{j=0}^{N-1}\rmH(\xi_{[0;\ell)}\,|\,\pi_{[0;\ell)};\ol{F}_j) = \frac{1}{\ell}\sum_{j=0}^{\ell-1}\rmH(\xi_{[0;\ell)}\,|\,\pi_{[0;\ell)};\ol{F}_j).\]
This lets us re-apply the argument for inequality (a).  This time we use the $\ell$-periodic sets $\ol{F}_j$ in place of $F_j$, the maps $\xi_{[0;\ell)}$ and $\pi_{[0;\ell)}$, and assumption (P2) in place of (P3.1).  We deduce that more than half of the values $j\in \{0,1,\dots,N-1\}$ satisfy
\begin{equation}\label{eq:bound-like-a}
\rmH(\xi_{[0;\ell)}\,|\,\pi_{[0;\ell)};\ol{F}_j) < (\rmh(\xi\,|\,\pi) + \delta^2)\cdot \ell.
\end{equation}
On the other hand, for each $j$, the sets $T^{-i\ell}F_j$ for $0 \leq i \leq n-1$ are a partition of $\ol{F}_j$ into $n$ further subsets of equal measure (again up to a negligible set), and this partition of $\ol{F}_j$ is generated by the restriction $\tau|\ol{F}_j$. From this, the chain rule, and monotonicity under conditioning, we deduce that
\[	\frac{1}{n}\sum_{i=0}^{n-1}\rmH(\xi_{[0;\ell)}\,|\,\pi_{[0;\ell)};T^{-i\ell}F_j) = \rmH(\xi_{[0;\ell)}\,|\,\pi_{[0;\ell)}\vee \tau;\ol{F}_j) 
	\leq \rmH(\xi_{[0;\ell)}\,|\,\pi_{[0;\ell)};\ol{F}_j).\]
Therefore any value of $j$ which satisfies~\eqref{eq:bound-like-a} also satisfies inequality (b).

Combining the conclusions above, there is at least one $j$ for which both (a) and (b) hold.
\end{proof}

With $j$ as given by Lemma~\ref{lem:choice-of-F}, fix $F:= F_j$.

\subsection*{Step 3: some properties of the conditional marginals}

Set $\t{A} := A^\ell$, so that we may identify $A^{n\ell} = A^N$ with $\t{A}^n$. Now define a new kernel by
\[\t{\l}(\bf{a}\,|\,\bf{b}) := \mu(\xi_{[0;N)} = \bf{a}\,|\,\{\pi_{[0;N)} = \bf{b}\}\cap F) \quad (\bf{a} \in A^N = \t{A}^n,\ \bf{b} \in B^N).\]
This is an instance of an $N$-block kernel, as in Definition~\ref{dfn:block-kernel}, except for the extra conditioning on $F$.  At this point it is important that we regard this as a kernel from $B^N$ to $\t{A}^n$, rather than to $A^N$.  In addition, define the probability measure $\t{\nu}$ on $B^N$ by $\t{\nu}(\bf{b}) := \mu(\pi_{[0;N)} = \bf{b}\,|\,F)$.

\begin{lem}\label{lem:good-set}
	Let $W$ be the set of all $\bf{b} \in B^N$ which satisfy
	\[\t{\nu}(\bf{b}) > 0 \quad \hbox{and} \quad \TC(\,\t{\l}(\,\cdot\,|\,\bf{b})\,) \leq \delta N.\]
	Then $\t{\nu}(W) = \mu(\pi_{[0;N)} \in W\,|\,F) > 1-\delta$.
\end{lem}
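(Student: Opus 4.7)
The plan is to bound the $\t{\nu}$-average of $\TC(\,\t{\l}(\,\cdot\,|\,\bf{b})\,)$ by $\delta^2 N$ and then apply Markov's inequality. Write out the total correlation by expanding the definition: for each $\bf{b}$ with $\t{\nu}(\bf{b}) > 0$,
\[\TC(\,\t{\l}(\,\cdot\,|\,\bf{b})\,) = \sum_{i=0}^{n-1}\rmH\big(\text{$i^{\rm{th}}$ marginal of $\t{\l}(\,\cdot\,|\,\bf{b})$ on $\t{A}$}\big) - \rmH(\,\t{\l}(\,\cdot\,|\,\bf{b})\,),\]
treating $\t{\l}(\,\cdot\,|\,\bf{b})$ as a measure on $\t{A}^n = A^{n\ell}$. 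The $i^{\rm{th}}$ marginal on $\t{A} = A^\ell$ is precisely the conditional distribution of $\xi_{[i\ell;(i+1)\ell)}$ given $\pi_{[0;N)} = \bf{b}$ on the event $F$, and the joint is the conditional distribution of $\xi_{[0;N)}$.

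Integrating with respect to $\t{\nu}$ and using~\eqref{eq:cond-ent-formula}, I get
\[\int \TC(\,\t{\l}(\,\cdot\,|\,\bf{b})\,)\,\t{\nu}(\d \bf{b}) = \sum_{i=0}^{n-1}\rmH(\xi_{[i\ell;(i+1)\ell)}\,|\,\pi_{[0;N)};F) - \rmH(\xi_{[0;N)}\,|\,\pi_{[0;N)};F).\]
The first sum I bound term by term, using first that extra conditioning can only reduce Shannon entropy,
\[\rmH(\xi_{[i\ell;(i+1)\ell)}\,|\,\pi_{[0;N)};F) \leq \rmH(\xi_{[i\ell;(i+1)\ell)}\,|\,\pi_{[i\ell;(i+1)\ell)};F),\]
and then the $T$-invariance of $\mu$ applied to the shift by $T^{i\ell}$, which sends $F$ to $T^{-i\ell}F$ and the block $[i\ell;(i+1)\ell)$ to $[0;\ell)$:
\[\rmH(\xi_{[i\ell;(i+1)\ell)}\,|\,\pi_{[i\ell;(i+1)\ell)};F) = \rmH(\xi_{[0;\ell)}\,|\,\pi_{[0;\ell)};T^{-i\ell}F).\]
Summing over $i$ and invoking property (b) of Lemma~\ref{lem:choice-of-F} gives an upper bound of $(\rmh(\xi\,|\,\pi) + \delta^2)\cdot n\ell = (\rmh(\xi\,|\,\pi)+\delta^2)\cdot N$. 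The joint entropy I bound below by applying Lemma~\ref{lem:rel-ent-over-periodic} (since $F$ is $\pi$-measurable and $N$-periodic modulo $\mu$), which yields $\rmH(\xi_{[0;N)}\,|\,\pi_{[0;N)};F) \geq \rmh(\xi\,|\,\pi)\cdot N$. Subtracting, the integrated $\TC$ is at most $\delta^2 N$.

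Markov's inequality now gives
\[\t{\nu}\big\{\bf{b}:\ \t{\nu}(\bf{b}) > 0\ \hbox{and}\ \TC(\,\t{\l}(\,\cdot\,|\,\bf{b})\,) > \delta N\big\} < \delta^2 N/(\delta N) = \delta,\]
where the restriction $\t{\nu}(\bf{b}) > 0$ costs nothing since $\t{\nu}$ puts no mass on its complement. Hence $\t{\nu}(W) > 1-\delta$, which is the claim.

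The argument is essentially a bookkeeping exercise once one has the right pair of inequalities for joint vs. marginal entropies. The only subtle point is that the bound on marginals uses the $\ell$-periodicity arranged through the auxiliary set $\ol{F}_j$ in Lemma~\ref{lem:choice-of-F}, while the matching lower bound on the joint entropy uses the $N$-periodicity of $F$; these two scales are precisely what was set up in (P2) and (P3) so that the slacks $\delta^2$ and $0$ combine to give $\delta^2 N$, which is an order of magnitude smaller than the threshold $\delta N$ needed for a nontrivial Markov step.
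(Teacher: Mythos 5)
Your proof is correct and follows essentially the same route as the paper's: expand $\TC$ via the conditional entropies of the $\ell$-blocks, integrate over $\t{\nu}$, bound the marginal terms using monotonicity under conditioning, $T$-invariance, and Lemma~\ref{lem:choice-of-F}(b), bound the joint term below via Lemma~\ref{lem:rel-ent-over-periodic}, and finish with Markov's inequality. The only cosmetic difference is that the paper makes explicit the intermediate step $\rmH(\xi_{[0;N)}\,|\,\pi_{[0;N)};F) \geq \rmH(\xi_{[0;N)}\,|\,\pi;F)$ before invoking Lemma~\ref{lem:rel-ent-over-periodic}, which you elide but which is immediate.
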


\begin{proof}
	Consider a string $\bf{b} \in B^N$ for which $\t{\nu}(\bf{b})  > 0$.  Then $\t{\l}(\,\cdot\,|\,\bf{b})$ is the conditional distribution of $\xi_{[0;N)}$ given the event $\{\pi_{[0;N)} = \bf{b}\}\cap F$, where we regard $\xi_{[0;N)}$ as taking values in $\t{A}^n$.  Accordingly, the $n$ marginals of $\t{\l}(\,\cdot\,|\,\bf{b})$ on the space $\t{A}$ are the conditional distributions of the $\t{A}$-valued observables
	\[\xi_{[0;\ell)},\ \xi_{[\ell;2\ell)},\ \dots,\ \xi_{[(n-1)\ell;n\ell)}\]
	given the event $\{\pi_{[0;N)} = \bf{b}\}\cap F$.  Therefore, by the definition of total correlation,
	\begin{align*}
\TC\big(\,\t{\l}(\,\cdot\,|\,\bf{b})\,\big) &= \sum_{i=1}^n \rmH(\xi_{[(i-1)\ell;i\ell)}\,|\,\{\pi_{[0;N)} = \bf{b}\}\cap F) - \rmH(\xi_{[0;N)}\,|\,\{\pi_{[0;N)} = \bf{b}\}\cap F)\\
&= \sum_{i=1}^n \rmH_{\mu(\,\cdot\,|\,F)}(\xi_{[(i-1)\ell;i\ell)}\,|\,\{\pi_{[0;N)} = \bf{b}\}) - \rmH_{\mu(\,\cdot\,|\,F)}(\xi_{[0;N)}\,|\,\{\pi_{[0;N)} = \bf{b}\}).
\end{align*}
	
	Integrating this equality with respect to $\t{\nu}(\d\bf{b})$, we obtain:
	\begin{equation}\label{eq:int-TC}
	\int \TC\big(\,\t{\l}(\,\cdot\,|\,\bf{b})\,\big)\ \t{\nu}(\d \bf{b}) = \sum_{i=1}^n \rmH(\xi_{[(i-1)\ell;i\ell)}\,|\,\pi_{[0;N)};F) - \rmH(\xi_{[0;N)}\,|\,\pi_{[0;N)};F).
	\end{equation}
	By the monotonicity of Shannon entropy under conditioning, and then by the $T$-invariance of $\mu$, this is at most
	\begin{align}\label{eq:int-over-b}
	&\sum_{i=1}^n \rmH(\xi_{[(i-1)\ell;i\ell)}\,|\,\pi_{[(i-1)\ell;i\ell)};F) - \rmH(\xi_{[0;N)}\,|\,\pi_{[0;N)};F)\nonumber \\
	&= \sum_{i=0}^{n-1}\rmH(\xi_{[0;\ell)}\,|\,\pi_{[0;\ell)};T^{-i\ell}F) - \rmH(\xi_{[0;N)}\,|\,\pi_{[0;N)};F).
	\end{align}
By inequality (b) from Lemma~\ref{lem:choice-of-F}, the sum of positive terms here is less than
\[(\rmh(\xi\,|\,\pi) + \delta^2)\cdot \ell\cdot n = (\rmh(\xi\,|\,\pi) + \delta^2)\cdot N.\]
On the other hand, $\rmH(\xi_{[0;N)}\,|\,\pi_{[0;N)};F)$ is at least $\rmH(\xi_{[0;N)}\,|\,\pi;F)$ by the monotonicity of Shannon entropy under conditioning, and the latter is at least $\rmh(\xi\,|\,\pi)\cdot N$ by Lemma~\ref{lem:rel-ent-over-periodic}.  Therefore the whole of~\eqref{eq:int-over-b} is strictly less than
	\[N\cdot \big(\rmh(\xi\,|\,\pi) + \delta^2\big) - N\cdot \rmh(\xi\,|\,\pi) = \delta^2 N,\]
	and hence so is the integral on the left-hand side of~\eqref{eq:int-TC}. So $\t{\nu}(W) > 1 - \delta$ by Markov's inequality.
\end{proof}

\subsection*{Step 4: application of Theorem C}

For each $\bf{b} \in W$, we now apply Theorem C to the measure $\t{\l}(\,\cdot\,|\,\bf{b})$ with the choice of parameters in (P1) above.  For each $\bf{b} \in W$, that theorem provides a partition
\begin{equation}\label{eq:decomp-from-C}
\t{A}^n = U_{\bf{b},1} \cup \cdots \cup U_{\bf{b},m_{\bf{b}}}
	\end{equation}
with the following properties:
\begin{enumerate}
	\item[(a)] The number of cells satisfies $m_{\bf{b}} \leq c_\rm{C}\exp(c_\rm{C} \delta N)$, which is less than $\rme^{\eps N}$ by our choice in (P3.3).
	\item[(b)] The first cell satisfies $\t{\l}(U_{\bf{b},1}\,|\,\bf{b}) < r$.
	\item[(c)] The twice-conditioned measure $(\t{\l}(\,\cdot\,|\,\bf{b}))_{|U_{\bf{b},j}}$ is well-defined and satisfies T$(\k_\rm{C} n,r)$ for every $j=2,3,\dots,m_{\bf{b}}$, where the underlying metric is the normalized Hamming metric $d_n$ on the $n$-fold product $\t{A}^n$. This is greater than or equal to the normalized Hamming metric $d_N$ when we identify $\t{A}^n$ with the $N$-fold product $A^N$, so $(\t{\l}(\,\cdot\,|\,\bf{b}))_{|U_{\bf{b},j}}$ satisfies T$(\k_\rm{C} n,r)$ for that choice of metric also.  Recalling that $\k = \k_\rm{C}/\ell$, let us write this last inequality as T$(\k N,r)$.
\end{enumerate}
By including a few copies of the empty set, let us suppose that $m_{\bf{b}}$ equals a fixed value $m$ for each $\bf{b} \in W$, with the modification that conclusion (c) above holds only when $U_{\bf{b},j}\neq \emptyset$. Also, for $\bf{b} \in B^N\setminus W$, choose an arbitrary partition of $\t{A}^n$ into $m$ subsets, so that the notation in~\eqref{eq:decomp-from-C} still makes sense for those $\bf{b}$.

We now switch back from $\t{A}^n$ to $A^N$ in our discussion.

In the rest of the construction, we need the right-hand side of~\eqref{eq:decomp-from-C} to be indexed by strings over some alphabet, rather than by the integers $1$, \ldots, $m$.  Since we have chosen $\eps < \log 2$, we have $m \leq 2^N$.  So let $S\subseteq \{0,1\}^N$ be any subset of cardinality $m$, and re-write the partitions on the right-hand side of~\eqref{eq:decomp-from-C} so that the parts are indexed by the strings in $S$:
\begin{equation}\label{eq:decomp-from-C2}
A^N = \bigcup_{\bf{c} \in S}U_{\bf{b},\bf{c}}.
\end{equation}
Do this in such a way that some fixed element $\bf{c}_1 \in S$ has $U_{\bf{b},\bf{c}_1}$ equal to the `bad' cell $U_{\bf{b},1}$ from property (b) for each $\bf{b}$.

Define $\Psi:B^N\times A^N\to S$ by
\[\Psi(\bf{b},\bf{a}) = \bf{c} \quad \hbox{whenever}\ \bf{a} \in U_{\bf{b},\bf{c}},\]
and define $\Phi:F\to S$ by
\[\Phi(x) := \Psi\big(\pi_{[0;N)}(x),\xi_{[0;N)}(x)\big).\]

\begin{lem}\label{lem:little-bit-ext}
	The map $\xi_{[0;N)}$ is $(\k N,3r)$-extremal over $\pi_{[0;N)}\vee \Phi$ according to $\mu(\,\cdot\,|\,F)$.
	\end{lem}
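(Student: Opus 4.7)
The plan is to unpack Definition~\ref{dfn:RV-ext} and check the extremality pointwise on the state space of $\pi_{[0;N)}\vee \Phi$, relying on Theorem~C for most pairs and treating a small exceptional set by the trivial bound from the unit diameter of $(A^N,d_N)$.

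First I would identify the relevant conditional distribution. Under the measure $\mu(\,\cdot\,|\,F)$, the pair $\pi_{[0;N)}\vee \Phi$ takes values in $B^N\times S$, and on the event $\{\pi_{[0;N)}=\bf{b}\}\cap \{\Phi=\bf{c}\}$ the observable $\xi_{[0;N)}$ lies in $U_{\bf{b},\bf{c}}$.  A short unwinding shows that a version of the conditional distribution of $\xi_{[0;N)}$ given $\pi_{[0;N)}\vee \Phi$ under $\mu(\,\cdot\,|\,F)$, evaluated at a pair $(\bf{b},\bf{c})$ with $\t{\nu}(\bf{b})\cdot\t{\l}(U_{\bf{b},\bf{c}}\,|\,\bf{b})>0$, is precisely the twice-conditioned measure $(\t{\l}(\,\cdot\,|\,\bf{b}))_{|U_{\bf{b},\bf{c}}}$ on $A^N$.

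Next I would split $B^N\times S$ into a good set and a bad set.  Call a pair $(\bf{b},\bf{c})$ \emph{good} if $\bf{b}\in W$ and $\bf{c}\neq \bf{c}_1$, and \emph{bad} otherwise.  For good $(\bf{b},\bf{c})$, conclusion (c) from Theorem~C gives that $(\t{\l}(\,\cdot\,|\,\bf{b}))_{|U_{\bf{b},\bf{c}}}$ satisfies T$(\k N,r)$, so it is $(\k N,r)$-extremal by the remark that a T-inequality implies extremality of the same parameters.  For bad $(\bf{b},\bf{c})$, every probability measure on $(A^N,d_N)$ is trivially $(\k N,1)$-extremal since the diameter of $d_N$ is at most $1$.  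So I would set $r_{(\bf{b},\bf{c})}:=r$ on the good set and $r_{(\bf{b},\bf{c})}:=1$ on the bad set, in accordance with Definition~\ref{dfn:ext}.

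Finally I would bound the $\mu(\,\cdot\,|\,F)$-probability of the bad set.  The contribution from $\bf{b}\notin W$ is at most $1-\t{\nu}(W)<\delta$ by Lemma~\ref{lem:good-set}.  The contribution from the remaining bad pairs, those with $\bf{b}\in W$ and $\bf{c}=\bf{c}_1$, is at most $\sum_{\bf{b}\in W}\t{\nu}(\bf{b})\cdot \t{\l}(U_{\bf{b},\bf{c}_1}\,|\,\bf{b})<r$, by conclusion (b) of our application of Theorem~C.  Adding these and using $\delta<r$ from choice (P1), the bad set has mass less than $2r$, and therefore the average
\begin{equation*}
\int r_{(\bf{b},\bf{c})}\,\d((\pi_{[0;N)}\vee\Phi)_\ast\mu(\,\cdot\,|\,F))\leq r\cdot 1+1\cdot 2r=3r.
\end{equation*}
This verifies Definition~\ref{dfn:RV-ext} with parameters $(\k N,3r)$ and completes the proof.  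No step is really the obstacle here: the work is purely bookkeeping, and all of the substance has already been invested in the careful parameter choices of Step 1 and in the application of Theorem~C in Step 4.
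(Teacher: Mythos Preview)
Your proof is correct and is essentially identical to the paper's: both identify the conditional distribution of $\xi_{[0;N)}$ given $\pi_{[0;N)}\vee\Phi$ under $\mu(\,\cdot\,|\,F)$ as $(\t{\l}(\,\cdot\,|\,\bf{b}))_{|U_{\bf{b},\bf{c}}}$, bound the bad set by $\delta + r < 2r$ using Lemma~\ref{lem:good-set} and conclusion (b) of Theorem~C, and then combine the pointwise T$(\k N,r)$ on the good set with the trivial diameter bound on the bad set. The only cosmetic difference is that the paper packages the last step as an appeal to Lemma~\ref{lem:ker-ext-bad-set}, whereas you unfold that lemma by writing down the function $r_{(\bf{b},\bf{c})}$ explicitly.
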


\begin{proof}
The point is that the measures
\[\l_{\bf{b},\bf{c}} := (\t{\l}(\,\cdot\,|\,\bf{b}))_{|U_{\bf{b},\bf{c}}}\]
constitute a conditional distribution of $\xi_{[0;N)}$ given $\pi_{[0;N)}\vee \Phi$ according to $\mu(\,\cdot\,|\,F)$.  To see this, let $\bf{a} \in A^N$, $\bf{b} \in B^N$ and $\bf{c} \in S$, and suppose that the conditional probability
\[\frac{\mu\big(\xi_{[0;N)}\vee \pi_{[0;N)}\vee \Phi = (\bf{a},\bf{b},\bf{c})\,\big|\,F\big)}{\mu\big(\pi_{[0;N)}\vee \Phi = (\bf{b},\bf{c})\,\big|\,F\big)} = \frac{\mu\big(\{\xi_{[0;N)}\vee \pi_{[0;N)}\vee \Phi = (\bf{a},\bf{b},\bf{c})\}\cap  F\big)}{\mu\big(\{\pi_{[0;N)}\vee \Phi = (\bf{b},\bf{c})\}\cap F\big)}\]
has nonzero denominator. Then this fraction satisfies the following:
\begin{itemize}
	\item It is zero if $\bf{c} \neq \Psi(\bf{b},\bf{a})$, or equivalently if $\bf{a} \not\in U_{\bf{b},\bf{c}}$. In this case $\l_{\bf{b},\bf{c}}(\bf{a})$ is also zero.
\item If $\bf{c} = \Psi(\bf{b},\bf{a})$, then the fraction equals
\begin{multline*}
\frac{\mu\big(\{\pi_{[0;N)} = \bf{b}\}\cap \{\xi_{[0;N)} = \bf{a}\}\cap F\big)}{\mu\big(\{\pi_{[0;N)} = \bf{b}\}\cap \{\Psi(\bf{b},\xi_{[0;N)}) = \bf{c}\}\cap F\big)} \\
= \frac{\mu\big(\xi_{[0;N)} = \bf{a}\,\big|\,\{\pi_{[0;N)} = \bf{b}\}\cap F\big)}{\mu\big(\xi_{[0;N)}\in U_{\bf{b},\bf{c}}\,\big|\,\{\pi_{[0;N)} = \bf{b}\}\cap F\big)}
= \frac{\t{\l}(\bf{a}\,|\,\bf{b})}{\t{\l}(U_{\bf{b},\bf{c}}\,|\,\bf{b})} = \l_{\bf{b},\bf{c}}(\bf{a}).
\end{multline*}
\end{itemize}

Having identified this conditional distribution, we may use the following consequence of our appeal to Theorem C:
\begin{align*}
&\mu\big(\pi_{[0;N)}\vee \Phi \in \big\{(\bf{b},\bf{c}):\ \l_{\bf{b},\bf{c}}\ \hbox{satisfies T$(\k N,r)$}\big\}\,\big|\,F\big)\\
&\geq 1 - \mu\big(\pi_{[0;N)} \not\in W \ \hbox{or}\ \xi_{[0;N)} \in U_{\pi_{[0;N)},\bf{c}_1}\,\big|\,F\big)\\
&\geq 1 - \mu(\pi_{[0;N)}\not\in W\,|\,F) - \int_W \t{\l}(U_{\bf{b},\bf{c}_1}\,|\,\bf{b})\,\t{\nu}(\d\bf{b}).
\end{align*}
By Lemma~\ref{lem:good-set} and conclusion (b) obtained from Theorem C, this is at least $1 - \delta - r$, which is at least $1-2r$.  Now Lemma~\ref{lem:ker-ext-bad-set} completes the proof.
\end{proof}

\subsection*{Step 5: finishing the construction}

It remains to construct the new process $\phi$.  It is obtained from $\tau$ together with another new observable $\psi_0:X \to \{0,1\}$. The idea behind $\psi_0$ is that names should be written as length-$N$ blocks that are given by $\Phi$, where the blocks start at the return times to $F$.  In notation, we wish to define $\psi_0$ so that
\begin{equation}\label{eq:def-psi}
	\psi_{[0;N)}|F = \Phi.
\end{equation}
This is possible because $F$ is $N$-periodic --- indeed, this is precisely why we introduce an $N$-periodic set into the construction.  The formal definition that achieves~\eqref{eq:def-psi} is as follows.  Given $x \in X$, choose $t \in [0;N)$ so that $T^{-t}x \in F$, and now let $\psi_0(x)$ be the $(t+1)^\rm{th}$ letter of the string
\[\Phi(T^{-t}x).\]
(We need the $(t+1)^\rm{th}$ letter because we index $\{0,1\}^N$ using $\{1,\dots,N\}$, not $[0;N)$.)

Let $\psi:X\to \{0,1\}^\bbZ$ be the process generated by $\psi_0$. Now let $\phi:= \tau\vee \psi$, and observe that $\pi \vee \phi$ and $\pi \vee \psi$ generate the same factor of $(X,\mu,T)$.

\begin{proof}[Proof of Proposition~\ref{prop:basic-ext}]
First,~\eqref{eq:def-psi} and Lemma~\ref{lem:rel-ent-over-periodic} give that
\[\rmh(\phi) = \rmh(\psi) \leq \frac{1}{N}\rmH(\psi_{[0;N)}\,|\,F) = \frac{1}{N}\rmH(\Phi\,|\,F)  \leq \frac{\log |S|}{N} < \eps.\]
This proves (a).
	
Next, the definition~\eqref{eq:def-psi} has the consequence that the conditional distribution of $\xi_{[0;N)}$ given $\pi_{[0;N)}\vee \phi_{[0;N)}$ according to $\mu(\,\cdot\,|\,F)$ is the same as its conditional distribution given $\pi_{[0;N)}\vee \Phi$ according to $\mu(\,\cdot\,|\,F)$.  In particular, $\tau$ has no effect on this conditional distribution, because $\tau|F$ is constant.  Therefore $\xi_{[0;N)}$ is relatively $(\k N,3r)$-extremal over $(\pi\vee \phi)_{[0;N)}$ according to $\mu(\,\cdot\,|\,F)$, by Lemma~\ref{lem:little-bit-ext}.

Now we turn to (c).  On the event $F$, we have from~\eqref{eq:def-psi} that the map $\psi_{[0;N)}$ agrees with $\Phi$, which is a function of $\pi_{[0;N)} \vee \xi_{[0;N)}$. Therefore the chain rule for Shannon entropy gives
\begin{align*}
\rmH(\xi_{[0;N)}\,|\,\phi_{[0;N)}\vee \pi_{[0;N)}\,;\,F) &= \rmH(\xi_{[0;N)}\,|\,\psi_{[0;N)}\vee \pi_{[0;N)}\,;\,F) \\ &= \rmH(\xi_{[0;N)}\,|\,\Phi\vee \pi_{[0;N)}\,;\,F)\\ &= \rmH(\xi_{[0;N)}\,|\,\pi_{[0;N)}\,;\,F) - \rmH(\Phi\,|\,\pi_{[0;N)}\,;\,F)\\
&=\rmH(\xi_{[0;N)}\,|\,\pi_{[0;N)}\,;\,F) - \rmH(\psi_{[0;N)}\,|\,\pi_{[0;N)}\,;\,F).
\end{align*}
By inequality (a) from Lemma~\ref{lem:choice-of-F}, this is less than
\[(\rmh(\xi\,|\,\pi) + \k r)\cdot N  - \rmH(\psi_{[0;N)}\,|\,\pi_{[0;N)}\,;\,F).\]
On the other hand, Lemma~\ref{lem:rel-ent-over-periodic} gives
\[\rmH(\psi_{[0;N)}\,|\,\pi_{[0;N)}\,;\,F) \geq \rmh(\psi\,|\,\pi)\cdot N,\]
so it follows that
\[\rmH(\xi_{[0;N)}\,|\,\phi_{[0;N)}\vee \pi_{[0;N)}\,;\,F) < (\rmh(\xi\,|\,\pi) - \rmh(\psi\,|\,\pi) + \k r)\cdot N.\]
Finally, $\pi \vee \psi$ and $\pi \vee \phi$ generate the same $\s$-algebra modulo $\mu$ and are both measurable with respect to $\pi \vee \xi$, so Lemma~\ref{lem:chain} can be applied and re-arranged to give
\[\rmh(\xi\,|\,\pi) - \rmh(\psi\,|\,\pi) = \rmh(\xi \vee \phi\,|\,\pi) - \rmh(\phi\,|\,\pi) = \rmh(\xi\,|\,\pi \vee \phi).\]
Substituting this into the previous inequality, we have shown that
\[\rmH(\xi_{[0;N)}\,|\,(\pi \vee \phi)_{[0;N)}\,;\,F) < \big(\rmh(\xi\,|\,\pi\vee \phi) + \k r\big)\cdot N.\]

Since $r < 3r$, this completes the proof of all three desired properties with $3r$ in place of $r$. This suffices because $r > 0$ was arbitrary.
\end{proof}

\subsection{Completion of the quantitative step}\label{subs:preA2}

We now complete the proof of Proposition~\ref{prop:basic-ext2}. Let $r' := r/17$, and apply Proposition~\ref{prop:basic-ext} with $r'$ in place of $r$. Let $\phi$, $N \in \bbN$, $\k > 0$ and $F\subseteq X$ be the data given by that proposition.

We show that this $\phi$ satisfies the conclusions of Proposition~\ref{prop:basic-ext2}.  Essentially, this requires that we replace conclusion (b) of Proposition~\ref{prop:basic-ext} with extremality that holds without reference to the periodic set $F$ and over arbitrarily long time-scales.

This replacement is enabled by the next two lemmas.  Let $\t{\pi} := \pi\vee \phi$, and suppose it takes values in $C^\bbZ$.  The first lemma gives extremality over longer time-scales, but still according to $\mu(\,\cdot\,|\,F)$.

\begin{lem}\label{lem:more-ext1}
For any $k \in \bbN$, the observable $\xi_{[0;kN)}$ is relatively $(\k kN,5r')$-extremal over $\t{\pi}_{[0;kN)}$ according to $\mu(\,\cdot\,|\,F)$.
\end{lem}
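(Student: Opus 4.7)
The plan is to apply Corollary~\ref{cor:stretch} (the stretching property of extremality) to the measure
\[\l := (\t{\pi}\vee \xi)_\ast \mu(\,\cdot\,|\,F)\]
on $C^\bbZ \times A^\bbZ$, regarded as a joint distribution of the block-observables $\b_{[0;N)} = \t{\pi}_{[0;N)}$ and $\a_{[0;N)} = \xi_{[0;N)}$. Because $F$ is $N$-periodic under $T$ modulo $\mu$, the conditioned measure $\mu(\,\cdot\,|\,F)$ is invariant under $T^N$, and so $\l$ is invariant under $T^N_{C\times A}$ (though in general not under $T_{C\times A}$). This is precisely the mildly weakened invariance in which Proposition~\ref{prop:ext-ABI} and Corollary~\ref{cor:stretch} are already formulated, so no further setup is needed.

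It remains to verify the two hypotheses of Proposition~\ref{prop:ext-ABI} with $n$ replaced by $N$ and $r$ replaced by $r'$. Hypothesis (i) — that $\a_{[0;N)}$ is relatively $(\k N, r')$-extremal over $\b_{[0;N)}$ under $\l$ — is immediate from conclusion (b) of Proposition~\ref{prop:basic-ext} (applied with $r'$ in place of $r$), since under the identification above that extremality statement is the same as the extremality of $\xi_{[0;N)}$ over $\t{\pi}_{[0;N)}$ according to $\mu(\,\cdot\,|\,F)$. For hypothesis (ii), I first compute the KS entropy rate appearing on its right-hand side. By Lemma~\ref{lem:rel-n-past} applied to the $T^N_{C\times A}$-invariant system $\l$,
\[\rmh(\a_{[0;N)},\l, T^N_{C\times A}\,|\,\b) = \rmH_\l(\a_{[0;N)}\,|\,\b \vee \a_{(-\infty;0)}) = \rmH(\xi_{[0;N)}\,|\,\t{\pi}\vee \xi_{(-\infty;0)}\,;\,F),\]
and by Lemma~\ref{lem:rel-ent-over-periodic} (with $\t{\pi}$ as the factor map and $F$ as the $N$-periodic set) this last quantity equals $\rmh(\xi,\mu,T\,|\,\t{\pi})\cdot N$. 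Conclusion (c) of Proposition~\ref{prop:basic-ext} then reads
\[\rmH_\l(\a_{[0;N)}\,|\,\b_{[0;N)}) = \rmH(\xi_{[0;N)}\,|\,\t{\pi}_{[0;N)}\,;\,F) < \bigl(\rmh(\xi\,|\,\t{\pi}) + r'\k\bigr)\cdot N = \rmh(\a_{[0;N)},\l,T^N_{C\times A}\,|\,\b) + \k r' N,\]
which is exactly hypothesis (ii).

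With both hypotheses in hand, Corollary~\ref{cor:stretch} applies and yields, for every $k\in\bbN$, that $\a_{[0;kN)}$ is relatively $(\k\, kN, 5r')$-extremal over $\b_{[0;kN)}$ under $\l$. Translating back, this says that $\xi_{[0;kN)}$ is relatively $(\k\, kN, 5r')$-extremal over $\t{\pi}_{[0;kN)}$ according to $\mu(\,\cdot\,|\,F)$, completing the proof. The only real point of care is the entropy computation in the previous paragraph, where one must remember that $\l$ is only $T^N$-invariant so Lemma~\ref{lem:rel-n-past} must be used in place of Lemma~\ref{lem:rel-past}; the subsequent step then rests on Lemma~\ref{lem:rel-ent-over-periodic}, which is precisely the Abramov-type identity relating the entropy rate under the induced transformation on $F$ to $N$ times the original rate.
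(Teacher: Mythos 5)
Your proof is correct and follows exactly the paper's route: apply Corollary~\ref{cor:stretch} to the $T^N$-invariant measure $(\t{\pi}\vee\xi)_\ast\mu(\,\cdot\,|\,F)$, with hypotheses (i) and (ii) of Proposition~\ref{prop:ext-ABI} supplied by conclusions (b) and (c) of Proposition~\ref{prop:basic-ext}. The only difference is that you spell out, via Lemma~\ref{lem:rel-n-past} and Lemma~\ref{lem:rel-ent-over-periodic}, why $\rmh(\a_{[0;N)},\l,T^N_{C\times A}\,|\,\b)$ equals $\rmh(\xi\,|\,\t{\pi})\cdot N$ so that conclusion (c) really is hypothesis (ii) --- a detail the paper leaves implicit, and which you have verified correctly.
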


\begin{proof}
This is our application of the `stretching' result for extremality given by Corollary~\ref{cor:stretch}. Let $\mu' := \mu(\,\cdot\,|\,F)$.  Since $F$ is $N$-periodic, $\mu'$ is invariant under $T^N$, and so we may apply Corollary~\ref{cor:stretch} to the joint distribution of $\xi$ and $\t{\pi}$ under $\mu'$.  Conclusions (b) and (c) of Proposition~\ref{prop:basic-ext} give precisely the hypotheses (i) and (ii) needed by that corollary.  The result follows.
\end{proof}

Using Lemma~\ref{lem:more-ext1}, we now prove a similar result for each of the shifted sets $T^{-i}F$, and for all sufficiently large time-scales rather than just multiples of $N$.

\begin{lem}\label{lem:more-ext2}
Let $i \in \{0,1,\dots,N-1\}$.  For all sufficiently large $t$, the observable $\xi_{[0;t)}$ is $(\k t,r)$-extremal over $\t{\pi}_{[0;t)}$ according to $\mu(\,\cdot\,|\,T^{-i}F)$.
\end{lem}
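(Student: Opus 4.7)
The proof plan has three phases: reduce to a statement about $\mu(\,\cdot\,|\,F)$ via a shift; replace the interval $[-i;t-i)$ with a nearby interval of length $kN$ already controlled by Lemma~\ref{lem:more-ext1}; and patch the two small discrepancies between these intervals using the stability results from Section~\ref{sec:ext}.

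First I would use $T$-invariance of $\mu$ to compute $T^i_\ast \mu(\,\cdot\,|\,T^{-i}F) = \mu(\,\cdot\,|\,F)$, and then note that because $\xi_j\circ T^{-i} = \xi_{j-i}$ (and similarly for $\t\pi$), the joint distribution of $(\xi_{[0;t)},\t\pi_{[0;t)})$ under $\mu(\,\cdot\,|\,T^{-i}F)$ coincides with that of $(\xi_{[-i;t-i)},\t\pi_{[-i;t-i)})$ under $\mu(\,\cdot\,|\,F)$.  Since extremality depends only on the joint distribution, the claim reduces to: for all sufficiently large $t$, $\xi_{[-i;t-i)}$ is $(\k t,r)$-extremal over $\t\pi_{[-i;t-i)}$ under $\mu(\,\cdot\,|\,F)$.

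Next, for large $t$ set $k := \lfloor (t-i)/N\rfloor$, so that $[0;kN)\subseteq[-i;t-i)$ and the symmetric difference $[-i;t-i)\setminus[0;kN)$ has at most $2N$ coordinates.  Lemma~\ref{lem:more-ext1} provides $(\k kN,5r')$-extremality of $\xi_{[0;kN)}$ over $\t\pi_{[0;kN)}$ under $\mu(\,\cdot\,|\,F)$.  I then apply Corollary~\ref{cor:refinement-still-ext} to refine the conditioning $\s$-algebra from $\s(\t\pi_{[0;kN)})$ up to the finer $\s(\t\pi_{[-i;t-i)})$.  The additional entropy loss is bounded by $2N\log|C|$, where $C$ is the alphabet of $\t\pi$, which is a constant in $t$.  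This produces $(\k kN,\,2(2N\log|C|)/(\k kN)+15r')$-extremality of $\xi_{[0;kN)}$ over $\t\pi_{[-i;t-i)}$ under $\mu(\,\cdot\,|\,F)$; the first error term is $O(1/t)$ as $t\to\infty$.

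Finally, I apply Lemma~\ref{lem:ext-stab-lift} with $n=t$, $S=[0;kN)\subset[-i;t-i)$, and $a=1-kN/t\leq 2N/t$, to lift the extremality just obtained for the projection $\xi_{[0;kN)}$ onto the full observable $\xi_{[-i;t-i)}$.  The resulting extremality parameters are $(t\cdot(\k kN)/kN,\ 15r'+O(1/t)) = (\k t,\ 15r'+O(1/t))$.  Recalling that $r'=r/17$, we have $15r'=15r/17<r$, and so for all $t$ large enough that the $O(1/t)$ term is smaller than $r-15r'$, we obtain the desired $(\k t,r)$-extremality.

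The three steps are all routine consequences of results already established; the only mild subtlety is bookkeeping with the error budget $r'=r/17$.  That factor of $17$ was engineered precisely to absorb the loss $5r'\to 15r'$ introduced by Corollary~\ref{cor:refinement-still-ext} together with the vanishing $O(1/t)$ contributions from Corollary~\ref{cor:refinement-still-ext} and Lemma~\ref{lem:ext-stab-lift}.  The main conceptual point, rather than an obstacle, is recognizing that Lemma~\ref{lem:ext-stab-lift} goes in the convenient direction for us: extremality on a sub-block of full asymptotic density lifts automatically to extremality on the enclosing block, so we never have to try to pass from a longer interval to a shorter one (a direction for which no analogous stability result is available).
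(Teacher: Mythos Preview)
Your proof is correct and follows essentially the same three-step strategy as the paper: invoke Lemma~\ref{lem:more-ext1} on a sub-block of length $kN$, enlarge the conditioning $\sigma$-algebra via Corollary~\ref{cor:refinement-still-ext} at $O(1/t)$ cost, then lift to the full block via Lemma~\ref{lem:ext-stab-lift}.  The only cosmetic difference is your choice of frame: you push forward by $T^i$ to work with $\mu(\,\cdot\,|\,F)$ and the interval $[-i;t-i)\supseteq[0;kN)$, whereas the paper stays with $\mu(\,\cdot\,|\,T^{-i}F)$ and uses the sub-interval $[i;i+kN)\subseteq[0;t)$; these are equivalent under the shift, and even your choice of $k$ coincides with the paper's.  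Your asymptotic bookkeeping ($15r'+O(1/t)<r$) is a slight streamlining of the paper's explicit rounding to $16r'$ and then $17r'=r$.
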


\begin{proof}
	For this we must combine Lemma~\ref{lem:more-ext1}, the invariance of $\mu$, and some of the stability properties of extremality from Section~\ref{sec:ext}.
	
\vspace{7pt}
	
\emph{Step 1.}\quad Assume $t \geq N$, and let $k \geq 0$ be the largest integer satisfying ${i + kN \leq t}$.  Let $j:= i + kN$.  Then $[i;j)$ is the largest subinterval of $[0;t)$ that starts at $i$ and has length a multiple of $N$.

For any $(\bf{c},\bf{a}) \in (C\times A)^{kN}$, the $T$-invariance of $\mu$ gives
\[\mu\big((\t{\pi} \vee \xi)_{[i;j)} = (\bf{c},\bf{a})\,\big|\,T^{-i}F\big) = \mu\big((\t{\pi} \vee \xi)_{[0;kN)} = (\bf{c},\bf{a})\,\big|\,F\big).\]
Therefore, by Lemma~\ref{lem:more-ext1}, $\xi_{[i;j)}$ is $(\k kN,5r')$-extremal over $\t{\pi}_{[i;j)}$ according to $\mu(\,\cdot\,|\,T^{-i}F)$.

\vspace{7pt}

\emph{Step 2.}\quad The observable $\t{\pi}_{[0;t)}$ generates a finer partition of $C^\bbZ$ than the observable $\t{\pi}_{[i;j)}$.  On the other hand,
\[	\rmH(\xi_{[i;j)}\,|\,\t{\pi}_{[i;j)}) - \rmH(\xi_{[i;j)}\,|\,\t{\pi}_{[0;t)}) \leq \rmH(\t{\pi}_{[0;t)}\,|\,\t{\pi}_{[i;j)})
	\leq \rmH(\t{\pi}_{[0;i)}\vee \t{\pi}_{[j;t)}) \leq 2N\rmH(\t{\pi}_0),\]
where the last estimate holds by stationarity and because $i\leq N$ and $t-j \leq N$. Therefore we may combine Step 1 with Corollary~\ref{cor:refinement-still-ext} to conclude that $\xi_{[i;j)}$ is extremal over $\t{\pi}_{[0;t)}$ according to $\mu(\,\cdot\,|\,T^{-i}F)$ with parameters
\[\Big(\k kN,\frac{4N\rmH(\t{\pi}_0)}{\k kN} + 15r'\Big).\]
If $t$ is sufficiently large, and hence $k$ is sufficiently large, then these parameters are at least as strong as $(\k kN,16r')$.

\vspace{7pt}

\emph{Step 3.}\quad Finally, assume $t$ is so large that $kN \geq (1 - r')t$.  Since $\xi_{[i;j)}$ is an image of $\xi_{[0;t)}$ under coordinate projection, we may apply Lemma~\ref{lem:ext-stab-lift}.  Starting from the conclusion of Step 2, that lemma gives that $\xi_{[0;t)}$ is $(\k t,17r')$-extremal over $\t{\pi}_{[0;t)}$ according to $\mu(\,\cdot\,|\,T^{-i}F)$. Since $r = 17r'$, this completes the proof.
\end{proof}

\begin{proof}[Proof of Proposition~\ref{prop:basic-ext2}]
Let $\theta := (\t{\pi}\vee \xi)_\ast\mu$ and $\g := \t{\pi}_\ast\mu$. For $t \in \bbN$, consider the $t$-block kernel of $\theta$:
\[\theta_{[0;t)}^\bl(\bf{a}\,|\,\bf{c}) := \mu(\xi_{[0;t)} = \bf{a}\,|\,\t{\pi}_{[0;t)} = \bf{c}) \quad (\bf{a} \in A^t,\ \bf{c} \in C^t)\]
(see Definition~\ref{dfn:block-kernel}). (It does not matter how we interpret this in case ${\mu(\t{\pi}_{[0;t)} = \bf{c})}$ is zero.)

Each of the sets $T^{-i}F$ is measurable with respect to $\t{\pi}_0$, hence certainly with respect to $\t{\pi}_{[0;t)}$.  So there is a partition
\[C^t = G_1 \cup \cdots \cup G_N\]
such that $T^{-i}F = \{\t{\pi}_{[0;t)} \in G_i\}$ modulo $\mu$ for each $i=1,2,\dots,N$. By Lemma~\ref{lem:more-ext2} the measure $\g_{[0;t)}(\,\cdot\,|\,G_i)$ and kernel $\theta_{[0;t)}^\bl(\,\cdot\,|\,\cdot)$ are $(\k t,r)$-extremal for each $i$.  Since the $G_i$s are a partition of $C^t$, it follows directly from Definition~\ref{dfn:ext} that $\g_{[0;t)}$ itself and $\theta_{[0;t)}^\bl(\,\cdot\,|\,\cdot)$ are $(\k t,r)$-extremal.
\end{proof}

\section{Completed proof of Theorem A}\label{sec:A}

\subsection{Proof under assumption (A)}\label{subs:AA}

Let $\pi:(X,\mu,T)\to (Y,\nu,S)$ be the factor map in the statement of Theorem A.  We must prove the relative weak Pinsker property over this factor. We first do this under the additional assumption (A) from the previous section. Note that we do \emph{not} assume that $(X,\mu,T)$ itself has finite entropy.

\begin{proof}[Proof of Theorem A under assumption (A)]
As in the previous section, under assumption (A) we may assume that $\pi$ is a finite-valued process, not just an arbitrary factor map.

Let $\xi^{(k)}:X\to A_k^\bbZ$, $k\geq 1$, be a sequence of finite-valued processes which toegether generate the whole of $\B_X$.  By replacing each $\xi^{(k)}$ with the common refinement $\xi^{(1)}\vee \cdots \vee \xi^{(k)}$ if necessary, we may assume that $\xi^{(k)}$ is $\xi^{(k+1)}$-measurable for every $k$.  For the sake of notation, let $\xi^{(0)}$ be a trivial process.

(In case $\rmh(\mu,T) < \infty$, Krieger's generator theorem provides a single generating process, but this makes the rest of the proof only slightly simpler.)

\vspace{7pt}

\emph{Construction.}\quad Fix a sequence $(k_i)_{i=1}^\infty \in \bbN^\bbN$ in which every natural number appears infinitely often.  We use a recursion to construct sequences of finite-valued processes $\phi^{(i)}$ and real values $\k_i > 0$ such that the following hold for every $i$:
\begin{itemize}
\item[(a)] we have
\[\rmh(\phi^{(i)},\mu,T) < 2^{-i}\min\big\{\eps,\,\k_1,\,\dots,\,\k_{i-1}\big\}.\]
\item[(b)] $\xi^{(k_i)}$ is conditionally $(\k_i,2^{-i})$-extremal over the process
\[\psi^{(i)}:= \pi\vee \xi^{(k_i-1)}\vee \phi^{(1)}\vee \cdots \vee\phi^{(i)}.\]
\end{itemize}

To start the recursion, apply Proposition~\ref{prop:basic-ext2} to the processes $\pi\vee\xi^{(k_1-1)}$ and $\xi^{(k_1)}$.  If $i\geq 2$ and we have already found $\phi^{(j)}$ for all $j < i$, then apply Proposition~\ref{prop:basic-ext2} to the processes
\[\pi \vee \xi^{(k_i-1)}\vee \phi^{(1)}\vee \cdots \vee \phi^{(i-1)}\ \quad \hbox{and} \quad \xi^{(k_i)}.\]

\vspace{7pt}

\emph{Completion of the proof.}\quad Consider the factor of $(X,\mu,T)$ generated by $\pi$ and all the new processes $\phi^{(i)}$. By property (a), it satisfies
\begin{multline*}
\rmh\Big(\pi \vee \bigvee_{i\geq 1}\phi^{(i)},\mu,T\,\Big|\,\pi\Big) \leq \lim_{\ell \to\infty}\rmh(\phi^{(1)}\vee\cdots\vee \phi^{(\ell)},\mu,T) \\ \leq \lim_{\ell\to\infty} \sum_{j=1}^\ell\rmh(\phi^{(j)},\mu,T) < \eps.
\end{multline*}
Therefore, by Krieger's generator theorem, there is a finite-valued process $\t{\pi}$ that generates the same factor of $(X,\mu,T)$ as $\pi \vee \bigvee_{i\geq 1}\phi^{(i)}$, modulo $\mu$-negligible sets.

More generally, for any $i \in \bbN$, a similar estimate using property (a) gives
\begin{equation}\label{eq:G-rel-Gi}
\rmh(\t{\pi},\mu,T\,|\,\pi \vee \phi^{(1)} \vee \cdots \vee \phi^{(i)} ) \leq \sum_{j=i+1}^\infty\rmh(\phi^{(j)},\mu,T) < \sum_{j=i+1}^\infty 2^{-j}\k_i = 2^{-i} \k_i.
\end{equation}

Now consider one of the processes $\xi^{(k)}$.  Since $k$ appears in the sequence $(k_i)_i$ infinitely often, there are arbitrarily large $i \in \bbN$ for which $k_i = k$.  We consider such a choice of $i$, and argue as follows:
\begin{itemize}
	\item According to property (b) above, $\xi^{(k)} = \xi^{(k_i)}$ is conditionally $(\k_i,2^{-i})$-extremal over $\psi^{(i)}$.
	\item Using estimate~\eqref{eq:G-rel-Gi} and Lemma~\ref{lem:ext-robust}, it follows that $\xi^{(k)}$ is also $(\k_i,6\cdot 2^{-i})$-extremal over $\t{\pi} \vee \psi^{(i)}$.
	\item Next, Proposition~\ref{prop:ABI-FD} gives that $\xi^{(k)}$ is relatively $(42\cdot 2^{-i})$-FD over $\t{\pi}\vee \psi^{(i)}$.
	\item Finally, Lemma~\ref{lem:FD-and-FD} gives the same conclusion for $\xi^{(k)}$ over $\t{\pi} \vee \xi^{(k-1)}$, since the the two processes $\t{\pi}\vee \psi^{(i)}$ and $\t{\pi}\vee \xi^{(k-1)}$ generate the same factor of $(X,\mu,T)$ modulo $\mu$.
\end{itemize}

Thus, we have shown that $\xi^{(k)}$ is relatively $(42\cdot 2^{-i})$-FD over $\t{\pi}\vee \xi^{(k-1)}$ for arbitrarily large values of $i$.  Now Thouvenot's theorem (Theorem~\ref{thm:FD-Bern}) gives that it is also relatively Bernoulli.

For each $k$, this fact promises an i.i.d. process $\zeta^{(k)}:X \to C^\bbZ_k$ such that (i) the map $\zeta^{(k)}$ is independent from the map
\[\t{\pi}\vee \xi^{(1)} \vee \cdots \vee \xi^{(k-1)},\]
and (ii) the maps
\[\t{\pi}\vee \xi^{(1)} \vee \cdots \vee \xi^{(k-1)} \vee \zeta^{(k)} \quad \hbox{and} \quad \t{\pi}\vee \xi^{(1)} \vee \cdots \vee \xi^{(k-1)}\vee \xi^{(k)}\]
generate the same factor of $(X,\mu,T)$ modulo $\mu$. It follows that the factor map
\[\bigvee_{k\geq 1}\zeta^{(k)}: X\to \prod_{k\geq 1}C_k^\bbZ\]
has image a Bernoulli shift (possibly of infinite entropy), is independent from the factor map $\t{\pi}$, and is such that
\[\t{\pi}\vee \bigvee_{k\geq 1}\zeta^{(k)} \quad \hbox{and} \quad \t{\pi}\vee \bigvee_{k\geq 1}\xi^{(k)}\]
generate the same factor of $(X,\mu,T)$ modulo $\mu$.  This factor is the whole of $\B_X$ by our choice of the $\xi^{(k)}$s, so this completes the proof.
	\end{proof}

\begin{rmk}
	An important part of the above proof is our appeal to Lemma~\ref{lem:ext-robust}. Given the extremality of $\xi^{(k_i)}$ over $\psi^{(i)}$ and the entropy bound~\eqref{eq:G-rel-Gi}, that lemma promises that $\xi^{(k_i)}$ is still extremal, with slightly worse parameters, over the limiting factor map $\t{\pi}\vee \psi^{(i)}$.
	
	It is natural to ask whether a more qualitative fact could be used in place of this argument. Specifically, suppose that $\xi$ is a process and $\pi^{(i)}$ is a sequence of factor maps such that $\pi^{(i)}$ is $\pi^{(i+1)}$-measurable for every $i$.  If $\xi$ is relatively Bernoulli over each $\pi^{(i)}$, is it also relatively Bernoulli over $\bigvee_{i\geq 1}\pi^{(i)}$?  Unfortunately, Jean-Paul Thouvenot has indicated to me an example in which the answer is No.  It seems that some more quantitative argument, like our appeal to Lemma~\ref{lem:ext-robust}, really is needed.
\end{rmk}

\subsection{Removing assumption (A), and general amenable groups}

We now complete the proof of Theorem A in full.

In fact, for the work that is left to do, it makes no difference if we prove a result for actions of a general countable amenable group $G$.  So suppose now that $T = (T^g)_{g \in G}$ is a free and ergodic measure-preserving action of $G$ on a standard probability space $(X,\mu)$, and let
\[\pi:(X,\mu,T) \to (Y,\nu,S)\]
be a factor map of measure-preserving $G$-actions.

\begin{thm}\label{thm:A+}
For every $\eps > 0$, there is another factor map $\t{\pi}$ of the $G$-system $(X,\mu,T)$ such that $\pi$ is $\t{\pi}$-measurable, $(X,\mu,T)$ is relatively Bernoulli over $\t{\pi}$, and
	\[\rmh(\t{\pi},\mu,T\,|\,\pi) < \eps.\]
\end{thm}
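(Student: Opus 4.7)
The plan is to deduce Theorem~\ref{thm:A+} from the special case of Theorem~A for $\bbZ$-actions under assumption~(A), which was established in Subsection~\ref{subs:AA}, by means of relative orbit equivalence for countable amenable groups. The central input is a relative form of the Dye--Ornstein--Weiss theorem: given $T$ and $\pi:(X,\mu,T)\to(Y,\nu,S)$ as in the hypotheses, one can produce a free ergodic measure-preserving $\bbZ$-action $\hat T$ on $(X,\mu)$ that generates the same orbit equivalence relation $\mathcal{R}$ as $T$ modulo null sets, in such a way that $\pi$ remains a factor map from $(X,\mu,\hat T)$ onto some free ergodic $\bbZ$-system $(Y,\nu,\hat S)$, and the orbit cocycle relating the two actions is $\pi$-measurable. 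Moreover, by first replacing the original $\pi$ with its common refinement with an ergodic rotation on $\prod_{N\geq 1}\bbZ/N\bbZ$ of arbitrarily small entropy --- a harmless step, since the conclusion of Theorem~\ref{thm:A+} itself allows enlarging $\pi$ by up to entropy $\eps$ --- one can arrange that $(Y,\nu,\hat S)$ satisfies assumption~(A).

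The decisive payoff of this setup is that a sub-$\s$-algebra of $\mathcal{B}_X$ is $T$-invariant if and only if it is $\mathcal{R}$-invariant if and only if it is $\hat T$-invariant, so the $G$-factors of $(X,\mu,T)$ and the $\bbZ$-factors of $(X,\mu,\hat T)$ coincide as sub-$\s$-algebras of $\mathcal{B}_X$. Next, apply the case of Theorem~A that is already proved to the $\bbZ$-factor map $\pi:(X,\mu,\hat T)\to(Y,\nu,\hat S)$, with a tolerance $\eps'$ to be chosen below. This yields an intermediate $\bbZ$-factor $\hat\pi_1:(X,\mu,\hat T)\to(\tilde Y,\tilde\nu,\tilde{\hat S})$, through which $\pi$ factors, such that $(X,\mu,\hat T)$ is relatively Bernoulli over $\hat\pi_1$ and the relative entropy of $\tilde{\hat S}$ over $\hat S$ is less than $\eps'$. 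By the bijection above, the sub-$\s$-algebra defining $\hat\pi_1$ is automatically $T$-invariant, and so gives rise to a $G$-factor $\tilde\pi_1:(X,\mu,T)\to(\tilde Y,\tilde\nu,\tilde S)$ through which $\pi$ factors.

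The remaining work is to transfer (i) relative Bernoullicity and (ii) the relative entropy bound from $\hat T$ to $T$. For (i), one invokes the relative orbit-equivalence transfer principle (developed by Rudolph, Kieffer, Rudolph--Weiss and others for the amenable setting): relative Bernoullicity over a common factor with $\pi$-measurable cocycle is invariant under measure-preserving orbit equivalence of free ergodic amenable group actions. This is exactly what allows the conclusion ``$(X,\mu,\hat T)$ is relatively Bernoulli over $\hat\pi_1$'' to be re-read as ``$(X,\mu,T)$ is relatively Bernoulli over $\tilde\pi_1$.'' For (ii), the relative Kolmogorov--Sinai entropies of measure-preserving orbit-equivalent amenable group actions are related by an Abramov--Rokhlin-type formula whose normalizing factor depends only on the orbit equivalence and not on the particular intermediate factor; absorbing this factor into the tolerance by shrinking $\eps'$ at the outset (once the orbit equivalence has been fixed) yields $\rmh(\tilde S,\tilde\nu\,|\,\pi_2)<\eps$ for the $G$-action.

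The hard part will be assembling the relative orbit equivalence machinery at the level of precision needed for both the Bernoullicity transfer and the relative entropy bound to be uniform; this is essentially a matter of citing and combining the appropriate relative forms of the Dye--Ornstein--Weiss theorem and of the Rudolph--Weiss entropy invariance results, rather than of proving anything substantially new. Once this is in place, Theorem~\ref{thm:A+} follows, and the special case $G=\bbZ$ of the same reduction simultaneously removes assumption~(A) from Theorem~A, completing its proof in full generality.
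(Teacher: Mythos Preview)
Your approach is the same as the paper's: reduce to the $\bbZ$-case under assumption~(A) via orbit equivalence, then transfer both the relative Bernoullicity and the entropy bound back using the $\pi$-measurability of the cocycle. Two points deserve correction, however.

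First, your handling of the entropy transfer is off. You invoke an ``Abramov--Rokhlin-type formula'' with a normalizing factor to be absorbed by shrinking $\eps'$. In fact the Rudolph--Weiss theorem gives exact equality: when the orbit-change cocycle is measurable with respect to the base factor $\pi$, one has $\rmh(\t{\pi},\mu,T\,|\,\pi)=\rmh(\t{\pi},\mu,\hat T\,|\,\pi)$ with no rescaling. So no adjustment of $\eps'$ is needed, and your detour through a normalization constant is both unnecessary and based on a misremembering of the statement.

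Second, your way of arranging assumption~(A) --- refining $\pi$ by a rotation on $\prod_{N\geq1}\bbZ/N\bbZ$ --- is awkward, since that rotation is a $\bbZ$-object and cannot literally be joined to $\pi$ at the $G$-level. The paper's route is cleaner: first enlarge $\pi$ slightly so that the $G$-action on $(Y,\nu)$ is free (you omit this step, but it is needed for the relative orbit equivalence to work), then apply Connes--Feldman--Weiss on $(Y,\nu)$, which lets you choose the orbit-equivalent automorphism $S_1$ to be \emph{any} atomless ergodic transformation you like --- in particular one already satisfying assumption~(A). One then lifts $S_1$ to $T_1$ on $X$ along the fibres of $\pi$. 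This builds assumption~(A) directly into the choice of the $\bbZ$-action rather than by a subsequent refinement.
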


The deduction of Theorem~\ref{thm:A+} from the special case of the preceding subsection uses some machinery from orbit equivalence theory. The results we need can mostly be found in Danilenko and Park's development~\cite{DanPark02} of the `orbital approach' to Ornstein theory for amenable group actions.

\begin{proof}[Proof of Theorem~\ref{thm:A+}, and hence of Theorem A]
	First, we may enlarge $\pi$ with as little additional entropy as we please in order to assume that the $G$-system $(Y,\nu,S)$ is free (see~\cite[Theorem 5.4]{DanPark02}).
	
	Having done so, Connes, Feldman and Weiss' classic generalization of Dye's theorem~\cite{ConFelWei81} provides a single automorphism $S_1$ of $(Y,\nu)$ that has the same orbits as $S$.  Moreover, we may choose $S_1$ to be isomorphic to any atomless and ergodic automorphism we like.  In particular, we can insist that $S_1$ satisfy the conditions in assumption (A).

Since $(Y,\nu,S)$ is free, the factor map $\pi$ restricts to a bijection on almost every individual orbit of $T$.  Using these bijections we obtain a unique lift of $S_1$ to an automorphism $T_1$ of $(X,\mu)$ with the same orbits as $T$.
		
	Now let
	\[\t{\pi}:(X,\mu,T_1)\to (\t{Y},\t{\nu},\t{S}_1)\]
	be a new factor map provided by the special case of Theorem A that we have already proved.  Then $\t{\pi}$ is also a factor map for the original $G$-action $T$, because the $\s$-algebra generated by $\t{\pi}$ contains the $\s$-algebra generated by $\pi$, and the orbit-equivalence cocycle is $\pi$-measurable.  By Rudolph and Weiss' relative entropy theorem~\cite{RudWei00}, we have
	\[\rmh(\t{\pi},\mu,T\,|\,\pi) = \rmh(\t{\pi},\mu,T_1\,|\,\pi) < \eps.\]
	On the other hand, the automorphism $T_1$ is relatively Bernoulli over $\t{\pi}$, and the orbit-equivalence cocycle is also $\t{\pi}$-measurable.  To finish,~\cite[Proposition 3.8]{DanPark02} shows that this property of relative Bernoullicity really depends on only (i) the orbit equivalence relation on $(\t{Y},\t{\nu})$ inherited from $(X,\mu,T_1)$, and (ii) the associated cocycle of automorphisms of $(X,\mu)$.  Therefore the automorphism $T_1$ is relatively Bernoulli over $\t{\pi}$ if and only if this holds for the $G$-action $T$.
\end{proof}

\section{Some known consequences and open questions}\label{sec:cors}

Since Thouvenot proposed the weak Pinsker property, it has been shown to have many consequences in ergodic theory.  Here we recall some of these, now stated as unconditional results about ergodic automorphisms.  We also present a couple of new results and collect a few open questions.  Jean-Paul Thouvenot contributed a great deal to the contents of this section.

\subsection{Results about joinings, factors, and isomorphisms}

\begin{prop}[From~\cite{Tho08}]\label{prop:cancel}
Let $\bf{X}$ and $\bf{X}'$ be ergodic automorphisms, let $\bf{B}$ be a Bernoulli shift with finite entropy, and suppose that $\bf{X} \times \bf{B}$ is isomorphic to $\bf{X}' \times \bf{B}$.  Then $\bf{X}$ and $\bf{X}'$ are themselves isomorphic. \qed
\end{prop}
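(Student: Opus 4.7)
The plan is to reduce the cancellation to Ornstein's isomorphism theorem together with the relative Ornstein theory of Part II, by using the weak Pinsker property (Theorem A) to exhibit $\bf{X}$ and $\bf{X}'$ as low-entropy pieces sitting inside otherwise highly Bernoulli systems.

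First I would fix a small parameter $\eps > 0$, to be chosen much less than $\rmh(\bf{B})$. Applying Theorem A (in its non-relative, weak Pinsker form) separately to $\bf{X}$ and $\bf{X}'$ yields splittings $\bf{X} \cong \bf{K} \times \bf{C}$ and $\bf{X}' \cong \bf{K}' \times \bf{C}'$ in which $\bf{K}, \bf{K}'$ have entropy less than $\eps$ and $\bf{C}, \bf{C}'$ are Bernoulli shifts. Since $\bf{X} \times \bf{B} \cong \bf{X}' \times \bf{B}$ and $\rmh(\bf{B}) < \infty$, the two systems have the same entropy, so $\rmh(\bf{X}) = \rmh(\bf{X}')$. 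Substituting the splittings into the hypothesis and using that the product of two Bernoulli shifts is Bernoulli, we obtain
\begin{equation*}
\bf{K} \times \bf{D} \;\cong\; \bf{X} \times \bf{B} \;\cong\; \bf{X}' \times \bf{B} \;\cong\; \bf{K}' \times \bf{D}',
\end{equation*}
where $\bf{D} := \bf{C} \times \bf{B}$ and $\bf{D}' := \bf{C}' \times \bf{B}$ are Bernoulli shifts of entropies at least $\rmh(\bf{B}) - \eps$. Thus the common system $\bf{Y} := \bf{X} \times \bf{B}$ admits two presentations as a low-entropy factor times a Bernoulli shift whose entropy is far larger than $\eps$.

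The heart of the matter, and the step I expect to be the main obstacle, is now to prove that $\bf{K} \cong \bf{K}'$ from the two presentations of $\bf{Y}$. For this I would invoke the relative Ornstein theory assembled in Part II. Each of the presentations exhibits $\bf{Y}$ as relatively Bernoulli over the corresponding low-entropy factor ($\bf{K}$ or $\bf{K}'$), with complementary Bernoulli factor of entropy greater than $\max(\rmh(\bf{K}), \rmh(\bf{K}'))$. Theorem~\ref{thm:FD-Bern} (Thouvenot's relative isomorphism theorem), combined with Ornstein's classical theorem used to match Bernoulli complements across an isomorphism of $\bf{Y}$, then forces the two low-entropy pieces to be isomorphic whenever the entropy gap between the Bernoulli part and the low-entropy part is sufficiently large. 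The delicacy here is that the two splittings of $\bf{Y}$ are not canonical and need not share a common Bernoulli sub-factor; reconciling them requires building a joint refinement in which the two Bernoulli complements can be simultaneously cancelled, which is precisely the kind of construction that relative finite determination is designed to support.

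Once $\bf{K} \cong \bf{K}'$ is in hand, the rest is immediate. One has $\rmh(\bf{K}) = \rmh(\bf{K}')$, hence $\rmh(\bf{C}) = \rmh(\bf{X}) - \rmh(\bf{K}) = \rmh(\bf{X}') - \rmh(\bf{K}') = \rmh(\bf{C}')$, and Ornstein's theorem yields $\bf{C} \cong \bf{C}'$. Therefore
\[\bf{X} \cong \bf{K} \times \bf{C} \cong \bf{K}' \times \bf{C}' \cong \bf{X}',\]
as required. Effectively, the role of Theorem A in the argument is to allow us to ``absorb'' all but an arbitrarily small amount of the entropy of $\bf{X}$ into a Bernoulli factor, after which the cancellation becomes a problem about relative isomorphism with a very large Bernoulli complement, a situation in which Thouvenot's relative Ornstein machinery is decisive.
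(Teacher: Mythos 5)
The paper does not actually prove this proposition: it quotes it from~\cite{Tho08}, remarking only that the finite-entropy restriction there can be dropped once Theorem A is available. Your argument therefore has to stand on its own, and it does not: the step where you conclude $\bf{K}\cong\bf{K}'$ is both unjustified and false. The low-entropy factor in a weak Pinsker splitting is in no way canonical. If $\bf{X}\cong\bf{K}\times\bf{C}$ with $\bf{C}$ Bernoulli, then writing $\bf{C}\cong\bf{B}_1\times\bf{C}_1$ for a small Bernoulli direct factor $\bf{B}_1$ gives a second admissible splitting $\bf{X}\cong(\bf{K}\times\bf{B}_1)\times\bf{C}_1$ whose low-entropy part has strictly larger entropy than $\bf{K}$, hence is not isomorphic to it. In particular nothing in your construction forces even $\rmh(\bf{K})=\rmh(\bf{K}')$, which $\bf{K}\cong\bf{K}'$ would require; taking $\bf{X}=\bf{X}'$ with the identity isomorphism already produces counterexamples. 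Moreover, Theorem~\ref{thm:FD-Bern} is not a relative isomorphism theorem: it says that relative finite determination implies relative Bernoullicity, and contains no uniqueness statement about the \emph{base} of a splitting. The uniqueness you are invoking --- that the complementary factor of a Bernoulli direct factor of prescribed entropy is determined up to isomorphism --- is precisely Proposition~\ref{prop:cancel} itself, so at its core the argument is circular: you reduce the cancellation problem for $(\bf{X},\bf{X}',\bf{B})$ to the same problem for $(\bf{K},\bf{K}')$ with large Bernoulli complements, and then assert the answer.

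Note that your final step only needs the weaker statement $\bf{K}\times\bf{C}\cong\bf{K}'\times\bf{C}'$, i.e.\ that a Bernoulli factor of entropy $\rmh(\bf{B})$ can be stripped from both presentations of $\bf{Y}$ simultaneously; routing this through $\bf{K}\cong\bf{K}'$ strengthens it into a false claim. Any correct proof has to establish the weaker statement directly. The relative isomorphism machinery of Part II only runs in the other direction --- it identifies two relatively Bernoulli \emph{extensions} of isomorphic bases with equal relative entropies, not two \emph{bases} of a common extension --- so the real work, which is carried out in~\cite{Tho08}, lies in manufacturing a suitable common base of small entropy inside $\bf{Y}$ to which that theorem can be applied. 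That construction is the genuinely missing content of your proposal.
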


(The statement of Proposition~\ref{prop:cancel} in~\cite{Tho08} is restricted to finite-entropy automorphisms, but one sees easily from the proof that the assumption of finite entropy for $\bf{X}$ and $\bf{X}'$ can be removed once we have Theorem A.)

The following question, also posed by Thouvenot, remains open.

\begin{ques}
Does Proposition~\ref{prop:cancel} still hold if both occurrences of `isomorphic' are replaced with `Kakutani equivalent'?
\end{ques}

It is not clear whether the weak Pinsker property offers any insight here.  The question is open even when $\bf{X}$ and $\bf{X}'$ both have entropy zero.  The following related question may be even more basic, and is also open.

\begin{ques}
Does every ergodic automorphism have an induced transformation that splits into a system of entropy zero and a Bernoulli system?
\end{ques}

Curiously, the weak Pinsker property also gives a result about canceling more general K-automorphisms from direct products in the opposite direction to Proposition~\ref{prop:cancel}.

\begin{prop}
Let $\bf{X}$ and $\bf{X}'$ be ergodic systems of equal entropy $h$, possibly infinite, and let $\eps > 0$.  Then there is a system $\bf{Z}$ of entropy less than $\eps$ such that
\[\bf{X} \times \bf{Z} \quad \hbox{is isomorphic to} \quad \bf{X}' \times \bf{Z}.\]
If the two original systems are K-automorphisms, then $\bf{Z}$ may be chosen to be K.
\end{prop}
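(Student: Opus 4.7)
The plan is to apply the relative weak Pinsker property (Theorem~A) to the projection $\bf{X}\times\bf{X}'\to\bf{X}$ and iterate it in order to show that $\bf{X}\times\bf{X}'\cong \bf{X}\times\bf{B}_h$, where $\bf{B}_h$ is the Bernoulli shift of entropy $h$; by symmetry this will give $\bf{X}\times\bf{B}_h\cong \bf{X}'\times\bf{B}_h$, and the small-entropy $\bf{Z}$ will then emerge via Proposition~\ref{prop:cancel}.  To apply Theorem~A one first needs $\bf{X}\times\bf{X}'$ to be ergodic; if it is not, the argument below should be carried out inside a typical ergodic component $\bf{W}$ of $\bf{X}\times\bf{X}'$, observing that both coordinate projections $\bf{W}\to\bf{X}$ and $\bf{W}\to\bf{X}'$ have images equal to $\bf{X}$ and $\bf{X}'$ respectively (since those targets are ergodic), so $\bf{W}$ is itself an ergodic joining of $\bf{X}$ and $\bf{X}'$.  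Assume henceforth that $\bf{X}\times\bf{X}'$ is ergodic and that $h<\infty$, the case $h=\infty$ being handled separately at the end.

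The first application of Theorem~A to $\bf{X}\times\bf{X}'\to\bf{X}$ with tolerance $\eps/8$ produces an intermediate factor $\bf{U}_1$ sitting $\bf{X}\times\bf{X}'\to\bf{U}_1\to\bf{X}$ with $\rmh(\bf{U}_1\,|\,\bf{X})<\eps/8$ and a splitting $\bf{X}\times\bf{X}'\cong \bf{U}_1\times\bf{B}^{(1)}$ for a Bernoulli $\bf{B}^{(1)}$.  Iterate: applying Theorem~A to $\bf{U}_1\to\bf{X}$ with tolerance $\eps/16$ yields $\bf{U}_2$ with $\rmh(\bf{U}_2\,|\,\bf{X})<\eps/16$ and $\bf{U}_1\cong \bf{U}_2\times \bf{B}^{(2)}$; continuing inductively gives a decreasing tower $\bf{U}_1\supseteq \bf{U}_2\supseteq\cdots\supseteq \bf{X}$ with $\rmh(\bf{U}_n\,|\,\bf{X})<\eps/2^{n+2}\to 0$ and Bernoulli complements $\bf{B}^{(n+1)}$ realizing each splitting $\bf{U}_n\cong \bf{U}_{n+1}\times \bf{B}^{(n+1)}$.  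Unfolding these splittings, inside $\bf{U}_1$ the factors $\bf{B}^{(2)},\bf{B}^{(3)},\ldots$ are jointly independent of each other and jointly independent of every $\bf{U}_n$, hence of the intersection $\bigcap_n\B_{\bf{U}_n}=\B_{\bf{X}}$; since the joint factor $\bf{X}\vee\bigvee_{n\geq 2}\bf{B}^{(n)}$ has entropy $\rmh(\bf{X})+\rmh(\bf{U}_1\,|\,\bf{X})=\rmh(\bf{U}_1)$, it must exhaust $\bf{U}_1$ by finite-entropy accounting.  This gives $\bf{U}_1\cong \bf{X}\times \bf{C}$ for a (finite-entropy) Bernoulli shift $\bf{C}$, whence $\bf{X}\times\bf{X}'\cong \bf{X}\times \bf{C}\times \bf{B}^{(1)}\cong \bf{X}\times \bf{B}_h$ by Ornstein's theorem, since the combined Bernoulli has entropy $\rmh(\bf{X}\times\bf{X}')-\rmh(\bf{X})=h$.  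Verifying this inverse-limit decomposition carefully---the joint independence of the extracted Bernoulli complements and the fact that they, together with $\bf{X}$, generate all of $\bf{U}_1$---is the main technical step.

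Running the same argument with $\bf{X}$ and $\bf{X}'$ interchanged gives $\bf{X}\times\bf{X}'\cong \bf{X}'\times\bf{B}_h$, and hence $\bf{X}\times\bf{B}_h\cong\bf{X}'\times\bf{B}_h$.  By Ornstein's theorem split $\bf{B}_h\cong \bf{B}_{\eps/2}\times \bf{B}_{h-\eps/2}$ (or take $\bf{Z}:=\bf{B}_h$ directly if $h<\eps/2$), and apply Proposition~\ref{prop:cancel} to cancel the finite-entropy Bernoulli $\bf{B}_{h-\eps/2}$, giving $\bf{X}\times \bf{B}_{\eps/2}\cong \bf{X}'\times\bf{B}_{\eps/2}$; one may therefore take $\bf{Z}:=\bf{B}_{\eps/2}$, which has entropy $\eps/2<\eps$ and, being Bernoulli, is automatically K, so the K-automorphism addendum needs no extra work.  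The case $h=\infty$ requires a separate reduction because Proposition~\ref{prop:cancel} needs a finite-entropy Bernoulli: apply Theorem~A directly to $\bf{X}$ and to $\bf{X}'$ to write $\bf{X}\cong \bf{V}\times \bf{B}_\infty$ and $\bf{X}'\cong \bf{V}'\times\bf{B}_\infty$ with $\rmh(\bf{V}),\rmh(\bf{V}')<\eps/3$, adjust so that $\rmh(\bf{V})=\rmh(\bf{V}')$ by transferring a small Bernoulli factor from one complement into the small-entropy part, invoke the finite-entropy case already proved for $\bf{V}$ and $\bf{V}'$ to obtain a small Bernoulli $\bf{Z}$ with $\bf{V}\times\bf{Z}\cong\bf{V}'\times\bf{Z}$, and multiply both sides by $\bf{B}_\infty$ to obtain $\bf{X}\times\bf{Z}\cong\bf{X}'\times\bf{Z}$.
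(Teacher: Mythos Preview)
Your argument has a fatal gap: the intermediate claim $\bf{X}\times\bf{X}'\cong\bf{X}\times\bf{B}_h$ is simply false in general, so no argument can establish it. For a concrete counterexample, take $\bf{X}=\bf{B}_h$ Bernoulli and $\bf{X}'$ any non-Bernoulli K-automorphism of entropy $h$. Then $\bf{X}\times\bf{B}_h=\bf{B}_{2h}$, so your claim would give $\bf{B}_h\times\bf{X}'\cong\bf{B}_{2h}$; but then $\bf{X}'$ would be a factor of a Bernoulli shift, hence Bernoulli by Ornstein's theorem, a contradiction. The specific step that fails is your ``finite-entropy accounting'': the factor $\bf{X}\vee\bigvee_{n\geq 2}\bf{B}^{(n)}$ inside $\bf{U}_1$ can have the same entropy as $\bf{U}_1$ without generating it. (Equality of entropy never forces a factor to be the whole system; this is exactly the phenomenon behind Sinai factors that are not generators.) Relatedly, the remark after Subsection~\ref{subs:AA} warns explicitly that an increasing union of factors over which $\xi$ is relatively Bernoulli need not have this property in the limit; your inverse-limit step runs into the same obstruction.

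The paper's proof avoids this by not trying to prove any relative Bernoullicity over $\bf{X}$. Instead it applies the weak Pinsker property \emph{separately} to $\bf{X}$ and to $\bf{X}'$, producing towers $\bf{X}_0\cong\bf{X}_1\times\bf{B}_1$, $\bf{X}_1\cong\bf{X}_2\times\bf{B}_2$, \dots\ with $\rmh(\bf{X}_i)<2^{-i-1}\eps$, and similarly $\bf{X}'_i$. It then sets
\[
\bf{Z}:=\bf{X}_1\times\bf{X}_2\times\cdots\times\bf{X}'_1\times\bf{X}'_2\times\cdots,
\]
and uses an Eilenberg-swindle (telescoping) computation: in $\bf{X}_0\times\bf{Z}$ one replaces each $\bf{X}_i$ by $\bf{X}_{i+1}\times\bf{B}_{i+1}$ and reorders to get $\bf{Z}\times(\hbox{Bernoulli})$, and likewise for $\bf{X}'_0\times\bf{Z}$. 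Matching entropies then gives the isomorphism. Note that this $\bf{Z}$ is \emph{not} Bernoulli in general; the K-addendum is handled because the $\bf{X}_i,\bf{X}'_i$ are factors of K-automorphisms, hence K, and a product of K-automorphisms is K.
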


\begin{proof}
Let $\bf{X}_0 := \bf{X}$ and $\bf{X}_0' := \bf{X}'$. A recursive application of the weak Pinsker property gives sequences of systems $\bf{X}_i$ and $\bf{B}_i$, $i\geq 1$, such that $\bf{X}_i$ has entropy less than $2^{-i-1}\eps$, each $\bf{B}_i$ is Bernoulli, and such that
\[\bf{X}_i \quad \hbox{is isomorphic to} \quad \bf{X}_{i+1} \times \bf{B}_{i+1} \quad \forall i\geq 0.\]
If $\bf{X}_0$ has the K-property, then so do all its factors $\bf{X}_i$. Apply the same argument to $\bf{X}_0'$ to obtain $\bf{X}'_i$ and $\bf{B}'_i$ for $i\geq 1$.  If $\bf{X}'_0$ has the K-property, then so does every $\bf{X}'_i$.

Now let
\[\bf{Z} := \bf{X}_1\times \bf{X}_2 \times \cdots \times \bf{X}_1'\times \bf{X}'_2 \times \cdots.\]
This has entropy less than
\[\sum_{i=1}^\infty 2^{-i-1}\eps + \sum_{i=1}^\infty 2^{-i-1}\eps = \eps.\]
If $\bf{X}_0$ and $\bf{X}_0'$ have the K-property, then $\bf{Z}$ is a direct product of K-automorphisms, so it also has the K-property.

To finish the proof, consider the direct product
\[\bf{X}_0 \times \bf{Z} = \bf{X}_0\times \bf{X}_1 \times \bf{X}_2 \times \cdots \times \bf{X}_1'\times \bf{X}'_2 \times \cdots.\]
In the product on the right, we know that each coordinate factor $\bf{X}_i$ may be replaced with $\bf{X}_{i+1}\times \bf{B}_{i+1}$ up to isomorphism. Therefore this product is isomorphic to
\[(\bf{X}_1\times \bf{B}_1) \times (\bf{X}_2\times \bf{B}_2)\times \cdots \times \bf{X}_1' \times \bf{X}_2' \times.\]
Re-ordering the factors, this is isomorphic to
\[\bf{X}_1 \times \bf{X}_2 \times \cdots \times \bf{X}_1' \times \bf{X}_2' \times \cdots \times \bf{B}_1\times \bf{B}_2\times \cdots,\]
which is equal to the direct product of $\bf{Z}$ with a Bernoulli shift.  Similarly, $\bf{X}_0'\times \bf{Z}$ is isomorphic to the direct product of $\bf{Z}$ and a Bernoulli shift.  Since $\bf{X}_0\times \bf{Z}$ and $\bf{X}_0'\times \bf{Z}$ have the same entropy, it follows that they are isomorphic to each other.
\end{proof}

\begin{prop}[From~\cite{SmoTho79}]\label{prop:SmoTho}
If $\bf{X}$ is ergodic and has positive entropy, then it has two Bernoulli factors that together generate the whole $\sigma$-algebra modulo $\mu$; equivalently, $\bf{X}$ is isomorphic to a joining of two Bernoulli systems.  For any $\eps > 0$, we may choose one of those Bernoulli factors to have entropy less than $\eps$. \qed
\end{prop}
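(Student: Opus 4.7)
The plan is to apply Theorem~A to $\bf{X}$ with the trivial base factor, producing an isomorphism $\bf{X} \cong \bf{Y} \times \bf{B}_1$ in which $\bf{B}_1$ is a Bernoulli shift and $\rmh(\bf{Y})$ is as small as we please; we choose the tolerance so that $\rmh(\bf{Y}) < \min\{\eps,\rmh(\bf{X})\}/3$. This immediately supplies one Bernoulli factor $\bf{B}_1$ of $\bf{X}$, and the whole task reduces to producing a second Bernoulli factor $\bf{B}_2$, of entropy less than $\eps$, for which $\bf{B}_1 \vee \bf{B}_2$ generates $\B_X$ modulo $\mu$.

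To build $\bf{B}_2$, we apply Krieger's generator theorem to the ergodic system $\bf{Y}$ to obtain a generating observable $f:\bf{Y}\to\bbZ/q\bbZ$ for some integer $q$ with $\log q < 2\rmh(\bf{Y}) < \eps$. Since
\[\rmh(\bf{B}_1) = \rmh(\bf{X}) - \rmh(\bf{Y}) > 2\rmh(\bf{Y}) > \log q,\]
Ornstein's classification of Bernoulli shifts lets us split $\bf{B}_1 \cong \bf{B}_1'\times \bf{B}_1''$, where $\bf{B}_1''$ is the Bernoulli shift with uniform one-dimensional marginal on $\bbZ/q\bbZ$. Under these identifications every $\bf{x}\in\bf{X}$ is a triple $(\bf{y},\bf{b}',\bf{b}'')$, and we define $\phi:\bf{X}\to(\bbZ/q\bbZ)^\bbZ$ by
\[\phi(\bf{x})_n := b''_n + f(T^n\bf{y}) \bmod q,\]
taking $\bf{B}_2$ to be the factor generated by $\phi$.

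Two verifications complete the argument. First, $\phi_\ast\mu$ is the uniform Bernoulli measure on $(\bbZ/q\bbZ)^\bbZ$, so that $\bf{B}_2$ is Bernoulli of entropy $\log q < \eps$: conditioning on $\bf{y}$ makes the sequence $c_n := f(T^n\bf{y})$ deterministic, while $(b''_n)_n$ remains i.i.d.\ uniform on $\bbZ/q\bbZ$ and independent of $\bf{y}$; translating an i.i.d.\ uniform sequence on a finite abelian group by a deterministic sequence preserves the i.i.d.\ uniform distribution, and since this conditional law does not depend on $\bf{y}$ it coincides with the unconditional one. Second, $\bf{B}_1$ recovers $(\bf{b}',\bf{b}'')$ and adjoining $\phi$ then recovers $(f(T^n\bf{y}))_n$, which determines $\bf{y}$ by the generating property of $f$; hence $\bf{B}_1 \vee \bf{B}_2 = \B_X$ modulo $\mu$.

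The only conceptual obstacle is the following: because every factor of a Bernoulli shift is itself Bernoulli (Ornstein), the weak-Pinsker residue $\bf{Y}$ cannot be embedded directly as a factor of any Bernoulli shift unless it happens to be Bernoulli. The construction above sidesteps this by \emph{encoding} $\bf{Y}$ inside $\bf{X}$ through a one-time-pad scheme, in which a high-entropy Bernoulli coordinate of $\bf{B}_1$ hides a Krieger generator of $\bf{Y}$: the encoded process $\phi$ is genuinely Bernoulli, yet $\bf{Y}$ can be recovered once the pad (carried by $\bf{B}_1$) is subtracted off. The remaining bookkeeping is routine --- for instance the case $\rmh(\bf{X})=\infty$ follows from the same construction applied to a finite-entropy Bernoulli subfactor of $\bf{B}_1$ large enough to accommodate $\bf{B}_1''$, while the case $\rmh(\bf{X})\le \eps$ is handled by shrinking $\rmh(\bf{Y})$ to leave room for $\bf{B}_1''$ inside $\bf{B}_1$.
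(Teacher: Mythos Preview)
The paper does not give its own proof of this proposition; it simply cites \cite{SmoTho79} and closes the statement with \qed. Your approach --- apply Theorem~A to strip off a Bernoulli factor $\bf{B}_1$ leaving a small-entropy residue $\bf{Y}$, then hide a Krieger generator of $\bf{Y}$ under a uniform Bernoulli pad drawn from $\bf{B}_1$ --- is exactly the idea behind the Smorodinsky--Thouvenot result, and the two verifications you give (that $\phi$ is i.i.d.\ uniform, and that $\bf{B}_1\vee\phi$ recovers $\bf{Y}$) are correct.

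There is, however, a real gap in the quantitative claim. You assert the existence of an integer $q$ with $\rmh(\bf{Y}) < \log q < 2\rmh(\bf{Y})$, but once $\rmh(\bf{Y}) < \tfrac12\log 2$ no such integer exists. More to the point, the one-time-pad step genuinely requires the pad $\bf{B}_1''$ to be \emph{uniform} on its alphabet for $\phi$ to come out i.i.d., and a uniform distribution on at least two letters has entropy at least $\log 2$. So no tuning of the parameters in your construction can produce $\rmh(\bf{B}_2) < \log 2$. Your argument therefore proves the first sentence of the proposition --- $\bf{X}$ is a joining of two Bernoulli factors --- but not the refinement ``for any $\eps > 0$'' once $\eps < \log 2$. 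Closing this gap is not the routine bookkeeping you suggest: it is the substantive technical point of \cite{SmoTho79}, which replaces the symbol-by-symbol uniform pad by a more delicate encoding.
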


The next result answers a question of Benjamin Weiss which has circulated informally for some time.  I was introduced to the question by Yonatan Gutman. The neat proof below was shown to me by Jean-Paul Thouvenot.  Thouvenot discusses the question further as problem (1) in~\cite{Tho09}.

\begin{prop}\label{prop:common-ext}
If $\bf{X}$ and $\bf{X}'$ are K-automorphisms of equal entropy $h$, possibly infinite, then they have a common extension that is also a K-automorphism of entropy $h$.
\end{prop}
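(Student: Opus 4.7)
The plan is to use the weak Pinsker property (Theorem A) together with Ornstein's isomorphism theorem to exhibit a common Bernoulli sub-factor of $\bf{X}$ and $\bf{X}'$ that carries all the entropy, and then to take the relative product over that common factor. The final verification that the relative product is itself a K-automorphism is the main obstacle.

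First, I would iterate the (non-relative) weak Pinsker property inside $\bf{X}$. Pick a sequence $\eps_n \downarrow 0$ with $\eps_0 = h$ (and $\eps_n \to 0$ along any scheme if $h = \infty$). Set $\bf{Y}_0 := \bf{X}$, and at stage $n \geq 1$ apply Theorem~A to $\bf{Y}_{n-1}$ (which is K as a factor of a K-automorphism) to obtain a splitting $\bf{Y}_{n-1} \cong \bf{B}_n \times \bf{Y}_n$ with $\bf{B}_n$ Bernoulli of entropy $\eps_{n-1}-\eps_n$ and $\rmh(\bf{Y}_n) = \eps_n$. As discussed following Theorem~A, the sub-$\sigma$-algebras $\bf{B}_1, \bf{B}_2,\dots$ are mutually independent in $\bf{X}$, so their join $\bf{B} := \bigvee_n \bf{B}_n$ is a Bernoulli factor of $\bf{X}$ of entropy $\sum_n (\eps_{n-1}-\eps_n) = h$; in particular $\rmh(\bf{X}\,|\,\bf{B}) = 0$. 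Repeating the construction inside $\bf{X}'$ yields a Bernoulli factor $\bf{B}' \subseteq \bf{X}'$ of entropy $h$, and by Ornstein's theorem $\bf{B} \cong \bf{B}'$; I fix such an isomorphism and henceforth view $\bf{B}$ as a common Bernoulli sub-factor of both $\bf{X}$ and $\bf{X}'$.

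Second, I would form the relative product $\bf{Z} := \bf{X} \times_{\bf{B}} \bf{X}'$, that is, the measure on $X \times X'$ that couples $\bf{X}$ and $\bf{X}'$ independently over their common factor $\bf{B}$. The projections $\bf{Z} \to \bf{X}$ and $\bf{Z} \to \bf{X}'$ are factor maps, and a routine entropy calculation using the chain rule gives
\[
\rmh(\bf{Z}) \;=\; \rmh(\bf{X}\,|\,\bf{B}) + \rmh(\bf{X}'\,|\,\bf{B}) + \rmh(\bf{B}) \;=\; 0 + 0 + h \;=\; h,
\]
as required. Ergodicity of $\bf{Z}$ follows from the ergodicity of $\bf{B}$ and the relative weak-mixing that comes automatically from the relatively Bernoulli structure of $\bf{X}$ and $\bf{X}'$ over the common Bernoulli factors built from Theorem~A.

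The delicate point, and the step I expect to be the main obstacle, is verifying that $\bf{Z}$ is K. A relative product of two K-automorphisms over a common factor is not K in general — even a product like Bernoulli$\times$rotation is a zero-relative-entropy extension of a K (indeed Bernoulli) factor that fails to be K. The point is that in our situation the common factor $\bf{B}$ is Bernoulli of full entropy $h = \rmh(\bf{Z})$, so any candidate nontrivial zero-entropy factor $\bf{P}$ of $\bf{Z}$ (contained in the Pinsker $\sigma$-algebra of $\bf{Z}$) would have to be independent of $\bf{B}$ for entropy reasons — and then $\bf{B} \vee \bf{P}$ would be a factor of $\bf{Z}$ of entropy $h$ that strictly enlarges $\bf{B}$. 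Projecting to either coordinate $\bf{X}$ or $\bf{X}'$, the image of $\bf{P}$ would be a zero-entropy factor of a K system, hence trivial; this would force $\bf{P} \subseteq \bf{X}^{\perp\bf{B}}$ and symmetrically $\bf{P} \subseteq (\bf{X}')^{\perp\bf{B}}$. Since in the relative product the two fibers are conditionally independent given $\bf{B}$, the only measurable set in both relative complements is trivial, and so $\bf{P}$ is trivial. The plan is to make this outline rigorous using the trivial-tail characterization of K and the fact that $\bf{X}$ and $\bf{X}'$ are (by construction) relatively Bernoulli over arbitrarily large sub-Bernoullis of $\bf{B}$.
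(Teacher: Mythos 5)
Your construction of a full-entropy Bernoulli factor $\bf{B}$ of $\bf{X}$ with $\rmh(\bf{X}\,|\,\bf{B})=0$ is fine (for finite $h$, Sinai's theorem plus the chain rule already gives it in one step), and the entropy count for the relative product $\bf{Z}=\bf{X}\times_{\bf{B}}\bf{X}'$ is correct. But the step you yourself flag as the main obstacle is a genuine gap, not a technicality to be cleaned up. First, your ergodicity claim rests on ``the relatively Bernoulli structure of $\bf{X}$ over $\bf{B}$'', which is false in your construction: since $\rmh(\bf{X}\,|\,\bf{B})=0$, relative Bernoullicity of $\bf{X}$ over $\bf{B}$ would force $\bf{B}$ to generate everything, i.e.\ $\bf{X}$ would be Bernoulli. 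Second, and more seriously, the argument that the Pinsker factor $\bf{P}$ of $\bf{Z}$ is trivial does not go through. The ``image of $\bf{P}$ under projection to $\bf{X}$'' is not a factor of $\bf{X}$; the correct object is $\bf{P}\cap\B_{\bf{X}}$, and its triviality does not imply that $\bf{P}$ is conditionally independent of $\B_{\bf{X}}$ given $\bf{B}$. So the inference $\bf{P}\subseteq\bf{X}^{\perp\bf{B}}$ is unjustified, and the triviality argument collapses. As your own Bernoulli-times-rotation example shows, a zero-relative-entropy extension of a full-entropy Bernoulli system can fail to be K, and $\bf{Z}$ is exactly such an extension of $\bf{B}$; nothing in your setup rules out this failure mode.

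The paper sidesteps the relative product entirely. Apply Theorem A once to each system to write $\bf{X}\cong\bf{X}_1\times\bf{B}$ and $\bf{X}'\cong\bf{X}_1'\times\bf{B}'$ with $a:=\rmh(\bf{X}_1)$, $b:=\rmh(\bf{X}_1')$ and $a+b<h$, and set $\t{\bf{X}}:=\bf{X}_1\times\bf{X}_1'\times\bf{B}''$ where $\bf{B}''$ is Bernoulli of entropy $h-a-b$. This is a \emph{direct} product of K-automorphisms ($\bf{X}_1$ and $\bf{X}_1'$ are K as factors of K-automorphisms), hence K with no further argument, and it has entropy $h$. It factors onto $\bf{X}$ because Sinai's theorem maps $\bf{X}_1'$ onto a Bernoulli shift of entropy $b$, so $\t{\bf{X}}$ factors onto the product of $\bf{X}_1$ with a Bernoulli shift of entropy $h-a$, which is isomorphic to $\bf{X}_1\times\bf{B}\cong\bf{X}$ by Ornstein's theorem; symmetrically for $\bf{X}'$. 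If you want to salvage your route, you would need an independent proof that your particular relative product is K, which is precisely the delicate point the direct-product construction is designed to avoid.
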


\begin{proof}
The desired conclusion depends on our two systems only up to isomorphism.  Therefore, by the weak Pinsker property, we may assume that
\[\bf{X} = \bf{X}_1\times \bf{B}\]
and
\[\bf{X}' = \bf{X}'_1\times \bf{B}',\]
where $\bf{B}$ and $\bf{B}'$ are Bernoulli and where the entropies $a := \rmh(\bf{X}_1)$ and $b := \rmh(\bf{X}'_1)$ satisfy $a+b < h$.  The new systems $\bf{X}_1$ and $\bf{X}_1'$ inherit the K-property from the original systems.

Let $\bf{B}''$ be a Bernoulli shift of entropy $h - a - b$, interpreting this as $\infty$ if $h = \infty$.  Now consider the system
\[\t{\bf{X}} := \bf{X}_1 \times \bf{X}_1' \times \bf{B}''.\]
This is a product of K-automorphisms, hence still has the K-property.  Its entropy is
\[a + b + (h-a-b) = h.\]
Finally, it admits both of our original systems as factors.  Indeed, by Sinai's theorem, there is a factor map from $\bf{X}_1'$ to another Bernoulli shift $\bf{B}_1'$ of entropy $b$.  Applying this map to the middle coordinate of $\t{\bf{X}}$ gives a factor map from $\t{\bf{X}}$ to
\[\bf{X}_1 \times \bf{B}_1'\times \bf{B}'',\]
which is isomorphic to $\bf{X}_1 \times \bf{B}$ and hence to $\bf{X}$.  The argument for $\bf{X}'$ is analogous.
\end{proof}

Given an automorphism $(X,\mu,T)$, another object of interest in ergodic theory is its \textbf{centralizer}: the group of all other $\mu$-preserving transformations of $X$ that commute with $T$.  Informally, it is known that the centralizer of a Bernoulli system is very large.  A precise result in this direction is due to Rudolph, who proved in~\cite{Rud78--cent} that if $T$ is Bernoulli and $S$ is another automorphism that commutes with the whole centralizer of $T$, then $S$ must be a power of $T$.

If $(X,\mu,T)$ is any ergodic automorphism with positive entropy, then Theorem A provides a non-trivial splitting
\begin{equation}\label{eq:cent-split}
(X,\mu,T) \stackrel{\rm{iso}}{\to} (Y,\nu,S) \times \bf{B}.
\end{equation}
Under this splitting, we obtain many elements of the centralizer of $T$ of the form $S^m \times U$, where $m \in \bbZ$ and $U$ is any centralizer element of $\bf{B}$.  In the first place, this answers a simple question posed in~\cite{Rud78--cent}: if $(X,\mu,T)$ has positive entropy, then its centralizer contains many elements that are not just powers of $T$.  I understand from Jean-Paul Thouvenot that by combining these centralizer elements with unpublished work of Ferenczi and the ideas in~\cite[Remark 2]{SmoTho79}, one can also extend Rudolph's main result from~\cite{Rud78--cent} to any ergodic automorphism of positive entropy. We do not explore this fully here, but leave the reader with the next natural question:

\begin{ques}
	Is there a K-automorphism $(X,\mu,T)$ such that every element of the centralizer arises as $S^m\times U$ for some splitting as in~\eqref{eq:cent-split}?
	\end{ques}

An old heuristic of Ornstein asserts that the classification of factors within a fixed Bernoulli shift should mirror the complexity of the classification of general K-automorphisms: see the discussion in~\cite{Orn75} and the examples in~\cite{Hoff99b}.  Can the methods of the present paper shed additional light on the lattice of all factors of a fixed Bernoulli shift?

Finally, here is a question that seems to lie strictly beyond the weak Pinsker property.  Theorem A promises that any ergodic system with positive entropy may be split into a non-trivial direct product in which one factor is Bernoulli.

\begin{ques}
Is there a non-Bernoulli K-automorphism with the property that any non-trivial splitting of it consists of one Bernoulli and one non-Bernoulli factor?
\end{ques}

\subsection{Results about generating observables and processes}

Recall that a process $\a:(X,\mu,T)\to (A^\bbZ,\a_\ast\mu,T_A)$ is uniquely determined by its time-zero observable $\a_0$.  That observable is called \textbf{generating} if $\a$ is an isomorphism of systems.

\begin{prop}[From~\cite{Tho08}]
	Any ergodic system of finite entropy has a generating observable whose two-sided tail equals the Pinsker factor. \qed
\end{prop}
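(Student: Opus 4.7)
The plan is to apply Theorem~A (the relative weak Pinsker property) to the factor map from $(X,\mu,T)$ onto its Pinsker factor, obtain a direct-product decomposition, and then combine generators of the two factors into a single generator of $X$ whose two-sided tail coincides with the Pinsker $\sigma$-algebra.

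Let $P$ denote the Pinsker $\sigma$-algebra of $(X,\mu,T)$, which is a zero-entropy factor. For a small $\eps > 0$, I would apply Theorem~A to the factor map $\pi \colon (X,\mu,T) \to (P,\mu|_P,T|_P)$: this yields an intermediate factor $\t{P}$ with $P \subseteq \t{P}$, relative entropy $\rmh(\t{P}\,|\,P) < \eps$, and $(X,\mu,T)$ relatively Bernoulli over $\t{P}$. So there is an isomorphism $(X,\mu,T) \cong (\t{P},\t{\mu},\t{T}) \times \bf{B}$ with $\bf{B}$ Bernoulli. I would next observe that the Pinsker factor of $\t{P}$ coincides with $P$: $P$ is zero-entropy and contained in $\t{P}$, while any zero-entropy factor of $\t{P}$ is a zero-entropy factor of $X$ and therefore contained in $P$.

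Next, pick a finite generator $\beta$ of $\t{P}$ (available from Krieger's theorem since $\rmh(\t{P}) < \infty$) and let $\gamma$ be the canonical i.i.d.\ generator of $\bf{B}$. Then $\alpha := \beta \vee \gamma$ is a finite-alphabet generator of $X$. Because $\beta$ is measurable with respect to the $\t{P}$-factor and $\gamma$ with respect to the independent $\bf{B}$-factor, the $\sigma$-algebra $\sigma(\alpha_k \colon |k| \geq n)$ factors as the product $\sigma(\beta_k \colon |k| \geq n) \otimes \sigma(\gamma_k \colon |k| \geq n)$; passing to the intersection over $n$ and exploiting independence of the two factors gives that the two-sided tail of $\alpha$ equals the corresponding product of the two-sided tails of $\beta$ and $\gamma$. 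By Kolmogorov's zero-one law the tail of $\gamma$ is trivial, so the two-sided tail of $\alpha$ coincides with that of $\beta$, viewed as a sub-$\sigma$-algebra of $\t{P} \subseteq X$.

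It then remains to show that the two-sided tail $\mathcal{T}(\beta)$ equals $P$. The inclusion $P \subseteq \mathcal{T}(\beta)$ follows from the classical Rokhlin--Sinai identity: the past tail of any finite generator of an ergodic finite-entropy system equals its Pinsker $\sigma$-algebra, which here is $\Pi(\t{P}) = P$, and the past tail lies inside the two-sided tail. The reverse inclusion $\mathcal{T}(\beta) \subseteq P$ amounts to showing $\mathcal{T}(\beta)$ has zero entropy, which combined with $\mathcal{T}(\beta) \supseteq P$ and the maximality of $P$ among zero-entropy factors yields equality. This last step is the main obstacle, since a generic finite generator of $\t{P}$ may have two-sided tail strictly larger than its past tail, contributing positive-entropy content beyond $P$. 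To address it, I would iterate Theorem~A on $\t{P}$, obtaining decompositions $\t{P}_{n-1} \cong \t{P}_n \times \bf{B}_n$ with $\rmh(\t{P}_n\,|\,P) < \eps/2^n$, and choose the generators at each stage coherently so that each Bernoulli factor $\bf{B},\bf{B}_1,\bf{B}_2,\dots$ contributes only a trivial two-sided tail while the entropy of the residual generator's tail is driven to zero. The key technical step, which I expect to absorb the bulk of the work, is converting this sequence of approximate constructions into a single generator of $X$ whose two-sided tail has zero entropy; this will plausibly require a diagonalization together with the quantitative extremality controls from Section~\ref{sec:ext} to ensure that the residual tail entropy is uniformly bounded across stages and vanishes in the limit.
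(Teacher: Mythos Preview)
The paper gives no proof of this proposition: it is stated with an immediate \qed and attributed to \cite{Tho08}, where Thouvenot established it conditionally on the weak Pinsker property. The paper's only contribution here is that Theorem~A now supplies that hypothesis unconditionally, so there is no detailed argument in the paper against which to compare your proposal.

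That said, your outline is the natural one and presumably parallels the argument in \cite{Tho08}: split off a Bernoulli factor via Theorem~A, take a product generator $\alpha = \beta \vee \gamma$, and use independence together with the Kolmogorov zero--one law to reduce the two-sided tail of $\alpha$ to that of $\beta$ alone. You have correctly located the genuine difficulty: an arbitrary finite generator $\beta$ of the residual factor $\t{P}$ is guaranteed only $P \subseteq \mathcal{T}(\beta) \subseteq \t{P}$, and one must force equality on the left.

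Your instinct to iterate Theorem~A is right, but be aware of a real pitfall in the limiting step. The nested splittings $X \cong Y_n \times B_1 \times \cdots \times B_n$ with $\rmh(Y_n)\to 0$ do give $\bigcap_n Y_n = P$, but it does \emph{not} follow that $P$ together with $\bigvee_i B_i$ generates $\B_X$; equivalently, the obvious limiting candidate $\delta \vee \bigvee_i \gamma^{(i)}$ (with $\delta$ a generator of $P$) may fail to generate $X$. This is exactly the obstruction recorded in the Remark following the proof of Theorem~A under assumption~(A): relative Bernoullicity over an increasing sequence of factors need not pass to their join. Whatever Thouvenot does in \cite{Tho08} must circumvent this, and your phrase ``diagonalization together with quantitative extremality controls'' does not yet name a mechanism for doing so. The reduction you have written is sound up to that point; the remaining step is a genuine piece of work that lives in the cited reference rather than in this paper.
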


\begin{prop}[From~\cite{PetTho04}]
	Any ergodic system of finite entropy has a generating observable whose one-sided super-tail equals the Pinsker factor (see~\cite{PetTho04} for the relevant definitions). \qed
\end{prop}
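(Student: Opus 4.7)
The plan is to combine Theorem A with the construction of Petersen and Thouvenot in~\cite{PetTho04}, which establishes the proposition under the hypothesis that the ambient system has the weak Pinsker property. Theorem A supplies that hypothesis for every ergodic finite-entropy system, so the conditional theorem of~\cite{PetTho04} becomes unconditional.

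To be more explicit, I would first apply Theorem A repeatedly with a sequence of tolerances $\eps_n \searrow 0$ to produce, for each $n$, a factor map $\pi_n:(X,\mu,T) \to (Y_n,\nu_n,S_n)$ over which $(X,\mu,T)$ is relatively Bernoulli and for which $\rmh(\nu_n,S_n) < \eps_n$. Since $(X,\mu,T)$ is relatively Bernoulli over $\pi_n$, any zero-entropy factor of $(X,\mu,T)$ is $\pi_n$-measurable; in particular, the Pinsker factor is contained in $\s(\pi_n)$ modulo $\mu$. Combined with $\rmh(\nu_n,S_n) < \eps_n$, which forces the entropy of $\bigcap_n \s(\pi_n)$ to vanish, this yields that $\bigcap_n \s(\pi_n)$ equals the Pinsker factor modulo $\mu$.

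Next, following the scheme of~\cite{PetTho04}, I would assemble these splittings into a single generating observable whose one-sided super-tail coincides with $\bigcap_n \s(\pi_n)$. The construction weaves together finite observables adapted to each $\pi_n$ with i.i.d.\ letters drawn from the corresponding Bernoulli complements, arranged on Rokhlin towers of increasing height. Relative Bernoullicity over each $\pi_n$ guarantees that the injected Bernoulli noise at each stage is truly independent of $\pi_n$ and hence contributes nothing to the super-tail, while the $\pi_n$-information at arbitrarily late stages does survive into it.

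The hard part will be this assembly step: producing a single observable that simultaneously encodes all the splittings $\pi_n$ and has the right super-tail behaviour, so that the Bernoulli noise injected at each level is progressively washed out and only the intersection $\bigcap_n \s(\pi_n)$ persists in the super-tail. This delicate coding is the substance of~\cite{PetTho04}; our only contribution is to remove the weak Pinsker hypothesis under which that coding was carried out.
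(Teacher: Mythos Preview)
Your approach is correct and matches the paper's exactly: the paper gives no proof beyond the citation and the \qed, since the proposition is listed among ``known consequences'' that were proved in~\cite{PetTho04} conditionally on the weak Pinsker property and become unconditional via Theorem~A. Your additional sketch of how the splittings from Theorem~A feed into the Petersen--Thouvenot construction is reasonable elaboration, but the paper itself simply defers the entire argument to~\cite{PetTho04}.
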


\begin{prop}[From~\cite{Ver00}]
For any ergodic finite-state process, Vershik's secondary entropy (defined in~\cite{Ver00}) grows slower than any doubly exponential function (although it can grow faster than any single exponential~\cite{MarShi02}). \qed
\end{prop}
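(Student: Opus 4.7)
The plan is to derive the statement as a direct corollary of Theorem A combined with the Shannon--McMillan theorem. Given an arbitrary $c > 0$, set $\eps := c/3$ and apply Theorem A to the ergodic process to obtain a splitting
\[
(X,\mu,T) \stackrel{\cong}{\to} (Y,\nu,S) \times \bf{B},
\]
with $\bf{B}$ Bernoulli and $\rmh(\nu,S) < \eps$. Vershik's secondary entropy is built from an averaged Hamming-type metric on orbits, and so behaves reasonably under direct products: the secondary entropy of a product system at scale $n$ is bounded above by a computable combination of the secondary entropies of its two factors. Hence it suffices to estimate the factor contributions from $(Y,\nu,S)$ and $\bf{B}$ separately and then combine them via the product-formula extracted from the definition in~\cite{Ver00}.

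For the Bernoulli factor, sharp concentration of its $n$-dimensional marginals on Hamming balls (Proposition~\ref{prop:Marton}), together with the Shannon--McMillan theorem applied to the Bernoulli distribution, yields an upper bound on its secondary entropy at scale $n$ of the form $\exp(C_\eps n)$, where $C_\eps$ depends only on $\rmh(\bf{B})$ and the cutoff scale $\eps$. For the low-entropy factor $(Y,\nu,S)$, the Shannon--McMillan theorem identifies a $\nu$-typical set of size at most $\exp((\rmh(\nu,S) + o(1))\cdot n) \leq \exp(2\eps n)$ for all $n$ sufficiently large, and using this typical set as a trivial net bounds the secondary entropy of $(Y,\nu,S)$ at scale $n$ by the same quantity.

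The final step is to feed these two estimates into the product-combining inequality for Vershik's secondary entropy. The output is an upper bound on the secondary entropy of $(X,\mu,T)$ whose logarithm grows no faster than $2\eps n + O_\eps(1)$. Consequently the secondary entropy itself is at most $\exp(\exp(2\eps n + O_\eps(1)))$, which for our choice $\eps = c/3$ is strictly less than $\exp(\exp(cn))$ once $n$ is large enough. Since $c > 0$ was arbitrary, the stated doubly exponential upper bound follows at once.

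The main obstacle is verifying the precise product-combining inequality for Vershik's secondary entropy. This requires unwinding the definition in~\cite{Ver00} to check that the averaged Hamming-type metric on the product system decomposes, up to a bounded factor, as a convex combination of the corresponding metrics on $Y$ and on $B^\bbZ$ (a task in the spirit of Lemma~\ref{lem:dist-from-prod-meas}), and then tracking how $\eps$-nets on the two factors combine into an $\eps$-net on the product with cardinality controlled by an explicit function of the factor cardinalities. Once this bookkeeping is in place, everything else is routine: Theorem A supplies an arbitrarily small $\eps$, Shannon--McMillan controls each factor, and taking $\eps \to 0$ produces the doubly exponential bound.
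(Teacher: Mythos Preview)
The paper does not give its own proof of this proposition: it is listed among ``known consequences'' in Section~\ref{sec:cors}, attributed to~\cite{Ver00}, and closed immediately with a \qed. Vershik established this growth bound \emph{conditionally} on the weak Pinsker property; the paper's contribution is to supply Theorem~A, which discharges that hypothesis. So there is nothing in the paper to compare your argument against beyond the bare strategy ``Theorem~A plus Vershik's conditional result''.

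Your outline follows exactly that strategy, and the shape is right: split via Theorem~A, control each factor separately, recombine. Two remarks, though. First, your exponential bookkeeping is garbled: you write that the combined upper bound has \emph{logarithm} growing like $2\eps n + O_\eps(1)$, and then conclude the secondary entropy is at most $\exp(\exp(2\eps n + O_\eps(1)))$. Those two statements are not the same thing --- the first would give a singly-exponential bound $\exp(2\eps n + O_\eps(1))$, which is actually stronger than what you need but cannot be correct in general given the Marton--Shields examples you cite. You should revisit Vershik's definition carefully and track which quantity sits at which exponential level; the doubly-exponential ceiling arises because the secondary entropy counts, roughly, $\eps$-distinguishable \emph{measures} on a set of singly-exponential size. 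Second, you correctly identify the product-combining inequality as the crux, but you have not actually proved it; that step and the correct treatment of the Bernoulli factor (whose secondary entropy is bounded, not merely $\exp(C_\eps n)$) are exactly what one must extract from~\cite{Ver00}. Until those are written out, this remains a plan rather than a proof.
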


The next result is a consequence of Proposition~\ref{prop:common-ext}.  It was brought to my attention by Yonatan Gutman, who established the connection together with Michael Hochman.

\begin{cor}[From~\cite{GutHoc08}]
	The class of finite-entropy, non-Bernoulli K-processes has no finitely observable invariants other than entropy (see~\cite{GutHoc08} for the relevant definitions). \qed
\end{cor}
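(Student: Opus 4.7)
The plan is to reduce the corollary to a conditional theorem of Gutman and Hochman by supplying the missing structural input, which turns out to be exactly Proposition~\ref{prop:common-ext}.

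First I would recall from \cite{GutHoc08} the relevant definitions: a \emph{finitely observable invariant} of ergodic finite-state processes is a function $J$ into a Polish space $Z$ such that there exist Borel functions $J_n:A^n\to Z$ with $J_n(\a_0,\dots,\a_{n-1})\to J(\a)$ in probability. Such a $J$ depends only on the shift-invariant law of the process, and by Shannon--McMillan--Breiman, entropy itself is finitely observable. The goal is therefore to show that on the class $\mathcal{C}$ of finite-entropy non-Bernoulli K-processes, any two elements of equal entropy are assigned the same value of $J$.

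Next I would invoke the main structural criterion from \cite{GutHoc08}: on a class of ergodic processes in which any two elements of equal entropy can be realized as observables on a common ambient system that itself lies in (the system-level version of) the class, every finitely observable invariant is a function of entropy alone. The mechanism is roughly that, working inside the common extension, one can $\bar d$-approximate either of the two processes by a family of observables built from a single generator of the extension, and continuity/approximation properties that any finitely observable invariant must satisfy force the two values of $J$ to coincide. Gutman and Hochman isolated this common-extension hypothesis as precisely what would upgrade their Bernoulli case to the K case, and posed the corresponding common-extension question for K-systems as the obstacle.

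Finally, to apply this criterion to $\mathcal{C}$ I need: given two finite-entropy non-Bernoulli K-processes $\a^{(1)},\a^{(2)}$ of equal entropy $h$, the systems $\bf X_1,\bf X_2$ they generate admit a common extension that is K, has entropy $h$, and is itself non-Bernoulli. Proposition~\ref{prop:common-ext} delivers a common K-extension $\t{\bf X}$ of entropy $h$, and $\t{\bf X}$ cannot be Bernoulli, since otherwise both factors $\bf X_i$ would be Bernoulli by Ornstein's factor theorem, contradicting the hypothesis. Feeding this into the Gutman--Hochman criterion yields the corollary.

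The main potential obstacle is verifying that the precise form of common extension required in \cite{GutHoc08} agrees with what Proposition~\ref{prop:common-ext} produces: the criterion needs the K-property and the exact matching of entropies (not merely an ergodic extension), both of which are in fact asserted by Proposition~\ref{prop:common-ext}. With that match confirmed, there is no remaining work beyond citing \cite{GutHoc08} for the continuity/approximation step that the criterion itself rests on.
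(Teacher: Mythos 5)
Your proposal is correct and follows exactly the route the paper intends: the corollary is obtained by feeding the common K-extension of equal entropy from Proposition~\ref{prop:common-ext} into the criterion of~\cite{GutHoc08}, which is precisely the missing hypothesis Gutman and Hochman had isolated. Your additional observation that the common extension is automatically non-Bernoulli (since a Bernoulli extension would force both factors to be Bernoulli) correctly handles the only point of matching between the two statements.
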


If $(X,\mu,T)$ is a system and $\a:X\to A^\bbZ$ is a finite-valued generating process, then the conditional entropy $\rmH_\mu(\a_0\,|\,\a_{(-\infty;0)})$ equals the Kolmogorov--Sinai entropy $\rmh(\mu,T)$.  However, if the process is real-valued (so $A = \bbR$), then in general one knows only that
\[\rmH_\mu(\a_0\,|\,\a_{(-\infty;0)}) \leq \rmh(\mu,T).\]
In connection with these quantities, the next result answers an early question of Rokhlin and Sinai~\cite[paragraph 12.7]{Rok67--survey}.

\begin{prop}\label{prop:RokSin}
	If $(X,\mu,T)$ is a finite-entropy K-automorphism, then for any $c \in (0,\rmh(\mu,T)]$ it has a real-valued generating process $\a:X\to \bbR^\bbZ$ such that
	\[\rmH_\mu(\a_0\,|\,\a_{(-\infty;0)}) = c.\]
\end{prop}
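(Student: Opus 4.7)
Write $h := \rmh(\mu,T)$.  The plan is to handle the boundary case $c = h$ immediately via Krieger's generator theorem---a finite-valued generator $\zeta$ of $(X,\mu,T)$ is real-valued and already satisfies $\rmH(\zeta_0\,|\,\zeta_{(-\infty;0)}) = h$---so the real work lies in the range $0 < c < h$.  For such $c$, I would fix $\eps \in (0,h-c)$ and apply Theorem~A to the trivial factor of $(X,\mu,T)$, which yields an isomorphism
\[(X,\mu,T) \;\cong\; \bf{Y}\times \bf{B}\]
with $\rmh(\bf{Y}) < \eps$ and $\bf{B}$ Bernoulli of entropy $h - \rmh(\bf{Y}) > c$.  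By Ornstein's isomorphism theorem, $\bf{B}$ further splits as a direct product $\bf{B}_c\times\bf{B}'$ with both factors Bernoulli and $\rmh(\bf{B}_c) = c$, so that $(X,\mu,T)$ is isomorphic to the direct product of three mutually independent systems $\bf{Y}\times\bf{B}_c\times\bf{B}'$.

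The core idea is to design a real-valued generating process whose innovation per step comes entirely from $\bf{B}_c$, with the remaining components of the system encoded in the trajectory but fully determined by the past.  Concretely, I would choose a finite-valued i.i.d.\ generator $\gamma$ of $\bf{B}_c$ with $\rmH(\gamma_0) = c$ (realizable on an alphabet $\{0,1,\dots,M-1\}$ for $M$ large enough); finite-valued generators $\zeta^Y$ of $\bf{Y}$ and $\eta$ of $\bf{B}'$ (both by Krieger); and combine the latter two into $\psi_0 := (\zeta^Y_0,\eta_0)$, valued in a finite alphabet $F$.  After fixing an a.e.\ bijective base-$|F|$ expansion $\phi\colon F^\bbN \to [0,1)$, define the real-valued observable
\[\a_0 \;:=\; \gamma_0 \;+\; \phi(\psi_0,\psi_1,\psi_2,\dots).\]
The integer part of $\a_n$ recovers $\gamma_n$, and the fractional part recovers $(\psi_k)_{k\geq n}$ via $\phi^{-1}$, so ranging $n$ over $\bbZ$ recovers both $\gamma$ and $\psi$ in full; hence $\a$ generates $(X,\mu,T)$.

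For the conditional-entropy computation, note that each past observable $\a_{-k}$ reveals $\gamma_{-k}$ and every $\psi_n$ with $n \geq -k$, so $\a_{(-\infty;0)}$ determines $(\gamma_n)_{n<0}$ together with the \emph{entire} process $\psi$.  Consequently $\phi(\psi_0,\psi_1,\dots)$ is measurable with respect to the past of $\a$, and conditioning on that past reduces $\a_0$ to a translation of $\gamma_0$ by a determined constant.  Since the product decomposition makes $\gamma_0$ independent of both $(\gamma_n)_{n<0}$ and the full $\psi$-process, and since $\gamma$ is i.i.d., this gives
\[\rmH(\a_0\,|\,\a_{(-\infty;0)}) \;=\; \rmH(\gamma_0) \;=\; c.\]
The main point requiring care is this last independence argument---in particular, checking that the almost-sure nature of $\phi^{-1}$ does not corrupt the relevant conditional distributions---but once the triple-product splitting is in hand the rest reduces to a routine manipulation of conditional distributions and presents no genuine obstacle.
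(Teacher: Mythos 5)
Your construction is correct as a proof of the proposition exactly as stated, and it is genuinely different from the paper's. The paper treats the Bernoulli factor by citing the Lindenstrauss--Peres--Schlag intermediate-value theorem (proved there via Bernoulli convolutions) and then takes the product with a finite generator of the low-entropy factor; you avoid that external input entirely by splitting off a Bernoulli factor $\bf{B}_c$ of entropy exactly $c$ and hiding the rest of the system in the base-$|F|$ expansion of the fractional part. The steps all check out: Theorem A plus Ornstein's theorem gives the splitting $\bf{Y}\times\bf{B}_c\times\bf{B}'$ with $\bf{B}'$ of positive entropy; the law of $(\psi_k)_{k\geq 0}$ is therefore atomless, so $\phi$ is a.e.\ invertible on the relevant support; one checks that $\s(\a_{(-\infty;0)})$ coincides mod $\mu$ with $\s(\gamma_{(-\infty;0)})\vee\s(\psi)$, of which $\gamma_0$ is independent, so the conditional law of $\a_0$ given the past is an $M$-point measure of entropy $\rmH(\gamma_0)=c$. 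However, you should be aware of what this elementary route gives up. Your generator has the whole of $\s(\psi)$ --- everything except the $\gamma$-innovations --- sitting inside its remote past $\bigcap_n\s(\a_{(-\infty;-n)})$, so the filtration $\s(\a_{(-\infty;n]})$ is very far from being an exhaustive $\s$-algebra with trivial tail. The Rokhlin--Sinai question behind this proposition is usually understood as asking about such K-filtrations (hence the title of the Lindenstrauss--Peres--Schlag paper, ``\ldots entropies of K-partitions''; their generator of the Bernoulli factor is engineered to be a K-partition, which is precisely what makes their problem hard). Once a non-trivial tail is permitted, the intermediate-value problem becomes easy, as your construction demonstrates. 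So this is a valid and more elementary proof of the literal statement, but it does not recover the trivial-tail form of the result, and it cannot be patched to do so without reinstating something like the Bernoulli-convolution construction for the innovating factor.
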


\begin{proof}
If $c = \rmh(\mu,T)$ then we simply take a finite-valued generating observable, as provided by Krieger's generator theorem.  So let us assume that $c < \rmh(\mu,T)$.

	In case $(X,\mu,T)$ is Bernoulli, the result is proved by Lindenstrauss, Peres and Schlag in~\cite{LinPerSch02} using an elegant construction of $\a$ in terms of Bernoulli convolutions.
	
	For the general case, by Theorem A we may split $(X,\mu,T)$ into a factor of entropy less than $\rmh(\mu,T) - c$ and a Bernoulli factor.  Take any finite-valued generating observable of the low-entropy factor, and apply the result from~\cite{LinPerSch02} to the Bernoulli factor.  The resulting product observable gives conditional entropy equal to $c$.
	\end{proof}

Another natural question about generating processes asks for a refinement of Theorem A.  It was suggested to me by G\'abor Pete.

\begin{ques}
	Let $(A^\bbZ,\mu,T_A)$ be an ergodic finite-state process and let $\eps > 0$.  Must the process be \emph{finitarily} isomorphic to a direct product of the form
	\[\underbrace{(C^\bbZ,\nu,T_C)}_{\rm{entropy}\ < \eps} \times \underbrace{(B^\bbZ,p^{\times \bbZ},T_B)}_{\rm{i.i.d.}}?\]
	If this does not always hold, are there natural sufficient conditions?
	
	What if we ask instead for an isomorphism that is unilateral in one direction or the other?
\end{ques}

\subsection{More general groups and actions}\label{subs:other-groups}

Many of the corollaries listed above should extend to actions of general countable amenable groups.  Indeed, the orbital methods of~\cite{DanPark02} yield some such generalizations quite easily: for instance, Proposition~\ref{prop:SmoTho} above should imply its extension to actions of amenable groups using the same arguments as for~\cite[Theorem 6.1]{DanPark02}.

Here is another consequence of the weak Pinsker property for actions of an arbitrary countable amenable group $G$:

\begin{prop}\label{prop:erasure-rate}
	Let $G$ be a countable amenable group. For any $\eps > 0$, any free and ergodic $G$-system $(X,\mu,T)$ of finite entropy $h$ has a generating observable $\a$ with the property that
	\begin{equation}\label{eq:erasure-rate}
	\rmH_\mu\Big(\a\,\Big|\,\bigvee_{g\in G\setminus \{e\}}\a\circ T^g\Big) > h - \eps.
	\end{equation}
	\qed
\end{prop}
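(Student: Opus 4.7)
The plan is to combine the amenable version of Theorem~\ref{thm:A+} with Ornstein's isomorphism theorem for Bernoulli amenable group actions and with Krieger's generator theorem. Apply Theorem~\ref{thm:A+} to the trivial factor of $(X,\mu,T)$: for the given $\eps>0$, it produces a factor $\t{\pi}$ of $(X,\mu,T)$ over which the system is relatively Bernoulli and such that $\rmh(\t{\pi},\mu,T)<\eps$. Because the ambient entropy is $h<\infty$, this relative Bernoullicity supplies an isomorphism of $(X,\mu,T)$ with a direct product $(Y,\nu,S)\times \bf{B}$, where $\rmh(\nu,S)<\eps$ and $\bf{B}$ is a Bernoulli $G$-system of entropy $h-\rmh(\nu,S)>h-\eps$. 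Identify $(X,\mu,T)$ with this product from now on.

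Next, by the Ornstein--Weiss classification of Bernoulli $G$-actions and the finiteness of $\rmh(\bf{B})$, we may realize $\bf{B}$ as a shift on a product space $(B^G,p^{\otimes G},\sigma)$ with $B$ a finite alphabet and $\rmH(p)=\rmh(\bf{B})$. Let $\b$ be the time-zero coordinate observable, so $\b$ generates $\bf{B}$ and $\rmH_\mu(\b)=\rmh(\bf{B})>h-\eps$. For the low-entropy factor $(Y,\nu,S)$, Krieger's generator theorem for free ergodic actions of countable amenable groups produces a finite-valued generating observable $\g$. Set
\[\a := \b\vee\g,\]
viewed as a finite-valued observable on the product. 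Since $\b$ generates $\bf{B}$ and $\g$ generates $(Y,\nu,S)$, the observable $\a$ generates $(X,\mu,T)$.

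It remains to bound the erasure-type conditional entropy. Let $\scrH$ be the $\sigma$-algebra generated by $\{\a\circ T^g:g\in G\setminus\{e\}\}$. The chain rule gives
\[\rmH_\mu\big(\a\,\big|\,\scrH\big) = \rmH_\mu\big(\g\,\big|\,\scrH\big) + \rmH_\mu\big(\b\,\big|\,\g\vee\scrH\big)\geq \rmH_\mu\big(\b\,\big|\,\g\vee\scrH\big).\]
The $\sigma$-algebra $\s(\g)\vee\scrH$ is generated by the family $\{\b\circ T^g:g\neq e\}\cup\{\g\circ T^g:g\in G\}$. The first family is independent of $\b$ because $(\b\circ T^g)_{g\in G}$ is i.i.d., and the second family is measurable with respect to the factor $(Y,\nu,S)$, which is independent of $\bf{B}$ in the direct product. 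Consequently $\b$ is independent of $\s(\g)\vee\scrH$, so
\[\rmH_\mu\big(\b\,\big|\,\g\vee\scrH\big) = \rmH_\mu(\b)>h-\eps,\]
and the bound~\eqref{eq:erasure-rate} follows.

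Given the tools already in hand, there is no substantial obstacle: the main ingredients are Theorem~\ref{thm:A+}, the finite-alphabet realization of a finite-entropy Bernoulli $G$-action, and Krieger's theorem for amenable $G$. The only point that deserves care is the independence computation in the final paragraph, which relies crucially on both the Bernoulli (i.i.d.) structure in the group direction and the product structure of the splitting provided by the weak Pinsker property.
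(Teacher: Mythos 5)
Your proof is correct and follows essentially the same route as the paper, whose entire argument is the one-line remark ``combine the two time-zero observables in a direct product of a low-entropy system and a Bernoulli shift'': you apply Theorem~\ref{thm:A+} to split off a Bernoulli factor of entropy greater than $h-\eps$, join its i.i.d.\ coordinate observable with a generator of the low-entropy factor, and verify the independence needed for the conditional entropy bound. The independence computation in your final paragraph is exactly the content the paper leaves implicit, and it is carried out correctly.
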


To prove this, simply combine the two time-zero observables in a direct product of a low-entropy system and a Bernoulli shift.  Proposition~\ref{prop:erasure-rate} resulted from discussions with Brandon Seward about `percolative entropy', a quantity of current interest in the ergodic theory of non-amenable groups. In case $G = \bbZ$, the left-hand side of~\eqref{eq:erasure-rate} is the same as Verd\'u and Weissman's `erasure entropy rate' from~\cite{VerWei08}. In that setting, Proposition~\ref{prop:erasure-rate} asserts that any stationary ergodic source with a finite alphabet admits generating observables whose erasure entropy rate is arbitrarily close to the Kolmogorov--Sinai entropy.  This can be seen as an opposite to Ornstein and Weiss' celebrated result that any such source has a generating observable which is bilaterally deterministic~\cite{OrnWei75}.

Fieldsteel showed in~\cite[Theorem 3]{Fie84} that an ergodic flow $(T^t)_{t\in\bbR}$ (that is, a bi-measurable and measure-preserving action of $\bbR$) has the weak Pinsker property if and only if the map $T^t$ is ergodic and has the weak Pinsker property for at least one value of $t$. Combined with Theorem A, this immediately gives the following.

\begin{prop}\label{prop:Fie}
	Every ergodic flow has the weak Pinsker property. \qed
	\end{prop}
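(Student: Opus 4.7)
The plan is to use Theorem A together with Fieldsteel's characterization of the weak Pinsker property for flows in a purely formal way; there is essentially no new work to do. Given an ergodic flow $(T^t)_{t\in\bbR}$ on $(X,\mu)$, I first need to locate at least one time $t_0 \in \bbR\setminus\{0\}$ for which the single automorphism $T^{t_0}$ is ergodic. This is a standard fact about ergodic flows: the set of $t$ for which $T^t$ fails to be ergodic is contained in a countable union of closed proper subgroups of $\bbR$, hence is countable (one can see this by a spectral argument, or equivalently by noting that non-ergodicity of $T^t$ forces a non-trivial eigenvalue of the flow to satisfy $\rme^{2\pi \rmi \lambda t} = 1$, and there are only countably many such eigenvalues since the spectrum lives in $L^2(\mu)$ which is separable). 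Pick such a $t_0$.

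Next, I apply Theorem~A to the single ergodic automorphism $(X,\mu,T^{t_0})$, taking the base factor to be trivial. Theorem~A then provides, for every $\eps > 0$, a splitting of $(X,\mu,T^{t_0})$ into a direct product of a Bernoulli shift and an automorphism of entropy less than $\eps$. In other words, the time-$t_0$ map $T^{t_0}$ has the weak Pinsker property in the sense of Thouvenot.

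Finally, I invoke Fieldsteel's result \cite[Theorem 3]{Fie84}, which asserts that an ergodic flow has the weak Pinsker property if and only if some (equivalently, every) ergodic time-$t$ map does. Since we have verified both hypotheses for $t = t_0$, Fieldsteel's theorem immediately yields the weak Pinsker property for the original flow $(T^t)_{t\in\bbR}$. The only step that is not quoted verbatim from the excerpt is the existence of an ergodic time, which is standard and not expected to present any obstacle; the entire content of Proposition~\ref{prop:Fie} is thus the combination of Theorem~A with Fieldsteel's transfer principle.
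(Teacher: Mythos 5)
Your proposal is correct and is exactly the paper's argument: combine Theorem A (applied to a single ergodic time-$t$ map, with trivial base factor) with Fieldsteel's transfer theorem for flows. The only thing you add is an explicit justification that an ergodic time $t_0$ exists, which the paper leaves implicit; your spectral argument for that standard fact is fine.
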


Do Theorem~\ref{thm:A+} and Proposition~\ref{prop:Fie} have a common generalization to actions of a larger class of locally compact, second countable groups?  It would be natural to start with those that have a `good entropy theory'~\cite{OrnWei87}.  Some relevant techniques may be available in~\cite{Avn10}.  In another direction, is there a sensible generalization to non-free actions?

Beyond the class of amenable groups one cannot hope for too much.  For sofic groups, one can ask after a version of the weak Pinsker property with sofic entropy~\cite{Bowen10} or Rokhlin entropy~\cite{Seward--KriI} in place of KS entropy, but Lewis Bowen has found natural examples of actions of non-amenable free groups for which those properties fail.

\bibliographystyle{abbrv}
\bibliography{bibfile}

\vspace{7pt}

\noindent\small{\textsc{Tim Austin}\\ \textsc{University of California, Los Angeles}\\ \textsc{Department of Mathematics}\\ \textsc{Los Angeles CA 90095-1555, U.S.A.}}

\vspace{7pt}

\noindent Email: \texttt{tim@math.ucla.edu}

\noindent URL: \texttt{math.ucla.edu/$\sim$tim}

\end{document}